\newcommand{\e}{\varepsilon}
\newcommand{\rto}{\overset{R}{\to}}
\newcommand{\mf}{\mathcal{F}}
\theoremstyle{definition}
\newtheorem{theorem}{Theorem}[section]
\newtheorem{remark}{Remark}[section]
\newtheorem{proposition}{Proposition}[section]
\newtheorem{definition}{Definition}[section]
\newtheorem{assumption}{Assumption}
\newtheorem{lemma}{Lemma}[section]
\newtheorem{corollary}{Corollary}[section]
\newtheorem{example}{Example}[section]
\begin{document}\sloppy
\title{Effective surface energies in nematic liquid crystals\\ as homogenised rugosity effects}
\author[1,4,5]{Razvan-Dumitru Ceuca}
\author[1]{Jamie M. Taylor}
\author[1,2,3]{Arghir Zarnescu}
\affil[1]{Basque Center for Applied Mathematics (BCAM), Bilbao, Bizkaia, Spain}
\affil[2]{
IKERBASQUE, Basque Foundation for Science, Maria Diaz de Haro 3, 48013, Bilbao, Bizkaia, Spain}
\affil[3]{“Simion Stoilow” Institute of the Romanian Academy, 21 Calea Grivit¸ei, 010702 Bucharest, Romania.}
\affil[4]{UPV/EHU, Universidad del Pa\'is Vasco/Euskal Herriko Unibertsitatea, Barrio Sarriena s/n, 48940 Leioa, Bizkaia, Spain}
\affil[5]{UAIC, Alexandru Ioan Cuza University of Iasi, Bulevardul Carol I, Nr. 11, 700506 Iasi, Romania
1}
\date{}
\maketitle
\abstract{We study  the effect of boundary rugosity in nematic liquid crystalline systems. We consider a highly general formulation of the problem, able to simultaneously deal with 
several liquid crystal theories. We use techniques of Gamma convergence and demonstrate that the effect of fine-scale surface oscillations may be replaced by an effective homogenised surface energy on a simpler domain. The  homogenisation limit is then quantitatively studied in a simplified setting, obtaining
convergence rates.
}

\section{Introduction}
Nematic liquid crystals are anisotropic  states of matter formed from elongated molecules that share attributes with both crystals and isotropic liquids. Whilst they appear as ``classical" fluids to the naked eye, the constituent rod-like molecules of a nematic admit long-range orientational ordering which leads to a macroscopic anisotropy of the system  and optical birefringence, a property more typical in solid crystals. More so, as these anisotropic structures often admit characteristic length scales comparable to the wavelength of visible light, this allows them to play key roles in a variety of optics applications, the most prominent of which is the liquid crystal display (LCD) \cite{kawamoto2002history,yeh2009optics}. 

While anisotropy is found in many systems and may be exploited in technological applications, the prevalence of liquid crystals arises not only from the interactions it is capable of having with electromagnetic fields and light, but from the fact that they posses a broken continuous symmetry. The consequence of this is that it is possible to reorient or reorganise the structure of a nematic via the imposition of modest external fields. That is to say, the material is {\it soft}. As well as their long history of technological use, in more recent history there has been a sustained interest in liquid crystalline systems in biological applications, where many naturally occuring biological structures may be identified with liquid crystalline phases (such as the structure of a bi-lipid membrane, or the collective dynamics of active systems from the scale of swimming bacteria to flocks of birds \cite{dell2018growing,heller1993molecular,toner1998flocks}), and complex materials with a liquid crystalline nature have presented themselves as candidate for soft prosthesis \cite{jeong2013advancements,thomsen2001liquid}.

 In both biological and technological contexts, the behaviour of liquid crystalline materials is highly dependent on the environment in which they are found. Owing to the dielectric and diamagnetic anisotropy of mesogenic (rod-like) molecules, the imposition of electromagnetic fields has become the standard method to control nematics in technological applications. Another important component of the system which determines the ground states, however, is the interactions that the material has with its bounding container, that is, surface effects. It should be noted that liquid crystals may have interfaces with air, other liquids, solids, or even have situations such as wetting where nematic-air and nematic-solid interfaces each play a role, and each can provide a variety of behaviours depending on the particular materials involved \cite{bates1997molecular, meyer1972point, pan1987laser,pizzirusso2012predicting, slavinec1997determination}. In this work we will consider the case which is most apparent in technological applications, that of the nematic-solid interface. 

The classical approach to modelling nematic-solid interfaces in a nematic system is to presume that the surface is somewhat smooth, and molecules generally have a preference to sit  at a given angle to the surface, with the most common cases being that they prefer to be parallel (planar degenerate anchoring) or perpendicular (homeotropic anchoring)  to the surface. A surface anchoring energy represents this preference  in mathematical modelling, although at physically meaningful length scales it is often reasonable to replace the surface energy with a Dirichletboundary condition, which may be viewed as a case in which the effective surface energy becomes infinitely large away from its minimiser. Given that the geometry of the surface must therefore cause fluctuations in the nematic, at least near the surface itself, it is natural to wish to quantify these effects, and more so ask ourselves if we can exploit them. Additive manufacturing processes and photolithography open the possibility of producing well-designed surfaces which may improve control of nematic systems \cite{cheng1979liquid,niitsuma2008contact,yi2009topographic}. More so, the ZBD (Zenithal Bistable Device) is a successful commercial bistable display that exploits surface-induced distortions of the nematic, showing a glimpse of the future possibilities \cite{bryan1997grating,wood200011}.

As often happens in soft-matter physics, the presence of multiple length scales in nematics leads to a variety of models that one may use to describe them, depending on the types of features and mechanisms that one wishes to capture and describe. Previous research into the behaviour of nematic-solid interfaces is no different, with molecular dynamics simulations, including simulations up to atom-level resolution \cite{roscioni2013predicting,roscioni2017predicting,zhang1996substrate}, mean-field approaches \cite{osipov1993density,teixeira2016nematic,yokoyama1997density}, and the continuum mechanics language of PDEs and the calculus of variations \cite{romero2010scaling,sen1987landau}. In particular there has been a wealth of research into understanding the effects of one-dimensional periodic surfaces, with a particular focus on understanding how these provoke the presence of defects and create ``effective" surface energies that depend on the structure itself \cite{harnau2007effective,kondrat2005nematic}. 

Within this work we will consider the continuum mechanics approach, and aim to understand via rigorous asymptotic analysis the way in which undulated surfaces, in which the wavelength is of comparable size to the amplitude, can lead to effective surface energies in the limit as the amplitude converges to zero. Problems of this flavour have been considered in the language of homogenisation of PDEs in a domain with an oscillating boundary, where certain scalar, linear, {\it rugose} systems may be rigorously proven to have certain effective behaviours in the limit. These have been considered, for example, in the context of \cite{allaire1999,amirat2011,avellaneda1987,belyaev1988,chechkin1999} or \cite{friedman1997}, but the list is not by any means exhaustive. A contemporary overview of the literature from this direction can be found,  for example, in the introduction of \cite{amirat2011}. The nature of physically meaningful surface energies in the context nematic liquid crystals however provides models that have yet to be considered in the literature within this homogenisation framework. 

Our approach will be two-pronged. Firstly, we will consider a large, general class of models, which include the ``classical" non-linear models for nematic liquid crystals in arbitrary dimension, as well as the mathematically similar Ginzburg-Landau model of superconductivity \cite{bethuel1994ginzburg}, and demonstrate through techniques of $\Gamma$-convergence that, under appropriate conditions and within a large class of energy functionals, in the limit as the rugosity converges to zero, we can obtain an effective surface energy, whilst the interior energy of the nematic is unchanged. 

Our second direction considers a simplified setting of a two-dimensional slab with periodic rugosity and a quadratic free energy, which provides a toy model of a {\it paranematic}. That is, a high-temperature system of mesogenic molecules which has melted into an isotropic state, but still admits some local nematic ordering induced by the surface. In this case, the simplicity of the system compared to the general case allows us to make much stronger conclusions, and we are able to provide quantitative estimates on how ground states behave in the homogenised limit. 

\subsection{Models of liquid crystals}
Owing to the presence of multiple length scales in a liquid crystal, a large number of models exist, coming in many distinct flavours, which aim to describe particular mechanisms and predict or explain particular behaviours. Within this work we will only consider the perspective of continuum mechanics, in which the language used is that of partial differential equations and the calculus of variations. However, even within this class of models there are still various descriptions that capture different aspects of the material. 

\subsubsection{Landau-de Gennes}
The Landau-de Gennes Q-tensor theory is a popular model for describing nematic liquid crystals, owing to its ability to describe biaxiality and defects \cite{de1993physics,mottram2014introduction}. If we have a domain $\Omega\subset\mathbb{R}^3$ which contains nematic liquid crystal, we presume that the (local) distribution of orientations near a point $x\in\Omega$ can be described by a probability distribution $f_x:\mathbb{S}^2\to[0,\infty)$, where we view $p\in\mathbb{S}^2$ to represent the orientation of the long axis of a molecule. Molecules are presumed to be (statistically) head-to-tail symmetric, which gives that $f_x(p)=f_x(-p)$, and consequently the first order moment of $f_x$ always vanishes. To this end, in the process of coarse-graining, we consider the second normalised moment $Q$, which is a traceless symmetric $3\times 3$ matrix known as the {\it Q-tensor}, defined by 
\begin{equation}\label{eqQtensor}
Q(x)=\int_{\mathbb{S}^2}f_x(p)\left(p\otimes p-\frac{1}{3}I\right)\,dp\in \text{Sym}_0(3)=\left\{A\in \mathbb{R}^{3\times 3}:\, \text{Tr}(A)=0,\, A=A^T\right\}.
\end{equation}
We highlight several characteristics of the Q-tensor:
\begin{itemize}
\item If $f_x(p)=\frac{1}{4\pi}$, that is, the system is isotropic (disordered), then $Q(x)=0$. 
\item If $f_x$ is axisymmetric, that is, $f_x(p)=g_x(p\cdot e)$ for a distinguished direction $e\in\mathbb{S}^2$ and some $g_x:[-1,1]\to\mathbb{R}$, then $Q(x)=s\left(e\otimes e-\frac{1}{3}I\right)$ for some scalar $s$. We say that such a Q-tensor is {\it uniaxial}, with {\it director} $e$ and {\it scalar order parameter} $s$.
\item Any $Q(x)$ which can be defined via \eqref{eqQtensor} for $f_x\in L^1(\mathbb{S}^2)$ must satisfy the eigenvalue constraint $\lambda_{\min}(Q)>-\frac{1}{3}$, often referred to as {\it physicality} \cite{ball2010nematic}.
\end{itemize}

For simplicity, we neglect the ``microscopic" definition of $Q$ as in \eqref{eqQtensor} and consider ${Q:\Omega\to\text{Sym}_0(3)}$ to be the order parameter of interest, and interpret stable equilibria of the nematic as minimisers of the free energy 
\begin{equation}
\int_\Omega F_{el}(\nabla Q(x),Q(x))+F_b(Q(x))\,dx+\int_{\partial\Omega}w(\nu(x),Q(x))\,dS(x).
\end{equation}
Here $F_{el}: \left(\text{Sym}_0(3)\times\mathbb{R}^3\right)\times \text{Sym}_0(3)\to [0,\infty)$ is a frame indifferent elastic energy which penalises spatial distortions. The elastic energy is typically quadratic in $\nabla Q$, which gives $W^{1,2}$ as the natural function space for the problem. A commonly considered form for $\mathcal{F}_{el}$ is 
\begin{equation}
F_{el}(D,Q)=\frac{L_1}{2}D_{ijk}D_{ijk}+\frac{L_2}{2}D_{ijk}D_{ikj}+\frac{L_3}{2}D_{ijj}D_{ikk},
\end{equation}
where Einstein notation is used,  and $L_1,L_2,L_3$ are constants. More general forms are possible, however, such as in \cite{longa1987extension}.

$F_b$ is a frame indifferent bulk energy, which is minimised either at isotropic Q-tensors ($Q=0$), or uniaxial Q-tensors with a certain fixed, positive scalar order parameter ($Q=s_0\left(n\otimes n-\frac{1}{3}I\right)$, for $s_0>0$ fixed and any $n\in\mathbb{S}^2$). The typical expressions for $Q$ are either quartic or sixth-order polynomials in $Q$, the most used being the quartic one
\begin{equation}
F_b(Q)=\frac{a}{2}\text{Tr}(Q^2)-\frac{b}{3}\text{Tr}(Q^3)+\frac{c}{4}\text{Tr}(Q^4).
\end{equation} 
for material constants $b>0,c>0$, while $a$ depends on both the material and temperature, and may either be positive or negative. Furthermore Katriel {\it et. al.} \cite{katriel1986free} and Ball and Majumdar \cite{ball2010nematic} have provided examples of a bulk energy which is singular as $\lambda_{\min}(Q)\to-\frac{1}{3}$, which is employed to ensure the physicality of solutions. In the case of {\it paranematics}, where the  the temperature is above the nematic-isoptric transition temperature and $Q=0$ is the ground state, it is possible to obtain some degree of nematic order,  that is, non-zero $Q$, by the imposition of external fields or interactions with surfaces or colloids. In this case, one may replace the bulk energy with a quadratic, penalising the deviation from this ground state, $F_b(Q)=\frac{c}{2}|Q|^2$ for $c>0$, as has been done in \cite{galatola2001nematic,galatola2003interaction}.

The surface energy $w$ depends not only on $Q$, but the outer unit normal $\nu:\partial\Omega\to\mathbb{S}^2$, which is necessary to capture the behaviour that molecules tend to prefer  a specific angle with respect to the surface, and will depend on the particular materials considered. There are a wide variety of surface energies considered in the literature, but via frame indifference, it is known that $w$ must be of the form 
\begin{equation}
w(\nu,Q)=\tilde{w}(\text{Tr}(Q^2),\text{Tr}(Q^3),Q\nu\cdot\nu,Q^2\nu\cdot\nu)
\end{equation}
for some function $\tilde{w}$, which depends only on the scalar invariants of $Q,\nu$ \cite{canevari2020design,smith1971isotropic}. The simplest of all such models comes from the presumption of homeotropic alignment, and impose the surface energy 
\begin{equation}
w(\nu,Q)=\frac{w_0}{2}\left|Q-s_0\left(\nu\otimes\nu-\frac{1}{3}I\right)\right|.
\end{equation}

It should be noted that, for mathematical simplicity, analogues of the Q-tensor model can be defined for a two-dimensional universe, in which case we define $Q\in \text{Sym}_0(2)$ to be a $2\times 2$ symmetric traceless matrix, defined as the average of $\left(p\otimes p-\frac{1}{2}I\right)$ for $p\in\mathbb{S}^1$. 

\subsubsection{Oseen-Frank}
We recall that in a nematic described by the Landau-de Gennes model, the ``bulk" energy $F_b$ is minimised at $Q$ of the form $Q=s_0\left(n\otimes n-\frac{1}{3}I\right)$, where $s_0>0$ is fixed and $n$ can be any unit vector. It is known that in domains much larger than the characteristic correlation length of the molecules the penalisation of the bulk energy dominates, and it is appropriate to impose the constraint that $Q$ minimises the bulk energy pointwise \cite{bauman2012analysis,canevari2021,gartland2015scalings,liu2018oseen, majumdar2010landau,taylor2018oseen}. In certain cases, at least, this becomes equivalent to the Oseen-Frank energy, in which we use only the director $n:\Omega\to\mathbb{S}^2$ to describe the nematic \cite{ball2011orientability}. The typical free energy we consider is then 
\begin{equation}\label{eqOF}
\int_\Omega F_{el}(n(x),\nabla n(x))\,dx+\int_{\partial\Omega}w(\nu(x),n(x))\,dS(x).
\end{equation}

The elastic energy is typically given of the form 
\begin{equation}
F_{el}(n,\nabla n)=\frac{K_1}{2}(\nabla \cdot n)^2+\frac{K_2}{2}(n\cdot\nabla\times n-\tau)^2+\frac{K_3}{2}|n\times\nabla\times n|^2,
\end{equation}
where $K_1,K_2,K_3,\tau$ are material constants. Note that if $\tau=0$, the energy is minimised at the undistorted state, $\nabla n=0$. However if $\tau\neq 0$, then the energy is minimised at a {\it cholesteric} state, where the ground state admits a helical structure \cite{bedford2014global}.

For the surface energy, frame invariance implies that $w$ may only depend on $(\nu\cdot n)^2$. The simplest possible such surface energy is the Rapini-Papoular surface energy \cite{rapini1969distorsion}
\begin{equation}
w(\nu,n)=\frac{w_0}{2}|\nu\cdot n|^2,
\end{equation}
where $w_0>0$ implies that the director wishes to be in the tangent plane to the surface (planar degenerate anchoring), and $w_0<0$ implies that director prefers to be orthogonal to the surface (homeotropic anchoring).
\begin{remark}\label{remarkOFVector}
An equivalent formulation of the Oseen-Frank model \eqref{eqOF} is to consider director fields $n\in W^{1,2}(\Omega,\mathbb{R}^3)$ with the free energy 
\begin{equation}
\int_\Omega F_{el}(n(x),\nabla n(x))+F_b(n(x))\,dx+\int_{\partial\Omega}w(\nu(x),n(x))\,dS(x),
\end{equation}
where $F_{el}$,$w$ are as in \eqref{eqOF} and the bulk energy is defined by $F_b(n)=0$ if $|n|=1$, and $F_b(n)=+\infty$ otherwise. This particular formulation allows us to consider $n$ as living in a vector space, rather than a Sobolev space of manifold-valued maps, albeit with a highly singular energy functional. 
\end{remark}

\subsection{Outline of the paper and main results}

\label{theoremGammaConvergence}

The paper is broadly organised into three sections. In \Cref{sect:Gamma_conv_general}, we consider a general problem that encompasses the main theories of nematic liquid crystals, and we consider the asymptotic limit as the surface oscillations converge to zero with fine structure. In \Cref{subsecModels}, we interpret these general results in the context of the Landau-de Gennes and Oseen Frank models of nematic liquid crystals. Finally, in \Cref{sect:Error}, we consider a simplified model in which we are able to proof more precise estimates on the asymptotic behaviour of the model, including error estimates in $L^2$. We now outline the key aspects of our analysis. 

\subsubsection{Summary of \Cref{sect:Gamma_conv_general}}

Within this section we consider integral functionals of the form 

\begin{equation}
\mathcal{F}_\e(u)=\int_{\Omega_\e}F(\nabla u(x),u(x),x)\,dx + \int_{\Gamma_\e}w(P_\Gamma x,\nu(x),n(x))\,dS(x). 
\end{equation}
for $u\in W^{1,p}(\Omega_\e,V)$ for a finite dimensional vector space $V$, and $P_\Gamma$ the projection onto a particular manifold $\Gamma$. The precise assumptions necessary for our analysis are presented in  \Cref{subsecTechnicalAssumptions}. One of the key assumptions is that $\Omega_\e$ is a graph of a function defined on a limiting domain $\Omega$, encoded by a function $\varphi_\e:\Gamma\to\mathbb{R}$, satisfying $||\varphi_\e||_\infty =O(\e)$, $||D\varphi_\e||_\infty=O(1)$, which generates a Young measure in a particular sense, as outlined in \Cref{assumpOmegaEps}. The assumptions on $F,w$ account for many typical models encountered in the modelling of liquid crystals. Furthermore, as the domains $\Omega_\e$ are $\e$ dependent, we need to define a notion of convergence of functions $u_\e\in W^{1,p}(\Omega_\e,V)$ to functions in $u\in W^{1,p}(\Omega,V)$, which we call {\it rugose convergence}, denoted $u_\e\rto u$, and is defined in \Cref{defRugoseConv}. Once we have established our technical assumptions, we proceed with identifying the $\Gamma$-limit of $\mf_\e$ with respect to rugose convergence.

First we prove our equicoercivity result, showing that if $\mathcal{F}_\e(u_\e)$ is uniformly bounded, then we may extract a subsequence $u_{\e_j}$ and $u\in W^{1,p}(\Omega,V)$ so that $u_{\e_j}\rto u$ (\Cref{propCompactness}). The argument is standard, and reflects that given in, for instance, \cite{belyaev1988},\cite{chechkin1999} and \cite{friedman1997}, whereby the only technicality is to ensure that the rugose limit is weakly differentiable with the correct derivative. Generally, we see that under rugose convergence, the interior term, that is, 
$$\int_{ \Omega_\e} F(\nabla u(x),u(x),x)\,dx,$$
is ``well-behaved", and does not require much technical consideration. For the surface term however, first we note that, up to an small error, the surface term may be replaced by a simpler function that, rather than depending on the full geometry, only sees the rugosity via $\hat{D}\varphi_\e$, according to \Cref{corollarySimplerSurface}. In particular, the argument shows that curvature  of $\Omega$ has a vanishing contribution in the limit. This reduction in complexity of the model also allows us to relate the surface energy in the limit with the Young measures described in \Cref{assumpOmegaEps}. Once this has been done, we show that rugose convergence $u_\e\rto u$ also implies the weak-$W^{1,p}$ convergence of $u_\e$ composed with a change of variables $\Phi_\e:\Omega\to \Omega_\e$\Cref{lemmaChangeOfVars}, which gives us a well-behaved notion of a type of ``trace" onto $\Gamma$ for functions in $W^{1,p}(\Omega_\e,V)$, and is used to show that for $u_\e\rto u$, 
\begin{equation}
\int_{\Gamma_\e} w(x, \nu_\e(x),u_\e(x))\,dS(x) \to \int_\Gamma w_h(x,u(x))\,dS(x)
\end{equation}
in \Cref{propSurfaceContinuous}, where $w_h$ admits an explicit representation in \eqref{eqHomoSurface}. Combining this ``continuity" of the surface term with classical lower semicontinuity results on the interior gives the liminf inequality necessary for the $\Gamma$-convergence (\Cref{propLiminf}). To show the limsup inequality, we construct recovery sequences by extending our candidate $u$ via reflections to a larger domain, and considering its restriction to $\Omega_\e$ (\Cref{propLimsup}).

{ Finally, we note that $\Gamma$ convergence, strictly speaking, requires that the domain of definition of $\mathcal{F}_\e$ is independent of $\e>0$, which is not the case in the problem we consider. However, this will not be problematic, as our conclusion is to employ the fundamental theorem of $\Gamma$-convergence, which holds nonetheless via the exact same argument as the classical case (\Cref{propGammaConvVarying}).

With these ingredients, we are able to prove the main theorem of this section.

\begin{theorem}
Let $\Omega_\e$, for $\e_0>\e>0$, $\Omega$ denote sets satisfying \Cref{assumpOmega} and \Cref{assumpOmegaEps}, the interior energy $F$ satisfy \Cref{assumpInterior}, and the surface energy $w$ satisfy \Cref{assumpSurface1}. Then there exists a local energy functional  $w_{h}:\Gamma\times V\to\mathbb{R}$ so that for every $u\in L^q$, $\lim\limits_{\e\to 0}w(\cdot,\tilde{\nu}_\e,u)\gamma_\e\overset{L^1(\Gamma)}{\rightharpoonup} w_{h}(\cdot,u)$, and $\mf_\e\overset{\Gamma}{\to} \mf$ with respect to $W^{1,p}$-rugose convergence, where $\mf:W^{1,p}(\Omega,V)\to[0,\infty)$ is given by 
\begin{equation}
\mf(u)=\int_\Omega F(\nabla u(x),u(x),x)\,dx +\int_{\Gamma}w_{h}(x,u(x))\,dS(x).
\end{equation} 
The homogenised surface energy $w_h:\Gamma\times V\to\mathbb{R}$ is given by 
\begin{equation}
w_h(x,u)=\int_{\nu+T_x\Gamma}w\left(x,\frac{1}{|v|}v,u\right)|v|\,d\mu_x(v),
\end{equation}
where $(\mu_x)_{x\in\Gamma}$ is the Young measure generated by $\hat{D}\varphi_\e:\Gamma\to \mathbb{R}^N$ as given in \Cref{assumpSurface2}.
Furthermore the energies $\mf_\e$ are equicoercive with respect to $W^{1,p}$-rugose convergence, and if $u_\e$ are minimisers of $\mf_\e$ for every $\e_0>\e>0$, there exists a subsequence $\e_j\to 0$ and minimiser $u$ of $\mf$ such that $u_{\e_j}\rto u$.  
\end{theorem}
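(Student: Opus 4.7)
The plan is to assemble the theorem from the preparatory results flagged in the outline, following the standard two-sided $\Gamma$-convergence recipe adapted to the varying-domain setting. Since the outline already decomposes the argument into equicoercivity, a simplification of the surface term, a change-of-variables trace lemma, a surface continuity statement, an interior lower semicontinuity bound, and a reflection-based recovery construction, the proof itself is essentially an orchestration of these components together with the fundamental theorem of $\Gamma$-convergence as extended to varying domains in \Cref{propGammaConvVarying}.

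I would proceed as follows. First, invoke \Cref{propCompactness} to obtain equicoercivity: any sequence $u_\e$ with $\mathcal{F}_\e(u_\e)$ uniformly bounded admits a subsequence rugose-converging to some $u\in W^{1,p}(\Omega,V)$. Next, for the $\liminf$ inequality along $u_\e \rto u$, split $\mathcal{F}_\e(u_\e)$ into its interior and surface contributions. For the interior term, the standard lower semicontinuity result for integrands satisfying \Cref{assumpInterior} (quasiconvexity/convexity in the gradient slot, together with the growth bounds) applied after extending $u_\e$ to $\Omega$ via the change of variables from \Cref{lemmaChangeOfVars} yields
\begin{equation}
\int_\Omega F(\nabla u, u, x)\,dx \le \liminf_{\e\to 0}\int_{\Omega_\e} F(\nabla u_\e, u_\e, x)\,dx.
\end{equation}
For the surface term, first apply \Cref{corollarySimplerSurface} to replace $w$ on $\Gamma_\e$ by an approximation controlled purely by $\hat D\varphi_\e$ (up to a vanishing error from curvature and higher-order geometric corrections). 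Then use \Cref{propSurfaceContinuous}, which provides full continuity (not just $\liminf$) of the simplified surface term along rugose convergence, to conclude that the surface term passes to $\int_\Gamma w_h(x,u(x))\,dS(x)$ with $w_h$ given by the explicit Young-measure averaging formula.

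For the $\limsup$ inequality I would use the recovery construction of \Cref{propLimsup}: given any target $u\in W^{1,p}(\Omega,V)$, extend it to a neighbourhood of $\Omega$ by reflecting across $\Gamma$ (using the tubular-neighbourhood structure guaranteed by \Cref{assumpOmega}), and set $u_\e$ to be the restriction of this extension to $\Omega_\e$. The $W^{1,p}$ bounds on $u_\e$ are controlled by $\|u\|_{W^{1,p}}$ up to a factor that tends to $1$, so the interior energy converges to $\int_\Omega F(\nabla u, u, x)\,dx$ by dominated convergence using the growth bounds in \Cref{assumpInterior}. The surface energy of the reflection-based recovery is exactly of the form to which \Cref{propSurfaceContinuous} applies, so it converges to $\int_\Gamma w_h(x,u(x))\,dS(x)$. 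This establishes $\mathcal{F}_\e \xrightarrow{\Gamma} \mathcal{F}$.

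With $\Gamma$-convergence and equicoercivity in hand, the final statement about minimisers is the varying-domain fundamental theorem of $\Gamma$-convergence \Cref{propGammaConvVarying}: any sequence of minimisers $u_\e$ of $\mathcal{F}_\e$ is energy-bounded (take any fixed competitor extended to $\Omega_\e$ to get a uniform upper bound), equicoercivity yields a rugose-convergent subsequence, and the $\Gamma$-convergence forces the limit to minimise $\mathcal{F}$. I expect the only genuinely delicate step to be verifying the continuity of the surface term, i.e.\ confirming that the Young-measure representation of $w_h$ extracted in \Cref{assumpSurface2} is compatible with the weak-$W^{1,p}$ traces obtained from \Cref{lemmaChangeOfVars}; the reflection-based recovery sequence works precisely because its values on $\Gamma_\e$ are determined by the trace on $\Gamma$, so there is no additional oscillation to contend with on the target side, and everything decouples into the prescribed geometric Young measure times an evaluation of $w$ at the trace of $u$.
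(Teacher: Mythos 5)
Your proposal correctly identifies the high-level architecture of the proof and correctly cites the supporting propositions: equicoercivity via \Cref{propCompactness}, surface simplification via \Cref{corollarySimplerSurface}, surface continuity via \Cref{lemmaChangeOfVars} and \Cref{propSurfaceContinuous}, the reflection-based recovery sequence of \Cref{propExtension}/\Cref{propLimsup}, and the varying-domain fundamental theorem \Cref{propGammaConvVarying}. The surface-term reasoning and the final minimiser conclusion both match the paper.

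There is however a genuine gap in your treatment of the interior $\liminf$ inequality. You propose to obtain
\begin{equation}
\int_\Omega F(\nabla u, u, x)\,dx \le \liminf_{\e\to 0}\int_{\Omega_\e} F(\nabla u_\e, u_\e, x)\,dx
\end{equation}
by first extending $u_\e$ to $\Omega$ via the change of variables $\Phi_\e$ from \Cref{lemmaChangeOfVars} and then applying lower semicontinuity on the fixed domain $\Omega$. This does not work, because the change of variables distorts the gradient argument of $F$: writing $v_\e = u_\e\circ\Phi_\e$, one has $\nabla u_\e(\Phi_\e(y)) = \nabla v_\e(y)\,(D\Phi_\e(y))^{-1}$, so
\begin{equation}
\int_{\Omega_\e} F(\nabla u_\e, u_\e, x)\,dx = \int_\Omega F\bigl(\nabla v_\e(y)(D\Phi_\e(y))^{-1},\, v_\e(y),\, \Phi_\e(y)\bigr)\,|\det D\Phi_\e(y)|\,dy,
\end{equation}
and while $|\det D\Phi_\e| = 1+O(\e)$ and $\Phi_\e\to\mathrm{id}$ uniformly, the matrix $(D\Phi_\e)^{-1}$ does \emph{not} converge to the identity, since $\|D\varphi_\e\|_\infty = O(1)$ rather than $o(1)$. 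Lower semicontinuity of $u\mapsto\int_\Omega F(\nabla u, u, x)\,dx$ evaluated at $v_\e$ therefore estimates a quantity that is genuinely different from the original interior energy, and the growth hypothesis \eqref{eqWeirdBound} only gives an upper comparison, which is the wrong direction for a $\liminf$. The paper's proof of \Cref{propLiminf} avoids the change of variables entirely: one fixes $U\subset\subset\Omega$, uses that $U\subset\Omega_{\e_j}$ for large $j$ together with the non-negativity of $F$ to restrict the integral to $U$, applies the standard weak lower semicontinuity on the fixed domain $U$ (where $u_{\e_j}\rightharpoonup u$ weakly in $W^{1,p}(U)$ by the definition of rugose convergence), and then exhausts $\Omega$ by letting $U\nearrow\Omega$, again using $F\ge 0$. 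You should replace your interior-$\liminf$ step by this argument; the rest of your outline is sound.
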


\subsubsection{Summary of \Cref{subsecModels}}

We consider the Oseen-Frank and the Landau-de Gennes models for nematic liquid crystals and we illustrate in both cases an example of homogenised surface energy for the limit problem, depending on the choice of surface energy for the rugose problem.

For the Oseen-Frank model, we consider the Rapini-Papoular surface energy, which is given by
\begin{align*}
w(\nu,n)=\dfrac{w_0}{2}\big(n\cdot\nu\big)^2,
\end{align*}
where $n\in W^{1,2}(\Omega,\mathbb{R}^3)$, and we prove that the homogenised surface energy in this case is of the form
\begin{align*}
w_h(x,n)=\dfrac{1}{2}A_{ef}\;n\cdot n,
\end{align*}
where $A_{ef}:\Gamma\to\mathbb{R}^{3\times 3}$ is a $3\times 3$ symmetric tensor given by
\begin{align*}
A_{ef}=w_0\int_{\nu(x)+T_x\Gamma}\dfrac{1}{|v|}v\otimes v\;\text{d}\mu_x(v).
\end{align*}

For the Landau-de Gennes model, we consider the following form for the surface energy:
\begin{align*}
w(x,\nu,Q)=\dfrac{w_0}{2}\bigg|Q-s_0\bigg(\nu\otimes\nu-\dfrac{1}{3}I\bigg)\bigg|^2
\end{align*}
and we prove that, in this case, that the homogenised surface energy is of the following form:
\begin{align*}
w_h(x,Q)=\dfrac{w_{ef}}{2}\big|Q-Q_{ef}\big|^2+R,
\end{align*}
where $R$ is a remainder that depends only on $x$ and not on $Q$, hence it does not affect any minimisers, and the terms $w_{ef}$ and $Q_{ef}$ are given by:
\begin{align*}
& w_{ef}=w_0\int_{\nu(x)+T_x\Gamma}|v|\;\text{d}\mu_x,\\
& Q_{ef}=\dfrac{s_0}{w_{ef}}\int_{\nu(x)+T_x\Gamma}\bigg(\dfrac{1}{|v|^2}v\otimes v-\dfrac{1}{3}I\bigg)|v|\;\text{d}\mu_x(v).
\end{align*}

\subsubsection{Summary of \Cref{sect:Error}}

In physical situations one has, after suitable non-dimensionalisations, a small but finite $\varepsilon$ parameter. Thus in order to understand the relevance of the theories previously developed, that study the limit problem, one needs to obtain error estimates. Moreover one aims to obtain error estimates with a convergence as good as possible and it will turn out that our main result shows that by using weaker norms one can improve the convergence rate, without needing to resort to correctors. In order to understand these issues we analyze the most simplified setting possible that still has a certain physical relevance.  

\bigskip

We consider the situation of a two-dimensional slab with periodic rugosity and the Landau-de Gennes model for the description of the nematic liquid crystal used. More specifically, the limiting domain is of the form $\Omega_0=\{(x,y)\;|\;x\in[0,2\pi),\;y\in(0,R)\}$, where $R>0$ is a constant, and the rugose domain is of the form $\Omega_{\e}=\{(x,y)\;|\;x\in[0,2\pi),\;y\in(\varphi_{\e}(x),R)\}$, where $\varphi_{\e}(x)=\e\varphi(x/\e)$ and $\varphi:\mathbb{R}\to\mathbb{R}$ is a $C^2$ $2\pi$-periodic function with $\varphi\geq 0$. We denote with $\Gamma_{\e}=\{(x,\e\varphi(x/\e))\;|\;x\in[0,2\pi)\}$ the rugose boundary and with $\Gamma_R=\{(x,R)\;|\;x\in[0,2\pi)\}$ the fixed upper boundary of the domains. We consider a quadratic free energy of the following form:
\begin{align*}
\mathcal{F}_{\e}[Q]=\int_{\Omega_{\e}}\big|\nabla Q\big|^2+c|Q|^2\;\text{d}(x,y)+\int_{\Gamma_{\e}}\dfrac{w_0}{2}\big|Q-Q_{\e}^0\big|^2\;\text{d}\sigma_{\e}+\int_{\Gamma_R}\dfrac{w_0}{2}\big|Q-Q_{R}\big|^2\;\text{d}\sigma_R,
\end{align*}
where $c>0$ is constant, $w_0>0$ is the anchoring strength, $Q_{\e}^0=\nu_{\e}\otimes\nu_{\e}-\frac{1}{2}I$ and $Q_R=\nu_R\otimes\nu_R-\frac{1}{2}I$  $\big(\nu_{\e}$ and $\nu_R$ are the outward normals to $\Gamma_{\e}$ and $\Gamma_R\big)$.

In this simplified model, using \Cref{prop:Q_eff}, we are able to identify the homogenised surface energy as $\dfrac{w_{ef}}{2}\big|Q-Q_{ef}\big|^2$, with $w_{ef}={w_0}\gamma$ and $Q_{ef}=\dfrac{1}{\gamma}\begin{pmatrix}
G_1 & G_2\\
G_2 & -G_1
\end{pmatrix}$, where $\gamma$, $G_1$ and $G_2$ are defined in \Cref{defn:g_functions}. The homogenised free energy functional is then of the form
\begin{align*}
\mathcal{F}_0[Q]=\int_{\Omega_0}\big|\nabla Q\big|^2+c|Q|^2\;\text{d}(x,y)+\int_{\Gamma_0}\dfrac{w_{ef}}{2}\big|Q-Q_{ef}\big|^2\;\text{d}\sigma_0+\int_{\Gamma_R}\dfrac{w_0}{2}\big|Q-Q_{R}\big|^2\;\text{d}\sigma_R,
\end{align*}
where $\nu_0$ is the outward normal to $\Gamma_0=\{(x,0)\;|\;x\in[0,2\pi)\}$.

\bigskip
Let $Q_{\e}$ be the minimiser of $\mathcal{F}_{\e}$ and $Q_0$ the minimiser of $\mathcal{F}_0$. In \cite{belyaev1988} and \cite{friedman1997}, the authors are able to prove that $\|Q_{\e}-Q_0\|_{H^1(\Omega_{\e})}\leq C\sqrt{\e}$. According to \cite{chechkin1999}, our simplified model is under the case $0=\beta=\alpha-1$, in which they prove that $\|Q_{\e}-Q_0\|_{H^1(\Omega_{\e})}\leq K_2(\sqrt{\e}+1)$. Both in \cite{allaire1993} and \cite{amirat2011}, it is proved that $\big(Q_{\e}\big)_{\e>0}$ converges strongly in $L^2(\Omega_{\e})$ to $Q_0$, under various assumptions for the domains. Using boundary layers, in \cite{allaire1999} the authors are able to prove that $\|Q_{\e}-Q_0-\e Q_1\|_{H^1(\Omega_{\e})}\leq C\sqrt{\e}$, where $Q_1$ is a first-order boundary term. In this work, we are able to prove the following $L^2$ error estimate:

\begin{theorem}%\label{thm:main} 
For any $p\in(2,+\infty)$, there exists an $\e$-independent constant $C$ such that:
\begin{align*}
\|Q_0-Q_{\e}\|_{L^2(\Omega_{\e})}\leq C\cdot \e^{\frac{p-1}{p}},
\end{align*}
where the constant $C$ depends on $c$, $w_0$, $p$, $\|\varphi\|_{L^{\infty}([0,2\pi))}$, $\|\varphi'\|_{L^{\infty}([0,2\pi))}$, $\Omega_0$ and $\|Q_0\|_{W^{1,\infty}(\Omega_0)}$.
\end{theorem}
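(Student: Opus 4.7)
The plan is to convert the $L^2$ error estimate into a boundary integral via a duality argument, and then to exploit the fact that the boundary discrepancy $g_\e := \partial_{\nu_\e} Q_0 + w_0(Q_0 - Q_\e^0)$ is, by construction, a mean-zero oscillatory function in the fast variable $x/\e$.

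First I would reduce to a pure boundary-defect problem. Since $\mathcal{F}_\e$ and $\mathcal{F}_0$ are quadratic, both $Q_\e$ and $Q_0$ solve linear Robin boundary value problems. Using $\varphi \geq 0$ so that $\Omega_\e \subset \Omega_0$, the difference $u := Q_0 - Q_\e$ satisfies $-\Delta u + cu = 0$ on $\Omega_\e$, with homogeneous Robin data on $\Gamma_R$ (since both $Q_\e,Q_0$ satisfy the same condition there) and $\partial_{\nu_\e} u + w_0 u = g_\e$ on $\Gamma_\e$. For a test $\phi \in L^p(\Omega_\e)$ with $p > 2$, let $\chi_\e$ solve the adjoint Robin problem $-\Delta \chi_\e + c \chi_\e = \phi$ with homogeneous Robin conditions on all of $\partial \Omega_\e$; Green's second identity produces the duality identity
\begin{equation*}
\int_{\Omega_\e} u\, \phi\, d(x,y) \;=\; \int_{\Gamma_\e} \chi_\e\, g_\e\, d\sigma_\e .
\end{equation*}

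Next I would extract the two-scale structure of $g_\e$. Parametrising $\Gamma_\e$ by $x \in [0,2\pi)$ and Taylor-expanding $Q_0$ around $y=0$ (using that elliptic regularity together with the $W^{1,\infty}$-bound on $Q_0$ controls its second derivatives up to $\Gamma_0$), one obtains a decomposition
\begin{equation*}
g_\e(x,\varphi_\e(x))\sqrt{1 + \varphi'(x/\e)^2} \;=\; a\bigl(x/\e,\, x\bigr) \;+\; \e \rho_\e(x),
\end{equation*}
with $\rho_\e$ uniformly bounded and $a(\cdot,x)$ a $2\pi$-periodic function whose average over one period vanishes for every $x$. The mean-zero property is a direct consequence of the specific form of $w_{ef}$ and $Q_{ef}$ obtained from \Cref{prop:Q_eff}, which was designed precisely so that the homogenised Robin condition $\partial_\nu Q_0 + w_{ef}(Q_0 - Q_{ef}) = 0$ on $\Gamma_0$ encodes $\int_0^{2\pi} a(t,x)\,dt = 0$. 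Introducing a bounded periodic antiderivative $A(t,x)$ and integrating by parts in $x$ transfers the oscillation onto $\chi_\e$:
\begin{equation*}
\left| \int_0^{2\pi} \chi_\e(x,\varphi_\e(x))\, a(x/\e,x)\, dx \right| \;\leq\; C\e \bigl( \|\chi_\e|_{\Gamma_\e}\|_{W^{1,1}([0,2\pi))} + \|\chi_\e|_{\Gamma_\e}\|_{L^1} \bigr),
\end{equation*}
and the $O(\e)$ remainder is absorbed by $\|\chi_\e|_{\Gamma_\e}\|_{L^1}$.

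The final step is to estimate the trace of $\chi_\e$ via trace theory together with elliptic $W^{2,p}$ regularity on $\Omega_\e$. I expect this to be the main obstacle: because $\Gamma_\e$ has curvature of order $1/\e$, the standard Calder\'on--Zygmund constants for $\Omega_\e$ blow up as $\e \to 0$. However, the boundary is Lipschitz with constant independent of $\e$ (since $\|\varphi_\e'\|_\infty = \|\varphi'\|_\infty$), and for $p > 2$ one can exploit Sobolev embeddings $W^{2,p}(\Omega_\e)\hookrightarrow C^{1,1-2/p}$ together with the uniform Lipschitz structure to obtain an estimate of the form $\|\chi_\e\|_{W^{2,p}(\Omega_\e)} \leq C_p \|\phi\|_{L^p(\Omega_\e)}$ with a $p$-dependent but $\e$-independent constant (at the cost of $C_p \to \infty$ as $p \to 2^+$ or $p\to\infty$). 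Combining this with the IBP gain, one concludes
\begin{equation*}
\Bigl| \int_{\Omega_\e} u\, \phi \Bigr| \;\leq\; C_p\, \e^{(p-1)/p}\, \|\phi\|_{L^p(\Omega_\e)},
\end{equation*}
and since $\Omega_\e$ has uniformly bounded measure, $L^p(\Omega_\e) \hookrightarrow L^2(\Omega_\e)$ for $p \geq 2$, allowing one to choose $\phi$ so as to recover the $L^2$ estimate with rate $\e^{(p-1)/p}$. The passage from the $L^p$-duality bound to the claimed $L^2$ norm (via testing against $\phi$ proportional to $u$ and Hölder's inequality) is where the trade-off between $p$ and the exponent $(p-1)/p$ becomes explicit, and is what prevents reaching the ideal rate $\e$.
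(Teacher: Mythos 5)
Your overall strategy --- a duality argument that moves the $L^2$ error onto a boundary pairing against an adjoint solution, followed by exploitation of the mean-zero two-scale structure of the boundary discrepancy --- is in the same spirit as the paper's proof, which tests the bilinear form against the adjoint solution $v_\e$ of $-\Delta v_\e + c v_\e = Q_0 - Q_\e$ with homogeneous Robin data, so that $a_\e(Q_0-Q_\e, v_\e)=\|Q_0-Q_\e\|_{L^2(\Omega_\e)}^2$. However, your regularity step contains a genuine gap. To control $\|\chi_\e|_{\Gamma_\e}\|_{W^{1,1}([0,2\pi))}$ after integrating by parts, you invoke a uniform bound $\|\chi_\e\|_{W^{2,p}(\Omega_\e)}\leq C_p\|\phi\|_{L^p(\Omega_\e)}$, asserting that the $\e$-independent Lipschitz constant of $\Gamma_\e$ plus the Sobolev embedding $W^{2,p}\hookrightarrow C^{1,1-2/p}$ suffice. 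This does not hold: $W^{2,p}$ elliptic estimates for Robin problems require $C^{1,1}$ boundary regularity (as in the Grisvard theorem the paper cites), and the $C^{1,1}$ constant of $\Gamma_\e$ is governed by $\varphi_\e''=\varphi''(\cdot/\e)/\e$, which blows up like $1/\e$. Uniform Lipschitz regularity of the boundary controls $W^{1,p}$, not $W^{2,p}$, and the Sobolev embedding you cite presupposes the very $W^{2,p}$ control you are trying to establish. Consequently the trace quantity $\|\chi_\e|_{\Gamma_\e}\|_{W^{1,1}}$ --- which involves first derivatives of $\chi_\e$ on the wiggling boundary, including a term $\varphi'(\cdot/\e)\partial_y\chi_\e$ --- is not uniformly bounded, and the claimed factor-$\e$ gain from integration by parts is not available.

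A related problem is that your route to the exponent $(p-1)/p$ is not coherent. If the IBP really produced a full factor $\e$ against a uniformly bounded trace norm, the rate would be $\e$, not $\e^{(p-1)/p}$; conversely, the closing ``test against $\phi\propto u$'' step does not work as stated, since it yields $\|u\|_{L^2}^2\leq C\,\e^{(p-1)/p}\|u\|_{L^p}$ and $\|u\|_{L^p}$ is \emph{not} controlled by $\|u\|_{L^2}$ for $p>2$ on a bounded domain. The paper avoids both issues. It needs only $W^{1,p}(\Omega_\e)$ control on the test function, obtained through the $\e$-uniform extension operator $E_\e$ of \Cref{defn:uniform_ext} to the fixed domain $\Omega_0$; it transfers the adjoint solution $v_\e$ to a fixed domain before invoking $H^2$ regularity, so that the Calder\'on--Zygmund constant is $\e$-independent; and, crucially, it generates the fractional rate $\e^{(p-1)/p}$ through \Cref{lemma:C_3}, which bounds the oscillating boundary pairing by the Gagliardo $W^{1-1/p,p}$ seminorm of the trace together with Jensen's inequality on $\e$-cells. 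Your proposal has no analogue of that lemma, and it is precisely the mechanism that produces the claimed rate without ever requiring derivatives of the adjoint solution along $\Gamma_\e$.
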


It is easy to observe that the fraction $\frac{p-1}{p}$, with $p\in(2,+\infty)$, allows us to obtain any desired exponent from the interval $(1/2,1)$. In order to prove this theorem, we first show in \Cref{subsect:reg} that $Q_{\e}$ and $Q_0$ exist and admit $W^{2,p}$ regularity, for any $p\in(2,+\infty)$. Then, we adapt in \Cref{subsect:integral_inequalities} the proofs from \cite{belyaev1988} and \cite{friedman1997} to the case of $W^{1,p}$ functions in order to obtain \Cref{prop:ineq_Omega_eps_rhs}. The result that dictates the exponent of $\e$ from our error estimate is \Cref{lemma:C_3}. A similar estimate to this lemma represents \cite[Lemma 5.1, (15)]{amirat2011}, where the exponent obtained is $\frac{d+2}{2d}$ for $L^{\frac{2d}{d-2}}$ estimates, for any $d>2$. The proof of our error estimate is also based on the construction of an extension operator, from $W^{1,p}(\Omega_{\e})$ to $W^{1,p}(\Omega_0)$, which is defined in \Cref{defn:uniform_ext} and has $\e$-independent bounds. With all of these ingredients, we are able then to prove the main result of this part, in \Cref{subsect:proof_err_est}.

\section{$\Gamma$-convergence in the general case}\label{sect:Gamma_conv_general}

Within this section we aim to describe the asymptotic behaviour of minimisers of a general class of nonlinear energy functionals. It is well known that pointwise convergence of a sequence of energy functionals is not sufficient to guarantee convergence of their minimisers to a minimiser of the limiting energy. In response to this challenge, many techniques have been proposed, and within this section we will use the techniques of $\Gamma$-convergence, of which we recall the definition below, following \cite[Theorem 2.1]{braides2006handbook}

\begin{definition}
Let $X$ be a topological space and $\mathcal{F}_\e:X\to\mathbb{R}\cup\{+\infty\}$ be energy functionals for every $\e>0$. Then we say that $\mf_\e$ $\Gamma$-converges to $\mf:X\to\mathbb{R}\cup\{+\infty\}$ if the following hold:
\begin{itemize}
\item (Liminf inequality) For every sequence $x_\e\to x$ in $X$, we have that $\liminf\limits_{\e\to 0}\mf_\e(x_\e)\geq \mf(x)$. 
\item (Limsup inequality) For every $x\in X$, there exists a sequence $x_\e\to x$ such that $\limsup\limits_{\e\to 0}\mf_\e(x_\e)=\mf(x)$. 
\end{itemize}
Furthermore, we say that the functionals are {\it equicoercive} if for any sequence $(x_\e)_{\e>0}$ such that $\mf_\e(x_\e)$ is uniformly bounded, there exists a subsequence $\e_j\to 0$ such that $x_{\e_j}\to x$ for some $x\in X$. 
\end{definition}

The relevance of $\Gamma$-convergence in minimisation problems is made apparent by the fundamental theorem of $\Gamma$-convergence, which states that if $\mf_\e$ are a series of equicoercive functionals and $\mf_\e$ $\Gamma$-converges to $\mf$, then for any sequence $x_\e$ such that $\lim\limits_{\e\to 0}\mf_\e(x_\e)-\inf\limits_{x'} \mf_\e(x')=0$, there exists a subsequence $\e_j\to 0$ and $x\in X$,  such that $x_{\e_j}\to x$ and $\mf(x)=\min\limits_{x'}\mf(x')$ (see, e.g., \cite[Theorem 2.10]{braides2006handbook}). In words, a sequence of almost minimisers of $\mf_\e$, up to a subsequence, converges to a minimiser of the $\Gamma$-limit. Furthermore, $\min\limits_{x'}\mf(x')=\lim\limits_{\e\to 0}\inf\limits_{x'\in X}\mf_\e(x')$.

\subsection{Technical assumptions and statement of the main results}\label{subsecTechnicalAssumptions}

\subsubsection{Geometric assumptions}
\begin{assumption}[Properties of $\Omega$]\label{assumpOmega}
$\Omega\subset\mathbb{R}^N$ is a bounded domain with boundary $\Gamma$. $\Gamma$ is a $C^2$ compact manifold without boundary of dimension $N-1$. Throughout, the function $\nu:\Gamma\to\mathbb{S}^{N-1}$ will denote the outward unit normal vector to $\Gamma$.

\end{assumption}

\begin{assumption}\label{assumpOmegaEps}
$\Omega_\e$ will denote uniformly bounded open subsets of $\mathbb{R}^N$ such that:
\begin{enumerate}
\item For every $U\subset\subset \Omega$, there exists $\e_0>0$ so that if $\e<\e_0$,  $U\subset\subset \Omega_\e$. 
\item For every $U\subset\subset \mathbb{R}^N\setminus \Omega$, there exists $\e_0>0$ such that for every $\e<\e_0$, $U\cap \Omega_\e=\emptyset $.
\end{enumerate} 
We denote $\Gamma_\e=\partial\Omega_\e$. We require that there exists $\varphi_\e\in  W^{1,\infty}(\Gamma,\mathbb{R})$  with the following properties:
\begin{enumerate}
\item $\Gamma\ni x\mapsto x+\varphi_\e(x)\nu(x)\in\Gamma_\e$ defines a bijection.
\item $||\varphi_\e||_{L^\infty(\Gamma)} = O(\e)$.
\item $||D\varphi_\e||_{L^\infty(\Gamma)} =O(1)$.
\end{enumerate}
Furthermore, we denote $\nu_\e:\Gamma_\e\to \mathbb{S}^{N-1}$ the outer unit normal to $\Gamma_\e$, and the surface element $\gamma_\e:\Gamma\to (0,\infty)$ such that 
\begin{equation}
\int_{\Gamma_\e}g(x)\,dS(x)=\int_{\Gamma}g\Big(x+\varphi_\e(x)\nu(x)\Big)\gamma_\e(x)\,dS(x)
\end{equation}
for all integrable $g:\Gamma_\e\to\mathbb{R}$. Finally, we define $\tilde{\nu}_\e:\Gamma\to\mathbb{S}^{N-1}$ to be 
\begin{equation}
\tilde{\nu}_\e(x)=\nu_\e(x+\varphi_\e(x)\nu(x)).
\end{equation}
\end{assumption}

\begin{remark}
We note that $D$ refers to the derivative on $\Gamma$, $D\varphi(x)\in (T_{x}\Gamma)^*$. Given a $C^1$ curve $\xi:[-1,1]\to \Gamma$ with $\xi(0)=x$ and $\xi'(0)=\tau\in T_{x}\Gamma$, 
\begin{equation}
\langle D\varphi_\e(x),\tau\rangle= \frac{\partial\varphi_\e\circ\xi}{\partial t}(0).
\end{equation}
Furthermore, for an orthonormal basis $(\tau_i(x))_{i=1}^{N-1}$ of $T_x\Gamma$, we have that 
\begin{equation}
|D\varphi_\e(x)|^2_\Gamma=\sum\limits_{i=1}^{N-1}|\langle D\varphi_\e(x),\tau_i(x)\rangle|^2,
\end{equation}
where $|\cdot|_\Gamma$ is the norm on $T_x\Gamma$ induced by the metric on $\Gamma$.
\end{remark}

\begin{definition}\label{defProjection}
We denote $P_{\Gamma}:\mathbb{R}^N\to\Gamma$ the nearest point projection to $\Gamma$. We presume that $\delta$ is sufficiently small so that 
\begin{equation}
\{x\in\Omega : d(x,\partial\Omega)=\delta\}
\end{equation} 
is a $C^1$ domain, parametrised by 
\begin{equation}
\{x\in\Omega : d(x,\Gamma)=\delta\}=\{x-\delta\nu(x):x\in\partial\Omega \}. 
\end{equation}
Furthermore, we presume $\delta$ is sufficiently small so that $P_{\Gamma}|_{\Gamma+B_\delta}$ is a $C^1$ function. By \cite[Lemma 4]{lewis2008alternating}, as $\Gamma$ is compact, such a $\delta$ exists.
\end{definition}

\begin{remark}
 Let $X,V$ be metric spaces where $X$ is a positive, complete measure space. We say that a sequence of functions $f_i:X\to V$ generates a Young measure $(\mu_x)_{x\in X}$, where for a.e. $x\in X$, $\mu_x$ defines a positive measure on $V$, if for every continuous $F:V\to\mathbb{R}$ such that $F(f_i)$ is uniformly integrable,
\begin{equation}
\int_{X} F(f_i)\,dx\to \int_{X\times V} F(v)\,d\mu_x(v)\,dx.
\end{equation}
In fact, if this holds and $X$ is an open domain in $\mathbb{R}^M$ and $V=\mathbb{R}^N$, then following \cite{ball1989version}, for any function $F:X\times V\to\mathbb{R}$ which is $X$-measurable in its first variable and continuous in its second,  if the functions $x\mapsto F(x,f_i(x))$ are uniformly integrable, then 
\begin{equation}
\int_{X} F(x,f_i)\,dx\to \int_{X\times V} F(x,v)\,d\mu_x(v)\,dx.
\end{equation}
It is straightforward to extend this result to the case when $X=\Gamma$, which satisfies \Cref{assumpOmega}, by considering a finite open covering and local coordinate systems. 
\end{remark}

 To illustrate Young measures, we consider a simple example
\begin{example}
Let $\varphi:\mathbb{S}^1\times \mathbb{S}^1\to\mathbb{R}$ be a smooth function. We will freely identify $\mathbb{S}^1$ with $[0,2\pi]$, and $\varphi$ with a $2\pi$-doubly periodic function on $\mathbb{R}^2$, throughout this example. That is, $\varphi(x,y)=\varphi(x+2k_1\pi,y+2k_2\pi)$ for all $k_1,k_2\in\mathbb{Z}$. Let $f_i(x)=\varphi(x,ix)$ for integer $i$. Then, given any continuous function $F:\mathbb{R}\to \mathbb{R}$, we have that 
\begin{equation}\label{eqTwoScaleStuff}
\begin{split}
\lim\limits_{i\to \infty}\int_{\mathbb{S}^1}F(f_i(x))\,dx = &\lim\limits_{i\to \infty}\int_0^{2\pi}F(\varphi(x,ix))\,dx\\
= & \frac{1}{2\pi}\int_0^{2\pi}\int_0^{2\pi}F(\varphi(x,x'))\,dx'\,dx
\end{split}
\end{equation}
by, for example, \cite[Lemma 9.1]{cioranescu1999}. Now if, for every $x$, we define the measure 
\begin{equation}\label{eqYoungMeasure}
\mu_x(A)=\frac{1}{2\pi}|\{z\in [0,2\pi]:\varphi(x,z)\in A\}|,
\end{equation}
 we may re-write the integral as 
\begin{equation}
\frac{1}{2\pi}\int_0^{2\pi}F(\varphi(x,x'))\,dx'=\int_{\mathbb{R}}F(v)\,d\mu_x(v)\,dx 
\end{equation}
for every $x\in [0,2\pi]$. Thus for every continuous function $F:\mathbb{R}\to\mathbb{R}$, we have
\begin{equation}
\int_{\mathbb{S}^1}F(f_i(x))\,dx\to \int_{\mathbb{S}^1}\int_{\mathbb{R}}F(v)\,d\mu_x(v)\,dx 
\end{equation}
by Fubini's theorem. Thus $(\mu_x)_{x\in\mathbb{S}^1}$ is the family of Young measures generated by $f_i$.

Furthermore, it is immediate from \eqref{eqYoungMeasure} that $\mu_x(\mathbb{R})=1$. This property does not hold generally for Young measures, rather that $\mu_x(\mathbb{R})\leq 1$. 

We further note that $f_i$  in our example has weak-$L^2$ limit given by 
\begin{equation}
\int_0^{2\pi}\varphi(x,x')\,dx'=\int_{\mathbb{R}}v\,d\mu_x(v),
\end{equation}
which can be seen by taking $F(v)=v$ in \eqref{eqTwoScaleStuff}. The property that the weak limit of $f_i$ is, pointwise, given by the average of $\mu_x$ is general. This highlights the fact that Young measures contain information about the weak limit via its average, but also ``retain" information about oscillations that the weak limit that would otherwise lose. It is thus a natural language for the problem we consider, where we wish to keep track of how fine-scale oscillations contribute to our surface energy, and will later be quantified in \Cref{propWeakSurfaceConv}. 

\end{example}

\begin{assumption}\label{assumpSurface2}
 Given $\varphi_\e$ satisfying \Cref{assumpOmegaEps}, we define the map $\hat{D}\varphi_\e:\Gamma\to\mathbb{R}^N$ as follows. The derivative operator $D\varphi_\e:\Gamma\to (T_x\Gamma)^*$ acts on tangent vectors $\tau\in T_x\Gamma$ via the duality pairing $\langle D\varphi_\e(x),\tau\rangle$. At each point $x\in\Gamma$, we take an orthonormal basis $(\tau_i(x))_{i=1}^{N-1}$ of $T_x\Gamma$, and define $\hat{D}\varphi_\e(x)$ via
\begin{equation}\label{eqPseudoDeriv}
\hat{D}\varphi_\e(x)=\nu(x)-\sum\limits_{i=1}^{N-1}\langle D\varphi_\e(x),\tau_i(x)\rangle\tau_i(x),
\end{equation}
Furthermore, we presume that $\hat{D}\varphi_\e$ generates a Young measure $(\mu_x)_{x\in\Gamma}$.
\end{assumption}

\begin{remark}
 As the gradient $D\varphi_\e(x)$ is defined as a linear operator on the tangent space $T_x\Gamma$, we will employ $\hat{D}\varphi_\e$ as an analogue of the gradient of $\varphi_\e$ which is $\mathbb{R}^N$ valued, and will be necessary to simplify computations involving the surface normal of $\Gamma_\e$ in the sequel (see \Cref{corollarySimplerSurface}). We note that $\hat{D}\varphi_\e$ does not depend on the (pointwise) orthonormal bases taken to define $\tau_i$.

 We will provide some preliminary results necessary for dealing with Young measures in \Cref{propYoungMeasure}
\end{remark}

\subsubsection{Functional assumptions}

\begin{assumption}[Properties of $F$]\label{assumpInterior}
Let $V$ be a finite dimensional vector space.  We take $D$ to be a bounded set such that $\Omega \cup\bigcup\limits_{\e>0}\Omega_\e\subset D$. Then ${F:(V\times\mathbb{R}^N)\times V\times D \to [0,+\infty]}$ denotes a Carath\'eodory function, convex in its first variable, satisfying the following $p$-growth lower bound, that there exists some $C>0$ so that
\begin{equation}
F(A,u,x)\geq C(|A|^p+|u|^p-1)
\end{equation}
 for all $A\in V\times\mathbb{R}^N$, $u\in V$, $x\in D$. Furthermore we presume there exists a homogeneous functional $F_h:(V\times\mathbb{R}^N)\times V\to[0,\infty]$ and $C_1>1$ such that 
\begin{equation}
F_h(A,u)\leq F(A,u,x)\leq C_1 F_h(A,u)
\end{equation}
 for all $A\in V\times\mathbb{R}^N,u\in V,x\in D$. Furthermore, we presume that for every $L_0>1$, there exists some $C>0$ so that for every $L\in\mathbb{R}^{N\times N}$ with $ L_0^{-1}I<L^TL<L_0I$ (where the inequality corresponds to that as bilinear forms),

\begin{equation}\label{eqWeirdBound}
F_h(AL,u)\leq C\left(F_h(A,u)+1\right). 
\end{equation}
for all $A\in V\times\mathbb{R}^N,u\in V$.
Finally, we presume that there exists some $u\in W^{1,p}(\Omega,V)$, where $\Omega$ is as in \Cref{assumpOmega}, such that 
\begin{equation}\label{ass:uF}
\int_\Omega F(\nabla u(x),u(x),x)\,dx<+\infty. 
\end{equation}
\end{assumption}

\begin{remark}
The assumptions on $F$ are somewhat non-standard, and are designed to account  simultaneously for various energies relevant in the modelling of liquid crystals, where each has its own non-standard aspects. Generally, such energies find themselves of the form
\begin{equation}\label{eqSplitEnergy}
F(A,u,x)= F_{el}(A,u)+F_{b}(u),
\end{equation}
where the elastic energy $F_{el}$ is $p$-positively homogeneous in its first argument and $F_b$ may have  different growth to $F_{el}$. In particular, $F_b$ may only be finite on a compact or bounded set. These are typical in models of nematics, such  as Landau-de Gennes, where  $F_{el}$ is quadratic, and $F_b$ is either a polynomial or singular bulk potential  which lacks an upper bound in terms of $|Q|^2$. A more extreme case is Oseen-Frank, in which one may interpret the unit vector constraint $|u|=1$ almost everywhere as one of the form \eqref{eqSplitEnergy} with $F_b(u)=0$ if $|u|= 1$ and $F_b(u)=\infty$ otherwise, and thus no power of $u$ can bound $F$ from above, as discussed in \Cref{remarkOFVector}.

Furthermore, the presence of a magnetic field would lead to $x$-dependence in $F$, via the addition of a term $\chi (H\cdot n)^2$ for Oseen-Frank or $\chi QH\cdot H$, where $H:\mathbb{R}^3\to\mathbb{R}^3$ is the externally imposed magnetic field. 

It is immediate that these models all comply with \Cref{assumpInterior}, with the restriction that $H$, if present, be bounded.

The reason for \eqref{eqWeirdBound} is such that if $u\in W^{1,p}(U)$ and $g:U\to g(U)$ is a bi-lipschitz map, then for $\tilde{u}=u\circ g^{-1}$
\begin{equation}
\int_{g(U)} F_h(\nabla \tilde{u}(x),\tilde{u}(x))\,dx\leq CL_0\int_U F_h(\nabla u(x),u(x))+1\,dx,
\end{equation}
where $L_0$ depends on the bi-Lipschitz constants of $g$. This inequality will allow us to define finite-energy extensions of  $W^{1,p}(\Omega,V)$ functions via reflections in the sequel. 

Furthermore, we note that whilst we are considering $F$ to be convex in the gradient of $u$ this particular assumption may be relaxed to $F$ which defines a weakly lower semicontinuous energy functional in $W^{1,p}$. For example quasiconvex, non-convex energies, such as forms considered in a elasticity like $F(A,u,x)=|A|^p +\det(A)^{-1}$ with $\det(A)>0$ and $+\infty$ otherwise, with $p>N$, may be considered, provided the growth bounds we impose are satisfied. 
Finally, we note that the existence of $u\in W^{1,p}(\Omega,V)$ such that \eqref{ass:uF} holds suffices in order to ensure that the $\Gamma$-limit functional that will be obtained is non infinite everywhere.
\end{remark}

\begin{assumption}[Assumptions on $w$]\label{assumpSurface1}
Let $1<p<\infty$ be as in \Cref{assumpInterior} and $1\leq q< p^*$, where $p^*=\frac{(N-1)p}{N-p}$ if $p<N$ or $p^*=\infty$ if $p\geq N$. We presume that $w:\Gamma\times\mathbb{S}^{N-1}\times V\to [0,\infty)$ satisfies the following. 
\begin{itemize}
\item There exists $C>0$ such that for a.e. $x\in\Gamma$, every $\nu \in \mathbb{S}^{N-1}$, $u_1,u_2\in V$, ${|w(x,\nu,u_1)-w(x,\nu,u_2)|\leq C(|u_1|^{q-1}+|u_2|^{q-1}+1)|u_1-u_2|}$. 
\item There exists $C>0$ such that for a.e. $x\in\Gamma$, every $\nu\in\mathbb{S}^{N-1}$, $u\in V$, $|w(x,\nu,u)|<C(|u|^q+1)$
\item There exists $C>0$ such that for a.e. $x\in\Gamma$, every $\nu_1,\nu_2\in\mathbb{S}^{N-1}$, $|w(x,\nu_1,u)-w(x,\nu_2,u)|<C(|u|^q+1)|\nu_1-\nu_2|$. 
\end{itemize}
\end{assumption}

\begin{remark}\label{remarkMultiLinearSurface}
We have in mind the case where 
\begin{equation}
w(x,\nu,u)=\sum\limits_{i=0}^m a_i(\nu)[u,...,u],
\end{equation}
where for every $\nu\in \mathbb{S}^{N-1}$, $a_i(\nu)$ is an $i$-linear map on $V$, and $m<q$. That is, $w$ is of polynomial type in $u$ with degree $m$, less than $q$, with coefficients that depend on $\nu$. We will take $a_i$ to be continuous in $\nu$ in such cases, and as we will see in \Cref{propPolynomialSurface}, functions of this form admit a greatly simplified homogenised surface energy. In particular, the homogenised surface energy will also be of polynomial type with degree $m$.

Furthermore, the choice of $q<p^*$ in the growth bounds for $w$ is chosen as a technical assumption, as this implies that the trace operator from $W^{1,p}(\Omega_\e)$ is a compact operator into $L^q(\Gamma_\e)$. 

\end{remark}

\begin{definition}\label{defFunctional}
We define the functionals $\mathcal{F}_\e:W^{1,p}(\Omega_\e,V)\to\mathbb{R}\cup\{+\infty\}$ by 
\begin{equation}
\mathcal{F}_\e(u)=\int_{\Omega_\e}F(\nabla u(x),u(x),x)\,dx +\int_{\Gamma_\e}w(P_\Gamma x,\nu_\e(x),u(x))\,dS(x). 
\end{equation}
\end{definition}

\subsubsection{Function space assumptions}

\begin{definition}
Let $U\subset \mathbb{R}^N$ be a measurable set. Let $V$ be a finite dimensional vector space, and $1<p<\infty$. We define the extension by zero operator $E_U:\bigcup\limits_{U'\subset U} L^p(U',V)\to L^p(U,V)$, where for $u\in L^p(U',W)$, $E_Uu=u$ on $U'$, $E_Uu=0$ otherwise. Similarly, we define the restriction operator $R_U:\bigcup\limits_{U\subset U'} L^p(U',V)\to L^p(U,V)$ by $R_Uu=u|_U$.
\end{definition}

\begin{definition}[Rugose convergence of functions]\label{defRugoseConv}
 Let $\Omega,\Omega_\e$ satisfy Assumptions \ref{assumpOmega} and \ref{assumpOmegaEps}, and $V$ be a finite dimensional vector space. Let $u_\e \in W^{1,p}(\Omega_\e,V)$ for every $\e>0$. Let $D$ be a bounded set, such that $\Omega_\e\subset\subset D$ for every $\e>0$. Then we say that $u_\e$ converges to $u\in W^{1,p}(\Omega,V)$ in the rugose sense, denoted ${u_\e\rto u}$, if the following hold:
\begin{itemize}
\item For every $U\subset\subset \Omega$, $R_Uu_\e\overset{W^{1,p}(U,V)}{\rightharpoonup} R_Uu$.
\item $E_{D}u_\e\overset{L^p(D,V)}{\rightharpoonup} E_{D}u$. 
\item $E_{D}\nabla u_\e\overset{L^p(D,V)}{\rightharpoonup} E_{D}\nabla u$.
\end{itemize}

\end{definition}
\begin{remark}
We note that \Cref{defRugoseConv} is independent of the choice of $D\subset\mathbb{R}^N$, provided that $\Omega_\e\subset\subset D$ for all $\e>0$.
\end{remark}

\begin{theorem}\label{theoremGammaConvergence}
Let $\Omega_\e$, for $\e_0>\e>0$, $\Omega$ denote sets satisfying \Cref{assumpOmega} and \Cref{assumpOmegaEps}, the interior energy $F$ satisfy \Cref{assumpInterior}, and the surface energy $w$ satisfy \Cref{assumpSurface1}. Then there exists an effective surface energy density function $w_{h}:\Gamma\times V\to\mathbb{R}$ so that for every $u\in L^q(\Gamma)$, $\lim\limits_{\e\to 0}w(\cdot,\tilde{\nu}_\e,u)\gamma_\e\overset{L^1(\Gamma)}{\rightharpoonup} w_{h}(\cdot,u)$, and $\mf_\e\overset{\Gamma}{\to} \mf$ with respect to $W^{1,p}$-rugose convergence, where $\mf:W^{1,p}(\Omega,V)\to[0,\infty)$ is given by 
\begin{equation}
\mf(u)=\int_\Omega F(\nabla u(x),u(x),x)\,dx +\int_{\Gamma}w_{h}(x,u(x))\,dS(x).
\end{equation} 
The homogenised surface energy $w_h:\Gamma\times V\to\mathbb{R}$ is given by 
\begin{equation}
w_h(x,u)=\int_{\nu+T_x\Gamma}w\left(x,\frac{1}{|v|}v,u\right)|v|\,d\mu_x(v),
\end{equation}
where $(\mu_x)_{x\in\Gamma}$ is the Young measure generated by $\hat{D}\varphi_\e:\Gamma\to \mathbb{R}^N$ as given in \Cref{assumpSurface2}.
Furthermore the energies $\mf_\e$ are equicoercive with respect to $W^{1,p}$-rugose convergence, and if $u_\e$ are minimisers of $\mf_\e$ for every $\e_0>\e>0$, there exists a subsequence $\e_j\to 0$ and minimiser $u$ of $\mf$ such that $u_{\e_j}\rto u$.  
\end{theorem}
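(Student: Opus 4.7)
The theorem assembles four ingredients: equicoercivity, the identification of the homogenised density $w_h$ via weak $L^1$ convergence, the $\Gamma$-$\liminf$ inequality, and the $\Gamma$-$\limsup$ inequality. Once these are in hand, the claim about subsequential convergence of minimisers follows from the varying-domain version of the fundamental theorem of $\Gamma$-convergence announced as \Cref{propGammaConvVarying}. My plan is to establish each ingredient in turn and then invoke that theorem at the end.

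\textbf{Equicoercivity and identification of $w_h$.} For equicoercivity, the $p$-growth bound in \Cref{assumpInterior} yields $\|u_\e\|_{W^{1,p}(\Omega_\e,V)}^p\leq C(\mathcal{F}_\e(u_\e)+|\Omega_\e|)$. Extending by zero into a fixed ambient bounded set $D\supset\supset\bigcup_\e\Omega_\e$, weak compactness gives $L^p$-weak limits of $E_D u_\e$ and $E_D\nabla u_\e$; for any $U\subset\subset\Omega$ eventually contained in $\Omega_\e$, weak compactness in $W^{1,p}(U,V)$ identifies these limits with $u\in W^{1,p}(\Omega,V)$ and its weak derivative, as in \Cref{propCompactness}. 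To identify $w_h$, I reparametrise the surface integral via $x\mapsto x+\varphi_\e(x)\nu(x)$ to write
$$\int_{\Gamma_\e}w(P_\Gamma y,\nu_\e(y),u(P_\Gamma y))\,dS(y)=\int_\Gamma w(x,\tilde{\nu}_\e(x),u(x))\gamma_\e(x)\,dS(x).$$
By \Cref{corollarySimplerSurface}, the curvature contributions of $\Gamma$ are asymptotically negligible and, to leading order, $\tilde{\nu}_\e\approx \hat{D}\varphi_\e/|\hat{D}\varphi_\e|$ and $\gamma_\e\approx |\hat{D}\varphi_\e|$. Since $\hat{D}\varphi_\e$ generates the Young measure $(\mu_x)_{x\in\Gamma}$ per \Cref{assumpSurface2} and $w$ is continuous in $\nu$, the weak $L^1(\Gamma)$ limit is exactly the announced
$$w_h(x,u)=\int_{\nu(x)+T_x\Gamma}w\!\left(x,\tfrac{v}{|v|},u\right)|v|\,d\mu_x(v).$$

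\textbf{Liminf, limsup, and main obstacle.} For the $\liminf$ inequality, given $u_\e\rto u$, the interior term is handled by De Giorgi–Ioffe weak lower semicontinuity of convex Carathéodory integrals on each $U\subset\subset\Omega$, exhausting $\Omega$ by such $U$. The surface term converges (not just as a $\liminf$): the change of variables $\Phi_\e:\Omega\to\Omega_\e$ of \Cref{lemmaChangeOfVars} combined with $u_\e\rto u$ gives weak $W^{1,p}(\Omega,V)$ convergence of $u_\e\circ\Phi_\e$; compact trace embedding $W^{1,p}(\Omega)\hookrightarrow L^q(\Gamma)$ (available since $q<p^*$) produces strong $L^q(\Gamma)$ convergence of traces, which, paired with the weak $L^1$ convergence of $w(\cdot,\tilde{\nu}_\e,\cdot)\gamma_\e$ and the Lipschitz-in-$u$ hypothesis on $w$, yields convergence of the full surface term (cf. \Cref{propSurfaceContinuous}). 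For $\limsup$, I construct recovery sequences by reflecting $u\in W^{1,p}(\Omega,V)$ across $\Gamma$ through the tubular neighbourhood of \Cref{defProjection}: the reflection is bi-Lipschitz with constants controlled by curvature, so estimate \eqref{eqWeirdBound} transfers finite interior energy to a neighbourhood of $\overline{\Omega}$ containing every $\Omega_\e$ for small $\e$. Setting $u_\e$ to be the restriction, $u_\e\rto u$ is immediate, $|\Omega_\e\triangle\Omega|\to 0$ gives convergence of interior energy, and the surface term converges by the continuity already established. The chief obstacle is precisely the joint handling of the thin rugose layer $\Omega_\e\setminus\Omega$ in both the interior and surface terms; this is exactly where the $O(\e)$ and $O(1)$ bounds on $\varphi_\e$ and $D\varphi_\e$, the homogeneity-type estimate \eqref{eqWeirdBound}, and the Young measure structure cooperate to give a uniform bound and a clean limit.
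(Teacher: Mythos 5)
Your proposal is correct and follows essentially the same route as the paper: equicoercivity via the $p$-growth bound (\Cref{propCompactness}), identification of $w_h$ via \Cref{corollarySimplerSurface} and Young measures (\Cref{propWeakSurfaceConv}), the $\liminf$ via weak lower semicontinuity on exhausting subdomains plus surface continuity through $\Phi_\e$ and the compact trace (\Cref{propLiminf}, \Cref{propSurfaceContinuous}), the $\limsup$ via reflection extension and restriction (\Cref{propExtension}, \Cref{propLimsup}), and finally \Cref{propGammaConvVarying}. The assembly, the order of the ingredients, and the key technical steps all match.
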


\subsection{Proof of $\Gamma$-convergence}\label{subsecGammaConv}

\subsubsection{Compactness}

\begin{proposition}\label{propCompactness}
 Let $\Omega,\Omega_\e$ satisfy Assumptions \ref{assumpOmega} and \ref{assumpOmegaEps}, $w$ satisfy Assumptions \ref{assumpSurface1} and \ref{assumpSurface2}, and $F$ satisfy \Cref{assumpInterior}. Let ${u_\e\in W^{1,p}(\Omega_\e,V)}$ for every $\e>0$ be such that 
\begin{equation}
\mf_\e(u_\e)<M. 
\end{equation}
Then there exists a subsequence $\epsilon_j\to 0$ and $u\in W^{1,p}(\Omega,V)$ such that $u_j:=u_{\e_j}\rto u$.
\end{proposition}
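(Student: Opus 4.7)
The plan is to proceed along the classical lines of \cite{belyaev1988,chechkin1999,friedman1997}, where the substantive task is not the extraction of weak limits but the correct identification of them across the oscillating boundary. First, I would exploit the $p$-growth lower bound in \Cref{assumpInterior} together with the nonnegativity of $w$ to turn the uniform energy bound $\mathcal{F}_\e(u_\e) < M$ into a uniform $W^{1,p}$-bound: since $w \geq 0$,
\begin{equation*}
M > \int_{\Omega_\e} F(\nabla u_\e, u_\e, x)\,dx \geq C\int_{\Omega_\e}|\nabla u_\e|^p + |u_\e|^p\,dx - C|\Omega_\e|,
\end{equation*}
and the uniform boundedness of $\Omega_\e$ yields $\sup_\e \|u_\e\|_{W^{1,p}(\Omega_\e,V)} < \infty$. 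Consequently $E_D u_\e$ and $E_D \nabla u_\e$ are bounded in $L^p(D,V)$ and $L^p(D, V\times \mathbb{R}^N)$, so by reflexivity (recall $p > 1$) I extract a subsequence along which $E_D u_\e \rightharpoonup v$ and $E_D \nabla u_\e \rightharpoonup W$.

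Next, I would construct the candidate limit $u$ by an interior diagonal argument. Fix an exhaustion $U_k \subset\subset U_{k+1} \subset\subset \Omega$ with $\bigcup_k U_k = \Omega$. By \Cref{assumpOmegaEps}(1), for each $k$ there exists $\e_k > 0$ with $U_k \subset \Omega_\e$ for $\e < \e_k$, so $u_\e|_{U_k}$ is uniformly bounded in $W^{1,p}(U_k,V)$. Diagonal extraction produces a further subsequence and a function $u \in W^{1,p}_{\mathrm{loc}}(\Omega,V)$ with $R_{U_k} u_\e \rightharpoonup R_{U_k} u$ in $W^{1,p}(U_k,V)$ for every $k$. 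Comparing this with the weak-$L^p$ convergence of the zero-extensions by testing against functions compactly supported in $U_k$ identifies $v = u$ and $W = \nabla u$ a.e.\ on $\Omega$; taking $k \to \infty$ then shows $u, \nabla u \in L^p(\Omega)$, i.e.\ $u \in W^{1,p}(\Omega,V)$.

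The main obstacle, and the step where care is required, is verifying that $v = E_D u$ and $W = E_D \nabla u$ on all of $D$, i.e.\ that the weak limits vanish on $D \setminus \bar{\Omega}$ so that no spurious ``boundary mass'' is injected by the oscillation of $\Gamma_\e$. For this I would use \Cref{assumpOmegaEps}(2): for any $U \subset\subset \mathbb{R}^N \setminus \bar{\Omega}$ there is an $\e_0 > 0$ such that $\Omega_\e \cap U = \emptyset$ for all $\e < \e_0$, whence $E_D u_\e \equiv 0$ and $E_D \nabla u_\e \equiv 0$ on $U$ eventually, and uniqueness of weak limits forces $v = 0$, $W = 0$ on $U$. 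Exhausting $\mathbb{R}^N \setminus \bar{\Omega}$ by such sets and noting that $\Gamma$ has Lebesgue measure zero gives $v = E_D u$ and $W = E_D \nabla u$ on $D$. All three clauses of \Cref{defRugoseConv} are then satisfied, completing the proof that $u_{\e_j} \rto u$.
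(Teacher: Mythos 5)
Your proposal is correct and takes essentially the same route as the paper's proof: uniform $W^{1,p}$ bounds from the $p$-growth lower bound and nonnegativity of $w$, weak $L^p$ extraction of the zero-extensions, localization of the support to $\bar{\Omega}$ via \Cref{assumpOmegaEps}(2), and identification of the weak derivative by testing against compactly supported functions that eventually lie inside $\Omega_\e$. The only cosmetic difference is that you interpose a diagonal extraction over an exhaustion $U_k$ to produce the local $W^{1,p}$ limits, whereas the paper works directly with the already-extracted subsequence and recovers the $W^{1,p}(U)$ weak convergence for each $U\subset\subset\Omega$ at the end via a subsequence-of-subsequence uniqueness argument; neither choice changes the substance.
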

\begin{proof}
Take $D$ to be a bounded, open set such that $\Omega_\e\subset\subset D$ for all $\e>0$. Using the non-negativity of the surface term, we immediately obtain the estimate 
\begin{equation}\begin{split}
M\geq \mf_\e(u_\e)\geq &\int_{\Omega_\e}F(\nabla u_\e(x),u_\e(x),x)\,dx\\
\geq &\int_{\Omega_\e}C\left(|\nabla u_\e(x)|^p+|u_\e(x)|^p-1\right)\,dx \\
=&-C|\Omega_\e|+C\int_{D}|E_D\nabla u_\e(x)|^p+|E_Du_\e(x)|^p\,dx .
\end{split}
\end{equation}
As $|\Omega_\e|$ is uniformly bounded, and thus both $E_Du_\e$, $E_D\nabla u_\e$ are uniformly bounded in $L^p(D)$, we may thus extract a subsequence such that $E_Du_j\rightharpoonup u_0$ and $E_D\nabla u_j\overset{L^p}{\rightharpoonup} A$ for some $u_0\in L^p(D,V)$ and $A\in L^p(D,V\times\mathbb{R}^N)$. Furthermore, for any $U\subset D$ with $U\subset\subset\mathbb{R}^N\setminus \Omega$, $U\cap \Omega_\e=\emptyset$ for small enough $\e$, therefore $E_Du_j,E_D\nabla u_j$ will both be zero on $U$ for sufficiently small epsilon. This implies $\text{supp}(u_0)\cap U=\text{supp}(A)\cap U = \emptyset$ for all such $U$. Taking the union of all such $U$ implies that  $\text{supp}(u_0),\text{supp}(A)\subset \bar{\Omega}$. 

We now define $u=R_\Omega u_0$. We aim to show that $R_\Omega A = \nabla u$. To do so we take any $\eta \in \mathcal{D}(\Omega,V)$. In particular, $\text{supp}(\eta )\subset\subset\Omega$, which in turn implies that for all sufficiently small $\e>0$, $\text{supp}(\eta )\subset \Omega_\e$. Therefore we know that 
\begin{equation}
\begin{split}
0=\int_{\Omega_\e} u_{\e_j}\cdot\nabla \eta +\nabla u_{\e_j}\cdot\eta  \,dx=&\int_{\text{supp}(\eta )} u_{\e_j}\cdot\nabla \eta +\nabla u_{\e_j}\cdot\eta \,dx\\
\to & \int_{\text{supp}(\eta )} u_0\cdot\nabla \eta +A\cdot\eta \,dx\\
=& \int_{\Omega} u\cdot\nabla \eta +R_\Omega A\cdot\eta \,dx\\
\end{split}
\end{equation}
Therefore $R_\Omega A = \nabla u$ in the sense of distributions, however as $A$ is in $L^p(\Omega,V)$, this implies that $u\in W^{1,p}(\Omega,V)$. 

It only remains to show that $R_U u_j\overset{W^{1,p}}{\rightharpoonup}R_U u$ on $U\subset\subset \Omega$. However, as $||R_Uu_j||_{W{1,p}}$ is uniformly bounded and $R_Uu_j\overset{L^p}{\rightharpoonup}R_Uu,\, R_U\nabla u_j\overset{L^p}{\rightharpoonup}R_U\nabla u$, as all sub-sequences must have a $W^{1,p}$-weakly converging sub-sub-sequence, whose limit may only be $R_Uu$. 
\end{proof}

\subsubsection{Surface terms}

We first provide some preliminary results on the family of Young measures $(\mu_x)_{x\in\Gamma}$ generated by $\hat{D}\varphi_\e$. In particular, we highlight that the assumption that $\hat{D}\varphi_\e$ generates a family of Young measures may always be taken up to a subsequence, provided $\varphi_\e$ satisfies the appropriate Lipschitz-type boudns. 

\begin{proposition}\label{propYoungMeasure}
Let $\Omega,\Omega_\e$ satisfy Assumptions \ref{assumpOmega} and \ref{assumpOmegaEps}. Then $\mu_x(V)=1$, and $\text{supp}(\mu_x)\subset \nu(x)+T_x\Gamma$. 

Furthermore, we have that for any sequence $\varphi_\e:\Gamma\to\mathbb{R}$ satisfying the bounds $||\varphi_\e||_\infty<C\e$, $||D \varphi_\e||_\infty<C$ , we may always extract a subsequence $\e_j\to 0$ such that $\hat{D}\varphi_{\e_j}$ generates a family of Young measures. 
\end{proposition}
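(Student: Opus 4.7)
The proposition has two logically independent parts, and both follow from standard facts about Young measures combined with the structural observation that the vector $\hat{D}\varphi_\e(x)$ always lies in the affine hyperplane $\nu(x)+T_x\Gamma$. I would tackle the two claims separately: first the support and total mass identities, and then the subsequential generation of a Young measure.

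For the support and total mass, the key observation is that by the very definition \eqref{eqPseudoDeriv}, $\hat{D}\varphi_\e(x)\cdot\nu(x)=1$ pointwise, since each $\tau_i(x)$ is tangent to $\Gamma$ and hence orthogonal to $\nu(x)$. Together with the hypothesis $\|D\varphi_\e\|_\infty=O(1)$, this shows that $\hat{D}\varphi_\e$ is uniformly bounded in $L^\infty(\Gamma,\mathbb{R}^N)$, so any bounded continuous function of $\hat{D}\varphi_\e$ is trivially uniformly integrable. Testing the Young-measure convergence with $F_1(x,v)=g(x)$ and $F_2(x,v)=g(x)(v\cdot\nu(x)-1)^2$ for arbitrary nonnegative $g\in C(\Gamma)$, and noting that $F_1(x,\hat{D}\varphi_\e(x))=g(x)$ while $F_2(x,\hat{D}\varphi_\e(x))\equiv 0$, one obtains
\begin{equation*}
\int_\Gamma g(x)\,\mu_x(V)\,dS(x)=\int_\Gamma g(x)\,dS(x),\qquad \int_\Gamma g(x)\int_V(v\cdot\nu(x)-1)^2\,d\mu_x(v)\,dS(x)=0.
\end{equation*}
By the arbitrariness of $g$, one concludes that $\mu_x(V)=1$ and that $\mu_x$ is concentrated on $\{v:v\cdot\nu(x)=1\}=\nu(x)+T_x\Gamma$ for $S$-almost every $x\in\Gamma$.

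For the existence of a generating subsequence, I would invoke the fundamental theorem of Young measures in Ball's form: on a finite measure space, any tight sequence of measurable $V$-valued functions admits a subsequence which generates a Young measure. The hypothesis $\|D\varphi_\e\|_\infty\leq C$ combined with $|\nu|=1$ confines all values of $\hat{D}\varphi_\e$ into a fixed compact ball of $\mathbb{R}^N$, so tightness is automatic. Since $\Gamma$ is compact and $C^2$ (hence of finite Hausdorff measure), the Euclidean version of the theorem transfers via a finite $C^2$ atlas and a subordinate partition of unity: a subsequence is extracted in each chart by the classical result and the choices are merged by a diagonal argument, yielding a single subsequence $\e_j\to 0$ which generates Young measures in every chart and hence globally.

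The only mildly technical point is the passage from Euclidean domains to the manifold $\Gamma$, which is the exact caveat already flagged in the remark preceding the introductory example. Because everything is local and $\Gamma$ is compact, this amounts to bookkeeping rather than a genuine obstacle, and I do not expect any substantial difficulty in assembling the proof from the two ingredients above.
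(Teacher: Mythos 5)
Your proof is correct and follows essentially the same three-step structure as the paper: a partition of unity plus Ball's Euclidean theorem for extracting a generating subsequence, uniform $L^\infty$-boundedness of $\hat{D}\varphi_\e$ for the total-mass identity, and a Carath\'eodory test function for the support statement. The one noteworthy divergence, however, is in your favour. For the support you test against $F(x,v)=g(x)\bigl(v\cdot\nu(x)-1\bigr)^2$, which is precisely right: the zero set of $v\mapsto(v\cdot\nu(x)-1)^2$ is $\{v:v\cdot\nu(x)=1\}=\nu(x)+T_x\Gamma$, and $\hat{D}\varphi_\e(x)\cdot\nu(x)=1$ identically because $\hat{D}\varphi_\e(x)-\nu(x)$ lies in $T_x\Gamma$. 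The paper instead tests against $F(x,v)=\bigl|(I-\nu(x)\otimes\nu(x))(v-\nu(x))\bigr|^2$ and asserts this vanishes exactly on $\nu(x)+T_x\Gamma$; that is not so --- $(I-\nu\otimes\nu)$ projects onto $T_x\Gamma$, so this $F$ vanishes on $\nu(x)+\mathrm{span}(\nu(x))$ rather than on the affine hyperplane $\nu(x)+T_x\Gamma$, and in fact $F(x,\hat{D}\varphi_\e(x))=|D\varphi_\e(x)|_\Gamma^2$, which is not identically zero. The correct projector there should be $\nu\otimes\nu$ (yielding exactly your $(v\cdot\nu-1)^2$), so your version repairs a slip in the paper's argument. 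A small stylistic point: your use of $F_1(x,v)=g(x)$ to derive $\mu_x(V)=1$ is fine, but it only yields information once one already knows $\mu_x(V)\leq 1$ (a sub-probability, as in Ball's theorem); given the uniform $L^\infty$ bound on $\hat{D}\varphi_\e$, tightness is automatic and Ball's theorem gives $\mu_x(V)=1$ directly --- which is what the paper invokes chart by chart --- so the $F_1$ test, while valid, is not strictly needed.
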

\begin{proof}

First we show the compactness statement. If $\Gamma$ were a measurable, positive measure subset of $\mathbb{R}^{N-1}$, then as $|\hat{D}\varphi_\e(x)|$ is uniformly bounded, the result would be a direct consequence of \cite[Theorem 1]{ball1989version}. It is however straightforward to generalise this to the case where $\Gamma$ is a $C^2$ manifold, by taking a partition of unity $(\xi_i)_{i=1}^M$, so that $\xi_i\geq 0$ for all $i$, $\sum\limits_{i=1}^M\xi_i=1$, and there exists domains $(U_i)_{i=1}^M$ with $\bigcup\limits_{i=1}^MU_i=\Gamma$, $\text{supp}(\xi_i)\subset U_i$. Take $U_i$ such that there exists a $C^1$ map $\eta_i:\tilde{U}_i\to U_i$, where $\tilde{U}_i\subset\mathbb{R}^{N-1}$. Then, given any continuous function $F:\mathbb{R}^N\to\mathbb{R}$, we have that 
\begin{equation}\label{eqDecompYoung}
\int_\Gamma F(x,\hat{D}\varphi_\e(x))\,dx = \sum\limits_{i=1}^M\int_{\tilde{U}_i} F(\eta_i(x'),\hat{D}\varphi_\e(\eta_i(x')))\xi_i(\eta_i(x'))J_i(x')\,dx',
\end{equation}
where $J_i$ are the corresponding volume elements. From this, it is immediate that the results for domains in $\mathbb{R}^{N-1}$ may be lifted from the result of Ball applied to the functions $f^i_\e=\hat{D}\varphi_\e\circ\eta_i$, and by inverting the integral decomposition \eqref{eqDecompYoung}, the compactness result holds. Explicitly, as a subsequence of $f^i_\e$, $f^i_{\e_j}$, generates Young measures $\mu^i_{\eta^i(x)}=\mu^i_{x'}$ for $x\in U^i$, we have that 
\begin{equation}\begin{split}
&\lim\limits_{j\to\infty}\sum\limits_{i=1}^M\int_{\tilde{U}_i} F(\eta_i(x'),\hat{D}\varphi_\e(\eta_i(x')))\xi_i(\eta_i(x'))J_i(x')\,dx'\\
&=\sum\limits_{i=1}^M\int_{\tilde{U}_i} \int_{\mathbb{R}^N} F(\eta_i(x'),v)\xi_i(x')J_i(x')\,d\mu^i_{x'}\,dx'\\
=&\sum\limits_{i=1}^M\int_{\tilde{U}_i} \int_{\mathbb{R}^N} F(\eta_i(x'),v)\,d\mu^i_{x'}\xi_i(\eta_i(x'))J_i(x')\,dx'\\
=&\sum\limits_{i=1}^M\int_{U_i} \int_{\mathbb{R}^N} F(x,v)\,d\mu^i_x\xi_i(x)\,dx\\
=& \int_{\Gamma}\int_{\mathbb{R}^N} F(x,v)\sum\limits_{i=1}^M\xi_i(x)\,d\mu^i_x(v)\,dx.
\end{split}
\end{equation}
Thus we see that the Young measure $\mu_x=\sum\limits_{i=1}^M\xi_i(x)\,d\mu^i_x$ is the family of Young measures corresponding to $\hat{D}\varphi_\e$. 

To show that $\mu_x({\mathbb{R}^N})=1$, via the decomposition \eqref{eqDecompYoung}, again we apply the result of Ball on each domain, which implies that $\mu^i_x({\mathbb{R}^N})=1$ for almost every $x$ as the integrand is uniformly bounded from above, and since $\sum\limits_{i=1}^M\xi_i=1$, we have that $\mu_x({\mathbb{R}^N})=1$ almost everywhere. 

Finally, to show that $\text{supp}(\mu_x)\subset\nu(x)+T_x\Gamma$ almost everywhere, we consider the function 
\begin{equation}
F(x,v)=\left|(I-\nu(x)\otimes \nu(x))(v-\nu(x))\right|^2.
\end{equation}
We note that this is a $C^1$, non-negative function, which is zero if and only if $v\in \nu(x)+T_x\Gamma$. As $\hat{D}\varphi_\e$ is uniformly bounded, it is thus immediate that $F(x,\hat{D}\varphi_\e(x))$ is a uniformly integrable sequence of functions, and therefore as $\hat{D}\varphi_\e(x)\in \nu(x)+T_x\Gamma$,
\begin{equation}
0=\int_\Gamma F(x,\hat{D}\varphi_\e(x))\,dx\to \int_\Gamma\int_V F(x,v)\,d\mu_x(v)\,dx.
\end{equation}
Therefore, we must have that $\int_V F(x,v)\,d\mu_x(v)=0$ for almost every $x$, which thus implies that $\text{supp}(\mu_x)\subset \nu(x)+T_x\Gamma$. 

\end{proof}

We now provide a lemma that allows us to greatly simplify the surface terms

\begin{lemma}\label{corollarySimplerSurface}
Let the surface energy $w$ and $\varphi_\e$ satisfy \Cref{assumpSurface1}, \Cref{assumpSurface2}, and $1\leq q\leq p^*$ be as in \Cref{assumpSurface1}. Then for every $u\in L^q(\Gamma,V)$, there exists $c>0$ such that for $\e$ sufficiently small,
\begin{equation}
\int_\Gamma\left|w(x,\nu_\e(x),u(x))\gamma_\e(x)-w\left(x,\frac{1}{|\hat{D}\varphi_\e(x)|}{\hat{D}\varphi_\e(x)},u(x)\right)|\hat{D}\varphi_\e(x)|\right|\,dS(x)<c\e.
\end{equation}
\end{lemma}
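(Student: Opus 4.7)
The plan is to show that the integrand is pointwise $O(\e)$ uniformly in $x\in\Gamma$ by establishing two geometric expansions: $\gamma_\e(x) = |\hat{D}\varphi_\e(x)| + O(\e)$ and $\tilde{\nu}_\e(x) = \hat{D}\varphi_\e(x)/|\hat{D}\varphi_\e(x)| + O(\e)$, and then exploiting the Lipschitz-in-$\nu$ and growth-in-$u$ hypotheses on $w$ from \Cref{assumpSurface1}.

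The main computation is local on $\Gamma$. Fix $x\in\Gamma$, an orthonormal basis $(\tau_i(x))_{i=1}^{N-1}$ of $T_x\Gamma$, and let $\Phi_\e(x) = x + \varphi_\e(x)\nu(x)$, which parametrises $\Gamma_\e$ by \Cref{assumpOmegaEps}. Differentiating along $\tau_i$ yields
\begin{equation*}
D\Phi_\e(x)[\tau_i] = \tau_i + \langle D\varphi_\e(x),\tau_i\rangle\,\nu(x) + \varphi_\e(x)\,D\nu(x)[\tau_i].
\end{equation*}
Since $\|\varphi_\e\|_\infty = O(\e)$ and $D\nu$ is uniformly bounded on the compact $C^2$ manifold $\Gamma$, the last term is $O(\e)$ uniformly in $x$. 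The Gram matrix of the leading order tangent vectors $\tau_i + \langle D\varphi_\e,\tau_i\rangle\nu$ is the rank-one perturbation $I + (D\varphi_\e\otimes D\varphi_\e)|_{T_x\Gamma}$ of the identity on $T_x\Gamma$, whose determinant equals $1+|D\varphi_\e|_\Gamma^2 = |\hat{D}\varphi_\e|^2$. Taking the square root gives $\gamma_\e(x) = |\hat{D}\varphi_\e(x)| + O(\e)$ uniformly. For the normal, the identity
\begin{equation*}
\hat{D}\varphi_\e(x)\cdot\bigl(\tau_j + \langle D\varphi_\e(x),\tau_j\rangle \nu(x)\bigr) = -\langle D\varphi_\e(x),\tau_j\rangle + \langle D\varphi_\e(x),\tau_j\rangle = 0
\end{equation*}
shows that $\hat{D}\varphi_\e/|\hat{D}\varphi_\e|$ is the unit normal to the leading-order tangent plane, with outward orientation since $\hat{D}\varphi_\e\cdot\nu = 1 > 0$; hence $\tilde{\nu}_\e(x) = \hat{D}\varphi_\e(x)/|\hat{D}\varphi_\e(x)| + O(\e)$ uniformly.

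Granted these expansions, split the integrand as
\begin{equation*}
w(x,\tilde{\nu}_\e,u)\bigl(\gamma_\e - |\hat{D}\varphi_\e|\bigr) + |\hat{D}\varphi_\e|\Bigl(w(x,\tilde{\nu}_\e,u) - w\bigl(x,\tfrac{\hat{D}\varphi_\e}{|\hat{D}\varphi_\e|},u\bigr)\Bigr).
\end{equation*}
The growth bound $|w(x,\nu,u)|\leq C(|u|^q+1)$ controls the first term by $C\e(|u(x)|^q+1)$, while the Lipschitz-in-$\nu$ bound and the uniform boundedness of $|\hat{D}\varphi_\e|$ (guaranteed by $\|D\varphi_\e\|_\infty = O(1)$) control the second term by $C\e(|u(x)|^q+1)$. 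Integrating over $\Gamma$ and using $u\in L^q(\Gamma,V)$ together with $|\Gamma|<\infty$ gives the asserted bound with $c = C(\|u\|_{L^q(\Gamma)}^q + |\Gamma|)$.

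The main obstacle is the uniform character of the two geometric expansions: one must be careful that the curvature of $\Gamma$ (contributing through $D\nu$) and the higher-order terms in the square-root expansion of the Gram determinant all produce errors that are $O(\e)$ uniformly in $x$, which relies on the $C^2$ compactness of $\Gamma$ and the bounds $\|\varphi_\e\|_\infty=O(\e)$, $\|D\varphi_\e\|_\infty = O(1)$. Once these expansions are obtained, the estimate reduces to a direct application of the assumptions on $w$.
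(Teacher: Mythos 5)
Your proof is correct and follows the same overall strategy as the paper: establish the two uniform expansions $\gamma_\e = |\hat{D}\varphi_\e| + O(\e)$ and $\tilde{\nu}_\e = \hat{D}\varphi_\e/|\hat{D}\varphi_\e| + O(\e)$ by isolating the $\varphi_\e\,D\nu[\tau_i]$ curvature contribution as an $O(\e)$ term, then split the integrand and invoke the growth and Lipschitz-in-$\nu$ bounds from \Cref{assumpSurface1}. The only technical variation is in how the geometric expansions are derived: the paper computes the wedge product $\Lambda_\e = \bigwedge_i \langle D(x+\varphi_\e\nu),\tau_i\rangle$ and reads off $\gamma_\e = |\Lambda_\e|$, $\tilde{\nu}_\e = \Lambda_\e/|\Lambda_\e|$ by projecting onto $\nu$ and $\tau_i$, whereas you compute the Gram determinant (first fundamental form) of the leading-order pushforward frame, using the rank-one perturbation formula $\det(I+\alpha\alpha^T)=1+|\alpha|^2$, and identify $\hat{D}\varphi_\e$ as the orthogonal complement of that frame. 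Both routes are standard and equivalent in content. One small point worth tightening: the step from ``$\hat{D}\varphi_\e/|\hat{D}\varphi_\e|$ is normal to the \emph{leading-order} tangent plane'' to ``$\tilde{\nu}_\e = \hat{D}\varphi_\e/|\hat{D}\varphi_\e| + O(\e)$'' implicitly uses that the unit normal depends Lipschitz-continuously on the tangent frame uniformly in $x$ and $\e$; this is true here because the Gram determinant is bounded below by $1$, but it deserves a sentence.
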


\begin{proof}
We defer the proof to \Cref{appSurface}
\end{proof}

\begin{proposition}\label{propWeakSurfaceConv}
 Let $\Omega,\Omega_\e$, $\varphi_\e$ satisfy Assumptions \ref{assumpOmega} and \ref{assumpOmegaEps}, and $w_h$ satisfy Assumptions \ref{assumpSurface2} and \ref{assumpSurface1}, with $p^*>q>1$ as in \Cref{assumpSurface1}.
There exists a homogenised surface energy $w_h:\Gamma\times V\to\mathbb{R}$ given by 
\begin{equation}\label{eqHomoSurface}
w_h(x,u)=\int_{\nu(x)+T_x\Gamma}w\left(x,\frac{1}{|v|}v,u\right)|v|\,d\mu_x(v),
\end{equation}
where $(\mu_x)_{x\in\Gamma}$ is the Young measure generated by $\hat{D}\varphi_\e$, as in \Cref{assumpSurface2}, such that for every $u\in L^q(\Gamma,V)$, 
\begin{equation}
\int_\Gamma w(x,\tilde{\nu}_\e(x),u(x))\gamma_\e(x)\,dx\to \int_\Gamma w_h(x,u(x))\,dx,
\end{equation}
 with $\tilde{\nu}_\e,\gamma_\e$ as defined in \Cref{assumpOmegaEps}.
\end{proposition}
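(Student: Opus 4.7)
The plan is to combine the simplification from \Cref{corollarySimplerSurface} with the definition of Young measure convergence supplied by \Cref{assumpSurface2} and \Cref{propYoungMeasure}. I proceed in three steps.

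\emph{Step 1: Reduction to a $\hat{D}\varphi_\e$-dependent integrand.} By \Cref{corollarySimplerSurface}, for all sufficiently small $\e$ there is a constant $c>0$ such that
\begin{equation*}
\int_\Gamma \bigl|w(x,\tilde{\nu}_\e(x),u(x))\gamma_\e(x) - w\bigl(x,\tfrac{1}{|\hat{D}\varphi_\e(x)|}\hat{D}\varphi_\e(x),u(x)\bigr)|\hat{D}\varphi_\e(x)|\bigr|\,dS(x) \leq c\e,
\end{equation*}
so it suffices to prove that
\begin{equation*}
\int_\Gamma w\bigl(x,\tfrac{1}{|\hat{D}\varphi_\e(x)|}\hat{D}\varphi_\e(x),u(x)\bigr)|\hat{D}\varphi_\e(x)|\,dS(x) \;\longrightarrow\; \int_\Gamma w_h(x,u(x))\,dS(x).
\end{equation*}

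\emph{Step 2: Application of Young measure convergence.} Define $F:\Gamma\times\mathbb{R}^N\to\mathbb{R}$ by $F(x,v)=w(x,v/|v|,u(x))|v|$ for $|v|\geq 1$, and extend continuously (say by $F(x,v)=w(x,\nu(x),u(x))$) for $|v|<1$; this extension is irrelevant since $|\hat{D}\varphi_\e(x)|=\sqrt{1+|D\varphi_\e(x)|_\Gamma^2}\geq 1$ for every $x$, and $\mathrm{supp}(\mu_x)\subset\nu(x)+T_x\Gamma$ (so $|v|\geq 1$ on the support of $\mu_x$ by \Cref{propYoungMeasure}). The function $F$ is measurable in $x$ and continuous in $v$ for $v\neq 0$. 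The growth bound in \Cref{assumpSurface1} combined with $\|D\varphi_\e\|_\infty=O(1)$ yields
\begin{equation*}
|F(x,\hat{D}\varphi_\e(x))| \leq C(|u(x)|^q+1)|\hat{D}\varphi_\e(x)| \leq C'(|u(x)|^q+1),
\end{equation*}
with $C'$ independent of $\e$, and the right-hand side lies in $L^1(\Gamma)$ since $u\in L^q(\Gamma,V)$. This provides uniform integrability, so by the Young measure property supplied by \Cref{assumpSurface2},
\begin{equation*}
\int_\Gamma F(x,\hat{D}\varphi_\e(x))\,dS(x) \;\longrightarrow\; \int_\Gamma \int_{\mathbb{R}^N}F(x,v)\,d\mu_x(v)\,dS(x).
\end{equation*}

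\emph{Step 3: Identification of the limit with $w_h$.} Using $\mathrm{supp}(\mu_x)\subset\nu(x)+T_x\Gamma$ from \Cref{propYoungMeasure}, the inner integral becomes exactly $w_h(x,u(x))$ as defined in \eqref{eqHomoSurface}, completing the proof.

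\emph{Expected main obstacle.} The only nontrivial point is the uniform-integrability argument that licenses Young measure convergence against an $x$-dependent test function. This is where the choice $q<p^*$ in \Cref{assumpSurface1} matters: it ensures the bound $(|u|^q+1)\in L^1(\Gamma)$ for the trace $u\in L^q(\Gamma,V)$, while the uniform Lipschitz bound $\|D\varphi_\e\|_\infty=O(1)$ keeps $|\hat{D}\varphi_\e|$ uniformly bounded so the factor $|v|$ in $F$ does not destroy integrability. Once these bounds are in place, steps 1 and 3 are purely bookkeeping.
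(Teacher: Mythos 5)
Your proof follows essentially the same route as the paper: reduce via \Cref{corollarySimplerSurface} to a $\hat{D}\varphi_\e$-dependent integrand, establish uniform integrability from the growth bound in \Cref{assumpSurface1} combined with the uniform bound on $|\hat{D}\varphi_\e|$, apply the Young-measure representation theorem, and identify the limit using $\mathrm{supp}(\mu_x)\subset\nu(x)+T_x\Gamma$. One small slip: the parenthetical extension you propose, $F(x,v)=w(x,\nu(x),u(x))$ for $|v|<1$, is not actually continuous at $|v|=1$ (the inner and outer definitions disagree unless $w(x,v,u)$ is constant in its second argument), so as written $F$ is not Carath\'eodory and the fundamental theorem for Young measures with $x$-dependent test functions does not directly apply. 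The paper avoids this by multiplying by a smooth cutoff $\eta$ with $\eta\equiv 0$ near the origin and $\eta\equiv 1$ for $|v|>\tfrac12$, which produces a genuine Carath\'eodory function agreeing with the original integrand on the relevant set. Your surrounding reasoning (that the extension values never matter because both $\hat{D}\varphi_\e$ and $\mathrm{supp}(\mu_x)$ live in $\{|v|\geq 1\}$) is correct, so any honest continuous extension works; you just need to pick one that is actually continuous.
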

\begin{proof}
By \Cref{corollarySimplerSurface}, we see that it suffices to prove that 
\begin{equation}
\begin{split}
&\int_\Gamma w\left(x,\frac{1}{|\hat{D}\varphi_\e(x)|}\hat{D}\varphi_\e(x),u(x)\right)|\hat{D}\varphi_\e(x)|\,dS(x)\\
\to & \int_\Gamma\int_{\nu(x)+T_x\Gamma}w\left(x,\frac{1}{|v|}v,u(x)\right)|v|\,d\mu_x(v)\,dS(x).
\end{split} 
\end{equation}
For fixed $u\in L^q$, we define the auxiliary function $\tilde{w}:\Gamma\times\mathbb{R}^N\to\mathbb{R}$ by
\begin{equation}
\tilde{w}(x,v)=w\left(x,\frac{1}{|v|}v,u(x)\right)|v|\eta(v),
\end{equation}
where $\eta$ is a smooth cutoff function, zero in a neighbourhood of the origin, and $\eta(v)=1$ for $|v|>\frac{1}{2}$. This defines a Carath\'eodory function for $u\in L^q$, and as we only consider $v=\hat{D}\varphi_\e$, which satisfies $|v|\geq 1$, we see that 
\begin{equation}
\tilde{w}(x,\hat{D}\varphi_\e(x))=w\left(x,\frac{1}{|\hat{D}\varphi_\e(x)|}\hat{D}\varphi_\e(x),u(x)\right)|\hat{D}\varphi_\e(x)|
\end{equation}
almost everywhere. We also see that for every $u$, $\tilde{w}(\cdot,\hat{D}\varphi_\e)$ is uniformly integrable via the definition of $\tilde{w}$ and \Cref{assumpSurface1}, as 
\begin{equation}
|\tilde{w}(x,\hat{D}\varphi_\e)|\leq C|\hat{D}\varphi_\e|(1+|u|^q),
\end{equation}
using that $|u|^q$ is integrable and $\hat{D}\varphi_\e$ admits uniform $L^\infty$ bounds. In particular, this implies that 
\begin{equation}
\int_\Gamma\tilde{w}(x,\hat{D}\varphi_\e(x))\,dS(x)\to\int_\Gamma \int_{\nu(x)+T_x\Gamma}\tilde{w}(x,v)\,d\mu_x(v)\,dS(x),
\end{equation}
by \cite{ball1989version}, at which point re-writing $\tilde{w}$ in terms of $w$ and $u$ gives the necessary result. 
\end{proof}

\begin{corollary}\label{corollaryHomoBounds}
 The function $w_h$ as given by \eqref{eqHomoSurface} satisfies the same growth conditions in $u$ as $w$ in \Cref{assumpSurface1}. That is, 
\begin{itemize}
\item There exists $C>0$ such that for a.e. $x\in\Gamma$ and $u\in V$, $|w_h(x,u)|\leq C(|u|^q+1)$. 
\item $|w_h(x,u_1)-w_h(x,u_2)|\leq C |u_1-u_2|\left(|u_1|^{q-1}+|u_2|^{q-1}+1\right)$ for a.e. $x\in\Gamma$ and all $u_1,u_2\in V$. 
\end{itemize}
\end{corollary}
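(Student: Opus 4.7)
The proof is a direct application of the growth bounds from Assumption \ref{assumpSurface1} to the integral representation \eqref{eqHomoSurface}, so the plan is short. The key preliminary observation is that the Young measure $\mu_x$ has uniformly bounded first moment $\int_{\mathbb{R}^N}|v|\,d\mu_x(v)$, which will let us pull the growth bounds through the integral without losing any structure.

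First, I would establish a uniform bound on $\int |v|\,d\mu_x(v)$. From Assumption \ref{assumpOmegaEps}, $\|D\varphi_\e\|_{L^\infty(\Gamma)} = O(1)$, and from the definition in \eqref{eqPseudoDeriv}, $|\hat{D}\varphi_\e(x)|^2 = 1 + \sum_i |\langle D\varphi_\e(x),\tau_i(x)\rangle|^2 \leq 1 + \|D\varphi_\e\|_\infty^2$. So there exists $M>0$, independent of $\e$ and $x$, such that $|\hat{D}\varphi_\e(x)|\leq M$ for a.e.\ $x\in\Gamma$ and all small $\e$. Applying the Young measure limit with the continuous bounded test function $v\mapsto|v|\chi_{\{|v|\leq M\}}$ (or more cleanly, any continuous compactly supported $\eta$ with $\eta(v)=|v|$ on $|v|\leq M$), and using $\mu_x(\mathbb{R}^N)=1$ (Proposition \ref{propYoungMeasure}) together with $\operatorname{supp}\mu_x \subset \overline{B_M}$, we deduce $\int_{\mathbb{R}^N}|v|\,d\mu_x(v)\leq M$ for a.e.\ $x\in\Gamma$.

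For the first growth bound, I would simply estimate
\begin{equation*}
|w_h(x,u)|\leq \int_{\nu(x)+T_x\Gamma}\left|w\!\left(x,\tfrac{v}{|v|},u\right)\right||v|\,d\mu_x(v)\leq C(|u|^q+1)\int_{\mathbb{R}^N}|v|\,d\mu_x(v)\leq CM(|u|^q+1),
\end{equation*}
using the second bullet of Assumption \ref{assumpSurface1}. For the Lipschitz-type bound, the same idea with the first bullet of Assumption \ref{assumpSurface1} gives
\begin{equation*}
|w_h(x,u_1)-w_h(x,u_2)|\leq \int\left|w(x,\tfrac{v}{|v|},u_1)-w(x,\tfrac{v}{|v|},u_2)\right||v|\,d\mu_x(v)\leq CM\bigl(|u_1|^{q-1}+|u_2|^{q-1}+1\bigr)|u_1-u_2|,
\end{equation*}
which is the desired conclusion with constant $CM$.

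There is essentially no obstacle here; the only thing to be careful about is justifying the uniform bound on the first moment of $\mu_x$ in terms of the $L^\infty$ control of $\hat{D}\varphi_\e$, and recording that the almost-everywhere statements hold because the Young measure properties from Proposition \ref{propYoungMeasure} (support and total mass) hold for a.e.\ $x\in\Gamma$.
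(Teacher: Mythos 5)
Your proof is correct and takes essentially the same route as the paper's one-line proof, which cites the integral representation of $w_h$ together with $\operatorname{supp}(\mu_x)\subset\nu(x)+T_x\Gamma$ and $\mu_x(\mathbb{R}^N)=1$ from \Cref{propYoungMeasure}. You usefully make explicit the point left implicit in the paper---that the uniform $L^\infty$ bound on $\hat{D}\varphi_\e$ (hence on the support of $\mu_x$) yields a uniform bound on the first moment $\int|v|\,d\mu_x(v)$, which is genuinely needed for the constant $C$ to be $x$-independent.
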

\begin{proof}
The result is a straightforward consequence of the integral representation of $w_h$ in \eqref{eqHomoSurface}, using that $\text{supp}(\mu_x)\subset\nu(x)+T_x\Gamma$ and $\mu_x(\mathbb{R}^N)=1$, as proven in \Cref{propYoungMeasure}

\end{proof}

In the case of simpler surface energies, we are able to provide $w_h$ in a more explicit sense. 

\begin{proposition}\label{propPolynomialSurface}
 Let $p^*>q>1$, as in \Cref{assumpSurface1}. Let $w$ be of the form 
\begin{equation}
w(x,\nu,u)=\sum\limits_{i=0}^m a_i(\nu)[u,u,...,u],
\end{equation}
where $a_i(\nu)$ is an $i$-linear map on a  finite dimensional vector space $V$, identified with a member of $(V^*)^i$, $m$ is an integer satisfying $m<q$, and $a_i$ is continuous in $\nu$. Then 
\begin{equation}
a_i(\nu_\e)\gamma_\e\overset{*}{\rightharpoonup}A_i=\int_{\nu+T_x\Gamma}a_i(v)|v|\,d\mu_x(v),
\end{equation}
where the convergence, explicitly, is weak-* $L^\infty$ convergence, and
\begin{equation}
w_h(x,u)=\sum\limits_{i=0}^m A_i(x)[u,u,...,u].
\end{equation}
\end{proposition}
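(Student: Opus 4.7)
The plan is to reduce both claims to the Young measure convergence of $\hat D\varphi_\e$ established in \Cref{assumpSurface2}, exploiting that $u$ appears only multilinearly. The weak-$*$ convergence of the coefficients is the substantive part; the polynomial decomposition of $w_h$ follows by linearity of integration.

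For the weak-$*$ convergence, I interpret $a_i(\nu_\e)\gamma_\e$ on $\Gamma$ via the identification of \Cref{assumpOmegaEps} as $a_i(\tilde\nu_\e)\gamma_\e$. Since each $a_i$ takes values in the finite-dimensional space of $i$-linear forms on $V$, by fixing a basis it suffices to prove the scalar claim for continuous bounded $a_i:\mathbb{S}^{N-1}\to\mathbb{R}$. Fix a test function $\phi\in L^1(\Gamma)$. Re-running the proof of \Cref{corollarySimplerSurface} with the $u$-independent, uniformly bounded quantity $a_i(\nu)$ in place of $w(x,\nu,u(x))$ (the growth conditions of \Cref{assumpSurface1} are trivially satisfied since continuous functions on the sphere are bounded) yields
\[
\int_\Gamma \phi(x)\, a_i(\tilde\nu_\e(x))\gamma_\e(x)\, dS(x) = \int_\Gamma \phi(x)\, a_i\!\left(\tfrac{\hat D\varphi_\e(x)}{|\hat D\varphi_\e(x)|}\right) |\hat D\varphi_\e(x)|\, dS(x) + O(\e \|\phi\|_{L^1}).
\]
As in the proof of \Cref{propWeakSurfaceConv}, define the Carath\'eodory function $F_\phi(x,v)=\phi(x) a_i(v/|v|)|v|\eta(v)$ where $\eta$ is a smooth cutoff equal to $1$ on $\{|v|\geq 1\}$ and vanishing near the origin. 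Since $\hat D\varphi_\e(x)\in \nu(x)+T_x\Gamma$ forces $|\hat D\varphi_\e(x)|\geq 1$, the substitution $v=\hat D\varphi_\e(x)$ recovers the integrand exactly a.e., and $|F_\phi(x,\hat D\varphi_\e(x))|\leq C|\phi(x)|$ provides uniform integrability. The Young measure convergence of \Cref{assumpSurface2} then gives
\[
\int_\Gamma F_\phi(x,\hat D\varphi_\e(x))\, dS(x) \to \int_\Gamma \int_{\nu(x)+T_x\Gamma} \phi(x)\, a_i(v/|v|)\, |v|\, d\mu_x(v)\, dS(x) = \int_\Gamma \phi(x)\, A_i(x)\, dS(x),
\]
which is the required weak-$*$ convergence in $L^\infty(\Gamma)$. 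The function $A_i$ is indeed $L^\infty$ since $\mu_x$ is a probability measure on $\nu(x)+T_x\Gamma$ (by \Cref{propYoungMeasure}), $|v|$ is uniformly bounded on $\mathrm{supp}(\mu_x)$ by the $W^{1,\infty}$ bound on $\varphi_\e$, and $a_i$ is continuous on $\mathbb{S}^{N-1}$.

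For the polynomial formula, substitute the expansion of $w$ into \eqref{eqHomoSurface}; since $[u,\ldots,u]$ is independent of the variable of integration and defines a continuous linear functional on the finite-dimensional space of $i$-linear maps, it commutes with integration against $\mu_x$, giving
\[
w_h(x,u)=\sum_{i=0}^m\int_{\nu(x)+T_x\Gamma} a_i(v/|v|)|v|\, d\mu_x(v)\,[u,\ldots,u]=\sum_{i=0}^m A_i(x)[u,\ldots,u].
\]
The main (mild) technical point is the first reduction: one must verify that the argument of \Cref{corollarySimplerSurface} carries through for the $u$-free quantity $a_i$, but since $a_i$ is globally bounded and continuous on $\mathbb{S}^{N-1}$, the proof goes through unchanged with no growth or integrability obstruction. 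Once this reduction is in place, the polynomial decomposition is a direct consequence of linearity and finite-dimensionality.
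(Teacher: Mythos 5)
Your proof is correct and follows essentially the same route as the paper: reduce $a_i(\tilde\nu_\e)\gamma_\e$ to $a_i(\hat D\varphi_\e/|\hat D\varphi_\e|)|\hat D\varphi_\e| + O(\e)$ via the uniform estimate of \Cref{corollarySimplerSurface}, then invoke the Ball-type Young measure convergence with the same cutoff device as in \Cref{propWeakSurfaceConv}. The only bookkeeping difference is that you fold the $L^1$ test function $\phi$ into the Carath\'eodory integrand and obtain weak-$*$ $L^\infty$ convergence directly, whereas the paper first deduces weak-$L^1$ convergence from Ball and then upgrades to weak-$*$ $L^\infty$ by combining uniform $L^\infty$ boundedness (hence weak-$*$ precompactness) with the fact that the two weak limits must coincide; both arguments are equally valid and equally short.
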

\begin{proof}
First we note that as $a_i$ is continuous and $\nu_\e$ is bounded, $a_i(\nu_\e)\gamma_\e$ is certainly bounded in $L^\infty$. Furthermore, using \Cref{corollarySimplerSurface}, we may estimate uniformly in $\e$ that $a_i(\nu_\e)\gamma_\e=a_i\left(\frac{1}{|\hat{D}\varphi_\e|}\hat{D}\varphi_\e\right)|\hat{D}\varphi_\e|+O(\e)$. Therefore, by \cite{ball1989version}, we must have that 
\begin{equation}
a_i(\nu_\e)\gamma_\e\overset{L^1}{\rightharpoonup}\int_{\nu(x)+T_x\Gamma}a_i(v)|v|\,d\mu_x(v)=A_i.
\end{equation}  We note that $a_i(\nu_\e)$ also admits uniform $L^\infty$ bounds, and thus is pre-compact with weak-* $L^\infty$ convergence. As weak-* $L^\infty$ limits and weak-$L^1$ limits must be equal if both exist, the pre-compactness with respect to weak-* $L^\infty$ convergence and weak $L^1$ convergence of $a_i(\nu_\e)$ imply that $a_i(\nu_\e)\overset{*L^\infty}{\rightharpoonup}A_i$ also.
\end{proof}

In order to understand how the surface energy behaves in the limit, our aim will be to define a ``trace" of $u_\e\in W^{1,p}(\Omega_\e,V)$ onto $\Gamma$, which is continuous under rugose convergence. That is, we want a way of understanding how $u_\e|_{\Gamma_\e}$ ``approaches" $u|_\Gamma$. To do so, we will define a bi-Lipschitz change of variables $\Phi_\e:\Omega\to\Omega_\e$, which is such that if $u_\e\rto u$, then $u_\e\circ \Phi_\e\overset{W^{1,p}}{\rightharpoonup} u$. We may then understand the trace of $u_\e\circ \Phi_\e$ onto $\Gamma$.

\begin{definition}\label{defProjection}
 We define the transformation $\Phi_\e:\Omega\to \Omega_\e$ as follows. Let $\delta>0$ be fixed, and sufficiently small so that the projection $P_\Gamma$, as defined in \Cref{defProjection}, is well-defined and $C^1$. We then define $\Omega^\delta = \{x\in\Omega:d(x,\partial\Omega)>\delta\}$, and consider only $\e$ sufficiently small so that $||\varphi_\e||_\infty<\frac{1}{2}\delta$, so that the following are well defined. If $x\in\Omega^\delta$, $\Phi_\e x:=x$. If  $x\in\Omega\setminus\Omega^\delta$, then we may write that $x=P_{\Gamma}x+\delta(t_x-1)\nu(P_{\Gamma}x)$ for some $t_x\in [0,1]$, which is readily evaluated as 
\begin{equation}
t_x = 1-\frac{|x-P_\Gamma x|}{\delta}.
\end{equation} Then we define 
\begin{equation}\label{eqDefBilip}
\Phi_\e x =P_{\Gamma}x+\left(\left(\varphi_\e(P_\Gamma x)+\delta\right)t_x-\delta\right)\nu(P_{\Gamma}x)
\end{equation}
\end{definition}
We note that the transformation restricted to lines of the form $s\mapsto x+s\nu(x)$ for $x\in\Gamma$ and $s$ sufficiently small is merely a linear rescaling, so that $\Phi_\e x = x$ for $x\in \partial\Omega^\delta$, and $\Phi_\e x = x+\varphi_\e(x)\nu(x)$ for $x\in \Gamma$.

\begin{figure}[h]\begin{center}
\begin{subfigure}[t]{0.4\textwidth}
\begin{center}
\includegraphics[width=\textwidth]{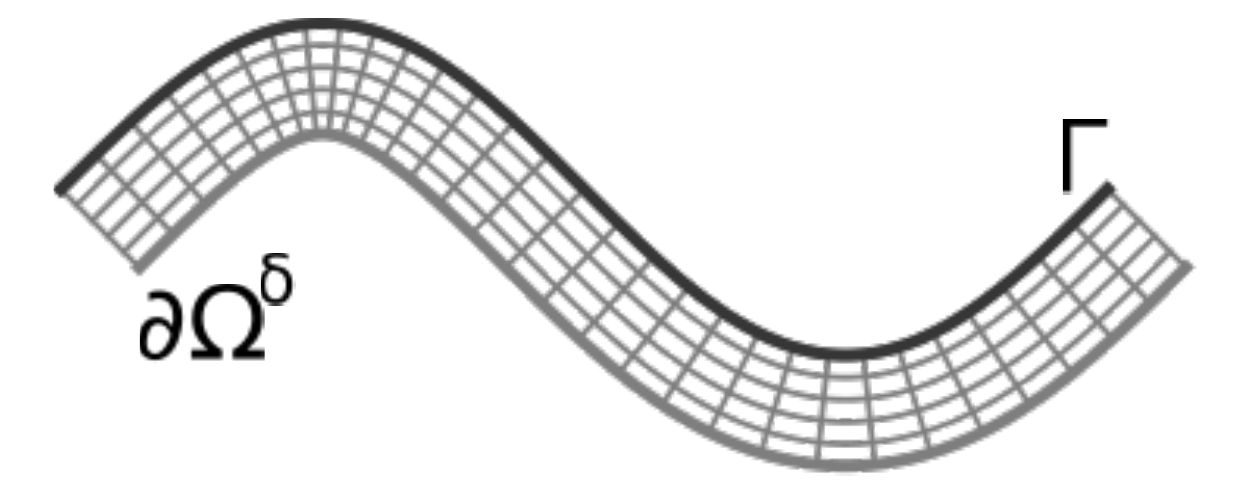}
\caption{
$\Omega\setminus\Omega^\delta$ with corresponding $(x,t)$ coordinate lines. $\Gamma$ is the  upper thick line, and $\partial\Omega^\delta$ the lower thick line.
}
\end{center}
\end{subfigure}
\hspace{0.025\textwidth}
\begin{subfigure}[t]{0.4\textwidth}
\begin{center}
\includegraphics[width=\textwidth]{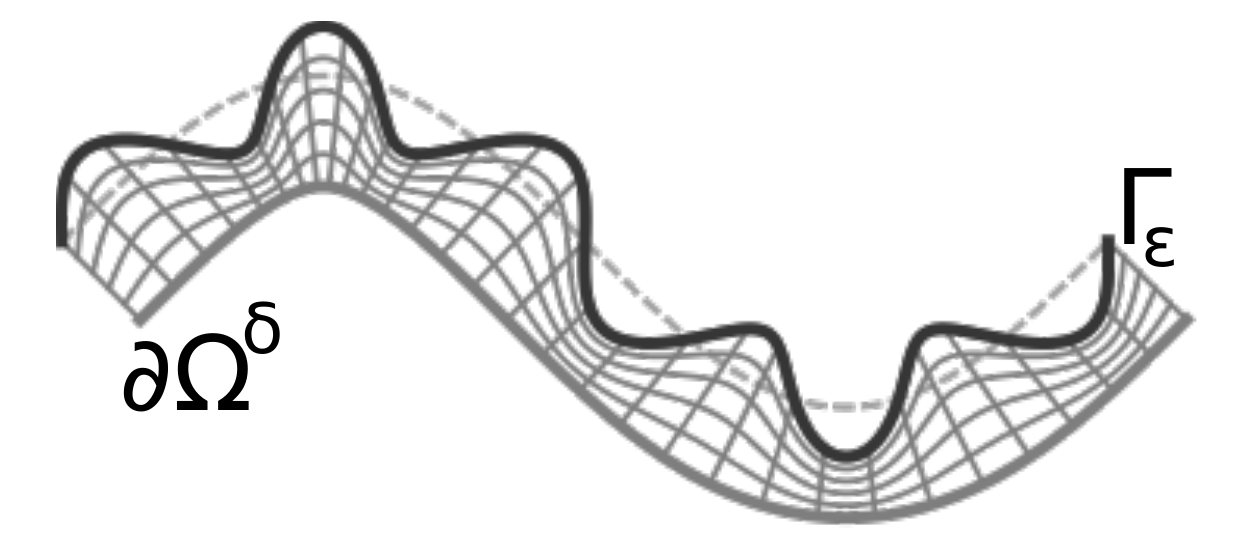}
\caption{$\Phi_\e(\Omega\setminus\Omega^\delta)$ with corresponding $(x,t)$ coordinate lines post-deformation. $\Gamma$ is visible as a dashed line, $\Gamma_\e$ is the upper thick line, and $\partial\Omega^\delta$ the lower thick line.
}
\end{center}
\end{subfigure}\captionsetup{width=0.85\linewidth}
\caption
 {The $(x,t)$ coordinate system before and after application of $\Phi_\e$ on $\Omega\setminus\Omega^\delta$. }
\end{center}
\end{figure}

\begin{proposition}\label{propBilipInverse}
 For $\e>0$ sufficiently small and $\Phi_\e$ defined as in \eqref{eqDefBilip}, $\Phi_\e$ is invertible, and $\Phi_\e^{-1}:\Omega_\e\to\Omega$ is given by $\Phi_\e^{-1}x=x$ on $\Omega^\delta$, and 
\begin{equation}\label{eqInverseBilip}
\Phi_\e^{-1}x =  P_{\Gamma}x+\delta\frac{s_x-\varphi_\e(P_\Gamma x)-\delta}{\delta+\varphi_\e(P_\Gamma x)}\nu(P_{\Gamma}x),
\end{equation}
where 
\begin{equation}\label{eqS}
s_x= |x-P_\Gamma x-\delta\nu(P_\Gamma x)|.
\end{equation}
In particular, $\Phi_\e$ defines a family of uniformly bi-Lipschitz maps between $\Omega$ and $\Omega_\e$. That is, there exists a constant $L>0$ such that for all $\e$, $x,y\in\Omega$
\begin{equation}
L^{-1}|x-y|<|\Phi_\e(x)-\Phi_\e(y)|<L|x-y|.
\end{equation}
\end{proposition}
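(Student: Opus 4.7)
The plan is to exploit the structure that $\Phi_\e$ is the identity away from the boundary layer and inside the layer $\Omega \setminus \Omega^\delta$ acts as an affine rescaling along each normal line through $\Gamma$; both pieces can then be inverted explicitly.

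First, I would observe that subtracting the definition of $\Phi_\e(x)$ from the representation $x = P_\Gamma x + \delta(t_x-1)\nu(P_\Gamma x)$ yields the compact displacement formula
\begin{equation}
\Phi_\e(x) - x = \varphi_\e(P_\Gamma x)\, t_x\, \nu(P_\Gamma x).
\end{equation}
Since $\|\varphi_\e\|_\infty = O(\e)$ and the displacement lies purely in the direction $\nu(P_\Gamma x)$, for $\e$ small enough the uniqueness of the nearest-point projection on the tubular neighbourhood forces $P_\Gamma(\Phi_\e(x)) = P_\Gamma(x)$. This reduces the inversion to a one-dimensional problem on each normal line, where $\Phi_\e$ restricts to the affine rescaling $t_x \mapsto (\varphi_\e(P_\Gamma x) + \delta)\, t_x - \delta$, sending $[0,1]$ onto $[-\delta, \varphi_\e(P_\Gamma x)]$; for $\e$ small we have $\varphi_\e + \delta \geq \delta/2 > 0$ uniformly, so this is invertible. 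Solving for $t_x$ in terms of the normal offset of the image point and substituting back produces the formula \eqref{eqInverseBilip}. The gluing to the identity on $\Omega^\delta$ is automatic since $t_x = 0$ on $\partial\Omega^\delta$ makes $\Phi_\e$ the identity there, and surjectivity onto $\Omega_\e$ is a direct consequence of the explicit parameterisation.

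Second, for the uniform bi-Lipschitz bounds I would differentiate $\Phi_\e(x) = x + \varphi_\e(P_\Gamma x)\, t_x\, \nu(P_\Gamma x)$ via the chain rule and write $D\Phi_\e = I + M_\e$. The perturbation $M_\e$ has an $O(1)$ rank-one piece $t_x\,\nu(P_\Gamma x) \otimes (D\varphi_\e(P_\Gamma x) \circ DP_\Gamma(x))$, bounded in operator norm by $\|D\varphi_\e\|_\infty = O(1)$, together with $O(\e)$ contributions from differentiating $t_x$ and $\nu \circ P_\Gamma$, the latter controlled by the $C^2$ regularity of $\Gamma$. This immediately yields $\|D\Phi_\e\|_{\mathrm{op}} \leq 1 + C$ uniformly, giving the Lipschitz continuity of $\Phi_\e$. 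For the inverse, the critical observation is that the rank-one piece factors as $\nu(P_\Gamma x) \otimes \tau_\e^*$ with tangential covector $\tau_\e^* = D\varphi_\e(P_\Gamma x) \circ DP_\Gamma(x)$, and $DP_\Gamma(x)[\nu(P_\Gamma x)] = 0$ since moving in the normal direction leaves $P_\Gamma$ unchanged to first order. The matrix-determinant lemma $\det(I + a\otimes b) = 1 + \langle a,b\rangle$ then makes this term contribute nothing to the determinant, so $\det D\Phi_\e = 1 + O(\e)$ is uniformly bounded away from zero for small $\e$. Applying Cramer's rule delivers the uniform bound on $\|(D\Phi_\e)^{-1}\|_{\mathrm{op}}$, hence on the Lipschitz constant of $\Phi_\e^{-1}$.

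The main (mild) obstacle is precisely the $O(1)$ rank-one part of $M_\e$: unlike a standard perturbation-of-identity argument, $\Phi_\e$ is not close to the identity in operator norm, only in displacement. The orthogonality between the normal direction $\nu$ carrying the range of this rank-one term and the tangential covector $\tau_\e^*$ spanning its cokernel is exactly what saves the determinant and closes the uniform bi-Lipschitz argument; the rest is bookkeeping involving the $C^2$ geometry of $\Gamma$ and the fixed choice of $\delta$ from \Cref{defProjection}.
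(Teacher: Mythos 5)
Your derivation of the inverse formula follows essentially the same path as the paper: observe that the displacement is purely normal so that $P_\Gamma\Phi_\e(x)=P_\Gamma x$, reduce to the scalar affine rescaling on each normal fibre, and invert. Where you genuinely diverge is the bi-Lipschitz bound. The paper handles this by noting that it now possesses explicit algebraic formulas for \emph{both} $\Phi_\e$ and $\Phi_\e^{-1}$ in terms of $P_\Gamma$, $\varphi_\e$, $t_x$, $s_x$, all Lipschitz with $\e$-independent constants and nonsingular, and simply bounds the derivatives of each expression directly. You instead bound only $D\Phi_\e$ and pass to the inverse through a determinant argument: you isolate the $O(1)$ rank-one block $t_x\,\nu(P_\Gamma x)\otimes\bigl(D\varphi_\e(P_\Gamma x)\circ DP_\Gamma(x)\bigr)$, note that $DP_\Gamma(x)[\nu(P_\Gamma x)]=0$ so that the matrix-determinant lemma kills this block's contribution exactly, conclude $\det D\Phi_\e=1+O(\e)$ uniformly, and finish with Cramer's rule. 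Both routes are valid. The paper's direct computation is shorter because the explicit inverse is already in hand, but it asserts the $\e$-independent bound somewhat tersely; your route is slightly longer but identifies the precise structural cancellation that prevents the $O(1)$ perturbation from degrading the Jacobian, which is an observation that would survive even without an explicit inverse formula and so is arguably more robust. One small caveat worth stating if you wrote this up: since $\varphi_\e$ is only $W^{1,\infty}$, $D\Phi_\e$ exists only a.e.; the Cramer's-rule step then bounds $\|\nabla(\Phi_\e^{-1})\|_{L^\infty}$, which equals the Lipschitz constant of $\Phi_\e^{-1}$ because you have already shown $\Phi_\e^{-1}$ is a Lipschitz homeomorphism from the explicit formula, so there is no circularity, but the two pieces of the argument do lean on each other.
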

\begin{proof}
To prove that the expression is the inverse  on $\Omega_\e\setminus\Omega^\delta$, first we notice that for such $x$, $P_{\Gamma}\Phi_\e(x)=P_{\Gamma}x$, so that $x=P_\Gamma(x)+\delta(s_x-1)\nu(P_\Gamma(x))$, where it is straightforward to verify that $s_x$ is as in \eqref{eqS}. Thus it remains to find $y\in\Omega\setminus\Omega^\delta$ so that

\begin{equation}
P_\Gamma x+\delta(s_x-1)\nu(P_\Gamma x)=\Phi_\e y = P_\Gamma y+ \left(\left(\varphi_\e(P_\Gamma y)+\delta\right)t_y-\delta\right)\nu(P_{\Gamma}y),
\end{equation}
 where $P_\Gamma y = P_\Gamma x$. By subtracting $P_\Gamma x$ from both sides and taking norms, this reduces to the simple scalar equation
\begin{equation}
\delta(s_x-1)=(\varphi_\e(P_\Gamma x)+\delta) t_y-\delta,
\end{equation}
which is easily inverted to give 
\begin{equation}
t_y= \frac{\delta s_x}{\varphi_\e(P_\Gamma x)+\delta},
\end{equation}

which is readily substituted into $x=P_\Gamma x + \delta (t_x-1)\nu(P_\Gamma x)
$ to give the form in \eqref{eqInverseBilip}.

 To observe that these maps are bi-Lipschitz, it suffices to notice that both $\Phi_\e,\Phi_\e^{-1}$ are continuous functions, and their derivatives can be uniformly bounded from above via their algebraic expressions, using that $P_\Gamma$, $\varphi_\e$, $t_x,s_x$ are all Lipschitz, with Lipschitz bounds that are independent of $\e$, and the algebraic expressions for $\Phi_\e,\Phi_\e^{-1}$ are non-singular on their domains of definition.

\end{proof}

\begin{lemma}\label{lemmaPhiConverge}
$\Phi_\e$, as defined in \eqref{eqDefBilip}, converges uniformly to the identity. Furthermore, for every $U\subset\subset\Omega$, there exists $\e_0>0$ so that for all $\e_0>\e>0$, $\Phi_\e(U)\subset\subset\Omega$.
\end{lemma}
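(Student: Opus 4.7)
The plan is to handle the two claims separately, both of which reduce to a direct computation using the explicit formula \eqref{eqDefBilip} for $\Phi_\e$.

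For uniform convergence to the identity, I would split into the two cases from the definition. On $\Omega^\delta$, we have $\Phi_\e x = x$ by definition, so there is nothing to prove. On $\Omega\setminus\Omega^\delta$, any point may be written as $x = P_\Gamma x + \delta(t_x-1)\nu(P_\Gamma x)$ with $t_x\in[0,1]$, while
\begin{equation}
\Phi_\e x = P_\Gamma x + \bigl((\varphi_\e(P_\Gamma x)+\delta)t_x-\delta\bigr)\nu(P_\Gamma x).
\end{equation}
Subtracting and using $|\nu(P_\Gamma x)|=1$ yields the clean identity
\begin{equation}
\Phi_\e x - x = \varphi_\e(P_\Gamma x)\, t_x\, \nu(P_\Gamma x),
\end{equation}
so that $|\Phi_\e x - x|\leq \|\varphi_\e\|_{L^\infty(\Gamma)} = O(\e)$ by \Cref{assumpOmegaEps}. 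Combined with the identity on $\Omega^\delta$, this gives $\sup_{x\in\Omega}|\Phi_\e x - x|=O(\e)\to 0$.

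For the inclusion $\Phi_\e(U)\subset\subset\Omega$, given $U\subset\subset\Omega$, set $d_0 := \mathrm{dist}(\overline{U},\partial\Omega)>0$, which is strictly positive by compactness of $\overline{U}$ and its disjointness from $\partial\Omega$. By the uniform bound just established, choose $\e_0>0$ small enough that $\|\varphi_\e\|_{L^\infty(\Gamma)}<d_0/2$ for all $\e<\e_0$. Then for any $x\in U$,
\begin{equation}
\mathrm{dist}(\Phi_\e x,\partial\Omega)\geq \mathrm{dist}(x,\partial\Omega) - |\Phi_\e x - x|\geq d_0 - d_0/2 = d_0/2,
\end{equation}
so $\Phi_\e(U)$ is contained in the compact set $\{y\in\Omega:\mathrm{dist}(y,\partial\Omega)\geq d_0/2\}\subset\subset\Omega$, as required.

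There is no real obstacle here: the key observation is the cancellation that produces $\Phi_\e x - x = \varphi_\e(P_\Gamma x)\,t_x\,\nu(P_\Gamma x)$, after which the bound $\|\varphi_\e\|_\infty=O(\e)$ from \Cref{assumpOmegaEps} does all the work. The only point requiring a small check is well-definedness of the formula, which is already guaranteed by the hypothesis $\|\varphi_\e\|_\infty<\tfrac{1}{2}\delta$ imposed in \Cref{defProjection} for small $\e$.
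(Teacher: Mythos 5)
Your proposal is correct and follows essentially the same two-step argument as the paper: the same algebraic cancellation giving $\Phi_\e x - x = \varphi_\e(P_\Gamma x)\,t_x\,\nu(P_\Gamma x)$ for the uniform convergence, and then a routine compactness/distance argument for the inclusion. The only cosmetic difference is that the paper phrases the inclusion step via a thickening $U + B_\eta \subset\subset \Omega$ rather than your distance-to-the-boundary estimate; both are the same idea.
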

\begin{proof}
Clearly on $\Omega^\delta$ there is nothing to prove as $\Phi_\e|_{\Omega^\delta}$ is the identity, so we consider the case with $x\in \Omega\setminus \Omega^\delta$. In this case we have that for $x=P_\Gamma x + \delta(t_x-1)\nu(P_\Gamma x)$,
\begin{equation}
\begin{split}
|x-\Phi_\e(x)|=&\left|P_{\Gamma}x+\delta(t_x-1)\nu(P_{\Gamma}x)-P_{\Gamma}x-\left(\left(\varphi_\e(P_\Gamma x)+\delta\right)t_x-\delta\right)\nu(P_{\Gamma}x)\right|\\
=&\left|\delta(t_x-1)-\left(\left(\varphi_\e(P_\Gamma x)+\delta\right)t_x-\delta\right)\right|\\
=&\left|\varphi_\e(P_\Gamma x)t_x\right|\\
\end{split}
\end{equation}
which, as $t_x\in[0,1]$ and $||\varphi_\e||_\infty\to 0$, converges to zero uniformly in $x$ as $\e\to 0$.

It is then immediate that for $U\subset\subset\Omega$ we must have that $\Phi_\e\subset\Omega$ for sufficiently small $\e$, since for any $\eta$ sufficiently small so that $U+B_\eta\subset\subset \Omega$, we can take $\e$ sufficiently small so that $|\Phi_\e(x)-x|<\eta$, and thus $\Phi_\e(U)\subset U+B_\eta$. 
\end{proof}

\begin{lemma}\label{lemmaChangeOfVars}
 Let $\Omega,\Omega_\e$ satisfy Assumptions \ref{assumpOmega},  \ref{assumpOmegaEps}. Let $1<p<\infty$, $u_\e\in W^{1,p}(\Omega_\e,V)$ be such that $u_\e\rto u_0\in W^{1,p}(\Omega,V)$. Then $u_\e\circ \Phi_\e\in W^{1,p}(\Omega,V)$ satisfies $u_\e\circ \Phi_\e\overset{W^{1,p}(\Omega)}{\rightharpoonup} u_0$ also. 
\end{lemma}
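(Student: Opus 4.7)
My plan is to first obtain a uniform $W^{1,p}(\Omega, V)$ bound on the composed sequence $u_\e \circ \Phi_\e$, extract a weakly convergent subsequence, and then use the fact that $\Phi_\e$ equals the identity away from the boundary to identify the weak limit with $u_0$.

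To obtain the uniform bound, I would apply the standard chain rule for composition of $W^{1,p}$ functions with bi-Lipschitz maps, which gives $u_\e \circ \Phi_\e \in W^{1,p}(\Omega, V)$ with $\nabla(u_\e \circ \Phi_\e)(x) = \nabla u_\e(\Phi_\e(x)) \, D\Phi_\e(x)$ almost everywhere. Changing variables and using the uniform bi-Lipschitz constant $L$ from \Cref{propBilipInverse} (which controls $|D\Phi_\e|$ and $|\det D\Phi_\e^{-1}|$ independently of $\e$), both $\|u_\e \circ \Phi_\e\|_{L^p(\Omega)}^p$ and $\|\nabla(u_\e \circ \Phi_\e)\|_{L^p(\Omega)}^p$ are controlled by a constant multiple of $\int_{\Omega_\e} |u_\e|^p + |\nabla u_\e|^p \, dy$. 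This last quantity is uniformly bounded because $E_D u_\e$ and $E_D \nabla u_\e$ are weakly convergent, hence bounded, in $L^p(D)$ by the definition of rugose convergence. By reflexivity of $W^{1,p}(\Omega, V)$ with $1 < p < \infty$, I then extract a subsequence converging weakly to some limit $v \in W^{1,p}(\Omega, V)$.

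To identify $v = u_0$, I would exploit the key structural feature baked into \Cref{defProjection}: $\Phi_\e$ is the identity on $\Omega^\delta = \{x \in \Omega : d(x, \partial\Omega) > \delta\}$ for every $\delta > 0$ and every $\e$ small enough that $\|\varphi_\e\|_\infty < \delta/2$. Moreover $\Omega^\delta \subset\subset \Omega_\e$ for such $\e$ by \Cref{assumpOmegaEps}, so $u_\e \circ \Phi_\e = u_\e$ on $\Omega^\delta$. For any $U \subset\subset \Omega^\delta$, rugose convergence gives $R_U u_\e \rightharpoonup R_U u_0$ in $W^{1,p}(U, V)$, and by uniqueness of weak limits $v = u_0$ almost everywhere on $U$. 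Letting $U$ exhaust $\Omega^\delta$ and then $\delta \to 0$ yields $v = u_0$ a.e. on $\Omega$. Since every subsequence of $(u_\e \circ \Phi_\e)$ admits a further subsequence converging weakly to $u_0$, the full sequence converges weakly.

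The argument contains no substantial obstacle; the delicate engineering has already been done upstream, in the construction of $\Phi_\e$ that acts trivially on the interior while transitioning to $\varphi_\e$-displacement on $\Gamma$, and in the uniform bi-Lipschitz bound of \Cref{propBilipInverse}. With those ingredients in place, the identification of the weak limit is immediate from the restriction $\Phi_\e|_{\Omega^\delta} = \text{id}$ and the interior part of rugose convergence.
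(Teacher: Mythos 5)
Your first two steps (uniform $W^{1,p}$ bound via the uniform bi-Lipschitz property of $\Phi_\e$ and extraction of a weak limit $v$, followed by a subsequence-of-subsequence reduction) are correct and match the paper. The gap is in the identification step, where you claim ``$\Phi_\e$ is the identity on $\Omega^\delta$ for every $\delta > 0$ and every $\e$ small enough,'' and then ``let $U$ exhaust $\Omega^\delta$ and then $\delta \to 0$.'' This does not work: the parameter $\delta$ appearing in the construction of $\Phi_\e$ (\Cref{defProjection}) is \emph{fixed once and for all}, chosen small enough that the nearest-point projection $P_\Gamma$ is $C^1$ on the tubular neighbourhood $\Gamma + B_\delta$. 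The map $\Phi_\e$ equals the identity only on the fixed set $\Omega^\delta = \{x : d(x,\partial\Omega) > \delta\}$; on the annular region $\Omega \setminus \Omega^\delta$ (a set of positive measure) it performs the interpolation \eqref{eqDefBilip} and is \emph{not} the identity for any $\e > 0$. You cannot shrink $\delta$: doing so changes the map $\Phi_\e$ itself, hence changes the sequence $u_\e \circ \Phi_\e$ whose limit you are trying to identify. Your argument therefore establishes $v = u_0$ only on $\Omega^\delta$, leaving the identification open on $\Omega \setminus \Omega^\delta$.

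The paper closes exactly this gap by a different argument on the boundary layer: it estimates $\|u_0 - u_k\circ\Phi_k\|_{L^p(U)}$ for arbitrary $U\subset\subset\Omega$ via a three-term triangle inequality with a Lipschitz approximant $\hat u_0$ of $u_0$, using (i) density of Lipschitz functions in $L^p$, (ii) the uniform convergence $\Phi_\e \to \mathrm{id}$ established in \Cref{lemmaPhiConverge} so that $\hat u_0 \circ \Phi_k \to \hat u_0$ on $U$, and (iii) the uniform bi-Lipschitz bound to transfer $\|\hat u_0\circ\Phi_k - u_k\circ\Phi_k\|_{L^p(U)} \le C\|\hat u_0 - u_k\|_{L^p(U')}$ combined with the strong $L^p_{\mathrm{loc}}$ convergence of $u_k$ to $u_0$ furnished by the interior part of rugose convergence. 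Neither (i) nor (ii) is present in your proposal, and without them the region $\Omega\setminus\Omega^\delta$ is not handled. Your observation that $\Phi_\e = \mathrm{id}$ on $\Omega^\delta$ is a useful and correct sanity check, but it does not suffice for the full identification.
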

\begin{proof}

Our first observation is that it suffices to show that for any subsequence $\e_j\to 0$, there exists a further subsequence $\e_{j_k}\to 0$ such that $u_{\e_{j_k}}\circ\Phi_{\e_{j_k}}\overset{W^{1,p}}{\rightharpoonup}u_0$.  If this were true, then as all subsequences of $u_\e\circ\Phi_\e$ have a $W^{1,p}$-weakly convergening subsequence, all admitting the same limit, we must have that $u_\e\circ\Phi_\e\overset{W^{1,p}}{\rightharpoonup}u_0$.Then we observe that as $\Phi_\e$ is bi-Lipschitz with bi-Lipschitz constants bounded away from $0$ and $\infty$ independently of $\e$, and $||u_\e||_{W^{1,p}(\Omega_\e,V)}$ is bounded, we therefore have that $u_\e\circ\Phi_\e$ is bounded in $W^{1,p}(\Omega,V)$ \cite[Theorem 2.2.2.]{ziemer1989sobolev}. Therefore, given any subsequence $\e_j\to 0$, we may take a subsequence $\e_{j_k}\to 0$ such that $u_{\e_{j_k}}\circ\Phi_{\e_{j_k}}$ converges weakly in $W^{1,p}(\Omega,V)$, and strongly in $L^p(\Omega,V)$ to some $\tilde{u}_0$. As such, it remains only to show that $\tilde{u}_0=u_0$. 

 For brevity of notation, we denote $u_k=u_{\e_{j_k}},\Phi_k=\Phi_{\e_{j_k}}$. To show that $\tilde{u}_0=u_0$, it suffices to show that for any $U\subset\subset\Omega$, 
\begin{equation}
\lim\limits_{j\to\infty } \int_U |u_0(x)-u_k\circ\Phi_k(x)|^p\,dx = 0.
\end{equation}
First, we let $\hat{u}_0$ denote a Lipschitz approximant of $u_0$ on an open set $U'$ which satisfies 
\begin{equation}
\bigcap\limits_{\e<\e_0}\Phi_\e(U)\subset\subset U'\subset\subset \Omega.
\end{equation}
 An $\e_0>0$ that permits this chain of inclusions is guaranteed by \Cref{lemmaPhiConverge}. Then we estimate 
\begin{equation}
||u_0-u_k\circ\Phi_k||_{L^p(U)}\leq  ||u_0-\hat{u}_0||_{L^p(U)}+||\hat{u}_0-\hat{u}_0\circ\Phi_k||_{L^p(U)}+||\hat{u}_0\circ\Phi_k-u_k\circ\Phi_k||_{L^p(U)}.
\end{equation}
Using \Cref{lemmaPhiConverge}, as $\hat{u}_0$ is Lipschitz on $U'$, it is clear that $\lim\limits_{j\to 0}||\hat{u}_0-\hat{u}_0\circ\Phi_k||_{L^p(U,V)}$. Then, as $\Phi_k$ is uniformly bi-Lipschitz, we must have that $||\hat{u}_0\circ\Phi_k-u_k\circ\Phi_k||_{L^p(U,V)}\leq C ||\hat{u}_0-u_k||_{L^p(U',V)}$ for some $C>0$. Thus we may estimate 
\begin{equation}
\begin{split}
\limsup\limits_{\j\to \infty}||u_0-u_k\circ\Phi_k||_{L^p(U,V)}\leq &||u_0-\hat{u}_0||_{L^p(U,V)}+C\limsup\limits_{\j\to \infty}||\hat{u}_0-u_k||_{L^p(U',V)}\\
= & ||u_0-\hat{u}_0||_{L^p(U,V)}+C\limsup\limits_{\j\to \infty}||\hat{u}_0-u_0||_{L^p(U',V)},
\end{split}
\end{equation}
as, by assumption, $u_k\rto u$ and thus $u_k\overset{L^p(U',V)}{\to}u_0$. Finally, we note that as $\hat{u}_0$ was an arbitrary approximation, by taking $\hat{u}_0$ to approximate $u_0$ in $L^p(U,V)$, which is permitted by the density of Lipschitz functions in $L^p$, we obtain that $\limsup\limits_{\j\to \infty}||u_0-u_k\circ\Phi_k||_{L^p(U,V)}=0$. As $U$ was arbitrary, $u_0=\tilde{u}_0$.

\end{proof}

\begin{proposition}\label{propSurfaceContinuous}
Let $u_\e\in W^{1,p}(\Omega_\e,V)$ such that $u_\e\rto u$. Then 
\begin{equation}
\lim\limits_{\e\to 0}\int_{\Gamma_\e} w(P_\Gamma x,\nu_\e(x),u_\e(x))\,dS(x)  =  \int_\Gamma w_h(x,u(x))\,dS(x)
\end{equation}
\end{proposition}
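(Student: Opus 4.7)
The plan is to reduce the claim to the weak surface convergence proved in \Cref{propWeakSurfaceConv} by first flattening the rugose boundary via the change of variables $\Phi_\e$ and then exploiting strong $L^q(\Gamma)$ convergence of traces. Specifically, I will first perform a change of variables so that the integral over $\Gamma_\e$ becomes an integral over $\Gamma$, picking up the surface element $\gamma_\e$ and the pulled-back normal $\tilde{\nu}_\e$. Since $\Phi_\e|_\Gamma$ is precisely the parametrisation $x\mapsto x+\varphi_\e(x)\nu(x)$ from \Cref{assumpOmegaEps}, we obtain
\begin{equation*}
\int_{\Gamma_\e}w(P_\Gamma x,\nu_\e(x),u_\e(x))\,dS(x)=\int_\Gamma w\bigl(x,\tilde{\nu}_\e(x),(u_\e\circ\Phi_\e)(x)\bigr)\gamma_\e(x)\,dS(x),
\end{equation*}
where I used that $P_\Gamma(x+\varphi_\e(x)\nu(x))=x$ for $x\in\Gamma$ and sufficiently small $\e$.

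Next I would upgrade the rugose convergence $u_\e\rto u$ to strong convergence of traces. By \Cref{lemmaChangeOfVars}, $u_\e\circ\Phi_\e\rightharpoonup u$ weakly in $W^{1,p}(\Omega,V)$. Since $q<p^*$ by \Cref{assumpSurface1}, the trace operator $W^{1,p}(\Omega,V)\to L^q(\Gamma,V)$ is compact, so $(u_\e\circ\Phi_\e)|_\Gamma\to u|_\Gamma$ strongly in $L^q(\Gamma,V)$, and in particular this sequence is bounded in $L^q(\Gamma,V)$.

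The key estimate is then to add and subtract $w(x,\tilde{\nu}_\e(x),u(x))\gamma_\e(x)$ inside the integral, writing
\begin{equation*}
\int_\Gamma w(x,\tilde{\nu}_\e,u_\e\circ\Phi_\e)\gamma_\e\,dS=\int_\Gamma\bigl[w(x,\tilde{\nu}_\e,u_\e\circ\Phi_\e)-w(x,\tilde{\nu}_\e,u)\bigr]\gamma_\e\,dS+\int_\Gamma w(x,\tilde{\nu}_\e,u)\gamma_\e\,dS.
\end{equation*}
For the first piece, the Lipschitz-type growth bound on $w$ from \Cref{assumpSurface1} combined with the uniform $L^\infty$ bound on $\gamma_\e$ and H\"older's inequality (with exponents $q/(q-1)$ and $q$) gives
\begin{equation*}
\left|\int_\Gamma\bigl[w(x,\tilde{\nu}_\e,u_\e\circ\Phi_\e)-w(x,\tilde{\nu}_\e,u)\bigr]\gamma_\e\,dS\right|\leq C\bigl(\|u_\e\circ\Phi_\e\|_{L^q(\Gamma)}^{q-1}+\|u\|_{L^q(\Gamma)}^{q-1}+1\bigr)\|u_\e\circ\Phi_\e-u\|_{L^q(\Gamma)},
\end{equation*}
which vanishes as $\e\to 0$ thanks to the compact-trace convergence just established. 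For the second piece, $u$ is a fixed element of $L^q(\Gamma,V)$, so \Cref{propWeakSurfaceConv} applies directly and yields convergence to $\int_\Gamma w_h(x,u(x))\,dS(x)$.

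The main technical obstacle is the decoupling between the oscillation of $\tilde{\nu}_\e$ (which is only weakly convergent, capturing the Young measure) and the $\e$-dependence of $u_\e$. This is precisely resolved by the compactness of the trace operator, which is why the hypothesis $q<p^*$ is essential: it converts the weak $W^{1,p}$ convergence of $u_\e\circ\Phi_\e$ into strong $L^q(\Gamma)$ convergence, allowing us to absorb the $u_\e$-dependence into a negligible remainder and defer the genuine homogenisation argument to the already-established \Cref{propWeakSurfaceConv}.
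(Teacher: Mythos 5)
Your proof is correct and follows essentially the same route as the paper: change of variables to $\Gamma$, weak $W^{1,p}(\Omega)$ convergence of $u_\e\circ\Phi_\e$ via \Cref{lemmaChangeOfVars}, compactness of the trace into $L^q(\Gamma,V)$, and then \Cref{propWeakSurfaceConv}. The paper states the conclusion by saying these ingredients ``combine'' to give the result; you make that step explicit with the add-and-subtract decomposition and the H\"older estimate using the Lipschitz-in-$u$ bound from \Cref{assumpSurface1} together with the uniform $L^\infty$ bound on $\gamma_\e$, which is exactly the argument the paper leaves implicit.
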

\begin{proof}
From the \Cref{lemmaChangeOfVars}, we must have that $u_\e\circ \Phi_{\e}\overset{W^{1,p}(\Omega)}{\rightharpoonup}u$. Then,  for $q$ as in \Cref{assumpSurface1}, $(u\circ \Phi_{\e})|_\Gamma\overset{L^q(\Gamma)}{\to}u|_\Gamma$ as the embedding $W^{1,p}(\Omega,V)\hookrightarrow L^q(\Gamma,V)$ is compact.

We may then perform a change of variables on the integral to give 
\begin{equation}
\int_{\Gamma_\e} w(P_\Gamma x,\nu_\e(x),u_\e(x))\,dS(x)=\int_{\Gamma} w(x,\tilde{\nu}_\e(x),u_\e\circ \Phi_\e(x))\gamma_\e(x)\,dS(x),
\end{equation}
which combined with \Cref{propWeakSurfaceConv} and the strong $L^q(\Gamma,V)$ convergence of $u_{\e}\circ \Phi_\e|_\Gamma$ implies that 
\begin{equation}
\lim\limits_{\e\to 0}\int_{\Gamma_{\e}} w(x,\nu_{\e}(x),u_{\e}(x))\,dS(x)=\int_\Gamma w_h(x,u(x))\,dS(x).
\end{equation}

%{\bf Arghir: I don't understand the point about $P_\Gamma x$. $x\in \Gamma_\e$, $P_\Gamma x \in \Gamma$, and $w:\Gamma\times\mathbb{S}^{N-1}\times V\to \mathbb{R}$.} 

\end{proof}

\subsubsection{Proof of \Cref{theoremGammaConvergence}}

\begin{definition}
Define $\mf:W^{1,p}(\Omega,V)\to \mathbb{R}\cup\{+\infty\}$ by 
\begin{equation}
\mf(u)=\int_\Omega F(\nabla u(x),u(x),x)\,dx + \int_{\Gamma}w_h(x,u(x))\,dS(x),
\end{equation}
where $F$ is as in \Cref{assumpInterior}, and $w_h$ as in \eqref{eqHomoSurface}.
\end{definition}

\begin{proposition}[Liminf]\label{propLiminf}
Let $u_\e\in W^{1,p}(\Omega_\e,V)$ for $\e>0$, such that $u_\e\rto u\in W^{1,p}(\Omega,V)$. Then 
\begin{equation}
\liminf\limits_{\e\to 0}\mf_\e(u_\e)\geq \mf(u).
\end{equation}
\end{proposition}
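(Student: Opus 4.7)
The plan is to decompose $\mathcal{F}_\e(u_\e)$ into its interior contribution $\int_{\Omega_\e} F(\nabla u_\e,u_\e,x)\,dx$ and its surface contribution $\int_{\Gamma_\e} w(P_\Gamma x,\nu_\e(x),u_\e(x))\,dS(x)$, and handle each separately. The surface term is already dealt with by \Cref{propSurfaceContinuous}, which gives the full limit (not just a liminf)
\begin{equation*}
\lim_{\e\to 0}\int_{\Gamma_\e}w(P_\Gamma x,\nu_\e(x),u_\e(x))\,dS(x)=\int_{\Gamma}w_h(x,u(x))\,dS(x),
\end{equation*}
so no further work is required there. The entire difficulty is therefore concentrated in the interior term.

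For the interior term I would exploit the fact that rugose convergence is essentially weak $W^{1,p}$ convergence on every compactly contained subset of $\Omega$. Fix $U\subset\subset \Omega$. By \Cref{assumpOmegaEps} we have $U\subset\Omega_\e$ for all sufficiently small $\e$, and the definition of rugose convergence (\Cref{defRugoseConv}) gives $u_\e|_U\rightharpoonup u|_U$ in $W^{1,p}(U,V)$. Since $F$ is a nonnegative Carath\'eodory integrand that is convex in its gradient argument, the classical lower semicontinuity theorem for convex integral functionals under weak $W^{1,p}$ convergence (see, e.g., Ioffe's theorem) applies, yielding
\begin{equation*}
\liminf_{\e\to 0}\int_U F(\nabla u_\e(x),u_\e(x),x)\,dx\geq \int_U F(\nabla u(x),u(x),x)\,dx.
\end{equation*}
Using that $F\geq 0$ and $U\subset\Omega_\e$, we have $\int_{\Omega_\e}F(\nabla u_\e,u_\e,x)\,dx\geq \int_U F(\nabla u_\e,u_\e,x)\,dx$, so the same liminf inequality holds with $\Omega_\e$ in place of $U$ on the left.

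To conclude, I would take an increasing exhaustion $U_n\subset\subset U_{n+1}\subset\subset\Omega$ with $\bigcup_n U_n=\Omega$ and apply the monotone convergence theorem on the right-hand side, yielding
\begin{equation*}
\liminf_{\e\to 0}\int_{\Omega_\e}F(\nabla u_\e,u_\e,x)\,dx\geq \sup_n\int_{U_n}F(\nabla u,u,x)\,dx=\int_\Omega F(\nabla u,u,x)\,dx.
\end{equation*}
Adding this to the surface limit from \Cref{propSurfaceContinuous} gives $\liminf_{\e\to 0}\mathcal{F}_\e(u_\e)\geq \mathcal{F}(u)$. Strictly speaking there is no significant obstacle here, since all the technical work has been done in establishing \Cref{propSurfaceContinuous} and the weak lower semicontinuity of convex integrands is classical. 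The only point deserving care is the interplay between the varying domains $\Omega_\e$ and the localization to $U\subset\subset\Omega$, which is precisely what rugose convergence and \Cref{assumpOmegaEps} were designed to accommodate; nonnegativity of $F$ is what lets us discard the ``bulk rugosity strip'' $\Omega_\e\setminus\Omega$ for free when passing to the liminf.
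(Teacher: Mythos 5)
Your proposal is correct and follows essentially the same route as the paper's own proof: decompose $\mathcal{F}_\e$ into interior and surface parts, invoke \Cref{propSurfaceContinuous} to pass to the limit in the surface term, and handle the interior term by localising to $U\subset\subset\Omega$, applying classical weak $W^{1,p}$ lower semicontinuity for nonnegative convex integrands, and exhausting $\Omega$. The only cosmetic difference is that the paper first extracts a subsequence realising the $\liminf$ and re-invokes \Cref{propCompactness}, which is redundant given that rugose convergence of the full sequence is already a hypothesis; your more direct argument avoids that detour.
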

\begin{proof}
Take a subsequence $\e_j\to 0$, $u_j=u_{\e_j}$, such that $\liminf\limits_{\e\to 0}\mf_\e(u_\e)=\lim\limits_{j\to\infty}\mf_{\e_j}(u_j)$. If the limit is $+\infty$ the result is trivial, so assume otherwise. In this case, by \Cref{propCompactness}, we can take a further subsequence (without relabelling) such that $u_j\rto u$ for some $u\in W^{1,p}(\Omega,V)$. In particular the surface term, following \Cref{propSurfaceContinuous}, satisfies 
\begin{equation}
\lim\limits_{j\to\infty}\int_{\Gamma_{\e_j}}w(P_\Gamma x,\nu_{\e_j}(x),u_j(x))\,dS(x)=\int_{\Gamma}w_h(x,u(x))\,dS(x). 
\end{equation}
We thus consider the interior term. Take $U\subset\subset \Omega$. For $j$ sufficiently large, $ U \subset \Omega_{\e_j}$, therefore, by the non-negativity of $F$, we have that 
\begin{equation}
\liminf\limits_{j\to\infty}\int_{\Omega_{\e_j}}F(\nabla u_j(x),u_j(x),x)\,dx\geq \liminf\limits_{j\to\infty}\int_{U}F(\nabla u_j(x),u_j(x),x)\,dx\geq \int_{U}F(\nabla u(x),u(x),x)\,dx
\end{equation}
by standard semicontinuity results for integral functionals \cite{dacorogna2007dm}. As $U$ is arbitrary, and the right-hand side is increasing in $U$ by the non-negativity of $F$, we therefore have that 
\begin{equation}
\liminf\limits_{j\to\infty}\int_{\Omega_{\e_j}}F(\nabla u_j(x),u_j(x),x)\,dx\geq \int_{\Omega}F(\nabla u(x),u(x),x)\,dx,
\end{equation} 
completing the proof. 
\end{proof}

\begin{proposition}\label{propExtension}
Let $\Omega$ satisfy \Cref{assumpOmega}, $F$ satisfy \ref{assumpInterior}, $w$ satisfy \ref{assumpSurface2}. Then for $\delta>0$ as in \ref{defProjection}, and $\Omega'=\Omega+B_\delta =\{x+y:x\in\Omega,|y|<\delta\}$. Then, for every $u\in W^{1,p}(\Omega,V)$ such that $\mathcal{F}(u)<+\infty$, there exists an extension $Tu\in W^{1,p}(\Omega',V)$ such that $u'|_\Omega=u$, and 
\begin{equation}\label{eqFiniteEnergyExt}
\int_{\Omega'} F(\nabla (Tu)(x),Tu(x),x)\,dx<+\infty.
\end{equation}

\end{proposition}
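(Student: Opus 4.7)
The plan is to construct $Tu$ by reflection across $\Gamma$ in the tubular neighbourhood $\Gamma+B_\delta$. Given that $\delta>0$ is small enough for $P_\Gamma$ to be well-defined and $C^1$ on $\Gamma+B_\delta$, for every $x\in\Omega'\setminus\bar\Omega$ we may write uniquely
\begin{equation*}
x=P_\Gamma x+s_x\nu(P_\Gamma x),\quad s_x\in(0,\delta),
\end{equation*}
and define the reflected point $Rx=P_\Gamma x-s_x\nu(P_\Gamma x)$. For sufficiently small $\delta$, $R$ maps $\Omega'\setminus\bar\Omega$ bi-Lipschitz onto an inner collar $\{y\in\Omega:d(y,\Gamma)<\delta\}$, with bi-Lipschitz constants depending only on $\delta$ and the $C^2$ geometry of $\Gamma$. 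We then set $Tu=u$ on $\Omega$ and $Tu=u\circ R$ on $\Omega'\setminus\bar\Omega$.

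The first step is to verify $Tu\in W^{1,p}(\Omega',V)$. Since $R$ is bi-Lipschitz, $u\circ R\in W^{1,p}(\Omega'\setminus\bar\Omega,V)$ by the standard composition result \cite[Theorem 2.2.2]{ziemer1989sobolev}. To piece the two halves together across $\Gamma$, I note that $R$ extends continuously to the identity on $\Gamma$, so the traces of $u$ from inside and of $u\circ R$ from outside agree in $L^p(\Gamma,V)$; hence $Tu$ has no singular part in its distributional gradient supported on $\Gamma$, and is globally Sobolev.

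The second step is the energy bound. On $\Omega$, one has $\int_\Omega F(\nabla Tu,Tu,x)\,dx=\int_\Omega F(\nabla u,u,x)\,dx<+\infty$ directly from $\mf(u)<+\infty$. On $\Omega'\setminus\bar\Omega$, I apply the upper bound $F(A,u,x)\leq C_1 F_h(A,u)$ from \Cref{assumpInterior}, change variables $y=Rx$, and use the inequality
\begin{equation*}
\int_{R(U)}F_h(\nabla\tilde u(x),\tilde u(x))\,dx\leq CL_0\int_U \bigl(F_h(\nabla u(y),u(y))+1\bigr)\,dy,
\end{equation*}
(derived in the remark following \Cref{assumpInterior} from \eqref{eqWeirdBound}, with $\tilde u=u\circ R^{-1}$ and $L_0$ the squared bi-Lipschitz constant of $R$) to conclude
\begin{equation*}
\int_{\Omega'\setminus\Omega}F(\nabla Tu,Tu,x)\,dx\leq C\int_{R(\Omega'\setminus\Omega)}\bigl(F(\nabla u,u,x)+1\bigr)\,dx<+\infty,
\end{equation*}
which combined with the interior bound yields \eqref{eqFiniteEnergyExt}.

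The main obstacle is showing that $R$ is bi-Lipschitz with constants controlled uniformly in terms of $\delta$ and the geometry of $\Gamma$; this requires some care because $R$ is a reflection through a curved hypersurface, but it reduces to straightforward estimates on $P_\Gamma$ and the signed distance function using the $C^2$ regularity of $\Gamma$ and the smallness assumption on $\delta$ from \Cref{defProjection}. Everything else is a routine application of \Cref{assumpInterior}; in particular the crucial role of the bilipschitz-invariance-up-to-constants condition \eqref{eqWeirdBound} is precisely to make such reflection extensions energy-finite even when $F$ is not coercive from above (as in the Oseen-Frank case discussed in \Cref{remarkOFVector}).
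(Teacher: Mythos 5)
Your proof takes essentially the same approach as the paper: extend $u$ by reflection across $\Gamma$ via a bi-Lipschitz map into the inner collar, then use the upper bound $F\leq C_1F_h$ together with the bilipschitz-invariance property \eqref{eqWeirdBound} to control the energy of the extension. The only noteworthy difference is in how $Tu\in W^{1,p}(\Omega')$ is established—you glue the two pieces across $\Gamma$ by matching traces, whereas the paper argues first for $u\in C^1(\bar\Omega,V)$ and then extends the linear operator $T$ by density—both being standard and equally valid routes; incidentally, your explicit formula $Rx=P_\Gamma x-s_x\nu(P_\Gamma x)$ is the correct reflection (the paper's formula $x\mapsto x-t_x\delta\nu(P_\Gamma x)$ as written reduces to $P_\Gamma x$ and is missing a factor of $2$, though the intent is clearly the reflection you wrote).
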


\begin{proof}
The proof strategy is to show first that there is a bounded linear extension operator, defined by reflections, $T:W^{1,p}(\Omega,V)\to W^{1,p}(\Omega',V)$. Then, we will show that this reflection operator is such that if $\mathcal{F}(u)$ has finite energy, then \eqref{eqFiniteEnergyExt} holds.  

First, we consider the simpler case when $u\in C^1(\bar{\Omega},V)$, which we will later extend via density. We will exploit a reflection argument, of a similar flavour to that of \cite[Theorem 5.22]{adams2003sobolev}, albeit simplified for the case at hand. First we define the reflection, in analogy to the argument presented in \Cref{propBilipInverse}. For $x\in\Omega'\setminus\Omega$, we may write 
\begin{equation}
x=P_\Gamma x + t_x\delta\nu(P_\Gamma x),
\end{equation}
where $t_x\in[0,1]$ is given explicitly via $t_x=\frac{1}{\delta}|x-P_\Gamma x|$. We thus define 
\begin{equation}\label{eqDefReflec}
Tu(x)=u(x-t_x\delta\nu(P_\Gamma x))
\end{equation}
for $x\in \Omega'\setminus \Omega$. As $P_\Gamma x$ and $t_x$ are Lipschitz functions, and $\nu$ is $C^1$, we thus have that the map $x\mapsto x-t_x \delta \nu(P_\Gamma x)$ is a Lipschitz map, and in fact bi-Lipschitz, by noting that its inverse is given by the simple formula $x'\mapsto x'+|x'-P_\Gamma x'|\nu(P_\Gamma x')$. Thus we see that $Tu$ defined by $Tu(x)=u(x)$ on $\bar{\Omega}$, and by \eqref{eqDefReflec} on $\Omega'\setminus\Omega$ defines a Lipschitz function, as $u\in C^1(\bar{\Omega},V)$. Furthermore, as the reflection mapping $x\mapsto x-t_x\delta\nu(P_\Gamma x)$ is bi-Lipschitz as a map between $\Omega\setminus\Omega^\delta$ (as defined in \Cref{defProjection}) and $\Omega'\setminus\Omega$, we have that 
\begin{equation}
\begin{split}
\int_{\Omega'}|\nabla (Tu)(x)|p\,dx =& \int_{\Omega}|\nabla (Tu)(x)|^p\,dx +\int_{\Omega'\setminus\Omega}|\nabla (Tu)(x)|^p\,dx \\
\leq & \int_{\Omega}|\nabla (Tu)(x)|^p\,dx+C\int_{\Omega\setminus\Omega^\delta}|\nabla u(x)|^p\,dx\\
\leq & (C+1)\int_\Omega |\nabla u(x)|^p,dx,
\end{split}
\end{equation}
where $C$ is independent of $u$, and only depends on the bi-Lipschitz constants of the reflection map. Similarly, and more trivially, we have that 
\begin{equation}
\int_{\Omega'}|Tu(x)|^p\,dx \leq C\int_\Omega |u(x)|^p\,dx,
\end{equation}
so that $Tu\in W^{1,p}(\Omega',V)$, and $||Tu||_{W^{1,p}(\Omega',V)}\leq C ||u||_{W^{1,p}(\Omega,V)}$. Finally, we note that $T$ is a linear map, and by the previous argument it is continuous, therefore by the density of $C^1(\bar{\Omega},V)$ in $W^{1,p}(\Omega,V)$, we may extend $T$ continuously to a bounded linear extension map $T:W^{1,p}(\Omega,V)\to W^{1,p}(\Omega',V)$. 

It now remains to show that $Tu$ satisfies \eqref{eqFiniteEnergyExt} if $\mathcal{F}(u)<+\infty$. Again, we exploit that the mapping $x\mapsto x-t_x\delta\nu(P_\Gamma x)$ is bi-Lipschitz, in tandem with \Cref{assumpInterior}, to give that 

\begin{equation}
\begin{split}
&\int_{\Omega'} F(T\nabla u(x), Tu(x),x)\,dx \\
=&\int_{\Omega} F(\nabla u(x), u(x),x)\,dx+\int_{\Omega'\setminus \Omega} F(\nabla Tu(x), Tu(x),x)\,dx\\
\leq & \int_{\Omega} F(\nabla u(x), u(x),x)\,dx+C\int_{\Omega\setminus\Omega^\delta} F(\nabla u(x), u(x),x)+1\,dx\\
\leq &(C+1) \left(\int_{\Omega} F(\nabla u(x), u(x),x)\,dx+ |\Omega|\right)
\end{split}
\end{equation}
so that
\begin{equation}
\int_{\Omega'}F(\nabla (Tu)(x),Tu(x),x)\,dx<\infty.
\end{equation}
\end{proof}

\begin{proposition}[Limsup]\label{propLimsup}
Let $\Omega,\Omega_\e$ satisfy Assumptions \ref{assumpOmega}, \ref{assumpOmegaEps}, $F,w$ satisfy Assumptions \ref{assumpInterior}, \ref{assumpSurface2}, and \ref{assumpSurface1}. Let $u\in W^{1,p}(\Omega,V)$. Then there exists a sequence $u_\e\in W^{1,p}(\Omega_\e,V)$ such that $u_\e\rto u$ and
\begin{equation}
\lim\limits_{\e\to 0}\mf_\e(u_\e)= \mf(u).
\end{equation}
\end{proposition}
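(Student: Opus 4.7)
The plan is to define the recovery sequence as the restriction to $\Omega_\e$ of a reflection-based extension of $u$ to a slightly enlarged domain. When $\mathcal{F}(u) = +\infty$, \Cref{propLiminf} already forces any rugose-convergent sequence to satisfy $\mathcal{F}_\e(u_\e) \to +\infty$, so it suffices to exhibit some sequence with $u_\e \rto u$, which the construction below supplies; hence I focus on the case $\mathcal{F}(u) < +\infty$. By \Cref{propExtension}, choose $Tu \in W^{1,p}(\Omega', V)$ with $\Omega' = \Omega + B_\delta$, $(Tu)|_\Omega = u$, and $\int_{\Omega'} F(\nabla Tu, Tu, x)\,dx < +\infty$. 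Since $||\varphi_\e||_{L^\infty(\Gamma)} = O(\e)$, for all sufficiently small $\e$ one has $\Omega_\e \subset \Omega'$, so I set $u_\e := (Tu)|_{\Omega_\e} \in W^{1,p}(\Omega_\e, V)$.

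Next I verify that $u_\e \rto u$. For any $U \subset\subset \Omega$ and $\e$ small enough one has $U \subset \Omega_\e$ and $u_\e = Tu = u$ on $U$, so the first bullet of \Cref{defRugoseConv} holds trivially. For the two $L^p(D)$-weak statements, I write $E_D u_\e - E_D u = \mathbb{1}_{\Omega_\e \setminus \Omega} Tu - \mathbb{1}_{\Omega \setminus \Omega_\e} u$ and similarly for the gradients. The graph description of \Cref{assumpOmegaEps} shows that $\Omega_\e \triangle \Omega$ lies in a tubular neighborhood of $\Gamma$ of thickness $O(\e)$, so its Lebesgue measure vanishes as $\e \to 0$, and dominated convergence upgrades this to strong, and hence weak, convergence in $L^p(D, V)$.

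With rugose convergence in hand, the energy limit is direct. For the interior term, $\mathbb{1}_{\Omega_\e} \to \mathbb{1}_\Omega$ almost everywhere and $F(\nabla Tu, Tu, \cdot) \in L^1(\Omega')$, so dominated convergence gives
\[
\int_{\Omega_\e} F(\nabla u_\e, u_\e, x)\,dx \longrightarrow \int_\Omega F(\nabla u, u, x)\,dx.
\]
For the surface term, \Cref{propSurfaceContinuous} applied to $u_\e \rto u$ yields
\[
\int_{\Gamma_\e} w(P_\Gamma x, \nu_\e(x), u_\e(x))\,dS(x) \longrightarrow \int_\Gamma w_h(x, u(x))\,dS(x).
\]
Summing the two gives $\mathcal{F}_\e(u_\e) \to \mathcal{F}(u)$.

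The structural difficulty has already been absorbed into \Cref{propExtension} and \Cref{propSurfaceContinuous}: the former requires that a reflection across $\Gamma$ preserve finite interior energy, which is the role of the bi-Lipschitz invariance \eqref{eqWeirdBound} of $F_h$, while the latter packages the Young-measure homogenization of $w$. Once those are in hand, no boundary-layer corrector is required, because the rugose geometry is already fully resolved at the surface through $\mu_x$. The one point demanding genuine care is that the reflection extension actually realizes weak $L^p$ convergence of the gradients on $D$, which is why a finite-energy extension—rather than an arbitrary $W^{1,p}$ extension—is essential, and is the reason for singling out \Cref{propExtension} as a separate result.
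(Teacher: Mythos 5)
Your proof is correct and follows essentially the same route as the paper: build the recovery sequence by restricting the reflection extension $Tu$ of \Cref{propExtension} to $\Omega_\e$, verify rugose convergence by noting that $E_D u_\e - E_D u$ is supported on the vanishing symmetric difference $\Omega_\e\triangle\Omega$, pass to the limit in the interior term by dominated convergence using the integrability of $F(\nabla Tu, Tu,\cdot)$ on $\Omega'$, and invoke \Cref{propSurfaceContinuous} for the surface term. Your explicit treatment of the $\mathcal{F}(u)=+\infty$ case is a small addition the paper leaves implicit, and it is handled correctly: the construction still produces $u_\e\rto u$ since $T$ is a bounded linear operator regardless of energy, and \Cref{propLiminf} then pins the limit at $+\infty$.
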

\begin{proof}
Let $T:W^{1,p}(\Omega,V)\to W^{1,p}(\Omega',V)$, as in \Cref{propExtension}. We define $u_\e = R_{\Omega_\e}T u$, and note that this implies that $u_\e\in W^{1,p}(\Omega_\e,V)$. First we must show that $u_\e\rto u$. It is immediate that $u_\e|_U= u|_U$ for all $U\subset\Omega\cap\Omega_\e$, which consequently implies that $u_\e|_U,\nabla u_\e|_U$ converge strongly in $L^p(U)$ to $u,\nabla u$ respectively. As the extensions by zero of $u_\e,\nabla u_\e$ may only disagree with $u,\nabla u$ on $\Omega_\e\setminus \Omega$, we see that 

 \begin{equation}
 ||E_Du_\e- E_Du||_{L^p(D,V)}=\left(\int_{(\Omega_\e\setminus\Omega)\cup(\Omega\setminus\Omega_\e)}|Tu(x)|^p\,dx\right)^\frac{1}{p},
 \end{equation}

which as $|\Omega_\e\setminus \Omega|+|\Omega\setminus \Omega_\e|\to 0$ and $||Tu||_{ L^p(\Omega',V)}<\infty$, 
\begin{equation}
\lim\limits_{\e\to 0}\int_{(\Omega_\e\setminus\Omega)\cup(\Omega\setminus\Omega_\e)}|Tu(x)|^p\,dx=0.
\end{equation}
The same argument applies to show that $E_D\nabla u_\e\to E_D\nabla u$. Therefore $u_\e\rto u$

Moreso, $x\mapsto F(\nabla T u(x), T u(x),x)$ is an integrable function on $\Omega'$. As $\mathcal{L}^N\big((\Omega_\e\setminus\Omega )\cup (\Omega\setminus \Omega_\e)\big)\to 0$, this therefore implies that 
\begin{equation}\begin{split}
\lim\limits_{\e\to 0}\int_{\Omega_\e} F(\nabla u_\e(x),u_\e(x),x)\,dx=& \lim\limits_{\e\to 0}\int_{\Omega_\e} F(\nabla Tu(x),Tu(x),x)\,dx\\
=& \int_{\Omega} F(\nabla Tu(x),Tu(x),x)\,dx\\
  =&\int_{\Omega} F(\nabla u(x),u(x),x)\,dx .
\end{split}
\end{equation}
Finally, as $u_\e\rto u$, by \Cref{propSurfaceContinuous} we have that 
\begin{equation}
\lim\limits_{\e\to 0}\int_{\Gamma_{\e}}w(P_\Gamma x,\nu_{\e}(x),u_\e(x))\,dS(x) =\int_{\Gamma}w_h(x,u(x))\,dS(x). 
\end{equation}
\end{proof}

\begin{proposition}[Fundamental of $\Gamma$-convergence on varying domains]\label{propGammaConvVarying}
Let $\Omega,\Omega_\e$ satisfy Assumptions \ref{assumpOmega}, \ref{assumpOmegaEps}, $F,w$ satisfy Assumptions \ref{assumpInterior}, \ref{assumpSurface2}, and \ref{assumpSurface1}. Then if $u_\e\in W^{1,p}(\Omega_\e,V)$ are minimisers $\mf_\e$ for every $\e>0$, there exists a subsequence $\e_j\to 0$ and minimiser $u\in W^{1,p}(\Omega,V)$ of $\mf$ such that $u_{\e_j}\rto u$.

\end{proposition}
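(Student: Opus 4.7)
The plan is to mimic the classical proof of the fundamental theorem of $\Gamma$-convergence, using the three pieces already set up: equicoercivity (\Cref{propCompactness}), the liminf inequality (\Cref{propLiminf}), and the existence of recovery sequences (\Cref{propLimsup}). The only genuinely $\e$-dependent wrinkle, namely that the $\mf_\e$ live on different function spaces, is already fully absorbed into the notion of rugose convergence, so nothing substantive beyond the classical argument should be required.

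First, I would establish that $\inf_{W^{1,p}(\Omega_\e,V)} \mf_\e$ is uniformly bounded. By \Cref{assumpInterior} there exists $u_0\in W^{1,p}(\Omega,V)$ with $\int_\Omega F(\nabla u_0,u_0,x)\,dx<+\infty$. Combining this with the growth bound on $w_h$ from \Cref{corollaryHomoBounds} and the fact that $u_0|_\Gamma\in L^q(\Gamma,V)$ (by the trace embedding, since $q<p^*$), we get $\mf(u_0)<+\infty$. Applying \Cref{propLimsup} to $u_0$ yields a sequence $v_\e\in W^{1,p}(\Omega_\e,V)$ with $v_\e\rto u_0$ and $\mf_\e(v_\e)\to \mf(u_0)$. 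Since $u_\e$ minimises $\mf_\e$, we have $\mf_\e(u_\e)\leq \mf_\e(v_\e)$, so $(\mf_\e(u_\e))_{\e>0}$ is uniformly bounded.

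Second, I would invoke equicoercivity. By \Cref{propCompactness}, applied to the bounded sequence $(u_\e)$, there is a subsequence $\e_j\to 0$ and $u\in W^{1,p}(\Omega,V)$ with $u_{\e_j}\rto u$.

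Finally, I would show that $u$ minimises $\mf$. For any $v\in W^{1,p}(\Omega,V)$, apply \Cref{propLimsup} to obtain a recovery sequence $v_\e\rto v$ with $\mf_\e(v_\e)\to \mf(v)$. Combining the liminf inequality of \Cref{propLiminf} with the minimality of $u_\e$ gives
\begin{equation*}
\mf(u)\leq \liminf_{j\to\infty}\mf_{\e_j}(u_{\e_j})\leq \liminf_{j\to\infty}\mf_{\e_j}(v_{\e_j})=\mf(v),
\end{equation*}
so $u$ is a minimiser of $\mf$ as claimed. The potential obstacle one might worry about—that compactness and the $\Gamma$-convergence inequalities need to be reconciled across varying domains—has already been handled in the earlier propositions, and all three ingredients are in place exactly as needed, so this step is essentially a bookkeeping exercise and should present no genuine difficulty.
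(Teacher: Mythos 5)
Your proof is correct and follows essentially the same structure as the paper's: establish a uniform bound on $\inf \mathcal{F}_\e$ via a recovery sequence for a finite-energy competitor $u_0$, extract a rugose-convergent subsequence of minimisers via the equicoercivity of \Cref{propCompactness}, and conclude minimality of the limit by chaining the liminf inequality with the limsup/recovery-sequence inequality. The only cosmetic difference is that you write the final comparison as a single liminf chain, whereas the paper uses a limsup–liminf sandwich; both are the standard bookkeeping and equivalent.
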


\begin{proof}
The proof follows exactly according to that of \cite{braides2006handbook}.

We begin by noting that there exists $u\in W^{1,p}(\Omega,V)$ such that $\mathcal{F}(u)<+\infty$. This holds as by  \Cref{assumpInterior}, there exists $u$ such that 
\begin{equation}
\int_\Omega F(\nabla u(x),u(x),x)\,dx<+\infty, 
\end{equation}
and we note that by the estimates in \Cref{corollaryHomoBounds} and the compact embedding $W^{1,p}(\Omega,V)\hookrightarrow L^q(\Gamma,V)$, we must also have that 
\begin{equation}
\int_\Gamma w_h(x,u(x))\,dx<+\infty,
\end{equation}
so that $\mathcal{F}(u)<+\infty$. By taking a recovery sequence $u_\e\in W^{1,p}(\Omega_\e,V)$ so that $u_\e\rto u$, which exists via \Cref{propLimsup}, we have that $\lim\limits_{\e\to 0}\mf_\e(u_\e)\to \mf(u)<+\infty$, and consequently it must hold that $\limsup\limits_{\e\to 0}\inf\limits_{W^{1,p}(\Omega_\e,V)}\mf_\e<+\infty$ also. Therefore minimisers $u_\e^*$ of $\mf_\e$ must satisfy the bound that $\mf_\e(u_\e^*)\leq M$ for all $\e_0>\e>0$ and some $M>0$. Thus by \ref{propCompactness}, there exists a subsequence $\e_j\to 0$ and $u^*\in W^{1,p}(\Omega,V)$ so that $u_{\e_j}^*\rto u^*$. By \ref{propLiminf}, thus 
\begin{equation}
\liminf\limits_{j\to\infty}\mf_{\e_j}(u_{\e_j}^*)\geq \mf(u^*). 
\end{equation}
Now, take any $u\in W^{1,p}(\Omega,V)$. By \ref{propLimsup}, there exists a sequence $(u_\e)_{\e>0}$ so that $u_\e\in W^{1,p}(\Omega_\e,V)$, $u_\e\rto u$, and $\lim\limits_{\e\to 0}\mf_\e(u_\e)=\mf(u)$. Then, as $u^*_\e$ are minimisers of $\mf_\e$ by assumption, we have that 
\begin{equation}
\mf(u)=\limsup\limits_{\e\to 0}\mf_\e(u_\e)\geq \limsup_{j\to\infty}\mf_{\e_j}(u_{\e_j})\geq  \liminf\limits_{j\to\infty}\mf_\e(u_{\e_j}^*)\geq \mf(u^*). 
\end{equation}
Therefore $u^*$ is a minimiser of $\mf$.
\end{proof}

\section{Applications to liquid crystals models}\label{subsecModels}

\subsection{Oseen-Frank}
We now turn to the cases of surface energies in models for nematic liquid crystals. First we consider the Oseen-Frank model, where  $V=\mathbb{R}^3$, although finite-energy states $n\in W^{1,2}(\Omega,V)$ satisfy $|n(x)|=1$ almost everywhere, as described in \Cref{remarkOFVector}. The Rapini-Papoular surface energy is given by 
\begin{equation}\begin{split}
w(\nu,n)= & \frac{w_0}{2}(n\cdot\nu)^2.\\
\end{split}
\end{equation}
The sign of $w_0$ produces different behaviour of the surface energy, although it does not change the existence of a lower bound, as $|n(x)|=1$ almost everywhere for finite energy $n$. For $w_0<0$, the energy corresponds to {\it homeotropic anchoring}, and is minimised when $n\cdot\nu=\pm 1$, whereby molecules prefer to be aligned perpendicularly to the surface. On the other hand, $w_0>0$ corresponds to {\it planar degenerate anchoring}, in which $n$ prefers to lie in the plane such that $\nu\cdot n =0$. In either case, we may write the energy, ignoring additive constants, as 
\begin{equation}
w(\nu,n) = \frac{w_0}{2}(n\otimes n )\cdot (\nu\otimes \nu). 
\end{equation}
In particular, following \Cref{propWeakSurfaceConv} we see that the homogenised surface energy is described entirely by a single $3\times 3$ symmetric tensor $A_{ef}:\Gamma\to \mathbb{R}^{3\times 3}$ given by, 
\begin{equation}
A_{ef}(x)=w_0\int_{\nu(x)+T_x\Gamma} \frac{1}{|v|}v\otimes v\,d\mu_x(v) ,
\end{equation}
from which the homogenised surface energy can be written as 
\begin{equation}
w_h(x,n)=\frac{1}{2}A_{ef}n\cdot n. 
\end{equation}
In particular, from its definition we see that $A_{ef}$ is a symmetric $3\times 3$ matrix and thus admits a spectral decomposition $A_{ef} = \sum\limits_{i=1}^3 \lambda_ie_i\otimes e_i$ at each point $x\in\Gamma$. As the trace is a linear function, we may take the trace before or after the integral to obtain
\begin{equation}
\frac{1}{w_0}\text{Tr}(A_{ef})=\int_{\nu(x)+T_x\Gamma} |v|\,d\mu_x(v)\geq 1,
\end{equation} 
and similarly for any $e\in \mathbb{S}^{2}$,
\begin{equation}
\frac{1}{w_{0}}A_{ef}e\cdot e=\int_{\nu(x)+T_x\Gamma} (e\cdot v)^2|v|^{-1}\,d\mu_x(v)\in [0,\frac{1}{w_0}\text{Tr}(A_{ef})].
\end{equation}

We see that there are four possibilities for $A_{ef}$, qualitatively speaking. We order the eigenvalues such that $\lambda_1\leq \lambda_2\leq \lambda_3$. 
\begin{enumerate}
\item $\lambda_1 = \lambda_2 < \lambda_3$. In this case, the energy behaves as a degenerate anchoring in the $e_1,e_2$ plane with weight $\lambda_3-\lambda_2$. We may write the limiting surface energy as $A_{ef} n\cdot n = (\lambda_3-\lambda_2) (e_3\cdot n)^2+\lambda_2$. We note that the latter term is $n$ independent, and $(\lambda_3-\lambda_2)$ is positive. 
\item $\lambda_1 < \lambda_2 = \lambda_3$. In this case, similarly to the previous case, we have a simple preferred axis $e_1$ giving weak homeotropic anchoring with weight $\lambda_1-\lambda_3$, and the energy may be written as $A_{ef} n\cdot n =  (\lambda_1-\lambda_3) (e_1\cdot n)^2+\lambda_3$, where we note that the coefficient $\lambda_1-\lambda_3$ is negative. 
\item $\lambda_1 = \lambda_2 = \lambda_3$. This reduces to no anchoring condition on $n$, as $A_{ef} n\cdot n= \lambda_1$ for all $n$. 
\item $\lambda_1 < \lambda_2 < \lambda_3$. Here we have a preferred axis $e_1$, however the energy is not rotationally symmetric around $e_1$, with deviations in the $e_2$ being preferable to those in the $e_3$ direction. 

Finally, we note that up to a constant $w_h$ is uniquely defined only by to the traceless part of $A_{ef}$, by writing 
\begin{equation}
A_{ef}=\left(A-\frac{\text{Tr}(A_{ef})}{3}I \right)+\frac{\text{Tr}(A_{ef})}{3}I=A^\circ_{ef}+\frac{\text{Tr}(A_{ef})}{3}I,
\end{equation} we see that 
\begin{equation}
w_h(x,n)=A^\circ_{ef}n\cdot n + \frac{1}{3}\text{Tr}(A_{ef}),
\end{equation}
where the latter term is independent of $n$ and thus does not affect minimisers. 

\end{enumerate}

\subsection{Landau-de Gennes}\label{subsubsect:LdG}

The simplest surface energy for Landau-de Gennes, in which $V=\text{Sym}_0(3)$, the traceless symmetric matrices, is given as 
\begin{equation}
w(x,\nu,Q)=\frac{w_0}{2}\left|Q-s_0\left(\nu\otimes \nu-\frac{1}{3}I\right)\right|^2.
\end{equation}
The constant $w_0>0$ is the anchoring strength, clearly a larger value of $w_0$ corresponds to a higher penalisation for deviation from the preferred state. This energy is minimised at $Q=s_0\left(\nu\otimes \nu-\frac{1}{3}\right)$, meaning that molecules generally want to lie parallel to the surface if $s_0>0$ or perpendicularly if $s_0<0$. Similarly to the Oseen-Frank model, we can investigate the behaviour of the model in the limit using \Cref{propWeakSurfaceConv}, first by expanding the energy as 
\begin{equation}
w(x,\nu,Q)=\frac{w_0}{2}|Q|^2-w_0s_0Q\cdot\left(\nu\otimes\nu-\frac{1}{3}I\right)+\frac{w_0 s_0^2}{3}.
\end{equation}
Then we define the effective anchoring strength $w_{ef}$ and the preferred direction $Q_{ef}$ as 
\begin{equation}
\begin{split}
w_{ef}(x)=&w_0\int_{\nu(x)+T_x\Gamma}|v|\,d\mu_x,\\
Q_{ef}(x)=&\frac{s_0}{w_{ef}(x)}\int_{\nu(x)+T_x\Gamma}\left(\frac{1}{|v|^2}v\otimes v -\frac{1}{3}I\right)|v|\,d\mu_x(v).
\end{split}
\end{equation}

Then we may write the homogenised surface energy as 
\begin{equation}\begin{split}
w_h(x,Q)=&\int_{\nu(x)+T_x\Gamma}\left(\frac{w_0}{2}|Q|^2-w_0s_0Q\cdot\left(\frac{1}{|v|^2}v\otimes v-\frac{1}{3}I\right)+\frac{2w_0s_0^2}{3}\right)|v|\,d\mu_x(v)\\
=&\frac{w_{ef}}{2}|Q-Q_{ef}|^2+R,\end{split}
\end{equation}
where $R$ is a remainder term that depends only on $x$ and not $Q$, and thus does not affect minimisers. We observe that the homogenised surface energy is thus of the same quadratic form, but $Q_{ef}$ plays the role of the preferred direction, and $w_{ef}$ as the new anchoring strength. As $|v|\geq 1$ for all $v\in \nu(x)+T_x\Gamma$, by the integral form of $w_{ef}$ we see that $w_{ef}>w_0$. That is, the anchoring strength can only increase. Similarly, if $f$ is any frame indifferent convex function, we have that for any $e\in\mathbb{S}^2$,
\begin{equation}
f\left(s_0\left(e_1\otimes  e_1-\frac{1}{3}I\right)\right)=g_f(s_0)
\end{equation} 
for some function $g_f:\mathbb{R}\to\mathbb{R}$. By applying Jensen's inequality, we see that 
\begin{equation}
\begin{split}
f(Q_{ef}(x))=&f\left(\frac{s_0}{\int_{\nu(x)+T_x\Gamma}|v|\,d\mu_x(v)}\int_{\nu(x)+T_x\Gamma}\left(\frac{1}{|v|^2}v\otimes v-\frac{1}{3}I\right)|v|\,d\mu_x(v)\right)\\
\leq &\frac{1}{\int_{\nu(x)+T_x\Gamma}|v|\,d\mu_x(v)}\int_{\nu(x)+T_x\Gamma}f\left(s_0\left(\frac{1}{|v|^2}v\otimes v-\frac{1}{3}I\right)\right)|v|\,d\mu_x(v)\\
\leq & \frac{1}{\int_{\nu(x)+T_x\Gamma}|v|\,d\mu_x(v)}\int_{\nu(x)+T_x\Gamma}g_f(s_0)|v|\,d\mu_x(v)=g_f(s_0).
\end{split}
\end{equation}

In particular, taking $f(Q)=\lambda_{\max}(Q)$, $f(Q)=-\lambda_{\min}(Q)$, and $ f(Q)=|Q|$, we see that $\lambda_{\max}(Q_{ef})\leq\frac{2s_0}{3}$, $\lambda_{\min}(Q_{ef})\geq -\frac{s_0}{3}$, and $|Q|\leq s_0\sqrt{\frac{2}{3}}$.

\section{Error estimates in periodic media}\label{sect:Error}

\subsection{Introduction of the problem}

Consider an $\ell$-periodic slab domain in two-dimensions, which represents the typical geometry of liquid crystal experiments, given explicitly as 
\begin{equation*}
\Omega'_\e=\{({x'},{y'})\in\mathbb{R}^2:{\varphi'_\e}({x'})<{y'}< {R'}\},
\end{equation*}
in which ${\varphi'_\e}:\mathbb{R}\to\mathbb{R}$ is an $\ell$-periodic function and let ${\partial\Omega'_\e}={\Gamma'_\e}\cup{\Gamma'_R}$, where we denote ${\Gamma'_\e}=\{({x'},{\varphi'_\e}({x'})):{x'}\in\mathbb{R}\}$ and ${\Gamma'_R}=\{({x'},{R'}):{x'}\in\mathbb{R}\}$. We consider a toy model, representative of paranematic systems as in {\cite{borvstnik1999interaction,galatola2001nematic,stark2002geometric}}, over $Q'\in\text{Sym}_0(2)=\{A\in\mathbb{R}^{2\times 2}:A^T=A,\, \text{Tr}(A)=0\}$. We consider solutions that respect the symmetry of the domain, that is, ${Q'}\in W^{1,2}_{loc}({\Omega'_\e},\text{Sym}_0(2))$ such that ${Q'}({x'}+\ell,{y'})={Q'}({x'},{y'})$ for almost every ${x'},{y'}$. The free energy per periodic cell, ${C\Omega'_\e}$, is to be given as 
\begin{equation*}
{\mathcal{F}'_\e}({Q'})=\int_{{C\Omega'_\e}}\frac{{L'}}{2}|\nabla {Q'}|^2+\frac{{c'}}{2}|{Q'}|^2\,\text{d}{x'}\,\text{d}{y'} +\int_{{C\partial\Omega'_\e}}\frac{{w'_0}}{2}\left|{Q'}-{s'_0}\left(\nu\otimes \nu-\frac{1}{2}I\right)\right|^2\,\text{d}\sigma({x'}).
\end{equation*}
Here ${C\Omega'_\e}=\{({x'},{y'})\in\mathbb{R}^2:{\varphi'_\e}({x'})<{y'}<{R'},\; 0\leq {x'}<\ell\}$, ${c'}>0$, ${C\partial\Omega'_\e} ={C\Gamma'_\e}\cup {C\Gamma'_R}$, with ${C\Gamma'_\e}=\{({x'},{\varphi'_\e}({x'})):{x'}\in[0,\ell)\}$ and ${C\Gamma'_R}=\{({x'},{R'}):{x'}\in[0,\ell)\}$, {$w'_0>0$, $s'_0\in\mathbb{R}$ and $\nu$ is the exterior normal.} We may non-dimensionalise the system by considering variables $(x,y)=\frac{2\pi}{\ell}({x'},{y'})$, $Q(x,y)=\frac{1}{{s'_0}}{Q'}({x'},{y'})$, $\mathcal{F}_\e= \frac{1}{{L'}{(s'_0)}^2}{\mathcal{F}'_\e}$, $c=\frac{{c'}\ell^2}{4{L'}\pi^2}$, $w_0=\frac{{w'_0}\ell}{2{L'}\pi}$, $R=\frac{2\pi}{\ell}{R'}$, $\varphi_\e(x)={\varphi'_\e}({x'})$ to give 
\begin{equation}\label{eq:free_energy_functional}
\mathcal{F}_\e(Q)=\int_{\Omega_\e}|\nabla Q|^2+c|Q|^2\,\text{d}x\,\text{d}y+\int_{\partial\Omega_\e}\dfrac{w_0}{2}\left|Q-\left(\nu\otimes \nu-\frac{1}{2}I\right)\right|^2\,\text{d}\sigma(x),
\end{equation}
with rescaled domain
\begin{align*}
\Omega_\e=\{(x,y):0\leq x<2\pi,\;\varphi_\e(x)<y<R\}.
\end{align*}
Moreover, we can write $\partial\Omega_\e=\Gamma_\e\cup\Gamma_R$, where $\Gamma_\e=\{(x,y):x\in[0,2\pi),\;y=\varphi_\e(x)\}$ and $\Gamma_R=\{(x,R):x\in[0,2\pi)\}$, so that we have:
\begin{equation}
\mathcal{F}_\e(Q)=\int_{\Omega_\e}|\nabla Q|^2+c|Q|^2\,\text{d}x\,\text{d}y+\int_{\Gamma_\e}\dfrac{w_0}{2}\left|Q-Q_{\varepsilon}^0\right|^2\,\text{d}\sigma_\e+\int_{\Gamma_R}\dfrac{w_0}{2}\left|Q-Q_R\right|^2\,\text{d}\sigma_R,
\end{equation}
with $Q_{\varepsilon}^0=\nu_\e\otimes \nu_\e-\frac{1}{2}I$ and $Q_R=\nu_R\otimes\nu_R-\frac{1}{2}I$, where $\nu_\e$ and $\nu_R$ are the outward normals to $\Gamma_\e$ and $\Gamma_R$.

We also consider the limit domain
\begin{align*}
\Omega_0=\{(x,y):x\in[0,2\pi),\;0<y<R\}
\end{align*} 
with $\partial\Omega_0=\Gamma_0\cup\Gamma_R$, where $\Gamma_0=\{(x,0):x\in[0,2\pi)\}$.

\subsection{Technical assumptions and main result}

\begin{assumption}\label{pp:1}
Let $\e>0$. We assume that $\varphi_\e(x)=\e\cdot\varphi(x/\e)$, where $\varphi:\mathbb{R}\to\mathbb{R}$ is a $C^2$ $2\pi$-periodic function with $\varphi\geq 0$.
\end{assumption}

\begin{remark}
Using \Cref{pp:1}, we obtain that $\Omega_\e\subset\Omega_0$ for all $\e>0$ and that $\Omega_\e\to\Omega_0$ as $\e\to 0$.
\end{remark}

\begin{figure}[h]\begin{center}
\begin{subfigure}[t]{0.4\textwidth}
\begin{center}
\includegraphics[width=\textwidth]{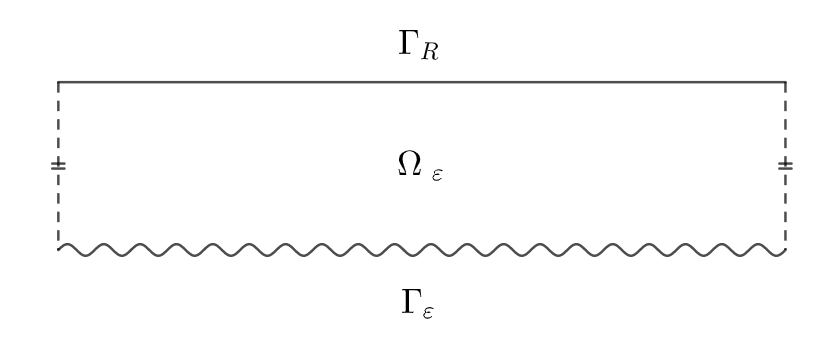}
\caption{
The oscillating domain $\Omega_\e$.
}
\end{center}
\end{subfigure}
\hspace{0.025\textwidth}
\begin{subfigure}[t]{0.4\textwidth}
\begin{center}
\includegraphics[width=\textwidth]{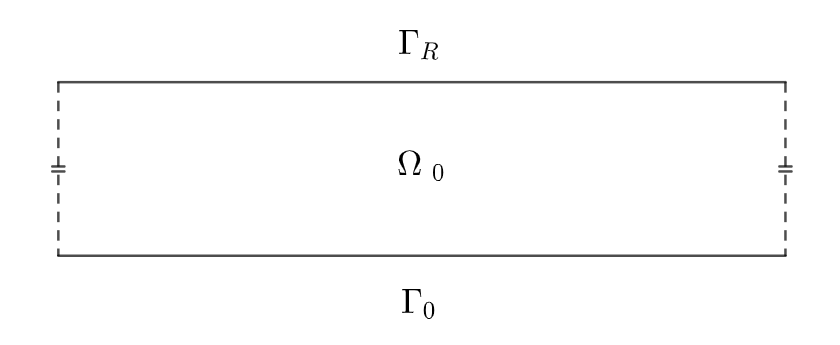}
\caption{ 
The limit domain $\Omega_0$.
}
\end{center}
\end{subfigure}
\end{center}
\end{figure}

\begin{remark}
In the next subsections, we write the first derivative of $\varphi$ as $\varphi'$. Moreover, we write $\|\varphi\|_{\infty}$ and $\|\varphi'\|_{\infty}$ instead of $\|\varphi\|_{L^{\infty}([0,2\pi))}$ and $\|\varphi'\|_{L^{\infty}([0,2\pi))}$.
\end{remark}

\begin{assumption}\label{pp:2}
We assume that:
\begin{align*}
0<\e=\dfrac{1}{2k}<\|\varphi\|^{-1}_{\infty}\cdot \dfrac{R}{2},\;\text{with}\;k\in\mathbb{N}^*,\;k>\|\varphi\|_{\infty}\cdot \dfrac{1}{R}.
\end{align*}
\end{assumption}

\begin{remark}
Using \Cref{pp:2}, we obtain that $\{(x,y)\;|\;x\in[0,2\pi),\;y\in(R/2,R)\}\subset\Omega_{\e}$, which tells us that the oscillations of $\Gamma_{\e}$ have an amplitude lower than half of the height of the domain $\Omega_0$.
\end{remark}

\begin{remark}
Using \Cref{pp:1}, the arclength parameter of the curve $\Gamma_\e$ can be described as the function $\gamma_{\e}:\mathbb{R}\to\mathbb{R}$, defined as:
\begin{align}\label{eq:gamma_e}
\gamma_{\e}(t)=\sqrt{1+\big(\varphi'(t)\big)^2},\;\forall t\in\mathbb{R}
\end{align}
and we obtain that:
\begin{align}\label{bounds:gamma_eps}
1\leq \gamma_{\e}(t)<\sqrt{1+\|\varphi'\|_{\infty}^2},\;\forall t\in\mathbb{R}.
\end{align}
Moreover, the outward normal $\nu_\e$ to $\Gamma_\e$ has the following form:
\begin{align}\label{eq:nu_eps_depends_only_on_x_e}
\nu_{\e}:=\nu_{\e}(x/\e)=\dfrac{1}{\gamma_{\e}(x/\e)}\big(\varphi'(x/\e),-1\big),
\end{align}
for all $x\in[0,2\pi)$.
\end{remark}

\begin{definition}\label{defn:g_functions}
Let $g_1,g_2:\mathbb{R}\to\mathbb{R}$ be two real functions defined as
\begin{center}
\begin{tabular}{ccc}
$g_1(t):=\dfrac{\big(\varphi'(t)\big)^2-1}{2\big(1+\big(\varphi'(t)\big)^2\big)}$ & \text{and} & $g_2(t):=\dfrac{-2 \varphi'(t)}{2\big(1+\big(\varphi'(t)\big)^2\big)}$
\end{tabular}
\end{center}
for all $t\in\mathbb{R}$ and let $\gamma,G_1,G_2\in\mathbb{R}$ be defined as:
\begin{center}
\begin{tabular}{ccc}
$\gamma:=\displaystyle{\dfrac{1}{2\pi}\int_0^{2\pi}\gamma_\e(t)\;\text{d}t,}$ & $G_1:=\displaystyle{\dfrac{1}{2\pi}\int_0^{2\pi}g_1(t)\cdot\gamma_\e(t)\;\text{d}t,}$ & $G_2:=\displaystyle{\dfrac{1}{2\pi}\int_0^{2\pi}g_2(t)\cdot\gamma_\e(t)\;\text{d}t.}$
\end{tabular}
\end{center}
\end{definition}

\begin{remark}\label{remark:Q_e_0_dependency}
Since $Q_{\varepsilon}^0=\nu_\e\otimes\nu_\e -\frac{1}{2}I$ and $\nu_{\e}=\nu_{\e}(x/\e)$, then $Q_{\varepsilon}^0(x/\e)=\begin{pmatrix}
g_1(x/\e) & g_2(x/\e)\\
g_2(x/\e) & -g_1(x/\e)
\end{pmatrix}$, for all $x\in[0,2\pi)$. Moreover, $\gamma$, $G_1$ and $G_2$ are constants and $\gamma\geq 1$.
\end{remark}

\begin{definition}
Let $w_{ef}:=\gamma w_0$ and $Q_{ef}:=\dfrac{1}{\gamma}\begin{pmatrix}
G_1 & G_2\\
G_2 & -G_1
\end{pmatrix}$.
\end{definition}

\begin{remark}
In this simplified model, $w_{ef}\in\mathbb{R}$ is constant and, since $\gamma\geq 1$, we have $w_{ef}\geq w_0$ (as previously observed in \Cref{subsubsect:LdG}. Moreover, $Q_{ef}\in\text{Sym}_0(2)$ is a constant $Q$-tensor.
\end{remark}

\begin{proposition}\label{prop:Q_eff}
We have $\gamma_{\e}(\cdot)\rightharpoonup \gamma$ and $\gamma_{\e}(\cdot/\e)Q_{\e}^0(\cdot/\e)\rightharpoonup \gamma Q_{ef}$ in $L^2([0,2\pi))$, as $\e\to 0$.
\end{proposition}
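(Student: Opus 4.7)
Both assertions reduce immediately to the classical lemma on weak convergence of rapidly oscillating periodic functions: for any $2\pi$-periodic $f\in L^{\infty}(\mathbb{R})$, the sequence $f(\cdot/\e)$ converges weakly-$\ast$ in $L^{\infty}([0,2\pi))$, and in particular weakly in $L^{2}([0,2\pi))$, to its mean $\bar f=\frac{1}{2\pi}\int_0^{2\pi}f(t)\,dt$, as $\e\to 0$. My plan is to verify the hypotheses of this lemma for the scalar and matrix-valued integrands appearing here, and to read off the weak limits from \Cref{defn:g_functions} and \Cref{remark:Q_e_0_dependency}.

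For the first assertion, the function $t\mapsto \gamma_{\e}(t)=\sqrt{1+(\varphi'(t))^2}$ is continuous and $2\pi$-periodic by \Cref{pp:1}, with the uniform bounds \eqref{bounds:gamma_eps}. Its mean over a period equals $\gamma$ by \Cref{defn:g_functions}, so the lemma delivers the required weak convergence in $L^{2}([0,2\pi))$. For the second, I would argue componentwise. Using \Cref{remark:Q_e_0_dependency},
\begin{equation*}
\gamma_{\e}(x/\e)\,Q_{\e}^{0}(x/\e)=\begin{pmatrix}
(\gamma_{\e} g_{1})(x/\e) & (\gamma_{\e} g_{2})(x/\e)\\
(\gamma_{\e} g_{2})(x/\e) & -(\gamma_{\e} g_{1})(x/\e)
\end{pmatrix},
\end{equation*}
and from the explicit formulas in \Cref{defn:g_functions} one checks $|g_{i}|\leq\tfrac{1}{2}$, so each of $\gamma_{\e}g_{1}$ and $\gamma_{\e}g_{2}$ is bounded, continuous, and $2\pi$-periodic, with means $G_{1}$ and $G_{2}$ respectively. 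Applying the lemma entrywise, together with the identity $\gamma Q_{ef}=\begin{pmatrix}G_{1}&G_{2}\\ G_{2}&-G_{1}\end{pmatrix}$ coming from the definition of $Q_{ef}$, yields the claimed weak limit.

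The one genuinely non-trivial ingredient is the underlying lemma, but it is classical. I would test against $\psi\in C^{1}([0,2\pi))$, introduce the primitive $F(t)=\int_{0}^{t}(f(s)-\bar f)\,ds$, which is bounded and $2\pi$-periodic with $F'=f-\bar f$, and integrate by parts to obtain
\begin{equation*}
\int_{0}^{2\pi}\psi(x)\bigl(f(x/\e)-\bar f\bigr)\,dx=\e\bigl[\psi\, F(\cdot/\e)\bigr]_{0}^{2\pi}-\e\int_{0}^{2\pi}\psi'(x)\,F(x/\e)\,dx,
\end{equation*}
which is $O(\e)\bigl(|\psi(0)|+|\psi(2\pi)|+\|\psi'\|_{L^{1}}\bigr)\|F\|_{\infty}$. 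Density of $C^{1}$ in $L^{2}([0,2\pi))$ then upgrades the conclusion to weak $L^{2}$ convergence for arbitrary test functions. Under \Cref{pp:2} the scale $\e=1/(2k)$ even makes the boundary term cancel exactly for periodic $\psi$, a cosmetic improvement. The main obstacle is therefore purely notational (the overloaded symbol $\gamma_{\e}$, whose $L^2$ template does not itself depend on $\e$) rather than analytic; all the real work is packaged into the standard lemma.
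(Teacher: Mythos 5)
Your proof is correct and follows essentially the same route as the paper's, which simply cites the classical lemma on weak convergence of rapidly oscillating periodic functions to their mean (Cioranescu--Donato, Lemma~9.1) and applies it to $\gamma_\e$, $g_1\gamma_\e$, and $g_2\gamma_\e$. Your reading of the first assertion as $\gamma_\e(\cdot/\e)\rightharpoonup\gamma$ correctly repairs a notational slip in the statement (since $\gamma_\e(t)=\sqrt{1+(\varphi'(t))^2}$ does not itself depend on $\e$), and your integration-by-parts sketch of the underlying lemma is sound but not needed beyond the citation.
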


\begin{proof}
Since $\varphi$ and $\varphi'$ are $2\pi$-periodic, according to \Cref{pp:1}, and $\e^{-1}\in\mathbb{N}^*$, according to \Cref{pp:2}, then the functions $\gamma_{\e}$, $g_1\gamma_{\e}$ and $g_2\gamma_{\e}$ are also $2\pi$-periodic and they tend to their mean as $\e\to 0$ (see for instance \cite[Lemma 9.1]{cioranescu1999}). This implies the conclusion.
\end{proof}

\begin{definition}
Let $Q_\e$ be a minimiser of the functional $\mathcal{F}_{\e}$. This implies that $Q_\e$ verifies the following Euler-Lagrange equations:
\begin{align}\label{defn:PDE_Q_eps}
\begin{cases}
-\Delta Q_{\e}+cQ_{\e}=0\;\;\;\text{in}\;\Omega_{\e};\\
\\
\dfrac{\partial Q_{\e}}{\partial\nu_{\e}}+\dfrac{w_0}{2}Q_{\e}=\dfrac{w_0}{2}Q^0_{\e}\;\;\;\text{on}\;\Gamma_{\e};\\
\\
\dfrac{\partial Q_{\e}}{\partial\nu_R}+\dfrac{w_0}{2}Q_{\e}=\dfrac{w_0}{2}Q_R\;\;\;\text{on}\;\Gamma_R.
\end{cases}
\end{align}
\end{definition}

\begin{remark}
We prove in \Cref{subsect:reg} that there exists a unique $Q_{\e}\in W^{2,p}(\Omega_{\e})$ solution of \eqref{defn:PDE_Q_eps}, for any $1<p<+\infty$.
\end{remark}

\begin{definition}
Let $\nu_0(\cdot,0)=(0,-1)$ be the outward normal of $\Gamma_0$. We consider the following PDE:
\begin{align}\label{defn:PDE_Q_0}
\begin{cases}
-\Delta Q_0+cQ_0=0\;\;\;\text{in}\;\Omega_0;\\
\\
\dfrac{\partial Q_0}{\partial \nu_0}+\dfrac{w_{ef}}{2} Q_0=\dfrac{w_{ef}}{2}Q_{ef}\;\;\;\text{on}\;\Gamma_0;\\
\\
\dfrac{\partial Q_0}{\partial \nu_R}+\dfrac{w_0}{2}Q_0=\dfrac{w_0}{2}Q_R\;\;\;\text{on}\;\Gamma_R.
\end{cases}
\end{align}
\end{definition}

\begin{remark}
In \Cref{subsect:reg}, we prove that we have a unique $Q_0\in W^{2,p}(\Omega_0)$ solution of \eqref{defn:PDE_Q_0}, for any $p\in(1,+\infty)$.
\end{remark}

\begin{remark}
It is easy to see that under these assumptions, we have $Q_\e\rto Q_0$ in the sense from \Cref{defRugoseConv}. However, in this simplified case, we can obtain {\it quantitative estimates}, which are presented in the main theorem of this section:
\end{remark}

\begin{theorem}\label{thm:main} 
For any $p\in(2,+\infty)$, there exists an $\e$-independent constant $C$ such that:
\begin{align}
\|Q_0-Q_{\e}\|_{L^2(\Omega_{\e})}\leq C\cdot \e^{\frac{p-1}{p}},
\end{align}
where the constant $C$ depends on $c$, $w_0$, $p$, $\|\varphi\|_{\infty}$, $\|\varphi'\|_{\infty}$, $\Omega_0$ and $\|Q_0\|_{W^{1,\infty}(\Omega_0)}$.
\end{theorem}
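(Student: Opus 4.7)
The plan is to assemble three ingredients developed earlier in this section: the integral identity of \Cref{prop:ineq_Omega_eps_rhs}, the oscillation estimate of \Cref{lemma:C_3}, and the uniform extension operator of \Cref{defn:uniform_ext}. Setting $\phi := Q_0 - Q_\e$ on $\Omega_\e$, I first test the weak formulations of \eqref{defn:PDE_Q_eps} and of \eqref{defn:PDE_Q_0} (the latter valid on $\Omega_\e\subset\Omega_0$) against $\phi$. Since the Robin conditions on $\Gamma_R$ coincide for the two unknowns, subtracting and rearranging gives
\begin{equation*}
\int_{\Omega_\e}\!\big(|\nabla\phi|^2 + c|\phi|^2\big)\,dx\,dy + \frac{w_0}{2}\!\int_{\Gamma_\e}\!|\phi|^2\,d\sigma_\e + \frac{w_0}{2}\!\int_{\Gamma_R}\!|\phi|^2\,d\sigma_R = \int_{\Gamma_\e}\! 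G_\e\cdot\phi\,d\sigma_\e,
\end{equation*}
with $G_\e := \partial_{\nu_\e}Q_0 + \tfrac{w_0}{2}(Q_0 - Q_\e^0)$. This is \Cref{prop:ineq_Omega_eps_rhs}, and its left-hand side controls $c\|\phi\|_{L^2(\Omega_\e)}^2$.

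Next I identify the zero-mean structure of $G_\e$. Parameterizing $\Gamma_\e$ by $x\in[0,2\pi)$ with $d\sigma_\e = \gamma_\e(x/\e)\,dx$ and $\nu_\e = \gamma_\e^{-1}(\varphi'(x/\e),-1)$, and Taylor-expanding $Q_0$ around $y=0$ (permissible by the $W^{1,\infty}(\Omega_0)$-regularity shown in \Cref{subsect:reg}), I rewrite $G_\e\gamma_\e$ as a sum of terms slowly varying in $x$ and rapidly oscillating in $x/\e$, modulo an $O(\e)$ remainder. \Cref{prop:Q_eff}, combined with the effective Robin condition $-\partial_yQ_0(x,0) + \tfrac{w_{ef}}{2}Q_0(x,0) = \tfrac{w_{ef}}{2}Q_{ef}$ on $\Gamma_0$, then shows that the period-average in $x/\e$ of $G_\e\gamma_\e$ equals
\begin{equation*}
-\partial_yQ_0(x,0) + \tfrac{w_{ef}}{2}\big(Q_0(x,0) - Q_{ef}\big) + O(\e) = O(\e),
\end{equation*}
using $w_{ef} = \gamma w_0$. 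Thus $G_\e\gamma_\e$ is a zero-mean fast oscillation up to an $O(\e)$ correction, with slowly varying coefficients.

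\Cref{lemma:C_3} then bounds such an integral against the boundary trace of a $W^{1,p}(\Omega_0)$-function by $C\,\e^{(p-1)/p}\,\|\tilde{\phi}\|_{W^{1,p}(\Omega_0)}$, for any $p\in(2,+\infty)$, where $\tilde{\phi}$ is the extension of $\phi$ to $\Omega_0$ provided by \Cref{defn:uniform_ext}. The norm $\|\tilde{\phi}\|_{W^{1,p}(\Omega_0)}$ is $\e$-uniformly controlled by $\|Q_0\|_{W^{1,\infty}(\Omega_0)}$ and by the $\e$-uniform $W^{1,p}(\Omega_\e)$-bound on $Q_\e$ from \Cref{subsect:reg}. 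This yields $c\|\phi\|_{L^2(\Omega_\e)}^2 \leq C\e^{(p-1)/p}$ already. To obtain the full exponent $(p-1)/p$ on the $L^2$-norm rather than its square-root, I refine the oscillation bound to $C\e^{(p-1)/p}\|\phi\|_{H^1(\Omega_\e)}$ and absorb $\|\phi\|_{H^1(\Omega_\e)}$ into the left-hand side of the energy identity via Young's inequality, giving $\|\phi\|_{H^1(\Omega_\e)} \leq C\e^{(p-1)/p}$, whence the claimed $L^2$-estimate.

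The main obstacle is \Cref{lemma:C_3}: extracting the fractional rate $\e^{(p-1)/p}$ from a fast-oscillating integral against a $W^{1,p}$-trace. Because $\phi$ has only $H^1$-control a priori, it must first be lifted to $W^{1,p}(\Omega_0)$ via the uniform extension operator of \Cref{defn:uniform_ext}, whose $\e$-independent norm bound is itself a nontrivial construction. The exponent $(p-1)/p$ emerges from interpolation between the trivial $L^p$-trace bound of order $O(1)$ and the integration-by-parts bound of order $O(\e)$ available against smoother $W^{1,p}$-traces, reflecting the fractional regularity $W^{(p-1)/p,p}$ inherited by the trace of $\tilde\phi$ on the oscillating boundary.
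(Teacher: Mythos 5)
There is a genuine gap, and it is in the final step you label a ``refinement.'' Testing the energy identity against $\phi = Q_0 - Q_\e$ itself gives, using \Cref{prop:ineq_Omega_eps_rhs} and the uniform extension,
\begin{equation*}
\min\{1,c\}\,\|\phi\|_{H^1(\Omega_\e)}^2 \;\le\; a_\e(\phi,\phi) \;\le\; C\,\e^{\frac{p-1}{p}}\,\|\phi\|_{W^{1,p}(\Omega_\e)},
\end{equation*}
and since the right-hand side is only $O(1)$ you land at $\|\phi\|_{L^2(\Omega_\e)} \lesssim \e^{\frac{p-1}{2p}}$, i.e.\ half the claimed exponent. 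Your proposed fix --- replacing $\|\phi\|_{W^{1,p}}$ by $\|\phi\|_{H^1}$ on the right while keeping the rate $\e^{(p-1)/p}$, then absorbing via Young --- is exactly what the section's machinery does \emph{not} deliver. The exponent $(p-1)/p$ is manufactured from $W^{1-1/p,p}$-trace regularity (the Gagliardo seminorm in the proof of \Cref{lemma:C_3}) and from the constraint $p>2$ in \Cref{prop:I_31} (which uses that $p' = p/(p-1) < p$, i.e.\ $p>2$); at $p=2$ these estimates degenerate to the classical $\e^{1/2}$. There is no route in the paper's toolbox, nor an obvious interpolation, that yields $|a_\e(\phi,\phi)| \lesssim \e^{(p-1)/p}\|\phi\|_{H^1(\Omega_\e)}$, and a quantity of the form $\e^{(p-1)/p}\|\phi\|_{W^{1,p}}$ with $p>2$ cannot be absorbed into $\|\phi\|_{H^1}^2$.

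The paper instead closes the loop with an Aubin--Nitsche duality argument, which is the missing idea. One introduces the adjoint problem \eqref{defn:PDE_v}: $v_\e$ solving $-\Delta v_\e + c v_\e = u_\e$ in $\Omega_\e$ with homogeneous Robin data, so that by construction $a_\e(u_\e,v_\e) = \|u_\e\|_{L^2(\Omega_\e)}^2$ (\Cref{prop:properties_of_v_eps}, identity \eqref{eq:a_eps_u_eps_v_eps}). Then \Cref{prop:ineq_Omega_eps_rhs} is applied with $v=v_\e$, and the crucial gain is that $\|v_\e\|_{W^{1,p}(\Omega_\e)}$ is controlled by $\|u_\e\|_{L^2(\Omega_\e)}$: one transfers $v_\e$ to the annulus $\mathcal{U}_0$ where $W^{2,2}$-elliptic regularity (\Cref{prop:regularity}) holds with $\e$-independent constants, invokes the two-dimensional compact embedding $W^{2,2}(\Omega_0)\hookrightarrow W^{1,p}(\Omega_0)$ for arbitrary $p<\infty$, and closes the chain with the energy estimate $\|v_\e\|_{H^1(\Omega_\e)} \lesssim \|u_\e\|_{L^2(\Omega_\e)}$ of \Cref{prop:properties_of_v_eps}. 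This yields $\|u_\e\|_{L^2}^2 \lesssim \e^{(p-1)/p}\|u_\e\|_{L^2}$, which divides out to give the full rate. Your identification of the energy identity and of the zero-mean structure of the boundary term are consistent with what underlies \Cref{prop:sum_of_integrals} and \Cref{lemma:C_3}, but without the adjoint problem you cannot upgrade the square-root rate; that duality step is the heart of the proof.
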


\begin{remark}
The constant $C$ from \Cref{thm:main} can actually be chosen of the following form:
\begin{align*}
C=\text{max}\big\{1,\|\varphi\|_{\infty}^{(p-1)/p}\big\}\cdot\sqrt{1+\|\varphi'\|_{\infty}^2}\cdot C(w_0,c,p,\Omega_0,Q_0),
\end{align*}
where $C(w_0,c,p,\Omega_0,Q_0)$ is an $\e$-independent constant depending only on $w_0$, $c$, $p$, $\Omega_0$ and $\|Q_0\|_{W^{1,\infty}(\Omega_0)}$.
\end{remark}

\subsection{Regularity of $Q_{\e}$ and $Q_0$}\label{subsect:reg}

In this subsection, we prove that there exists a unique solution $Q_{\e}$ of \eqref{defn:PDE_Q_eps} and a unique solution $Q_0$ of \eqref{defn:PDE_Q_0}, with $Q_{\e}\in W^{2,p}(\Omega_{\e})$ and $Q_0\in W^{2,p}(\Omega_0)$, for any $p\in(1,+\infty)$.

\begin{remark}
It is easy to see that problems \eqref{defn:PDE_Q_eps} and \eqref{defn:PDE_Q_0} admit solutions from $W^{1,2}(\Omega_0)$ and $W^{1,2}(\Omega_{\e})$ by using, for example, direct methods, such as in \cite{dacorogna2007dm}, or the approach via Lax-Milgram theorem for elliptic problems, such as in \cite{evans2010second} or \cite{grisvard1985non}.
\end{remark}

\begin{definition}\label{defn:THE_transf}
We denote by $\Phi_{polar}$ the polar coordinates transform:
\begin{align*}
\Phi_{polar}(x,y)=(y\cos x,y\sin x),\;\forall (x,y)\in[0,2\pi)\times(0,R),
\end{align*}
by $\Phi_{tl}$ the following translation:
\begin{align*}
\Phi_{tl}(x,y)=(x,y+2R),\;\forall(x,y)\in[0,2\pi)\times(0,R)
\end{align*} 
and let $\Phi:\Omega_0\to \Phi(\Omega_0)$ be defined as $\Phi=\Phi_{polar}\circ \Phi_{tl}$. We define
\begin{align*}
\mathcal{U}_{\e}=\Phi(\Omega_{\e})\hspace{5mm}\text{and}\hspace{5mm}\mathcal{U}_0=\Phi(\Omega_0).
\end{align*}
\end{definition}

\begin{definition}
For any $a,b\in\mathbb{R}$ with $a,b>0$, we denote
\begin{align*}
A_{a,b}=\{(y\cos x,y\sin x)\;|\;x\in[0,2\pi),\;y\in(a,b)\}.
\end{align*}
\end{definition}

\begin{remark}
The transformation $\Phi:\Omega_0\to\mathcal{U}_0$ is smooth and bi-Lipschitz. Moreover, using \Cref{pp:1}, we have that $\mathcal{U}_{\e}$ is a bounded open domain from $\mathbb{R}^2$ with a $C^2$ boundary and, using \Cref{pp:2}, we have that $A_{5R/2,3R}\subset \mathcal{U}_{\e}\subset\mathcal{U}_0=A_{2R,3R}$.
\end{remark}

In order to prove that $Q_{\e}$ and $Q_0$ admit $W^{2,p}$ regularity, we use \cite[Theorem 2.4.2.6]{grisvard1985non}:

\begin{theorem}\label{thm:reg}
Let $\Omega$ be a bounded open subset of $\mathbb{R}^n$, with a $C^{1,1}$ boundary. Let $a_{i,j}$ and $b_i$ be uniformly Lipschitz functions in $\overline{\Omega}$ and let $a_i$ be bounded measurable functions in $\overline{\Omega}$. Assume that $a_{i,j}=a_{j,i}$, $1\leq i,j\leq n$ and that there exists $\alpha>0$ with
\begin{align*}
\displaystyle{\sum_{i,j=1}^n a_{i,j}(x)\xi_i\xi_j\leq -\alpha|\xi|^2}
\end{align*}
for all $\xi\in\mathbb{R}^n$ and almost every $x\in\overline{\Omega}$. Assume in addition that $a_0\geq \beta>0$ a.e. in $\Omega$ and that
\begin{align*}
\displaystyle{b_0b_{\nu}=b_0\sum_{j=1}^n b_j\nu^j\geq 0,\hspace{4mm}b_{\nu}\neq 0}
\end{align*}
on $\Gamma=\partial\Omega$. Then for every $f\in L^p(\Omega)$ and every $g\in W^{1-1/p,p}(\Gamma)$, there exists a unique $u\in W^{2,p}(\Omega)$, which is a solution of
\begin{align*}
\begin{cases}
\displaystyle{\sum_{i,j=1}^n D_i(a_{i,j}D_ju)+\sum_{i=1}^n a_iD_iu+a_0u=f\hspace{4mm}\text{in}\;\Omega}\\
\\
\displaystyle{\text{Tr}\bigg(\sum_{j=1}^n b_jD_ju+b_0u\bigg)=g\hspace{24mm}\text{on}\;\Gamma}
\end{cases}
\end{align*}
where $\text{Tr}$ is the trace operator.
\end{theorem}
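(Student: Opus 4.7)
The statement is the classical regularity theorem for Robin-type elliptic boundary value problems as presented in \cite[Theorem 2.4.2.6]{grisvard1985non}, so the natural plan is to reproduce (or simply invoke) Grisvard's argument, which has two main parts: existence of a weak solution, followed by elliptic regularity up to $W^{2,p}$.

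First I would obtain a unique weak solution $u\in W^{1,2}(\Omega)$ via the Lax--Milgram theorem applied to the bilinear form
$$a(u,v)=-\sum_{i,j}\int_\Omega a_{ij}\,D_ju\,D_iv\,dx+\sum_i\int_\Omega a_iD_iu\,v\,dx+\int_\Omega a_0\,u\,v\,dx+\int_\Gamma \tfrac{b_0}{b_\nu}\,uv\,dS,$$
on $H^1(\Omega)$, with right-hand side $\ell(v)=\int_\Omega fv\,dx+\int_\Gamma\tfrac{g}{b_\nu}v\,dS$ (one first reduces to $b_\nu$ normalized, using $b_\nu\neq 0$). The coercivity sign $-\alpha|\xi|^2$ on $a_{ij}$, the lower bound $a_0\geq\beta>0$, and the compatibility $b_0b_\nu\geq 0$ are exactly what is needed to produce coercivity of $a(\cdot,\cdot)$, after a routine absorption of the first-order drift terms $a_iD_iu$ into the principal part via Young's inequality (using the boundedness of $a_i$). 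Continuity of $a$ and $\ell$ on $H^1(\Omega)$ is immediate from the regularity assumptions on the coefficients together with the trace inequality $\|v\|_{L^2(\Gamma)}\leq C\|v\|_{H^1(\Omega)}$, valid since $\partial\Omega$ is $C^{1,1}$.

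Second, I would upgrade $W^{1,2}$ regularity to $W^{2,p}$ for an arbitrary $p\in(1,\infty)$ by the standard localisation-and-flattening strategy for elliptic equations with Lipschitz coefficients. Interior second derivatives in $L^p$ follow from Calder\'on--Zygmund theory combined with Nirenberg's difference quotient technique, exploiting that $a_{ij}$ is Lipschitz so that the commutator of the difference quotient with the operator produces $L^p$-controllable remainders. Near the boundary, the $C^{1,1}$ regularity of $\Gamma$ allows a local diffeomorphism $\Psi$ of class $C^{1,1}$ that straightens $\Gamma$ into a portion of the hyperplane $\{x_n=0\}$, while preserving the Lipschitz regularity of the coefficients and the Robin/oblique structure of the boundary condition; the Agmon--Douglis--Nirenberg $L^p$ estimates for half-space problems with Lipschitz coefficients then yield the required $W^{2,p}$ bound on each chart, which can be patched into a global bound using a partition of unity.

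The main obstacle, and precisely the reason that $C^{1,1}$ (rather than only Lipschitz) boundary regularity is required, lies in the boundary-flattening step: one needs the outward normal $\nu$ to be Lipschitz so that the transformed principal part still has Lipschitz coefficients and the transformed boundary operator has bounded, well-defined coefficients against which the half-space $L^p$ estimates can be applied. Uniqueness then follows immediately from the weak formulation and the coercivity already established. For the application in this paper, this theorem will be invoked via the bi-Lipschitz polar change of variables $\Phi$ of \Cref{defn:THE_transf}, which sends $\Omega_\e$ and $\Omega_0$ to the annular domains $\mathcal{U}_\e$ and $\mathcal{U}_0$ whose boundaries are $C^2$ by \Cref{pp:1}, so that the hypotheses of the theorem are directly met on each component of $Q_\e$ (respectively $Q_0$).
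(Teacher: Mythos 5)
The paper gives no proof of this statement: it is lifted verbatim from Grisvard's book as \cite[Theorem 2.4.2.6]{grisvard1985non} and invoked as an external result, so there is no internal proof to compare against. Your plan correctly identifies the provenance and the two-stage structure (existence, then $W^{2,p}$ regularity via localisation, flattening, Calder\'on--Zygmund interior estimates and ADN boundary estimates), and the remarks about why $C^{1,1}$ regularity of $\Gamma$ is needed and how the theorem is applied through the polar map $\Phi$ are accurate.

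However, the Lax--Milgram step as you state it does not go through for the general oblique boundary operator appearing in the theorem. Integration by parts of $\int_\Omega \sum D_i(a_{ij}D_j u)\,v$ produces the \emph{conormal} boundary term $\int_\Gamma \big(\sum_{i,j} a_{ij}\nu_i D_j u\big)v\,dS$, whereas the prescribed boundary condition involves $\sum_j b_j D_j u$, and these two vector fields need not be proportional. Dividing by $b_\nu$ normalises the normal component but leaves a tangential mismatch $\tilde{b}_\tau\cdot\nabla_\tau u$ that does not appear in your bilinear form $a(u,v)$; making this disappear requires either assuming the boundary condition is conormal (which is true for the Laplacian in the paper's application, where $b_j=\nu_j$, but is not part of the hypotheses of the quoted theorem) or an additional integration by parts along $\Gamma$ that introduces curvature/tangential terms you have not accounted for. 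For the general oblique-derivative problem covered by Grisvard's statement, existence is instead obtained from a priori $W^{2,p}$ estimates (valid because the Lopatinskii--Shapiro condition reduces to $b_\nu\neq 0$) together with a continuity/Fredholm argument, with uniqueness coming from the sign conditions $a_0\geq\beta>0$ and $b_0 b_\nu\geq 0$. Your proof as written is therefore correct for the conormal case the paper actually needs, but does not establish the theorem as stated.
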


\begin{corollary}\label{cor:Q_eps}
For any $p\in(1,+\infty)$, there exists a unique $Q_{\e}\in W^{2,p}(\Omega_{\e})$ which solves the problem \eqref{defn:PDE_Q_eps}.
\end{corollary}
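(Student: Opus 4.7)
The strategy is to reduce Corollary~\ref{cor:Q_eps} to a scalar problem on a smooth, non-periodic domain and then invoke Grisvard's Theorem~\ref{thm:reg}. Because $Q\in\text{Sym}_0(2)$ has only two independent scalar entries (e.g.\ $Q_{11}$ and $Q_{12}$), and the Laplacian, the zeroth-order term $cQ$, and the Robin boundary operator all act componentwise, the system \eqref{defn:PDE_Q_eps} decouples into two independent scalar Robin problems. It therefore suffices to establish existence and uniqueness in $W^{2,p}$ for a single scalar component $u$.

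\textbf{Transformation to $\mathcal{U}_\e$.} The domain $\Omega_\e$ itself is not directly amenable to Grisvard's result because of the periodic identification in $x$, which is exactly why the map $\Phi=\Phi_{polar}\circ\Phi_{tl}$ was introduced: it sends $\Omega_\e$ $C^2$-diffeomorphically onto a bounded open set $\mathcal{U}_\e\subset A_{2R,3R}$ with $C^2$ boundary. Here one uses that $\varphi\in C^2$ and that $\e^{-1}=2k\in\mathbb{N}$ (\Cref{pp:2}), which forces $\varphi_\e$ to be genuinely $2\pi$-periodic so that $\partial\mathcal{U}_\e$ is a closed curve without seams. Since $\Phi$ and $\Phi^{-1}$ have $C^1$ coefficients and non-degenerate Jacobian on $\overline{\Omega_0}$, setting $\tilde u = u\circ\Phi^{-1}$ converts $-\Delta u+cu=0$ in $\Omega_\e$ into a divergence-form equation
\begin{equation*}
-\sum_{i,j=1}^2 D_i\bigl(a_{ij}(\xi)D_j\tilde u\bigr) + a_0(\xi)\tilde u = 0 \quad\text{in }\mathcal{U}_\e,
\end{equation*}
and the Robin conditions become $\sum_j b_j D_j\tilde u + b_0\tilde u = \tilde g$ on $\partial\mathcal{U}_\e$.

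\textbf{Verification of Grisvard's hypotheses.} The matrix $(a_{ij})$ is symmetric (since $-\Delta$ is), uniformly Lipschitz and uniformly elliptic on $\overline{\mathcal{U}_\e}$ (as $D\Phi,D\Phi^{-1}$ are bounded away from zero and $\Phi\in C^2$). The zero-order coefficient $a_0$ equals $c$ times a strictly positive Jacobian factor, hence is bounded below by some $\beta>0$. On the boundary, $b_0$ equals $w_0/2$ times a positive arclength factor, and under an orientation-preserving $C^2$-diffeomorphism the outward normal derivative $\partial u/\partial\nu$ pulls back to the outward conormal derivative $a_{ij}\nu^j\partial_i\tilde u$, up to a positive scalar factor; hence $b_\nu=\sum_j b_j\nu^j>0$ by ellipticity and outward orientation, giving $b_0 b_\nu>0$ and $b_\nu\neq 0$. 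Finally, $\varphi\in C^2$ implies $\nu_\e\in C^1(\Gamma_\e)$ and hence $Q_\e^0\in C^1(\Gamma_\e)$, while $Q_R$ is constant, so the transformed datum $\tilde g$ lies in $C^1(\partial\mathcal{U}_\e)\subset W^{1-1/p,p}(\partial\mathcal{U}_\e)$ for every $p\in(1,\infty)$.

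\textbf{Conclusion and main obstacle.} Theorem~\ref{thm:reg} then provides a unique $\tilde u\in W^{2,p}(\mathcal{U}_\e)$; pulling back through the $C^2$-diffeomorphism $\Phi$ yields $u\in W^{2,p}(\Omega_\e)$, and assembling the two independent components gives the unique $Q_\e\in W^{2,p}(\Omega_\e)$ solving \eqref{defn:PDE_Q_eps}. The only non-routine step is the bookkeeping required to verify the sign condition $b_0 b_\nu\ge 0$ with $b_\nu\neq 0$ for the transformed Robin boundary operator, which in this setting ultimately reduces to the observation that $\Phi$ is orientation-preserving with positive Jacobian bounded away from zero.
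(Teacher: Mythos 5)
Your proof is correct and follows essentially the same route as the paper's: conjugate the problem by $\Phi=\Phi_{polar}\circ\Phi_{tl}$ to produce a $C^2$ domain $\mathcal{U}_\e$ free of the periodic identification, verify the hypotheses of Grisvard's Theorem~\ref{thm:reg}, and pull back. Two minor points are worth noting. First, you make explicit the componentwise decoupling of the $\text{Sym}_0(2)$-valued system into scalar Robin problems, which the paper elides when it applies the scalar statement of Theorem~\ref{thm:reg} directly to the matrix-valued $\tilde{Q}_\e$; this is a harmless but useful clarification, since $-\Delta$ and the Robin operator act entrywise and the data $Q_\e^0$, $Q_R$ merely enter as source terms, not as couplings. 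Second, where you argue that "$b_\nu>0$ by ellipticity and outward orientation," the precise reason is that the $b_j$ do not involve the transformed diffusion coefficients $a_{ij}$ at all: they arise from the chain-rule identity $\nabla Q_\e\cdot\nu_\e=(D\Phi\,\nu_\e)\cdot\nabla\tilde Q_\e$, so $b_\nu$ is the inner product of the pushed-forward normal $D\Phi\,\nu_\e$ (which remains outward-pointing since $\det D\Phi>0$) with the outward normal of $\mathcal{U}_\e$, hence strictly positive. The paper's proof sidesteps this by writing the $a_{ij}$, $a_i$, $b_{1i}$, $b_{2i}$ explicitly; your argument is more abstract but equally valid.
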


The proof of this corollary can be found in \Cref{appendix:err_est}. In a similar way, we can show that:
\begin{corollary}\label{cor:Q_0}
For any $p\in(1,+\infty)$, there exists a unique $Q_0\in W^{2,p}(\Omega_0)$ which solves the problem \eqref{defn:PDE_Q_0}.
\end{corollary}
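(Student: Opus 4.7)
The proof follows the same template as \Cref{cor:Q_eps}, so the plan is to mimic that argument, with $\mathcal{U}_\e$ replaced by $\mathcal{U}_0=A_{2R,3R}$. The difficulty compared to working directly on $\Omega_0$ is that $\Omega_0$ is a rectangle with corners, so one cannot invoke \Cref{thm:reg} there; the role of the transformation $\Phi=\Phi_{polar}\circ\Phi_{tl}$ is to map $\Omega_0$ onto the smooth annulus $\mathcal{U}_0$, on which the periodicity in $x$ is automatically built into the geometry and the boundary is $C^\infty$. Since $\Phi$ is a smooth bi-Lipschitz diffeomorphism and $W^{2,p}$ regularity is preserved under such maps, it suffices to prove the existence of a unique $W^{2,p}$-solution of the transformed problem on $\mathcal{U}_0$.

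First I would change variables, setting $\tilde Q_0=Q_0\circ\Phi^{-1}$ on $\mathcal{U}_0$ and rewriting \eqref{defn:PDE_Q_0} componentwise. The new interior operator has the form $-\partial_i(a_{ij}\partial_j\tilde Q_0)+\sum_i a_i\partial_i\tilde Q_0+c\tilde Q_0$, where $(a_{ij})$ is obtained from the Jacobian of $\Phi^{-1}$ and is a smooth, uniformly elliptic symmetric matrix on $\overline{\mathcal{U}_0}$ (because $\Phi$ is $C^\infty$ and non-degenerate on the annular region, which sits away from the origin). On the inner boundary $\Phi(\Gamma_0)$ and outer boundary $\Phi(\Gamma_R)$ the Robin conditions become $b_j\partial_j\tilde Q_0+b_0\tilde Q_0=g$ with $b_0=w_{ef}/2>0$ or $w_0/2>0$ respectively, and $b_\nu=\sum_j b_j\nu^j$ reduces, after the pull-back, to a positive conormal derivative, so $b_0 b_\nu\geq 0$ and $b_\nu\neq 0$. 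The coefficients are Lipschitz and the data $g$ (arising from $Q_{ef}$ and $Q_R$) lie in any $W^{1-1/p,p}$ since they are essentially constants.

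Second I would apply \Cref{thm:reg} to each scalar component of $\tilde Q_0$, with $f=0$, to obtain a unique $\tilde Q_0\in W^{2,p}(\mathcal{U}_0)$. Pulling back via $\Phi$ yields $Q_0\in W^{2,p}(\Omega_0)$, and the $2\pi$-periodicity in the $x$-variable is automatic from the fact that the solution lives on the annulus $\mathcal{U}_0$. Uniqueness on $\Omega_0$ follows from uniqueness on $\mathcal{U}_0$ together with the bijectivity of $\Phi$.

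The main obstacle is essentially bookkeeping: one must check that after pull-back the boundary sign condition $b_0 b_\nu\geq 0,\,b_\nu\neq 0$ on both components $\Phi(\Gamma_0)$ and $\Phi(\Gamma_R)$, and the positivity and Lipschitz regularity of the coefficients, are all inherited from \eqref{defn:PDE_Q_0}. Because the target domain $\mathcal{U}_0$ is an annulus bounded away from the origin, $\Phi^{-1}$ is smooth there, so all these properties carry over without loss. Note that this argument is in fact simpler than the one for $Q_\e$, since the boundary $\Phi(\Gamma_0)$ is perfectly circular rather than oscillatory, and the coefficients $w_{ef}$ and $Q_{ef}$ are constant, making the trace data trivially in $W^{1-1/p,p}$.
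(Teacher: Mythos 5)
Your proposal follows essentially the same route as the paper's own (implicit) proof: the paper simply states that \Cref{cor:Q_0} is proved "in a similar way" to \Cref{cor:Q_eps}, whose proof is exactly the transformation argument you describe — map $\Omega_0$ via $\Phi=\Phi_{polar}\circ\Phi_{tl}$ to the smooth annulus $\mathcal{U}_0$, verify the hypotheses of \Cref{thm:reg} for the pushed-forward operator and Robin conditions, and pull back the unique $W^{2,p}$ solution. Your additional remarks about why the annular domain is needed (the rectangle has corners) and why the $Q_0$ case is in fact simpler (circular boundary, constant trace data) are correct and consistent with the paper.
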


\begin{remark}
Using the method of separation of variables, one can find that $Q_0$ is of the form:
\begin{align*}
Q_0(x,y)=c_1\cdot e^{y\sqrt{c}}+c_2\cdot e^{-y\sqrt{c}},
\end{align*}
where $c_1$ and $c_2$ are two constant $Q$-tensors that can be found from:
\begin{align*}
\begin{cases}
c_1\cdot \bigg(\dfrac{w_{ef}}{2}-\sqrt{c}\bigg)+c_2\cdot\bigg(\dfrac{w_{ef}}{2}+\sqrt{c}\bigg)=\dfrac{w_{ef}}{2}\cdot Q_{ef};\\
\\
c_1\cdot e^{R\sqrt{c}}\cdot\bigg(\dfrac{w_0}{2}+\sqrt{c}\bigg)+c_2\cdot e^{-R\sqrt{c}}\cdot\bigg(\dfrac{w_0}{2}-\sqrt{c}\bigg)=\dfrac{w_0}{2}\cdot Q_R.
\end{cases}
\end{align*}
\end{remark}

\subsection{Some integral inequalities}\label{subsect:integral_inequalities}

\begin{definition}\label{defn:trace_ineq}
Let $p\in(1,+\infty)$. Let us consider the trace operator $\text{Tr}:W^{1,p}(\Omega_0)\to W^{1-1/p,p}(\partial\Omega_0)$. We denote $C_{tr}(p,\Omega_0)$ the constant given by the trace inequality, that is:
\begin{align*}
\|\text{Tr}(\omega)\|_{W^{1-1/p,p}(\partial\Omega_0)}\leq C_{tr}(p,\Omega_0)\cdot\|\omega\|_{W^{1,p}(\Omega_0)},\;\forall \omega\in W^{1,p}(\Omega_0).
\end{align*}
\end{definition}

\begin{remark}
For notation simplicity, we choose to write $v(\cdot,0)$ instead of $[\text{Tr}(v)]\big|_{\Gamma_0}(\cdot,0)$, whenever $v\in W^{1,p}(\Omega_0)$.
\end{remark}

\begin{definition}\label{defn:bilinear_funct}
We consider the following bilinear functional on $W^{1,2}(\Omega_{\e})\times W^{1,2}(\Omega_{\e})$:
\begin{align*}
a_{\e}(u,v)=\int_{\Omega_{\e}}\big(\nabla u\cdot\nabla v+c\cdot u\cdot v\big)\text{d}(x,y)+\dfrac{w_0}{2}\bigg(\int_{\Gamma_{\e}}u\cdot v\;\text{d}\sigma_{\e}+\int_{\Gamma_R}u\cdot v\;\text{d}\sigma_R\bigg),
\end{align*}
for any $u,v\in W^{1,2}(\Omega_{\e})$.
\end{definition}

\begin{remark}
For notation simplicity, whenever we choose $v\in W^{1,2}(\Omega_0)$, we write $a_{\e}(\cdot,v)$ instead of $a_{\e}(\cdot,v|_{\Omega_{\e}})$.
\end{remark}

The goal of this subsection is to prove the following proposition:

\begin{proposition}\label{prop:ineq_for_W_1_p_Omega_0}
Let $p\in(2,+\infty)$. Then there exists an $\e$-independent constant $C_I$ such that:
\begin{align*}
|a_{\e}(Q_0-Q_{\e},v)|\leq C_I\cdot \e^{\frac{p-1}{p}}\cdot\|v\|_{W^{1,p}(\Omega_0)},\;\forall\;v\in W^{1,p}(\Omega_0).
\end{align*}
\end{proposition}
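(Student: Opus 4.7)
The plan is to reduce the bilinear form $a_\e(Q_0 - Q_\e,v)$ to a single oscillatory boundary integral over $\Gamma_\e$, and then extract the rate $\e^{(p-1)/p}$ from the mean-zero structure of its integrand. First I would exploit the $W^{2,p}$ regularity established for $Q_\e$ and $Q_0$: testing the PDE \eqref{defn:PDE_Q_eps} against any $v\in W^{1,p}(\Omega_0)|_{\Omega_\e}$ and applying Green's identity gives $a_\e(Q_\e,v)=\tfrac{w_0}{2}\int_{\Gamma_\e}Q^0_\e v\,d\sigma_\e+\tfrac{w_0}{2}\int_{\Gamma_R}Q_R v\,d\sigma_R$, while applying Green's identity to $Q_0$ on $\Omega_\e$ (using that $-\Delta Q_0+cQ_0=0$ on $\Omega_0\supset\Omega_\e$, together with the Robin condition on $\Gamma_R$ for $Q_0$) yields $a_\e(Q_0,v)=\int_{\Gamma_\e}\bigl(\partial_{\nu_\e}Q_0+\tfrac{w_0}{2}Q_0\bigr)v\,d\sigma_\e+\tfrac{w_0}{2}\int_{\Gamma_R}Q_Rv\,d\sigma_R$. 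Subtracting eliminates the $\Gamma_R$ terms and leaves the single boundary integral
\begin{equation*}
a_\e(Q_0-Q_\e,v)=\int_{\Gamma_\e}\Bigl(\partial_{\nu_\e}Q_0+\tfrac{w_0}{2}(Q_0-Q^0_\e)\Bigr)v\,d\sigma_\e.
\end{equation*}

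Next I would parametrize $\Gamma_\e$ by $x\mapsto(x,\e\varphi(x/\e))$, at which point $\gamma_\e\,\partial_{\nu_\e}Q_0=\varphi'(x/\e)\partial_xQ_0-\partial_yQ_0$, so the integrand becomes a sum of terms of the form $h_i(x/\e)\cdot g_i(x,\e\varphi(x/\e))\cdot v(x,\e\varphi(x/\e))$ with $h_i\in\{\varphi',1,\gamma_\e,\gamma_\e\cdot g_1,\gamma_\e\cdot g_2\}$ (using the explicit form of $Q^0_\e$ from \Cref{remark:Q_e_0_dependency}). Using $\|\e\varphi\|_\infty=O(\e)$ and $Q_0\in W^{1,\infty}(\Omega_0)$ (which follows from $W^{2,p}$ for $p>2$), I would replace every evaluation of $Q_0$ and $\nabla Q_0$ at $(x,\e\varphi(x/\e))$ by an evaluation at $(x,0)$, paying an $O(\e)\,\|Q_0\|_{W^{1,\infty}}$ pointwise correction whose contribution to the integral is bounded by $C\e\,\|v\|_{L^1(\Gamma_\e)}\leq C\e\,\|v\|_{W^{1,p}(\Omega_0)}$ via the trace estimate.

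The heart of the argument is then to show that the resulting "frozen" oscillating kernel is mean-zero. By \Cref{prop:Q_eff}, $\overline{\gamma_\e}=\gamma$ and $\overline{\gamma_\e Q^0_\e}=\gamma Q_{ef}$, while $\overline{\varphi'}=0$ by periodicity. A direct computation shows that the mean of the bracketed kernel against the frozen boundary data of $Q_0$ equals
\begin{equation*}
-\partial_yQ_0(x,0)+\tfrac{w_{ef}}{2}\bigl(Q_0(x,0)-Q_{ef}\bigr),
\end{equation*}
which vanishes exactly because $Q_0$ satisfies the homogenised Robin condition on $\Gamma_0$ (with $\nu_0=(0,-1)$). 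Thus, modulo $O(\e)$ errors, the integrand has the form $f(x,x/\e)\cdot v(x,\e\varphi(x/\e))$ with $\tfrac{1}{2\pi}\int_0^{2\pi}f(x,t)\,dt=0$ for a.e.\ $x$.

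The remaining and hardest step is the quantitative averaging: showing that such a mean-zero oscillatory boundary integral, tested against the trace of $v\in W^{1,p}(\Omega_0)$ on $\Gamma_\e$, is bounded by $C\e^{(p-1)/p}\|v\|_{W^{1,p}(\Omega_0)}$. This is precisely the role of the forthcoming \Cref{lemma:C_3}: the rate loss compared with the naive $O(\e)$ obtained for Lipschitz test functions reflects that $v$ is only Sobolev along $\Gamma_\e$. I would expect the proof to proceed by partitioning $\Gamma_\e$ into the $O(1/\e)$ periodic cells of length $2\pi\e$, using periodic-zero-mean Poincaré on each cell together with the uniform-in-$\e$ extension operator (\Cref{defn:uniform_ext}) to lift the cell-wise $L^{p'}$ oscillations of $v$ to the $W^{1,p}$ norm on $\Omega_0$, and summing via Hölder so that the $1/p$-power of the number of cells produces the exponent $(p-1)/p$. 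The main obstacle is accommodating the general $p$ (rather than the $p=2$ of \cite{belyaev1988,friedman1997}) while keeping the constant independent of $\e$; modulo that input, the remainder of the argument consists of the bookkeeping sketched above.
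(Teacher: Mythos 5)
Your proposal is correct and uses the same two analytic ingredients as the paper (a ``transport'' estimate moving boundary evaluations from $y=\e\varphi(x/\e)$ to $y=0$, and a mean-zero oscillatory averaging estimate of the type in \Cref{lemma:C_3}), but you organize the computation around a genuinely different decomposition. The paper applies Green's identity to $Q_0$ over $\Omega_0$ and to $Q_\e$ over $\Omega_\e$, producing the bulk residual $I_1$ on $\Omega_0\setminus\Omega_\e$ plus four boundary residuals $I_{21},I_{22},I_{31},I_{32}$ (\Cref{defn:I_integrals}, \Cref{prop:sum_of_integrals}) which are then bounded one-by-one by \Cref{lemma_C_2} and \Cref{lemma:C_3}. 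You instead apply Green's identity to $Q_0$ directly over $\Omega_\e$, using that its interior equation holds on all of $\Omega_0\supset\Omega_\e$; this eliminates the bulk correction $I_1$ and collapses the entire residual into the single surface integral $\int_{\Gamma_\e}\bigl(\partial_{\nu_\e}Q_0+\tfrac{w_0}{2}(Q_0-Q^0_\e)\bigr)v\,d\sigma_\e$. Your observation that the frozen ($y=0$) version of this kernel has vanishing $t$-mean \emph{precisely because} $Q_0$ satisfies the homogenised Robin condition on $\Gamma_0$ is a cleaner way to expose why the homogenised boundary condition is the correct one; the paper encodes the same fact less directly, by constructing $I_{22}$ and $I_{32}$ as differences against the means $\gamma Q_{ef}$ and $\gamma$. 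Two trade-offs worth flagging. First, replacing $\nabla Q_0(x,\e\varphi(x/\e))$ by $\nabla Q_0(x,0)$ with a pointwise $O(\e)$ error requires $\nabla Q_0$ Lipschitz, i.e.\ $Q_0\in W^{2,\infty}(\Omega_0)$; this holds here because $Q_0=c_1e^{y\sqrt c}+c_2e^{-y\sqrt c}$ is explicit and smooth, but it is strictly stronger than the $W^{1,\infty}$ control the paper needs, and the paper's decomposition deliberately never involves the normal derivative $\partial_{\nu_\e}Q_0$, so no transport of $\nabla Q_0$ is required. Second, after freezing $Q_0$ and its gradient the test function still appears as $v(x,\e\varphi(x/\e))$, so you must either prove a $\Gamma_\e$-version of \Cref{lemma:C_3} (using the uniform extension operator and uniform-in-$\e$ trace constants, as you gesture at) or insert one more \Cref{lemma_C_2}-type transport step for $v$ before applying the paper's \Cref{lemma:C_3} as stated on $\Gamma_0$; this should be made explicit, since it contributes the dominant $\e^{(p-1)/p}$ alongside the averaging estimate.
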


\begin{remark}
In \Cref{appendix:err_est}, we prove that $a_{\e}(\cdot,v)$ is well-defined for all $v\in W^{1,p}(\Omega_0)$. Moreover, in order to obtain \Cref{prop:ineq_for_W_1_p_Omega_0}, we split $a_{\e}(Q_0-Q_{\e},v)$ into several parts, which are presented in the following definition.
\end{remark}

\begin{definition}\label{defn:I_integrals}
Let $v\in W^{1,p}(\Omega_0)$. We denote:
\begin{align*}
I_1&=-\displaystyle{\int_{\Omega_0\setminus\Omega_{\e}}\big(\nabla Q_0\cdot\nabla v+c\cdot Q_0\cdot v\big)\text{d}(x,y)},\\
I_{21}&=-\dfrac{w_0}{2}\int_0^{2\pi} (v(x,\e \varphi(x/\e))-v(x,0))\cdot Q^0_{\e}(x/\e)\cdot \gamma_{\e}(x/\e)\;\text{d}x,\\
I_{31}&=\dfrac{w_0}{2}\int_0^{2\pi} \big(Q_0(x,\e \varphi(x/\e))\cdot v(x,\e \varphi(x/\e))-Q_0(x,0)\cdot v(x,0)\big)\cdot\gamma_{\e}(x/\e)\;\text{d}x,\\
I_{22}&=\dfrac{w_0}{2}\int_0^{2\pi} v(x,0)\cdot\bigg(\gamma Q_{ef}-\gamma_{\e}(x/\e)Q^0_{\e}(x/\e)\bigg)\;\text{d}x,\\
I_{32}&=-\dfrac{w_0}{2}\int_0^{2\pi} Q_0(x,0)\cdot v(x,0)\cdot \bigg(\gamma-\gamma_{\e}(x/\e)\bigg)\;\text{d}x.
\end{align*}
\end{definition}

\begin{proposition}\label{prop:sum_of_integrals}
We have
\begin{align}\label{eq:sum_of_integrals}
a_{\e}(Q_0-Q_{\e},v)=I_1+I_{21}+I_{22}+I_{31}+I_{32},\;\forall\;v\in W^{1,p}(\Omega_0).
\end{align}
\end{proposition}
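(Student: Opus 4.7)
The plan is to rewrite both $a_\e(Q_0,v)$ and $a_\e(Q_\e,v)$ via the weak formulations of the two Euler--Lagrange systems \eqref{defn:PDE_Q_eps} and \eqref{defn:PDE_Q_0}, subtract, and then unpack the resulting surface integrals by a purely algebraic rearrangement that reproduces the five $I_{jk}$.

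First I would test \eqref{defn:PDE_Q_eps} against the restriction $v|_{\Omega_\e}\in W^{1,2}(\Omega_\e)$ (admissible since $p>2$ and $\Omega_\e\subset\Omega_0$) and integrate by parts; the Robin-type conditions on $\Gamma_\e$ and $\Gamma_R$ combine with the bulk equation to yield
\begin{equation*}
a_\e(Q_\e,v)=\frac{w_0}{2}\int_{\Gamma_\e} Q^0_\e\cdot v\,d\sigma_\e+\frac{w_0}{2}\int_{\Gamma_R} Q_R\cdot v\,d\sigma_R.
\end{equation*}
For $Q_0$, since $-\Delta Q_0+cQ_0=0$ holds on all of $\Omega_0\supset\Omega_\e$ and $Q_0\in W^{2,p}(\Omega_0)$ by \Cref{cor:Q_0}, I would apply Green's identity on the full limit domain $\Omega_0$, use the boundary conditions on $\Gamma_0$ and $\Gamma_R$, and split $\int_{\Omega_\e}=\int_{\Omega_0}-\int_{\Omega_0\setminus\Omega_\e}$ so that the second contribution is exactly $I_1$. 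Subtracting the two identities gives
\begin{equation*}
a_\e(Q_0-Q_\e,v)=I_1+\frac{w_{ef}}{2}\int_{\Gamma_0}(Q_{ef}-Q_0)\cdot v\,d\sigma_0+\frac{w_0}{2}\int_{\Gamma_\e}(Q_0-Q^0_\e)\cdot v\,d\sigma_\e,
\end{equation*}
where the $\Gamma_R$ contributions from the two weak formulations cancel exactly.

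The final step is to parameterise the $\Gamma_\e$ integral by $x\mapsto(x,\e\varphi(x/\e))$ with arclength element $\gamma_\e(x/\e)\,dx$, parameterise $\Gamma_0$ by $x\mapsto(x,0)$, and use $w_{ef}=\gamma w_0$ (so $w_{ef}Q_{ef}=w_0\gamma Q_{ef}$). I would then insert and subtract the two ``mixed'' quantities $\frac{w_0}{2}\int_0^{2\pi}Q^0_\e(x/\e)v(x,0)\gamma_\e(x/\e)\,dx$ and $\frac{w_0}{2}\int_0^{2\pi}Q_0(x,0)v(x,0)\gamma_\e(x/\e)\,dx$. This cleanly separates the surface contributions into two families: the ``vertical'' increments $v(x,\e\varphi(x/\e))-v(x,0)$ and $Q_0(x,\e\varphi(x/\e))v(x,\e\varphi(x/\e))-Q_0(x,0)v(x,0)$ produce precisely $I_{21}$ and $I_{31}$, while the differences $\gamma Q_{ef}-\gamma_\e(x/\e)Q^0_\e(x/\e)$ and $\gamma-\gamma_\e(x/\e)$ measuring the homogenisation defect on $\Gamma_0$ produce $I_{22}$ and $I_{32}$. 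A short direct check then confirms the identity \eqref{eq:sum_of_integrals}.

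The main obstacle I expect is purely bookkeeping: choosing the correct pair of ``add and subtract'' terms so that exactly the combinations in \Cref{defn:I_integrals} emerge, rather than an equivalent but differently organised decomposition. There is no genuine analytic difficulty, since the integrations by parts are legitimate under the $W^{2,p}$ regularity of $Q_\e$ and $Q_0$ from \Cref{cor:Q_eps} and \Cref{cor:Q_0}, and every surface integral is absolutely convergent by the trace inequality of \Cref{defn:trace_ineq} together with the uniform bounds \eqref{bounds:gamma_eps} on $\gamma_\e$.
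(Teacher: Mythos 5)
Your proposal is correct and follows essentially the same route as the paper: test the Euler--Lagrange systems against $v$, split the $\Omega_0$ bulk integral to produce $I_1$, subtract to cancel the $\Gamma_R$ contributions, then parameterise $\Gamma_\e$ and $\Gamma_0$ and insert-and-subtract the two mixed terms $\frac{w_0}{2}\int_0^{2\pi}Q^0_\e(x/\e)v(x,0)\gamma_\e(x/\e)\,dx$ and $\frac{w_0}{2}\int_0^{2\pi}Q_0(x,0)v(x,0)\gamma_\e(x/\e)\,dx$ to recover $I_{21},I_{22},I_{31},I_{32}$. The intermediate identity you state, and the pairing of the added/subtracted terms with the $I_{jk}$, both check out against the paper's own computation.
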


\begin{remark}
The proof of \Cref{prop:sum_of_integrals} can be found in \Cref{appendix:err_est}. Before proving \Cref{prop:ineq_for_W_1_p_Omega_0}, we obtain first estimates for each of the integrals from \Cref{defn:I_integrals}.
\end{remark}

\begin{proposition}\label{prop:I_1}
Let $p\in(1,+\infty)$. Then for any $v\in W^{1,p}(\Omega_0)$, we have:
\begin{align*}
\big|I_1\big|\leq C_1\cdot \e^{\frac{p-1}{p}}\cdot\|v\|_{W^{1,p}(\Omega_0\setminus\Omega_{\e})},\;\forall v\in W^{1,p}(\Omega_0),
\end{align*}
where
\begin{align*}
I_1&=-\displaystyle{\int_{\Omega_0\setminus\Omega_{\e}}\big(\nabla Q_0\cdot\nabla v+c\cdot Q_0\cdot v\big)\text{d}(x,y)}.
\end{align*}
and
\begin{align}\label{defn:C_1}
C_1=(2\pi\|\varphi\|_{\infty})^{\frac{p-1}{p}}\cdot\|Q_0\|_{W^{1,\infty}(\Omega_0)}\cdot \text{max}\{c,1\}.
\end{align}
\end{proposition}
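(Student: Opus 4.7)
The estimate is a direct Hölder-type computation on the thin strip $\Omega_0 \setminus \Omega_\e$, exploiting both the $W^{1,\infty}$-regularity of $Q_0$ and the smallness of the strip's measure. I would proceed as follows.

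First, I would obtain the pointwise bounds on $Q_0$ and its gradient. Since $Q_0 \in W^{1,\infty}(\Omega_0)$ by Corollary~\ref{cor:Q_0} and Morrey embedding (or simply by assumption in the statement), both $|Q_0(x,y)|$ and $|\nabla Q_0(x,y)|$ are bounded by $\|Q_0\|_{W^{1,\infty}(\Omega_0)}$ almost everywhere. The triangle inequality and Cauchy–Schwarz on $V$ then yield
\begin{equation*}
|I_1| \leq \|Q_0\|_{W^{1,\infty}(\Omega_0)} \int_{\Omega_0\setminus\Omega_\e}\bigl(|\nabla v(x,y)| + c\,|v(x,y)|\bigr)\,d(x,y).
\end{equation*}

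Next, I would estimate the Lebesgue measure of the thin strip. By Fubini and \Cref{pp:1},
\begin{equation*}
|\Omega_0 \setminus \Omega_\e| = \int_0^{2\pi} \varphi_\e(x)\,dx = \e \int_0^{2\pi} \varphi(x/\e)\,dx \leq 2\pi\,\e\,\|\varphi\|_{\infty}.
\end{equation*}
Applying Hölder's inequality with conjugate exponents $p/(p-1)$ and $p$ to each of the two pieces,
\begin{equation*}
\int_{\Omega_0\setminus\Omega_\e} |\nabla v|\,d(x,y) \leq |\Omega_0\setminus\Omega_\e|^{(p-1)/p}\,\|\nabla v\|_{L^p(\Omega_0\setminus\Omega_\e)},
\end{equation*}
and similarly for $\int |v|\,d(x,y)$, I would obtain
\begin{equation*}
|I_1| \leq \|Q_0\|_{W^{1,\infty}(\Omega_0)}\,(2\pi\|\varphi\|_\infty)^{(p-1)/p}\,\e^{(p-1)/p}\Bigl(\|\nabla v\|_{L^p(\Omega_0\setminus\Omega_\e)} + c\,\|v\|_{L^p(\Omega_0\setminus\Omega_\e)}\Bigr).
\end{equation*}

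Finally, factoring out $\max\{c,1\}$ and bounding the sum $\|\nabla v\|_{L^p} + \|v\|_{L^p}$ by $\|v\|_{W^{1,p}(\Omega_0\setminus\Omega_\e)}$ (taking the additive convention for the Sobolev norm used throughout this section) recovers exactly the constant $C_1$ defined in \eqref{defn:C_1}. There is no real obstacle here: the only minor subtlety is making sure the pointwise $W^{1,\infty}$-bound on $Q_0$ is admissible on the closure, which is granted by the regularity results from \Cref{subsect:reg}. The smallness of the constant in $\e$ is produced entirely by the measure bound $|\Omega_0\setminus\Omega_\e| = O(\e)$, and the sub-optimality $\e^{(p-1)/p}$ rather than $\e$ is the inevitable price of using the $L^p$-norm of $v$ through Hölder's inequality.
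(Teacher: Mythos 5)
Your argument is the same Hölder-plus-measure-of-the-strip computation the paper uses; the only cosmetic difference is that you pull out the $W^{1,\infty}$-bound on $Q_0$ before applying Hölder, whereas the paper applies Hölder first and then inserts the $L^\infty$ bound, which yields identical constants. Both versions rely on the same (unremarked) additive convention for the $W^{1,p}$-norm to recover $C_1$ exactly as stated.
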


\begin{proof} 
We apply H\" older inequality with coefficients $p$ and $p'=\dfrac{p}{p-1}$:
\begin{align*}
\big|I_1\big| &\leq \int_{\Omega_0\setminus\Omega_{\e}}\big|\nabla Q_0\cdot\nabla v\big|+c\big|Q_0\cdot v\big|\;\text{d}(x,y)\\
&\leq \bigg(\int_{\Omega_0\setminus\Omega_{\e}}\big|\nabla Q_0\big|^{p'}\;\text{d}(x,y)\bigg)^{1/p'}\cdot\bigg(\int_{\Omega_0\setminus\Omega_{\e}}\big|\nabla v\big|^p\;\text{d}(x,y)\bigg)^{1/p}+\\
&\hspace{10mm}+c\cdot \bigg(\int_{\Omega_0\setminus\Omega_{\e}}\big|Q_0\big|^{p'}\;\text{d}(x,y)\bigg)^{1/p'}\cdot\bigg(\int_{\Omega_0\setminus\Omega_{\e}}\big|v\big|^p\;\text{d}(x,y)\bigg)^{1/p}\\
&\leq \big|\Omega_0\setminus\Omega_{\e}\big|^{1/p'}\bigg(\|\nabla Q_0\|_{L^{\infty}(\Omega_0\setminus\Omega_{\e})}\cdot\|\nabla v\|_{L^p(\Omega_0\setminus\Omega_{\e})}+c\cdot \|Q_0\|_{L^{\infty}(\Omega_0\setminus\Omega_{\e})}\cdot\|v\|_{L^p(\Omega_0\setminus\Omega_{\e})}\bigg).
\end{align*}
We have:
\begin{align*}
\big|\Omega_0\setminus\Omega_{\e}\big|&=\int_{\Omega_0\setminus\Omega_{\e}}\;1\;\text{d}x=\int_0^{2\pi}\int_{0}^{\e \varphi(x/\e)}\;1\;\text{d}y\;\text{d}x\\
&=\int_0^{2\pi} \e \varphi(x/\e)\;\text{d}x\leq \e\cdot 2\pi\|\varphi\|_{\infty}.
\end{align*}

In the end, we obtain that:
\begin{align*}
|I_1|\leq C_1\cdot \e^{\frac{p-1}{p}}\cdot\|v\|_{W^{1,p}(\Omega_0\setminus\Omega_{\e})},
\end{align*}
where 
\begin{align*}
C_1=(2\pi\|\varphi\|_{\infty})^{\frac{p-1}{p}}\cdot\|Q_0\|_{W^{1,\infty}(\Omega_0)}\cdot \text{max}\{c,1\}.
\end{align*}
\end{proof}

For the integrals $I_{21}$ and $I_{31}$, from \Cref{defn:I_integrals}, we prove first the following lemma:

\begin{lemma}\label{lemma_C_2}
Let $1<q<p<+\infty$ and $\omega\in W^{1,p}(\Omega_0)$. Then:
\begin{align}\label{ineq:C_2}
\bigg(\int_0^{2\pi}\big|\omega(x,\e \varphi(x/\e))-\omega(x,0)\big|^q\;\text{d}x\bigg)^{1/q}\leq C_{2,q}\cdot \e^{\frac{p-1}{p}}\cdot \|\nabla \omega\|_{L^p(\Omega_0\setminus\Omega_{\e})},
\end{align}
with 
\begin{align}\label{defn:C_{2,q}}
C_{2,q}=(2\pi)^{\frac{p-q}{pq}}\|\varphi\|_{\infty}^{\frac{p-1}{p}}.
\end{align}
\end{lemma}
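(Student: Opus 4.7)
The plan is to reduce the estimate to a one-dimensional fundamental theorem of calculus argument along vertical lines, and then apply Hölder twice: once pointwise in $y$ and once over the strip in $x$. The band of integration collapses at the rate $\e$ because $\{(x,y) : 0<y<\e\varphi(x/\e)\} = \Omega_0\setminus\Omega_\e$, which is precisely what provides the smallness.

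First I would assume $\omega\in C^\infty(\overline{\Omega_0})$, prove the inequality with constants independent of $\omega$, and extend by density to $W^{1,p}(\Omega_0)$ using that the trace operators onto the Lipschitz curves $\Gamma_0 = \{y=0\}$ and $\Gamma_\e = \{y=\e\varphi(x/\e)\}$ are continuous on $W^{1,p}$. For smooth $\omega$, write
\begin{equation*}
\omega(x,\e\varphi(x/\e))-\omega(x,0)=\int_0^{\e\varphi(x/\e)}\partial_y\omega(x,s)\,ds
\end{equation*}
and apply Hölder with exponents $p$ and $p/(p-1)$ on the inner interval, bounding the length of integration by $\e\|\varphi\|_\infty$:
\begin{equation*}
|\omega(x,\e\varphi(x/\e))-\omega(x,0)|\leq (\e\|\varphi\|_\infty)^{(p-1)/p}\left(\int_0^{\e\varphi(x/\e)}|\partial_y\omega(x,s)|^p\,ds\right)^{1/p}.
\end{equation*}

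Next, raise to the $q$-th power and integrate in $x\in[0,2\pi)$, then apply Hölder a second time with exponents $p/q>1$ and $p/(p-q)$, which yields
\begin{equation*}
\int_0^{2\pi}\!\left(\int_0^{\e\varphi(x/\e)}|\partial_y\omega(x,s)|^p\,ds\right)^{q/p}\!dx\leq (2\pi)^{(p-q)/p}\!\left(\int_0^{2\pi}\!\int_0^{\e\varphi(x/\e)}|\partial_y\omega(x,s)|^p\,ds\,dx\right)^{q/p}.
\end{equation*}
Recognising the inner double integral as $\int_{\Omega_0\setminus\Omega_\e}|\partial_y\omega|^p\,d(x,y)\leq \|\nabla\omega\|^p_{L^p(\Omega_0\setminus\Omega_\e)}$ and collecting the powers of $\e$, $\|\varphi\|_\infty$ and $2\pi$, taking the $q$-th root gives exactly the stated constant $C_{2,q}=(2\pi)^{(p-q)/(pq)}\|\varphi\|_\infty^{(p-1)/p}$ and the claimed $\e^{(p-1)/p}$ rate.

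The main obstacle is the density argument: for general $\omega\in W^{1,p}(\Omega_0)$ one must justify the pointwise identity along almost every vertical line and the existence of boundary values on $\Gamma_\e$. This is handled by choosing the ACL representative (equivalently, approximating $\omega$ by smooth functions $\omega_n\to\omega$ in $W^{1,p}(\Omega_0)$), observing that the left-hand side depends continuously on $\omega$ via the traces on $\Gamma_0$ and $\Gamma_\e$, and passing to the limit; uniformity in $\e$ is automatic because all constants on the right-hand side depend only on $p,q,\|\varphi\|_\infty$ and on the $L^p$ norm of $\nabla\omega$ on the $\e$-dependent strip.
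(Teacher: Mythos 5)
Your proof is correct and follows essentially the same strategy as the paper: FTC along vertical lines, Hölder to bound the variation across the thin strip, a second Hölder over $x\in[0,2\pi)$, and a density argument. The only difference is bookkeeping: you apply the first Hölder with exponents $(p,p')$, immediately extracting the factor $(\e\|\varphi\|_\infty)^{(p-1)/p}$ and landing directly on an $L^p$ norm of $\partial_y\omega$, whereas the paper uses $(q,q')$ and must therefore carry the $x$-dependent weight $(\e\varphi(x/\e))^{q-1}$ into the second Hölder step; both arrive at the same constant $C_{2,q}$.
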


\begin{proof}
We prove the result first for $C^1(\overline{\Omega_0})$ functions. 

Let $\omega\in C^1(\overline{\Omega_0})$. Then we have the following inequality:
\begin{align*}
\big|\omega(x,\e \varphi(x/\e))-\omega(x,0)\big|\leq \int_0^{\e \varphi(x/\e)} \bigg|\dfrac{\partial \omega}{\partial y}(x,t)\bigg|\;\text{d}t,
\end{align*}
for any $x\in[0,2\pi]$. 

We apply H\" older inequality with exponents $q$ and $q'=\dfrac{q}{q-1}$:
\begin{align*}
|\omega(x,\e \varphi(x/\e))-\omega(x,0)|&\leq \bigg(\int_0^{\e \varphi(x/\e)}\bigg|\dfrac{\partial \omega}{\partial y}(x,t)\bigg|^q\text{d}t\bigg)^{1/q}\bigg(\int_0^{\e \varphi(x/\e)} 1^{q/(q-1)}\text{d}t\bigg)^{(q-1)/q}\\
\Rightarrow|\omega(x,\e \varphi(x/\e))-\omega(x,0)|^q &\leq\int_0^{\e \varphi(x/\e)} \big(\e \varphi(x/\e)\big)^{(q-1)}\cdot \bigg|\dfrac{\partial \omega}{\partial y}(x,t)\bigg|^q\text{d}t\\
\Rightarrow\int_0^{2\pi} |\omega(x,\e \varphi(x/\e))-\omega(x,0)|^q\text{d}x &\leq \int_0^{2\pi}\int_0^{\e \varphi(x/\e)} \big(\e \varphi(x/\e)\big)^{(q-1)}\cdot \bigg|\dfrac{\partial \omega}{\partial y}(x,t)\bigg|^q\text{d}t\;\text{d}x
\end{align*}

Let now $k=\dfrac{p}{q}>1$. We apply H\" older inequality with exponents $k$ and $k'=\dfrac{k}{k-1}=\dfrac{p}{p-q}$ to obtain:
\begin{align*}
\int_0^{2\pi} |\omega(x,\e \varphi(x/\e))-\omega(x,0)|^q\text{d}x &\leq \bigg(\int_0^{2\pi}\int_0^{\e \varphi(x/\e)} \big(\e \varphi(x/\e)\big)^{(q-1)\cdot\frac{p}{p-q}}\text{d}t\;\text{d}x\bigg)^{\frac{p-q}{p}}\cdot\\
&\hspace{5mm}\cdot\bigg(\int_0^{2\pi}\int_0^{\e \varphi(x/\e)} \bigg|\dfrac{\partial \omega}{\partial y}(x,t)\bigg|^{q\cdot\frac{p}{q}}\text{d}t\;\text{d}x\bigg)^{\frac{q}{p}}\\
&\leq \e^{(q-1)+\frac{p-q}{p}}\bigg(\int_0^{2\pi}\big(\varphi(x/\e)\big)^{1+\frac{pq-p}{p-q}}\text{d}x\bigg)^{\frac{p-q}{p}}\cdot\|\nabla \omega\|^q_{L^p(\Omega_0\setminus\Omega_{\e})}\\
&\leq \e^{\frac{q(p-1)}{p}}\bigg(\int_0^{2\pi} \big(\varphi(x/\e)\big)^{\frac{pq-q}{p-q}}\text{d}x\bigg)^{\frac{p-q}{p}}\cdot\|\nabla \omega\|^q_{L^p(\Omega_0\setminus\Omega_{\e})}\\
&\leq \e^{\frac{q(p-1)}{p}}\cdot \|\varphi\|_{\infty}^{\frac{q(p-1)}{p}}\cdot (2\pi)^{\frac{p-q}{p}}\cdot \|\nabla \omega\|^q_{L^p(\Omega_0\setminus\Omega_{\e})}.
\end{align*}

In the end, we get
\begin{align*}
\bigg(\int_0^{2\pi} |\omega(x,\e \varphi(x/\e))-\omega(x,0)|^q\text{d}x\bigg)^{1/q} &\leq C_{2,q}\cdot\e^{\frac{p-1}{p}}\cdot\|\nabla \omega\|_{L^p(\Omega_0\setminus\Omega_{\e})},
\end{align*}
with $C_{2,q}$ from \eqref{defn:C_{2,q}}. We conclude the proof by a classical density argument, using the embeddings $C^1(\overline{\Omega_0})\hookrightarrow W^{1,p}(\Omega_0)\hookrightarrow L^q(\partial\Omega_0)$.
\end{proof}

\begin{proposition}\label{prop:I_21}
Let $p\in(1,+\infty)$. For any $v\in W^{1,p}(\Omega_0)$, we have
\begin{align*}
|I_{21}|\leq C_{21}\cdot\e^{\frac{p-1}{p}}\cdot\|v\|_{W^{1,p}(\Omega_0\setminus\Omega_{\e})},
\end{align*}
where
\begin{align*}
I_{21}&=-\dfrac{w_0}{2}\int_0^{2\pi} (v(x,\e \varphi(x/\e))-v(x,0))\cdot Q^0_{\e}(x/\e)\cdot \gamma_{\e}(x/\e)\;\text{d}x
\end{align*}
and
\begin{align}\label{defn:C_21}
C_{21}=\dfrac{|w_0|\sqrt{2}}{4}\cdot (2\pi\|\varphi\|_{\infty})^{\frac{p-1}{p}}\cdot\sqrt{1+\|\varphi'\|_{\infty}^2}.
\end{align}
\end{proposition}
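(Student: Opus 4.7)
\textbf{Proof plan for the bound on $I_{21}$.}
The plan is to factor the integrand into the ``geometric'' part $Q_\e^0(x/\e)\gamma_\e(x/\e)$, which I will estimate in $L^\infty$, and the ``analytic'' part $v(x,\e\varphi(x/\e))-v(x,0)$, for which I will reuse the argument of \Cref{lemma_C_2} specialised to $q=1$. Since the stated form of \Cref{lemma_C_2} is for $1<q<p$, I will not cite it directly but instead carry out the very short direct computation (it is actually the same Hölder estimate with $k=p$, $k'=p/(p-1)$, which remains valid when $q=1$).

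\textbf{Step 1: pointwise bounds on $Q_\e^0$ and $\gamma_\e$.} By \Cref{remark:Q_e_0_dependency}, $Q_\e^0(x/\e)=\nu_\e\otimes\nu_\e-\tfrac12 I$, and since $\nu_\e\in\mathbb{S}^1$, this symmetric traceless matrix has eigenvalues $\pm\tfrac12$, so $|Q_\e^0(x/\e)|=\sqrt{2}/2$ pointwise. The bound \eqref{bounds:gamma_eps} gives $\gamma_\e(x/\e)\leq \sqrt{1+\|\varphi'\|_\infty^2}$. Hence
\begin{equation*}
|I_{21}|\leq \frac{|w_0|\sqrt{2}}{4}\sqrt{1+\|\varphi'\|_\infty^2}\int_0^{2\pi}\bigl|v(x,\e\varphi(x/\e))-v(x,0)\bigr|\,\mathrm{d}x.
\end{equation*}

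\textbf{Step 2: estimate the difference of traces by a fundamental-theorem-of-calculus plus Hölder argument.} For $v\in C^1(\overline{\Omega_0})$ I write
\begin{equation*}
|v(x,\e\varphi(x/\e))-v(x,0)|\leq \int_0^{\e\varphi(x/\e)}\left|\frac{\partial v}{\partial y}(x,t)\right|\,\mathrm{d}t,
\end{equation*}
integrate over $x\in[0,2\pi)$ and apply Hölder with exponents $p$ and $p/(p-1)$ on the product measure $\mathrm{d}t\,\mathrm{d}x$ over the subgraph region, which is exactly $\Omega_0\setminus\Omega_\e$. Using $\int_0^{2\pi}\e\varphi(x/\e)\,\mathrm{d}x\leq 2\pi\e\|\varphi\|_\infty$, I obtain
\begin{equation*}
\int_0^{2\pi}|v(x,\e\varphi(x/\e))-v(x,0)|\,\mathrm{d}x\leq (2\pi\|\varphi\|_\infty)^{(p-1)/p}\,\e^{(p-1)/p}\,\|\nabla v\|_{L^p(\Omega_0\setminus\Omega_\e)},
\end{equation*}
then extend by density from $C^1(\overline{\Omega_0})$ to $W^{1,p}(\Omega_0)$ using the continuity of the trace on $\Gamma_0$ and on the graph of $\e\varphi(\cdot/\e)$.

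\textbf{Step 3: combine.} Multiplying the bounds from Steps 1 and 2 and using $\|\nabla v\|_{L^p(\Omega_0\setminus\Omega_\e)}\leq \|v\|_{W^{1,p}(\Omega_0\setminus\Omega_\e)}$ yields
\begin{equation*}
|I_{21}|\leq \frac{|w_0|\sqrt{2}}{4}(2\pi\|\varphi\|_\infty)^{(p-1)/p}\sqrt{1+\|\varphi'\|_\infty^2}\;\e^{(p-1)/p}\,\|v\|_{W^{1,p}(\Omega_0\setminus\Omega_\e)},
\end{equation*}
which is precisely the claimed inequality with $C_{21}$ as in \eqref{defn:C_21}. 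There is no real obstacle; the only subtlety is taking care to match the exponent $(p-1)/p$ so that the region of integration in the Hölder step coincides with $\Omega_0\setminus\Omega_\e$, which is exactly why Hölder is applied on the product measure rather than separating the $x$- and $t$-integrals.
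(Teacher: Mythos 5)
Your proof is correct and follows essentially the same route as the paper's: fundamental theorem of calculus in the $y$-variable, then H\"older over the thin region $\Omega_0\setminus\Omega_\e$, combined with the pointwise bounds $|Q_\e^0|=\sqrt2/2$ and $\gamma_\e\leq\sqrt{1+\|\varphi'\|_\infty^2}$, and a density argument for the traces. The only (cosmetic) difference is that the paper first applies H\"older on the boundary with an auxiliary exponent $1<q<p$ and then invokes \Cref{lemma_C_2}, whereas you take the $L^\infty$ bound on $Q_\e^0\gamma_\e$ out directly and redo the \Cref{lemma_C_2} computation at the endpoint $q=1$; the auxiliary $q$ in the paper cancels out algebraically and yields the same constant $C_{21}$, so your streamlining is harmless and in fact a bit cleaner.
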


\begin{proof}
Let $1<q<p$ and $q'=\dfrac{q}{q-1}$. Then:
\begin{align*}
\dfrac{2}{|w_0|}|I_{21}|&\leq \int_0^{2\pi}\big|v(x,\e \varphi(x/\e))-v(x,0)\big|\cdot\bigg|Q^0_{\e}(x/\e)\cdot\sqrt{1+\big(\varphi'(x/\e)\big)^2}\bigg|\;\text{d}x\\
&\leq \bigg(\int_0^{2\pi}\big|v(x,\e \varphi(x/\e))-v(x,0)\big|^q\;\text{d}x\bigg)^{1/q}\cdot\\
&\hspace{10mm}\cdot\bigg(\int_0^{2\pi}\bigg|Q^0_{\e}(x/\e)\cdot\sqrt{1+\big(\varphi'(x/\e)\big)^2}\bigg|^{q'}\;\text{d}x\bigg)^{1/q'}\\
&\leq \bigg(\int_0^{2\pi}\big|v(x,\e \varphi(x/\e))-v(x,0)\big|^q\;\text{d}x\bigg)^{1/q}\cdot\\
&\hspace{10mm}\cdot\bigg(\int_0^{2\pi} \bigg(\dfrac{\sqrt{2}}{2}\bigg)^{q'}\cdot\bigg(\sqrt{1+\big(\varphi'(x/\e)\big)^2}\bigg)^{q'}\;\text{d}x\bigg)^{1/q'}\\
&\leq \dfrac{\sqrt{2}}{2}\cdot  \bigg(\int_0^{2\pi}\big|v(x,\e \varphi(x/\e))-v(x,0)\big|^q\;\text{d}x\bigg)^{1/q}\cdot \bigg(\int_0^{2\pi} \big(\sqrt{1+\|\varphi'\|_{\infty}^2}\big)^{q'}\;\text{d}x\bigg)^{1/q'}\\
&\leq \dfrac{\sqrt{2}}{2}\cdot (2\pi)^{1/q'}\cdot \sqrt{1+\|\varphi'\|_{\infty}^2}\cdot \bigg(\int_0^{2\pi}\big|v(x,\e \varphi(x/\e))-v(x,0)\big|^q\;\text{d}x\bigg)^{1/q},
\end{align*}
where we have used \eqref{bounds:gamma_eps} and that $|Q^0_{\e}(x/\e)|=\dfrac{\sqrt{2}}{2}$, for all $x\in[0,2\pi)$.

We apply now \Cref{lemma_C_2}, with the constant $C_{2,q}$ from \eqref{defn:C_{2,q}}, in order to obtain:
\begin{align*}
|I_{21}|&\leq C_{21}\cdot \e^{\frac{p-1}{p}}\cdot\|\nabla v\|_{L^p(\Omega_0\setminus\Omega_{\e})}.
\end{align*}
with
\begin{align*}
C_{21}=\dfrac{|w_0|\sqrt{2}}{4}\cdot (2\pi)^{\frac{q-1}{q}}\cdot \sqrt{1+\|\varphi'\|_{\infty}^2}\cdot C_{2,q}=\dfrac{|w_0|\sqrt{2}}{4}\cdot(2\pi\|\varphi\|_{\infty})^{\frac{p-1}{p}}\cdot\sqrt{1+\|\varphi'\|_{\infty}^2}.
\end{align*}
\end{proof}

\begin{proposition}\label{prop:I_31}
Let $p\in(2,+\infty)$. For any $v\in W^{1,p}(\Omega_0)$, we have:
\begin{align*}
|I_{31}|&\leq C_{31}\cdot\e^{\frac{p-1}{p}}\cdot\|v\|_{W^{1,p}(\Omega_0)},
\end{align*}
where
\begin{align*}
I_{31}&=\dfrac{w_0}{2}\int_0^{2\pi} \big(Q_0(x,\e \varphi(x/\e))\cdot v(x,\e \varphi(x/\e))-Q_0(x,0)\cdot v(x,0)\big)\cdot\gamma_{\e}(x/\e)\;\text{d}x
\end{align*}
and
\begin{align}\label{defn:C_31}
C_{31}=\dfrac{|w_0|}{2}\sqrt{1+\|\varphi'\|_{\infty}^2}\cdot(2\pi\|\varphi\|_{\infty})^{\frac{p-1}{p}}\cdot\|Q_0\|_{W^{1,\infty}(\Omega_0)}\cdot \big(|\Omega_0|^{1/p}\cdot C_{tr}(p,\Omega_0)+1\big).
\end{align}
\end{proposition}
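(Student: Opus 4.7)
The approach is to reduce the integrand of $I_{31}$ to two manageable pieces by adding and subtracting the cross-term $Q_0(x,\e\varphi(x/\e))\cdot v(x,0)\cdot\gamma_\e(x/\e)$, producing
\begin{align*}
I_{31}&=\frac{w_0}{2}\int_0^{2\pi} Q_0(x,\e\varphi(x/\e))\cdot\big(v(x,\e\varphi(x/\e))-v(x,0)\big)\cdot\gamma_\e(x/\e)\;\text{d}x\\
&\quad+\frac{w_0}{2}\int_0^{2\pi}\big(Q_0(x,\e\varphi(x/\e))-Q_0(x,0)\big)\cdot v(x,0)\cdot\gamma_\e(x/\e)\;\text{d}x=:J_A+J_B.
\end{align*}
In both pieces the pointwise bound $\gamma_\e\leq\sqrt{1+\|\varphi'\|_\infty^2}$ from \eqref{bounds:gamma_eps} factors out immediately, and the remaining task is to estimate the vertical increments of $v$, respectively of $Q_0$, across a displacement of size $\e\varphi(x/\e)$; this is precisely what \Cref{lemma_C_2} is designed for.

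For $J_A$ I would use $|Q_0(x,\e\varphi(x/\e))|\leq \|Q_0\|_{L^\infty(\Omega_0)}$ pointwise, apply H\"older with some conjugate pair $(q,q')$ for any $q\in(1,p)$, and then invoke \Cref{lemma_C_2} on $v$ to bound the resulting $L^q$-norm of the $v$-increment by $C_{2,q}\,\e^{(p-1)/p}\|\nabla v\|_{L^p(\Omega_0\setminus\Omega_\e)}$. A short bookkeeping of the exponents, of the type already performed in the proof of \Cref{prop:I_21}, shows that $(2\pi)^{1/q'}\cdot(2\pi)^{(p-q)/(pq)}=(2\pi)^{(p-1)/p}$ independently of the choice of $q$, so the resulting estimate on $|J_A|$ takes the form $\tfrac{|w_0|}{2}\sqrt{1+\|\varphi'\|_\infty^2}\,(2\pi\|\varphi\|_\infty)^{(p-1)/p}\|Q_0\|_{L^\infty(\Omega_0)}\,\e^{(p-1)/p}\|v\|_{W^{1,p}(\Omega_0)}$. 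This is the contribution responsible for the ``$+1$'' inside the bracketed factor of $C_{31}$ in \eqref{defn:C_31}.

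The piece $J_B$ is handled symmetrically, but with the roles of $v$ and $Q_0$ exchanged: I would apply H\"older with the conjugate pair $q=p/(p-1)$, $q'=p$, use \Cref{lemma_C_2} on $Q_0$ (whose $W^{1,p}$-regularity follows from $\|\nabla Q_0\|_{L^p(\Omega_0)}\leq |\Omega_0|^{1/p}\|\nabla Q_0\|_{L^\infty(\Omega_0)}$), and close the estimate by the trace inequality of \Cref{defn:trace_ineq} to pass from $\|v(\cdot,0)\|_{L^p(\Gamma_0)}$ to $C_{tr}(p,\Omega_0)\|v\|_{W^{1,p}(\Omega_0)}$. The principal technical obstacle, and the source of the hypothesis $p>2$, lies precisely here: \Cref{lemma_C_2} requires $q<p$, whereas coupling with the $L^p$-trace of $v$ forces $q=p/(p-1)$, and these two conditions are jointly satisfiable exactly when $p>2$. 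The factor $(2\pi)^{(p-2)/p}$ coming from $C_{2,p/(p-1)}$ is dominated by $(2\pi)^{(p-1)/p}$, so once $|J_A|+|J_B|$ is assembled and the common prefactor $\tfrac{|w_0|}{2}\sqrt{1+\|\varphi'\|_\infty^2}(2\pi\|\varphi\|_\infty)^{(p-1)/p}\|Q_0\|_{W^{1,\infty}(\Omega_0)}$ is pulled out, what remains is exactly $|\Omega_0|^{1/p}C_{tr}(p,\Omega_0)+1$, reproducing the stated constant $C_{31}$.
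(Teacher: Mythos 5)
Your proof is correct and follows essentially the same route as the paper: the same add-and-subtract decomposition into $J_A$ and $J_B$, the same application of \Cref{lemma_C_2} to the $v$- and $Q_0$-increments respectively with H\"older exponents $(q,q')$ and $(p',p)$, and the same use of the trace inequality for $J_B$, with the $p>2$ hypothesis emerging exactly as you identify (so that $1<p'<p$). The bookkeeping of the $(2\pi)$- and $\|\varphi\|_\infty$-powers and the assembly of the constant $C_{31}$ also match the paper's computation.
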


\begin{proof}
Let $1<q<p$ and $q'=\dfrac{q}{q-1}$.

Using \eqref{bounds:gamma_eps}, we have:
\begin{align*}
&\dfrac{2}{|w_0|}|I_{31}| \leq \sqrt{1+\|\varphi'\|_{\infty}^2}\int_{0}^{2\pi}\big|\big(Q_0\cdot v\big)(x,\e \varphi(x/\e))-\big(Q_0\cdot v\big)(x,0)\big|\;\text{d}x\\
&\leq \sqrt{1+\|\varphi'\|_{\infty}^2}\cdot\bigg(\int_0^{2\pi}\big|Q_0(x,\e \varphi(x/\e))\big|\cdot\big|v(x,\e \varphi(x/\e))-v(x,0)\big|\;\text{d}x\bigg)+\\
&\hspace{5mm}+\sqrt{1+\|\varphi'\|_{\infty}^2}\cdot\bigg(\int_0^{2\pi}\big|v(x,0)\big|\cdot\big|Q_0(x,\e \varphi(x/\e))-Q_0(x,0)\big|\;\text{d}x\bigg)\\
&\leq \sqrt{1+\|\varphi'\|_{\infty}^2}\cdot\bigg(\int_0^{2\pi}\big|Q_0(x,\e \varphi(x/\e))\big|^{q'}\;\text{d}x\bigg)^{1/q'}\cdot\bigg(\int_0^{2\pi}\big|v(x,\e \varphi(x/\e))-v(x,0)\big|^q\;\text{d}x\bigg)^{1/q}+\\
&\hspace{5mm}+\sqrt{1+\|\varphi'\|_{\infty}^2}\cdot\bigg(\int_0^{2\pi}\big|v(x,0)\big|^p\;\text{d}x\bigg)^{1/p}\cdot\bigg(\int_0^{2\pi}\big|Q_0(x,\e \varphi(x/\e))-Q_0(x,0)\bigg|^{p'}\;\text{d}x\bigg)^{1/p'}
\end{align*}
where we have applied H\"{o}lder inequality with exponents $q$ and $q'$ for the first term and with exponents $p$ and $p'$ for the second one. 

For the first term, we apply \Cref{lemma_C_2} and use the $L^{\infty}(\Omega_0)$ bounds for $Q_0$ in order to obtain that:
\begin{align*}
&\bigg(\int_0^{2\pi}\big|Q_0(x,\e \varphi(x/\e))\big|^{q'}\;\text{d}x\bigg)^{1/q'}\cdot\bigg(\int_0^{2\pi}\big|v(x,\e \varphi(x/\e))-v(x,0)\big|^q\;\text{d}x\bigg)^{1/q}\leq \\
&\hspace{5mm}\leq (2\pi)^{1/q'}\cdot \|Q_0\|_{L^{\infty}(\Omega_0)}\cdot (2\pi)^{\frac{p-q}{pq}}\cdot \|\varphi\|_{\infty}^{\frac{p-1}{p}}\cdot \e^{\frac{p-1}{p}}\cdot\|\nabla v\|_{L^p(\Omega_0\setminus\Omega_{\e})}\\
&\hspace{5mm}\leq \big((2\pi\|\varphi\|_{\infty})^{\frac{p-1}{p}}\cdot\|Q_0\|_{L^{\infty}(\Omega_0)}\big)\cdot\e^{\frac{p-1}{p}}\cdot\|v\|_{W^{1,p}(\Omega_0)}
\end{align*}
since $\Omega_0\setminus\Omega_{\e}\subset\Omega_0$.

For the other term, we see that we can apply \Cref{lemma_C_2} with exponents $p'$ and $p$, since $p>2$ implies that $1<p'<p$, in order to obtain:
\begin{align*}
&\bigg(\int_0^{2\pi}\big|v(x,0)\big|^p\;\text{d}x\bigg)^{1/p}\cdot\bigg(\int_0^{2\pi}\big|Q_0(x,\e \varphi(x/\e))-Q_0(x,0)\bigg|^{p'}\;\text{d}x\bigg)^{1/p'}\leq\\
&\hspace{5mm}\leq \bigg(\int_{\Gamma_0}\big|v\big|^p\;\text{d}\sigma_0\bigg)^{1/p}\cdot (2\pi)^{\frac{p-p'}{pp'}}\cdot\|\varphi\|_{\infty}^{\frac{p-1}{p}}\cdot\e^{\frac{p-1}{p}}\cdot\|\nabla Q_0\|_{L^p(\Omega_0\setminus\Omega_{\e})}\\
&\hspace{5mm}\leq \big((2\pi\|\varphi\|_{\infty})^{\frac{p-1}{p}}\cdot\|\nabla Q_0\|_{L^{\infty}(\Omega_0)}\cdot|\Omega_0|^{1/p}\cdot C_{tr}(p,\Omega_0)\big)\cdot\e^{\frac{p-1}{p}}\cdot\|v\|_{W^{1,p}(\Omega_0)},
\end{align*}
where we use \Cref{defn:trace_ineq} and the fact that, if $1<p'<p$, then $\frac{p-p'}{pp'}<\frac{p}{pp'}=\frac{1}{p'}=\frac{p-1}{p}$.

In the end, we obtain that:
\begin{align*}
|I_{31}|&\leq C_{31}\cdot\e^{\frac{p-1}{p}}\cdot\|v\|_{W^{1,p}(\Omega_0)},
\end{align*}
where
\begin{align*}
C_{31}=\dfrac{|w_0|}{2}\sqrt{1+\|\varphi'\|_{\infty}^2}\cdot(2\pi\|\varphi\|_{\infty})^{\frac{p-1}{p}}\cdot\|Q_0\|_{W^{1,\infty}(\Omega_0)}\cdot \big(|\Omega_0|^{1/p}\cdot C_{tr}(p,\Omega_0)+1\big).
\end{align*}
\end{proof}

Let us now prove the following lemma.

\begin{lemma}\label{lemma:C_3}
Let us consider the case in which \Cref{pp:2} holds. Let $p\in(2,+\infty)$, \linebreak $\omega\in W^{1,p}(\Omega_0)$, $V$ be a Banach space and $b:\mathbb{R}\to V$ a $2\pi$-periodic function such that \linebreak $b\in L^{\infty}([0,2\pi))$, for which we write $\|b\|_{\infty}$ instead of $\|b\|_{L^{\infty}([0,2\pi))}$. Then:
\begin{align*}
\bigg|\int_0^{2\pi}\;\omega(x,0)\big(B-b(x/\e)\big)\;\text{d}x\bigg|\leq C_3\cdot \e^{\frac{p-1}{p}}\cdot\|\omega\|_{W^{1,p}(\Omega_0)},
\end{align*}
where $B=\displaystyle{\dfrac{1}{2\pi}\int_0^{2\pi}\;b(t)\;\text{d}t}$ and $C_3=(2\pi)^{\frac{2p-2}{p}}\cdot \|b\|_{\infty}\cdot C_{tr}(p,\Omega_0)$.
\end{lemma}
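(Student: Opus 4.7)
The plan is to exploit the periodic cancellation of $b(\cdot/\e) - B$ on length-scale $2\pi\e$ together with the fractional regularity of the trace $\omega(\cdot, 0)$. Since $\e = 1/(2k)$ and $b$ is $2\pi$-periodic, I would first partition $[0, 2\pi]$ into the $2k$ disjoint sub-intervals $I_j := [2\pi\e j, 2\pi\e(j+1))$ and observe, via the substitution $s = x/\e$, that $\int_{I_j} b(x/\e)\,dx = \e\int_{2\pi j}^{2\pi(j+1)} b(s)\,ds = 2\pi\e B$, so that $\int_{I_j}(B - b(x/\e))\,dx = 0$ on each $I_j$. Writing $\bar\omega_j := |I_j|^{-1}\int_{I_j}\omega(x, 0)\,dx$ and $\tilde\omega := \sum_j \bar\omega_j \mathbf{1}_{I_j}$, this cancellation lets me replace $\omega(x, 0)$ by $\omega(x, 0) - \tilde\omega(x)$ inside the integral without altering its value.

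Next, I would apply H\"older's inequality with exponents $p$ and $p' = p/(p-1)$,
\[
\left|\int_0^{2\pi}\omega(x, 0)(B - b(x/\e))\,dx\right| \leq \|\omega(\cdot, 0) - \tilde\omega\|_{L^{p'}(0, 2\pi)}\cdot \|B - b(\cdot/\e)\|_{L^p(0, 2\pi)},
\]
bound the second factor trivially by $2\|b\|_\infty (2\pi)^{1/p}$, and, since $p \geq p'$, use a second H\"older step to obtain $\|\omega(\cdot, 0) - \tilde\omega\|_{L^{p'}} \leq (2\pi)^{(p-2)/p}\|\omega(\cdot, 0) - \tilde\omega\|_{L^p}$. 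On each $I_j$ a fractional Poincar\'e inequality — immediate from Jensen applied to $u(x) - \bar u_I = |I|^{-1}\int_I(u(x)-u(y))\,dy$ together with the Gagliardo double integral defining the $W^{1-1/p, p}(I_j)$ seminorm — gives
\[
\|\omega(\cdot, 0) - \bar\omega_j\|_{L^p(I_j)} \leq |I_j|^{1-1/p}\,[\omega(\cdot, 0)]_{W^{1-1/p, p}(I_j)},
\]
and an $\ell^p$-sum (using that the global Gagliardo seminorm on $[0, 2\pi)$ dominates the $\ell^p$-sum of the local ones) yields $\|\omega(\cdot, 0) - \tilde\omega\|_{L^p(0, 2\pi)} \leq (2\pi\e)^{(p-1)/p}[\omega(\cdot, 0)]_{W^{1-1/p, p}([0, 2\pi))}$.

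Collecting, the factors of $\e$ and $2\pi$ combine exactly to $\e^{(p-1)/p}(2\pi)^{(2p-2)/p}$, and the trace inequality of \Cref{defn:trace_ineq} converts $[\omega(\cdot, 0)]_{W^{1-1/p, p}(\Gamma_0)}$ into $C_{tr}(p, \Omega_0)\|\omega\|_{W^{1, p}(\Omega_0)}$, producing the claimed estimate (up to the innocuous constant from $|B - b(\cdot)| \leq 2\|b\|_\infty$). The main obstacle is the fractional Poincar\'e step, which rests on a Gagliardo-seminorm inequality that has not been introduced earlier in the paper and would warrant a short in-line verification. A seemingly cleaner integration-by-parts approach against the primitive $\Psi(x) := \int_0^x(B - b(t/\e))\,dt$ — which is $2\pi\e$-periodic, satisfies $\|\Psi\|_\infty = O(\e)$, and vanishes at $0$ and $2\pi$ by the same $\e = 1/(2k)$ cancellation — is tempting, but it requires a full $x$-derivative of $\omega(\cdot, 0)$ that need not exist in $L^1$ for $\omega \in W^{1, p}(\Omega_0)$, so one must in any case pass through a fractional-order estimate to recover the optimal $\e^{(p-1)/p}$ rate.
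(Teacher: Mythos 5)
Your argument is correct and obtains the stated rate $\e^{(p-1)/p}$. It hinges on the same two ingredients as the paper's proof -- the mean-zero property of $B-b(\cdot/\e)$ over each period cell $I_j=[2\pi\e j,2\pi\e(j+1))$ (which requires $\e^{-1}=2k\in\mathbb{N}$ from \Cref{pp:2}), and the $W^{1-1/p,p}$ Gagliardo regularity of the trace $\omega(\cdot,0)$ -- but the bookkeeping is genuinely different. The paper rewrites $\int_0^{2\pi}\omega(B-b(\cdot/\e))\,dx$ via periodicity and the identity $\e=\frac{1}{2\pi}|I_j|$ as $\frac{1}{2\pi}\sum_j\iint_{I_j\times I_j}\e^{-1}(\omega(x,0)-\omega(t,0))b(t/\e)\,dt\,dx$, applies H\"older cell by cell (against the factor $|x-t|/\e\le 2\pi$), and then aggregates the cell-wise $\ell^1$ sum of $p$-th roots via Jensen's inequality for the concave map $r\mapsto r^{1/p}$ to reach the global Gagliardo seminorm. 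You instead subtract from $\omega(\cdot,0)$ the piecewise-constant field $\tilde\omega$ of cell averages, using the orthogonality $\int_{I_j}(B-b(\cdot/\e))\,dx=0$; then you isolate a local fractional Poincar\'e inequality on each $I_j$ (whose proof, as you sketch, is just Jensen combined with the fact that $1+(1-1/p)p=p$ makes the Gagliardo kernel $|x-y|^{-p}$), and aggregate by $\ell^p$-subadditivity of the local Gagliardo energies. Both reach the Gagliardo seminorm on $[0,2\pi)$ and then \Cref{defn:trace_ineq}. Your version factors out a reusable Poincar\'e lemma and avoids the somewhat opaque $\ell^1$-to-$\ell^p$ Jensen step, at the cost of a small in-line verification of the fractional Poincar\'e inequality; the paper's is a more hands-on computation.

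Two minor points. First, your constant carries an extra factor $2$ relative to the stated $C_3$, coming from the crude pointwise bound $|B-b|\le 2\|b\|_\infty$; you flag this yourself. It can in fact be removed within your scheme: the same orthogonality gives $\int_{I_j}\omega(x,0)(B-b(x/\e))\,dx = -\int_{I_j}\big(\omega(x,0)-\tilde\omega(x)\big)\,b(x/\e)\,dx$, so only $\|b\|_\infty$ (not $\|B-b\|_\infty$) enters, matching the paper's constant exactly. Second, your closing observation about the integration-by-parts alternative is apt: the primitive $\Psi$ is $O(\e)$ and vanishes at the endpoints, but the trace $\omega(\cdot,0)\in W^{1-1/p,p}(\Gamma_0)$ does not give $\partial_x\omega(\cdot,0)\in L^1$, so the fractional route is indeed the right level of regularity for the optimal exponent.
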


\begin{proof}
From \Cref{pp:2}, we have that $\e^{-1}=2k\in\mathbb{N}^*$. We write then:
\begin{align}\label{eqlemmaimp}
\int_0^{2\pi}\;\omega(x,0)\big(B-b(x/\e)\big)\;\text{d}x &=\sum_{j=0}^{2k-1}\int_{j\e\cdot 2\pi}^{(j+1)\e\cdot 2\pi}\;\omega(x,0)\big(B-b(x/\e)\big)\;\text{d}x\notag\\
&\hspace{-30mm}=\dfrac{1}{2\pi}\sum_{j=0}^{2k-1}\int_{j\e\cdot 2\pi}^{(j+1)\e\cdot 2\pi}\int_0^{2\pi}\;\omega(x,0)b(t)\;\text{d}t\;\text{d}x-\sum_{j=0}^{2k-1}\int_{j\e\cdot 2\pi}^{(j+1)\e\cdot 2\pi}\;\omega(x,0)b(x/\e)\;\text{d}x.
\end{align}

Using the change of variables $x=x'+j\e\cdot 2\pi$, we obtain:
\begin{align*}
\sum_{j=0}^{2k-1}\int_{j\e\cdot 2\pi}^{(j+1)\e\cdot 2\pi}\;\omega(x,0)b(x/\e)\;\text{d}x=\sum_{j=0}^{2k-1}\int_{0}^{\e\cdot 2\pi}\;\omega(x'+j\e\cdot 2\pi,0)b\bigg(\dfrac{x'+j\e\cdot 2\pi}{\e}\bigg)\;\text{d}x'
\end{align*}
and, since the function $b$ is $2\pi$-periodic and $j\in\mathbb{N}$, we can rewrite the last equality as:
\begin{align*}
\sum_{j=0}^{2k-1}\int_{j\e\cdot 2\pi}^{(j+1)\e\cdot 2\pi}\;\omega(x,0)b(x/\e)\;\text{d}x=\sum_{j=0}^{2k-1}\int_{0}^{\e\cdot 2\pi}\;\omega(x'+j\e\cdot 2\pi,0)b(x'/\e)\;\text{d}x'.
\end{align*}
Using now the change of variables $x'=\e t$, we get:
\begin{align*}
\sum_{j=0}^{2k-1}\int_{j\e\cdot 2\pi}^{(j+1)\e\cdot 2\pi}\;\omega(x,0)b(x/\e)\;\text{d}x=\sum_{j=0}^{2k-1}\int_{0}^{2\pi}\;\e\cdot \omega(\e t+j\e\cdot 2\pi,0)b(t)\;\text{d}t.
\end{align*}

Since
\begin{align*}
\e=\dfrac{1}{2\pi}\int_{j\e\cdot 2\pi}^{(j+1)\e\cdot 2\pi}\;1\;\text{d}x,
\end{align*}
then \eqref{eqlemmaimp} becomes:
\begin{align*}
&\int_0^{2\pi}\;\omega(x,0)\big(B-b(x/\e)\big)\;\text{d}x=\\
&=\dfrac{1}{2\pi}\sum_{j=0}^{2k-1}\int_{j\e\cdot 2\pi}^{(j+1)\e\cdot 2\pi}\int_0^{2\pi}\;\omega(x,0)b(t)\;\text{d}t\;\text{d}x-\dfrac{1}{2\pi}\sum_{j=0}^{2k-1}\int_{j\e\cdot 2\pi}^{(j+1)\e\cdot 2\pi}\int_{0}^{2\pi}\; \omega(\e t+j\e\cdot 2\pi,0)b(t)\;\text{d}t\;\text{d}x\\
&=\dfrac{1}{2\pi}\sum_{j=0}^{2k-1}\int_{j\e\cdot 2\pi}^{(j+1)\e\cdot 2\pi}\int_0^{2\pi}\;\big(\omega(x,0)-\omega(\e t+j\e\cdot 2\pi,0)\big)\cdot b(t)\;\text{d}t\;\text{d}x.
\end{align*}

Then
\begin{align*}
&\bigg|\int_0^{2\pi}\;\omega(x,0)\big(B-b(x/\e)\big)\;\text{d}x\bigg|\leq\dfrac{1}{2\pi}\sum_{j=0}^{2k-1}\int_{j\e\cdot 2\pi}^{(j+1)\e\cdot 2\pi}\int_0^{2\pi}\;\big|\omega(x,0)-\omega(\e t+j\e\cdot 2\pi,0)\big|\cdot \big|b(t)\big|\;\text{d}t\;\text{d}x\\
&\hspace{10mm}\leq \dfrac{\|b\|_{\infty}}{2\pi}\sum_{j=0}^{2k-1}\int_{j\e\cdot 2\pi}^{(j+1)\e\cdot 2\pi}\int_0^{2\pi}\;\big|\omega(x,0)-\omega(\e t+j\e\cdot 2\pi,0)\big|\;\text{d}t\;\text{d}x.
\end{align*}

We apply now the change of variables $t'=\e t+j\e\cdot 2\pi$ and drop the primes to obtain:
\begin{align*}
&\bigg|\int_0^{2\pi}\;\omega(x,0)\big(B-b(x/\e)\big)\;\text{d}x\bigg|\leq\dfrac{\|b\|_{\infty}}{2\pi}\sum_{j=0}^{2k-1}\int_{j\e\cdot 2\pi}^{(j+1)\e\cdot 2\pi}\int_{j\e\cdot 2\pi}^{(j+1)\e\cdot 2\pi}\;\dfrac{\big|\omega(x,0)-\omega(t,0)\big|}{\e}\;\text{d}t\;\text{d}x.
\end{align*}

Then:
\begin{align*}
&\sum_{j=0}^{2k-1}\int_{j\e\cdot 2\pi}^{(j+1)\e\cdot 2\pi}\int_{j\e\cdot 2\pi}^{(j+1)\e\cdot 2\pi}\e^{-1}\cdot \big|\omega(x,0)-\omega(t,0)\big|\;\text{d}t\;\text{d}x \;= \\
&\hspace{10mm}= \sum_{j=0}^{2k-1}\int_{j\e\cdot 2\pi}^{(j+1)\e\cdot 2\pi}\int_{j\e\cdot 2\pi}^{(j+1)\e\cdot 2\pi}\dfrac{\big|\omega(x,0)-\omega(t,0)\big|}{\big|(x,0)-(t,0)\big|}\cdot\dfrac{\big|(x,0)-(t,0)\big|}{\e}\;\text{d}t\;\text{d}x\\
&\hspace{10mm}\leq \sum_{j=0}^{2k-1}\bigg[\bigg(\int_{j\e\cdot 2\pi}^{(j+1)\e\cdot 2\pi}\int_{j\e\cdot 2\pi}^{(j+1)\e\cdot 2\pi}\dfrac{\big|\omega(x,0)-\omega(t,0)\big|^p}{\big|(x,0)-(t,0)\big|^p}\;\text{d}t\;\text{d}x\bigg)^{1/p}\cdot\\
&\hspace{20mm}\cdot \bigg(\int_{j\e\cdot 2\pi}^{(j+1)\e\cdot 2\pi}\int_{j\e\cdot 2\pi}^{(j+1)\e\cdot 2\pi}\dfrac{\big|x-t\big|^{p'}}{\e^{p'}}\;\text{d}t\;\text{d}x\bigg)^{1/p'}\bigg],
\end{align*}
where we have applied H\"{o}lder inequality with exponents $p$ and $p'$. Since $|x-t|\leq \e\cdot 2\pi$, for any $x,t\in[j\e\cdot 2\pi,(j+1)\e\cdot 2\pi]$, we obtain:
\begin{align*}
&\sum_{j=0}^{2k-1}\int_{j\e\cdot 2\pi}^{(j+1)\e\cdot 2\pi}\int_{j\e\cdot 2\pi}^{(j+1)\e\cdot 2\pi}\e^{-1}\cdot \big|w(x,0)-w(t,0)\big|\;\text{d}t\;\text{d}x \leq \\
&\hspace{5mm}\leq (2\pi)^{1+2/p'}\cdot \e^{2/p'}\cdot \sum_{j=0}^{2k-1}\bigg(\int_{j\e\cdot 2\pi}^{(j+1)\e\cdot 2\pi}\int_{j\e\cdot 2\pi}^{(j+1)\e\cdot 2\pi}\dfrac{\big|w(x,0)-w(t,0)\big|^p}{\big|(x,0)-(t,0)\big|^p}\;\text{d}t\;\text{d}x\bigg)^{1/p}\\
&\hspace{5mm}\leq (2\pi)^{1+2/p'}\cdot \e^{2/p'}\cdot \sum_{j=0}^{2k-1}\bigg(\int_{j\e\cdot 2\pi}^{(j+1)\e\cdot 2\pi}\int_0^{2\pi}\dfrac{\big|w(x,0)-w(t,0)\big|^p}{\big|(x,0)-(t,0)\big|^p}\;\text{d}t\;\text{d}x\bigg)^{1/p}.
\end{align*}

Let us denote now:
\begin{align*}
r_j=\int_{j\e\cdot 2\pi}^{(j+1)\e\cdot 2\pi}\int_0^{2\pi}\dfrac{\big|w(x,0)-w(t,0)\big|^p}{\big|(x,0)-(t,0)\big|^p}\;\text{d}t\;\text{d}x,\;\forall\;j\in\{0,1,\ldots,2k-1\}.
\end{align*}

We have that $r_j\geq 0$, for all $j\in\{0,1,\ldots,2k-1\}$. The function $\mathbb{R}\ni x\to x^{1/p}$ is concave since $p\in(2,+\infty)$, therefore we have the Jensen inequality:
\begin{align*}
\sum_{j=0}^{2k-1} r_j^{1/p}\leq 2k\cdot\bigg(\dfrac{1}{2k}\sum_{j=0}^{2k-1} r_j\bigg)^{1/p}.
\end{align*}

Since $2k=\e^{-1}$ and
\begin{align*}
\bigg(\sum_{j=0}^{2k-1} r_j\bigg)^{1/p}=\bigg(\int_0^{2\pi}\int_0^{2\pi}\dfrac{|\omega(x,0)-\omega(t,0)\big|^p}{|x-t|^p}\;\text{d}t\;\text{d}x\bigg)^{1/p}\leq \|\omega\|_{W^{1-1/p,p}(\Gamma_1)},
\end{align*}
due to the fact that the left hand side of the last inequality represents the Gagliardo seminorm defined on the space $W^{1-1/p,p}(\Gamma_1)$, we obtain that
\begin{align*}
&\sum_{j=0}^{2k-1}\int_{j\e\cdot 2\pi}^{(j+1)\e\cdot 2\pi}\int_{j\e\cdot 2\pi}^{(j+1)\e\cdot 2\pi}\e^{-1}\cdot \big|w(x,0)-w(t,0)\big|\;\text{d}t\;\text{d}x \leq \\
&\hspace{5mm} \leq (2\pi)^{\frac{3p-2}{p}}\cdot \e^{\frac{2p-2}{p}}\cdot (2k)^{1-1/p}\cdot\|\omega\|_{W^{1-1/p,p}(\Gamma_1)}\\
&\hspace{5mm}\leq (2\pi)^{\frac{3p-2}{p}}\cdot \e^{\frac{2p-2}{p}-1+\frac{1}{p}}\cdot\|\omega\|_{W^{1-1/p,p}(\Gamma_1)}\\
&\hspace{5mm}\leq (2\pi)^{\frac{3p-2}{p}}\cdot \e^{\frac{p-1}{p}}\cdot\|\omega\|_{W^{1-1/p,p}(\Gamma_1)}.
\end{align*}

Using \Cref{defn:trace_ineq}, we obtain that:
\begin{align*}
&\bigg|\int_0^{2\pi}\;\omega(x,0)\big(B-b(x/\e)\big)\;\text{d}x\bigg|\leq C_3\cdot \e^{\frac{p-1}{p}}\cdot\|\omega\|_{W^{1,p}(\Omega_0)}
\end{align*}
with
\begin{align*}
C_3=(2\pi)^{\frac{2p-2}{p}}\cdot\|b\|_{\infty}\cdot C_{tr}(p,\Omega_0).
\end{align*}
\end{proof}

\begin{proposition}\label{prop:I_32}
Let $p\in (2,+\infty)$. For any $v\in W^{1,p}(\Omega_0)$ we have:
\begin{align*}
|I_{32}|\leq C_{32}\cdot\e^{\frac{p-1}{p}}\cdot\|v\|_{W^{1,p}(\Omega_0)},
\end{align*}
where
\begin{align*}
I_{32}&=-\dfrac{w_0}{2}\int_0^{2\pi} Q_0(x,0)\cdot v(x,0)\cdot \bigg(\gamma-\gamma_{\e}(x/\e)\bigg)\;\text{d}x
\end{align*}
and
\begin{align}\label{defn:C_32}
C_{32}=\dfrac{|w_0|}{2}\cdot (2\pi)^{\frac{2p-2}{p}}\cdot \sqrt{1+\|\varphi'\|_{\infty}^2}\cdot C_{tr}(p,\Omega_0)\cdot\|Q_0\|_{W^{1,\infty}(\Omega_0)}.
\end{align}
\end{proposition}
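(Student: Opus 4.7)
The plan is to reduce \Cref{prop:I_32} to a direct application of \Cref{lemma:C_3}. The structure of $I_{32}$ is already precisely that of the integrals handled by that lemma: it is $\tfrac{|w_0|}{2}$ times the integral on $[0,2\pi)$ of a trace on $\Gamma_0$ of a $W^{1,p}(\Omega_0)$ function multiplied by a rapidly oscillating $2\pi$-periodic coefficient minus its mean.

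First, I would identify the scalar $W^{1,p}(\Omega_0)$ function as $\omega := Q_0\cdot v$, where ``$\cdot$'' denotes the Frobenius inner product on $\mathrm{Sym}_0(2)$. Since $Q_0\in W^{1,\infty}(\Omega_0)$ by \Cref{cor:Q_0} combined with the two-dimensional Sobolev embedding $W^{2,p}\hookrightarrow W^{1,\infty}$ for $p>2$, and $v\in W^{1,p}(\Omega_0)$, Leibniz's rule gives $\omega\in W^{1,p}(\Omega_0)$ with the product estimate
\begin{equation*}
\|\omega\|_{W^{1,p}(\Omega_0)}\leq \|Q_0\|_{W^{1,\infty}(\Omega_0)}\,\|v\|_{W^{1,p}(\Omega_0)}
\end{equation*}
up to an absorbable numerical constant.

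Next, I would identify the bounded, $2\pi$-periodic coefficient of \Cref{lemma:C_3} as $b(t):=\gamma_\e(t)=\sqrt{1+(\varphi'(t))^2}$. Its $2\pi$-periodicity follows from \Cref{pp:1}, its $L^\infty$ bound $\|b\|_\infty\leq \sqrt{1+\|\varphi'\|_\infty^2}$ from \eqref{bounds:gamma_eps}, and its mean over $[0,2\pi]$ equals $\gamma$ by \Cref{defn:g_functions}. Applying \Cref{lemma:C_3} with these choices (and $V=\mathbb{R}$) yields
\begin{equation*}
\left|\int_0^{2\pi}\omega(x,0)\,(\gamma-\gamma_\e(x/\e))\,\mathrm{d}x\right|\leq (2\pi)^{\frac{2p-2}{p}}\|b\|_\infty\, C_{tr}(p,\Omega_0)\,\e^{\frac{p-1}{p}}\,\|\omega\|_{W^{1,p}(\Omega_0)}.
\end{equation*}
Multiplying by $|w_0|/2$ and substituting the preceding bound on $\|\omega\|_{W^{1,p}(\Omega_0)}$ produces the estimate with the explicit constant $C_{32}$ stated in the proposition.

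No substantial obstacle is anticipated: the argument is a bookkeeping exercise once one observes that $I_{32}$ fits the template of \Cref{lemma:C_3} by choosing $\omega=Q_0\cdot v$ and $b=\gamma_\e$. The only minor care needed is in justifying the product-rule bound for $Q_0\cdot v$ in $W^{1,p}(\Omega_0)$, which is routine given the $W^{1,\infty}$ regularity of $Q_0$.
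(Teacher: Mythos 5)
Your proposal is correct and follows essentially the same route as the paper's proof: identify $\omega = Q_0\cdot v \in W^{1,p}(\Omega_0)$ and $b = \gamma_\e$, apply \Cref{lemma:C_3}, then use $\|\gamma_\e\|_\infty = \sqrt{1+\|\varphi'\|_\infty^2}$ and the product bound $\|Q_0\cdot v\|_{W^{1,p}(\Omega_0)}\leq\|Q_0\|_{W^{1,\infty}(\Omega_0)}\|v\|_{W^{1,p}(\Omega_0)}$. The only difference is cosmetic — you spell out the $W^{2,p}\hookrightarrow W^{1,\infty}$ embedding justifying $Q_0\in W^{1,\infty}$, which the paper leaves implicit.
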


\begin{proof}
Let $v\in W^{1,p}(\Omega_0)$. Since $Q_0\in W^{1,\infty}(\Omega_0)$, we have that $Q_0\cdot v\in W^{1,p}(\Omega_0)$. We apply \Cref{lemma:C_3} for $\omega=Q_0\cdot v$ and $b=\gamma_{\e}$, with $V=\mathbb{R}$. In this way, we obtain that:
\begin{align*}
|I_{32}|\leq \dfrac{|w_0|}{2}\cdot (2\pi)^{\frac{2p-2}{p}}\cdot \|\gamma_{\e}\|_{\infty}\cdot C_{tr}(p,\Omega_0)\cdot \e^{\frac{p-1}{p}}\cdot\|Q_0\cdot v\|_{W^{1,p}(\Omega_0)}.
\end{align*}
Using now that $\|\gamma_{\e}\|_{\infty}=\sqrt{1+\|\varphi'\|_{\infty}^2}$ and that $\|Q_0\cdot v\|_{W^{1,p}(\Omega_0)}\leq \|Q_0\|_{W^{1,\infty}(\Omega_0)}\cdot\|v\|_{W^{1,p}(\Omega_0)}$, we obtain the conclusion.
\end{proof}

\begin{proposition}\label{prop:I_22}
Let $p\in(2,+\infty)$. For any $v\in W^{1,p}(\Omega_0)$ we have:
\begin{align*}
|I_{22}|\leq C_{22}\cdot\e^{\frac{p-1}{p}}\cdot\|v\|_{W^{1,p}(\Omega_0)},
\end{align*}
where
\begin{align*}
I_{22}=\dfrac{w_0}{2}\int_0^{2\pi}v(x,0)\cdot\bigg(\gamma Q_{ef}-\gamma_{\e}(x/\e)Q_{\e}^0(x/\e)\bigg)\;\text{d}x
\end{align*}
and
\begin{align}\label{defn:C_22}
C_{22}=\dfrac{|w_0|\sqrt{2}}{4}\cdot (2\pi)^{\frac{2p-2}{p}}\cdot\sqrt{1+\|\varphi'\|_{\infty}^2}\cdot C_{tr}(p,\Omega_0).
\end{align}
\end{proposition}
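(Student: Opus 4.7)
The plan is to identify the integrand in $I_{22}$ as exactly the type of oscillatory integral handled by \Cref{lemma:C_3}. The essential observation is that, thanks to \Cref{defn:g_functions} and the matrix form of $Q_{\e}^0$ from \Cref{remark:Q_e_0_dependency}, the quantity $\gamma Q_{ef}$ is precisely the $[0,2\pi)$-average of the $2\pi$-periodic matrix-valued map $t \mapsto \gamma_{\e}(t) Q_{\e}^0(t)$:
\begin{align*}
\frac{1}{2\pi}\int_0^{2\pi} \gamma_{\e}(t) Q_{\e}^0(t) \, dt = \begin{pmatrix} G_1 & G_2 \\ G_2 & -G_1 \end{pmatrix} = \gamma Q_{ef}.
\end{align*}
Hence, up to the prefactor $w_0/2$, the quantity $I_{22}$ is of the form treated in \Cref{lemma:C_3} with $b(t) = \gamma_{\e}(t) Q_{\e}^0(t)$ and $B = \gamma Q_{ef}$.

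Next, I would invoke \Cref{lemma:C_3} in its natural matrix-valued formulation: the proof given there extends verbatim to $V$-valued $\omega$ and $V$-valued $b$ paired via the Frobenius inner product, because it only relies on the pointwise estimate $|\omega \cdot b| \leq |\omega|\,|b|$ (Frobenius) and on the scalar trace inequality from \Cref{defn:trace_ineq}, both of which carry over componentwise without any deterioration of the constant; equivalently, one applies \Cref{lemma:C_3} separately to each of the four scalar entries and reassembles. This produces
\begin{align*}
\bigg| \int_0^{2\pi} v(x,0) \cdot \big(\gamma Q_{ef} - \gamma_{\e}(x/\e) Q_{\e}^0(x/\e)\big) \, dx \bigg| \leq (2\pi)^{(2p-2)/p} \, \|b\|_{\infty} \, C_{tr}(p, \Omega_0) \, \e^{\frac{p-1}{p}} \, \|v\|_{W^{1,p}(\Omega_0)}.
\end{align*}

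To finish I would bound $\|b\|_{\infty}$ using the computation already exploited in the proof of \Cref{prop:I_21}: a direct calculation gives $|Q_{\e}^0(t)| \equiv \sqrt{2}/2$, and combined with $\|\gamma_{\e}\|_{\infty} \leq \sqrt{1+\|\varphi'\|_{\infty}^2}$ from \eqref{bounds:gamma_eps} this yields $\|b\|_{\infty} \leq \tfrac{\sqrt{2}}{2}\sqrt{1+\|\varphi'\|_{\infty}^2}$. Multiplying by the $|w_0|/2$ prefactor produces the constant $C_{22}$ announced in \eqref{defn:C_22}. The only mildly delicate point is the matrix-valued invocation of \Cref{lemma:C_3}: its statement features a scalar $\omega$, but the proof is entirely insensitive to this, so the extension is routine. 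In particular, I do not anticipate any genuinely new obstacle beyond what \Cref{lemma:C_3} already overcomes—this proposition is essentially a direct corollary of that lemma once the averaging identity above is recognised.
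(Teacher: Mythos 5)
Your proof is correct and follows the same route as the paper: apply \Cref{lemma:C_3} with $\omega=v$ and $b=\gamma_\e Q_\e^0$ and bound $\|b\|_\infty$ by $\tfrac{\sqrt{2}}{2}\sqrt{1+\|\varphi'\|_\infty^2}$. The explicit check that $B=\gamma Q_{ef}$ via the averaging identity is a worthwhile verification that the paper leaves implicit in \Cref{defn:g_functions}, and your remark on the $V$-valued invocation of the lemma is accurate (the paper already takes $V=\text{Sym}_0(2)$ in exactly this way).
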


\begin{proof}
Let $v\in W^{1,p}(\Omega_0)$. We apply \Cref{lemma:C_3} for $\omega=v$ and $b=\gamma_\e Q_{\e}^0$, with $V=\text{Sym}_0(2)$. In this way, we have:
\begin{align*}
|I_{22}|\leq \dfrac{|w_0|}{2}\cdot (2\pi)^{\frac{2p-2}{p}}\cdot \|Q_{\e}^0\cdot\gamma_{\e}\|_{\infty}\cdot C_{tr}(p,\Omega_0)\cdot \e^{\frac{p-1}{p}}\cdot \|v\|_{W^{1,p}(\Omega_0)}.
\end{align*}
Since $|Q_{\e}^0|=\dfrac{\sqrt{2}}{2}$ and $\|\gamma_{\e}\|_{\infty}=\sqrt{1+\|\varphi'\|_{\infty}^2}$, we obtain the conclusion.
\end{proof}

We are now able to prove \Cref{prop:ineq_for_W_1_p_Omega_0}.

\begin{proof}[Proof of \Cref{prop:ineq_for_W_1_p_Omega_0}]
We combine \eqref{eq:sum_of_integrals} with \Cref{prop:I_1,prop:I_21,prop:I_31,prop:I_32,prop:I_22} to obtain the conclusion, with, for example,
\begin{align*}
C_I=C_1+C_{21}+C_{31}+C_{22}+C_{32},
\end{align*}
where these constants are defined in \eqref{defn:C_1}, \eqref{defn:C_21}, \eqref{defn:C_31}, \eqref{defn:C_32} and \eqref{defn:C_22}.
\end{proof}

\begin{remark}
We can actually choose $C_I$ of the following form:
\begin{align*}
C_I=\text{max}\big\{1,\|\varphi\|_{\infty}^{(p-1)/p}\big\}\cdot\sqrt{1+\|\varphi'\|_{\infty}^2}\cdot C(w_0,c,p,\Omega_0,Q_0),
\end{align*}
where $C(w_0,c,p,\Omega_0,Q_0)$ is an $\e$-independent constant depending only on $w_0$, $c$, $p$, $\Omega_0$ and $Q_0$.
\end{remark}

\subsection{Proof of the error estimate}\label{subsect:proof_err_est}

The goal in this subsection would be to place instead of $v$ in the right-hand side of the inequality from \Cref{prop:ineq_for_W_1_p_Omega_0} something that depends on $(Q_0-Q_{\e})$, such that we can obtain \Cref{thm:main}.

Throughout this subsection, we fix $p\in(2,+\infty)$.

\begin{definition}
Let $u_{\e}:=Q_0-Q_{\e}$.
\end{definition}

\begin{remark}
The function $u_{\e}$ solves the following PDE:
\begin{align*}
\begin{cases}
-\Delta u_{\e}+cu_{\e}=0,\;\text{in}\;\Omega_{\e}\\
\\
\dfrac{\partial u_{\e}}{\partial \nu_{\e}}+\dfrac{w_0}{2}u_{\e}=g_{\e},\;\text{on}\;\Gamma_{\e}\\\
\\
\dfrac{\partial u_{\e}}{\partial \nu_R}+\dfrac{w_0}{2}u_{\e}=0,\;\text{on}\;\Gamma_R
\end{cases}
\end{align*}
where $g_{\e}=\dfrac{\partial Q_0}{\partial \nu_{\e}}+\dfrac{w_0}{2}\big(Q_0-Q_{\e}^0\big)$. Since $\Gamma_{\e}\subset\Omega_0$, we have no problems with defining $g_{\e}$. By \Cref{cor:Q_0,cor:Q_eps}, we also have that $u_{\e}\in W^{2,p}(\Omega_{\e})$ and $g_{\e}\in W^{1-1/p,p}(\Gamma_{\e})$, for any $1<p<+\infty$.
\end{remark}

We would like now to prove the following proposition (to be compared with \Cref{prop:ineq_for_W_1_p_Omega_0}):

\begin{proposition}\label{prop:ineq_Omega_eps_rhs}
There exists an $\e$-independent constant $C_0$ such that:
\begin{align*}
\big|a_{\e}(Q_0-Q_{\e},v)\big|\leq C_0\cdot\e^{\frac{p-1}{p}}\cdot\|v\|_{W^{1,p}(\Omega_{\e})},\;\forall v\in W^{1,p}(\Omega_{\e}).
\end{align*}
\end{proposition}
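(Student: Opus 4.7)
The plan is to reduce \Cref{prop:ineq_Omega_eps_rhs} to \Cref{prop:ineq_for_W_1_p_Omega_0} via an $\e$-independent extension operator. Concretely, the key observation is that the bilinear form $a_{\e}(Q_0 - Q_{\e}, v)$ depends only on the values of $v$ on $\Omega_{\e}$ together with its traces on $\Gamma_{\e}$ and $\Gamma_R$, all of which are contained in $\overline{\Omega_{\e}} \subset \overline{\Omega_0}$. Therefore, if we can exhibit a bounded linear extension operator $E_{\e}\colon W^{1,p}(\Omega_{\e}) \to W^{1,p}(\Omega_0)$ satisfying $(E_{\e} v)|_{\Omega_{\e}} = v$ and $\|E_{\e} v\|_{W^{1,p}(\Omega_0)} \le C_{\mathrm{ext}}\,\|v\|_{W^{1,p}(\Omega_{\e})}$ with $C_{\mathrm{ext}}$ independent of $\e$, then the desired inequality follows immediately by applying \Cref{prop:ineq_for_W_1_p_Omega_0} to $\tilde v := E_{\e} v$, setting $C_0 := C_I \cdot C_{\mathrm{ext}}$.

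The construction of such an extension operator is precisely the content of the forthcoming \Cref{defn:uniform_ext} alluded to in the introduction. The natural strategy is a reflection across $\Gamma_{\e}$: for $(x,y) \in \Omega_0 \setminus \Omega_{\e}$, i.e.\ $0 < y < \e\varphi(x/\e)$, set $(E_{\e} v)(x,y) := v(x, 2\e\varphi(x/\e) - y)$, so that $E_{\e} v$ agrees with $v$ on $\Omega_{\e}$ and equals its reflection through $\Gamma_{\e}$ in the thin strip $\Omega_0 \setminus \Omega_{\e}$. Since $\|\varphi\|_\infty$ and $\|\varphi'\|_\infty$ are uniformly bounded by \Cref{pp:1}, the reflection map $(x,y) \mapsto (x, 2\e\varphi(x/\e) - y)$ is bi-Lipschitz from $\Omega_0 \setminus \Omega_{\e}$ to a subset of $\Omega_{\e}$ (contained in $\{0 < y < 2\e\|\varphi\|_\infty\}$), and its Jacobian plus the Jacobian of its inverse are controlled independently of $\e$. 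A change of variables then gives the $\e$-independent bound
\begin{equation*}
\|E_{\e} v\|_{W^{1,p}(\Omega_0)}^p \le \|v\|_{W^{1,p}(\Omega_{\e})}^p + C\,\|v\|_{W^{1,p}(\Omega_{\e})}^p,
\end{equation*}
where $C$ depends only on $\|\varphi\|_\infty$ and $\|\varphi'\|_\infty$. The usual density argument from $C^1(\overline{\Omega_{\e}})$ then promotes this from smooth $v$ to general $v \in W^{1,p}(\Omega_{\e})$.

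Given the extension, the proof itself is a one-line consequence: for $v \in W^{1,p}(\Omega_{\e})$, write $\tilde v = E_{\e} v$, note that $a_{\e}(Q_0 - Q_{\e}, v) = a_{\e}(Q_0 - Q_{\e}, \tilde v)$ under the convention of \Cref{defn:bilinear_funct}, and apply \Cref{prop:ineq_for_W_1_p_Omega_0} together with the extension bound to obtain
\begin{equation*}
|a_{\e}(Q_0 - Q_{\e}, v)| \le C_I \cdot \e^{\frac{p-1}{p}} \cdot \|\tilde v\|_{W^{1,p}(\Omega_0)} \le C_I\,C_{\mathrm{ext}} \cdot \e^{\frac{p-1}{p}} \cdot \|v\|_{W^{1,p}(\Omega_{\e})}.
\end{equation*}

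The main obstacle, and essentially the only nontrivial work, is verifying the $\e$-independence of the operator norm of $E_{\e}$. This hinges on the fact that, although the strip $\Omega_0 \setminus \Omega_{\e}$ has width $O(\e)$, the reflection is a genuine bi-Lipschitz diffeomorphism with Lipschitz constants depending only on $\|\varphi'\|_\infty$ (and not on $\e$), because the slope of $\Gamma_{\e}$ equals $\varphi'(x/\e)$, which is uniformly bounded. Consequently the pullback of the $W^{1,p}$ norm under reflection is controlled by the $W^{1,p}$ norm on a uniformly comparable set inside $\Omega_{\e}$, which is the crucial ingredient.
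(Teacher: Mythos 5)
Your overall strategy matches the paper exactly: reduce to \Cref{prop:ineq_for_W_1_p_Omega_0} by constructing a bounded linear extension operator $E_\e\colon W^{1,p}(\Omega_\e)\to W^{1,p}(\Omega_0)$ with $E_\e v|_{\Omega_\e}=v$ and an operator norm bound independent of $\e$, then set $C_0=C_I\,C_{\mathrm{ext}}$. This is precisely the paper's route (via \Cref{defn:uniform_ext} and \Cref{prop:uniform_ext}), and your one-line deduction at the end is the paper's proof of \Cref{prop:ineq_Omega_eps_rhs} verbatim.

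Where you genuinely differ is in \emph{how} the extension operator is built. The paper constructs $E_\e$ as a three-step composition: pull $\omega$ back to $\Omega_0$ via the graph-flattening map $\Phi_\e|_{\Omega_0}$, extend across the \emph{flat} boundary $\Gamma_0=\{y=0\}$ by the classical reflection $T$ into $\Omega_2=[0,2\pi)\times(-R,R)$, restrict to $\Omega_1^\e$, and push forward via $\Phi_\e^{-1}$. The uniform bound then comes from combining the uniform bi-Lipschitz estimates for $\Phi_\e$ (\Cref{prop:properties_of_Phi_e}) with the $\e$-independent constant in the flat-reflection bound (\Cref{corollary:extension_0}). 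You instead reflect \emph{directly} across the curved boundary $\Gamma_\e$ in the vertical direction: $(E_\e v)(x,y)=v(x,2\e\varphi(x/\e)-y)$ for $0<y<\e\varphi(x/\e)$. This also works: the map $(x,y)\mapsto(x,2\e\varphi(x/\e)-y)$ has Jacobian
\begin{equation*}
\begin{pmatrix} 1 & 0 \\ 2\varphi'(x/\e) & -1 \end{pmatrix},
\end{equation*}
with determinant $-1$ and operator norm controlled by $\|\varphi'\|_\infty$ alone; it sends $\Omega_0\setminus\Omega_\e$ into $\{\e\varphi(x/\e)<y<2\e\varphi(x/\e)\}\subset\Omega_\e$ (which requires $2\e\|\varphi\|_\infty<R$, supplied by \Cref{pp:2}); and the traces from both sides of $\Gamma_\e$ agree, so the usual density argument from $C^1(\overline{\Omega_\e})$ applies. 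Your construction is more elementary and avoids introducing the auxiliary domain $\Omega_1^\e$ and the inverse map $\Phi_\e^{-1}$; the paper's construction has the advantage of reusing the already-established properties of $\Phi_\e$ and reducing to the textbook flat-boundary reflection, so that no fresh Jacobian or trace-matching computation is needed. Both are correct and yield the same conclusion with $\e$-independent constants.
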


In order to do so, we need to construct an extension operator $E_{\e}:W^{1,p}(\Omega_{\e})\to W^{1,p}(\Omega_0)$ such that $E_{\e}\omega\equiv\omega$ in $\Omega_{\e}$ for any $\omega\in W^{1,p}(\Omega_{\e})$, with the operator $E_{\e}$ bounded uniformly in $\e$. For this, we adapt the bi-Lipschitz maps $\Phi_\e$ from \eqref{eqDefBilip} and $\Phi^{-1}_\e$ from \eqref{eqInverseBilip} to this simplified model. In \eqref{eqDefBilip} and \eqref{eqInverseBilip}, the transformations $\Phi_\e$ and $\Phi_\e^{-1}$ are only between $\Omega_\e$ and $\Omega_0$. In order to construct the desired extension, we introduce the following notation.

\begin{definition}
Let $\Omega_1^{\e}=\bigg\{(x,y)\;\bigg|\;x\in[0,2\pi),\;y\in\bigg(-\dfrac{R\e\varphi(x/\e)}{R-\e\varphi(x/\e)},R\bigg)\bigg\}$.
\end{definition}

\begin{remark}
Using \Cref{pp:2}, it is easy to see that $\Omega_1^{\e}$ is well defined, since $R-\e\varphi(x/\e)>0$ for all $x\in[0,2\pi)$.
\end{remark}

\begin{definition}\label{defn:new_Phi_eps}
We define $\Phi_\e:\Omega_1^\e\to\Omega_0$ as
\begin{align*}
\Phi_\e(x,y)=\bigg(x,y\cdot\dfrac{R-\e\varphi(x/\e)}{R}+\e\varphi(x/\e)\bigg),\;\forall(x,y)\in\Omega_1^{\e}
\end{align*}
and $\Phi_\e^{-1}:\Omega_0\to\Omega_1^{\e}$ as
\begin{align*}
\Phi_\e^{-1}(x,y)=\bigg(x,\dfrac{R\big(y-\e\varphi(x/\e)\big)}{R-\e\varphi(x/\e)}\bigg),\;\forall(x,y)\in\Omega_0.
\end{align*}
\end{definition}

\begin{remark}
We have $\Phi_\e(\Omega_0)=\Omega_\e$ and, using \Cref{pp:2}, we can prove the following sequence of inclusions:
\begin{align*}
\{(x,y)\;|\;x\in[0,2\pi),\;y\in(R/2,R)\}\subset \Omega_{\e}\subset \Omega_0\subset \Omega_1^{\e}\subset \{(x,y)\;|\;x\in[0,2\pi),\;y\in(-R,R)\}.
\end{align*}
\end{remark}

\begin{proposition}\label{prop:properties_of_Phi_e}
$\Phi_\e$ defines a family of $C^2$ uniformly bi-Lipschitz maps between $\Omega_1$ and $\Omega_0$. Moreover, there exists an $\e$-independent constant $C_\Phi$ such that:
\begin{align}\label{ineq:C_Phi_eps_0}
C_\Phi^{-1}\cdot \|\omega\|_{W^{1,p}(\Omega_{\e})}\leq \|\tilde{\omega}\|_{W^{1,p}(\Omega_0)}\leq C_\Phi\cdot \|\omega\|_{W^{1,p}(\Omega_{\e})},
\end{align}
for all $\omega\in W^{1,p}(\Omega_{\e})$, where $\tilde{\omega}=\omega\circ \Phi_{\e}\big|_{\Omega_0}$, and
\begin{align}\label{ineq:C_Phi_0_1}
C_\Phi^{-1}\cdot \|\omega\|_{W^{1,p}(\Omega_{0})}\leq \|\tilde{\omega}\|_{W^{1,p}(\Omega_1^{\e})}\leq C_\Phi\cdot \|\omega\|_{W^{1,p}(\Omega_{0})},
\end{align}
for all $\omega\in W^{1,p}(\Omega_0)$, where $\tilde{\omega}=\omega\circ \Phi_{\e}$.
\end{proposition}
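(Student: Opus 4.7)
The plan is as follows. Since $\varphi \in C^2$ by \Cref{pp:1}, the explicit formula in \Cref{defn:new_Phi_eps} shows immediately that $\Phi_\e \in C^2(\Omega_1^\e, \mathbb{R}^2)$, and a direct substitution verifies that the proposed $\Phi_\e^{-1}$ is a two-sided inverse. I would first confirm $\Phi_\e(\Omega_1^\e) = \Omega_0$ and $\Phi_\e(\Omega_0) = \Omega_\e$ by evaluating the second coordinate at the endpoints of the $y$-intervals defining each domain; this also yields the useful a priori bound $|y| < R$ throughout $\Omega_1^\e$, since $y > -R\e\varphi(x/\e)/(R - \e\varphi(x/\e)) > -R$ by \Cref{pp:2}.

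Next, I would compute
\begin{equation*}
D\Phi_\e(x,y) = \begin{pmatrix} 1 & 0 \\ \varphi'(x/\e)(1 - y/R) & 1 - \e\varphi(x/\e)/R \end{pmatrix},
\end{equation*}
whose determinant equals $1 - \e\varphi(x/\e)/R$. By \Cref{pp:2} we have $\e\|\varphi\|_\infty < R/2$, pinning the determinant in $[1/2, 1]$, while the off-diagonal entry is controlled by $2\|\varphi'\|_\infty$ thanks to the $y$-bound above. A matching computation for $D\Phi_\e^{-1}$, obtained either by differentiating the formula for $\Phi_\e^{-1}$ directly or by inverting the $2\times 2$ matrix above, produces entries controlled purely in terms of $\|\varphi'\|_\infty$ and the same determinant lower bound. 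Thus there exists $L > 0$ depending only on $R$, $\|\varphi\|_\infty$ and $\|\varphi'\|_\infty$ so that $\|D\Phi_\e\|_{L^\infty} + \|D\Phi_\e^{-1}\|_{L^\infty} \leq L$, which is the uniform bi-Lipschitz bound.

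With these Jacobian bounds in hand, inequalities \eqref{ineq:C_Phi_eps_0} and \eqref{ineq:C_Phi_0_1} reduce to the standard change-of-variables estimates for composition with a bi-Lipschitz map. For \eqref{ineq:C_Phi_eps_0}, working first with $\omega \in C^1(\overline{\Omega_\e})$ and passing to the limit by density, the chain rule $\nabla \tilde\omega(x,y) = (D\Phi_\e(x,y))^T (\nabla \omega)(\Phi_\e(x,y))$ combined with the substitution $(x,y) \mapsto \Phi_\e(x,y)$ from $\Omega_0$ onto $\Omega_\e$ gives
\begin{equation*}
\|\tilde\omega\|_{W^{1,p}(\Omega_0)}^p \leq (1 + L^p) \int_{\Omega_\e} \bigl(|\omega|^p + |\nabla \omega|^p\bigr) |\det D\Phi_\e^{-1}| \, dx\, dy \leq 2(1 + L^p) \|\omega\|_{W^{1,p}(\Omega_\e)}^p,
\end{equation*}
and the reverse inequality is obtained identically by applying the same reasoning to $\Phi_\e^{-1}$. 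Inequality \eqref{ineq:C_Phi_0_1} is established by repeating the argument verbatim with $\Omega_\e$ replaced by $\Omega_0$ and $\Omega_0$ replaced by $\Omega_1^\e$, noting that $\Phi_\e \colon \Omega_1^\e \to \Omega_0$ enjoys the same uniform bi-Lipschitz bounds on the larger domain.

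I do not anticipate a substantive obstacle: the content is a uniform-in-$\e$ change-of-variables argument, and the only place the oscillation structure enters is in invoking \Cref{pp:2} to prevent the Jacobian determinant from degenerating. The final constant $C_\Phi$ depends only on $L$ and hence only on $R$, $\|\varphi\|_\infty$ and $\|\varphi'\|_\infty$, as required for the downstream $\e$-independent estimates in the proof of \Cref{prop:ineq_Omega_eps_rhs}.
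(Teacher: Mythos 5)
Your proposal is correct and takes essentially the same approach as the paper: compute the Jacobian $D\Phi_\e$ explicitly, use the bound $\e\|\varphi\|_\infty < R/2$ from \Cref{pp:2} to obtain $\e$-independent control on the determinant and on $D\Phi_\e^{-1}$, and then transfer this to the $W^{1,p}$ estimates by the chain rule and change of variables. You are somewhat more explicit about writing the Jacobian in matrix form, verifying $\Phi_\e(\Omega_1^\e)=\Omega_0$ and $\Phi_\e(\Omega_0)=\Omega_\e$, and spelling out the density step, but these are just filling in details the paper leaves terse; the underlying argument is identical.
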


\begin{proof}
Since $\varphi\in C^2$, then $\Phi_\e\in C^2$. Moreover, since the definition of $\Phi_\e$ from \Cref{defn:new_Phi_eps} is based on \eqref{eqDefBilip}, then one can argue similarly as in \Cref{propBilipInverse} to obtain that $\Phi_{\e}$ and its inverse are Lipschitz with Lipschitz constant independent of $\e$. More specifically, using the bound
\begin{align*}
0<\e\cdot \|\varphi\|_{\infty}<\dfrac{R}{2} 
\end{align*}
from \Cref{pp:1}, we can obtain that 
\begin{align*}
\bigg|\dfrac{\partial \Phi_\e}{\partial x}(x,y)\bigg|\leq \text{max}\{2R,1\}\cdot\sqrt{1+\|\varphi'\|_{\infty}^2}\;\;\;\text{and}\;\;\;\bigg|\dfrac{\partial \Phi_\e}{\partial y}(x,y)\bigg|\leq 1,\;\forall(x,y)\in\Omega_1^{\e},
\end{align*}
which implies that $\Phi_\e$ is Lipschitz with its Lipschitz constant bounded $\e$-independent. In the same way, the first order derivatives of $\Phi_\e^{-1}$ can be bounded $\e$-independent (using the same bound as above for $\e$). To obtain the constant $C_\Phi$, we use the same $\e$-independent bounds for the first order derivatives of $\Phi_{\e}$ and $\Phi_\e^{-1}$ when we apply chain rule in $\tilde{\omega}=\omega\circ \Phi_\e$ with $\omega\in W^{1,p}(\Omega_0)$.
\end{proof}

\begin{remark}
If we were to define $E_{\e}\omega=\omega\circ \Phi_\e\big|_{\Omega_0}$, for any $\omega\in W^{1,p}(\Omega_{\e})$, then we would not have had $E_{\e}\omega\equiv \omega$ inside of $\Omega_{\e}$, so we need the more sophisticated extension that will be provided in \Cref{defn:uniform_ext} below.
\end{remark}

\begin{corollary}\label{corollary:extension_0}
Let $\Omega_{2}:=\{(x,y)\;|\;x\in[0,2\pi),\;y\in(-R,R)\}$ and let $T:W^{1,p}(\Omega_0)\to W^{1,p}(\Omega_{2})$ the following extension operator:
\begin{align*}
T\omega(x,y)=\begin{cases}
\omega(x,y),\;\text{if}\;y\in(0,R);\\
\\
\omega\big(x,-y\big),\;\text{if}\;y\in(-R,0);
\end{cases}
\end{align*}
for any $\omega\in W^{1,p}(\Omega_0)$. Then there exists an $\e$-independent constant $C_{ext}(p,\Omega_0)>0$ such that:
\begin{align*}
\|T\omega\|_{W^{1,p}(\Omega_{2})}\leq C_{ext}(p,\Omega_0)\cdot\|\omega\|_{W^{1,p}(\Omega_0)},\;\forall\;\omega\in W^{1,p}(\Omega_0).
\end{align*}
\end{corollary}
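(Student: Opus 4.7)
The plan is to verify directly that the reflection extension in the statement is a bounded linear operator from $W^{1,p}(\Omega_0)$ to $W^{1,p}(\Omega_2)$ with a constant depending only on $p$, which is therefore trivially $\e$-independent since $T$ does not involve $\e$ in any way.

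The first step is to handle the smooth case. For $\omega\in C^1(\overline{\Omega_0})$, the function $T\omega$ is continuous across the line $\{y=0\}$ because both branches agree with $\omega(x,0)$ in the limit. On each of the subdomains $\{0<y<R\}$ and $\{-R<y<0\}$ it is classically $C^1$, with
\begin{equation*}
\nabla T\omega(x,y)=\begin{cases}\big(\partial_x\omega(x,y),\partial_y\omega(x,y)\big)&\text{if }y>0,\\ \big(\partial_x\omega(x,-y),-\partial_y\omega(x,-y)\big)&\text{if }y<0.\end{cases}
\end{equation*}
A standard check using the continuity across $y=0$ shows there is no singular part concentrated on $\{y=0\}$, so the classical derivatives above also represent the weak derivatives on $\Omega_2$. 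The change of variables $y\mapsto -y$ on $(-R,0)$ then gives the sharp identities
\begin{equation*}
\|T\omega\|_{L^p(\Omega_2)}^p=2\|\omega\|_{L^p(\Omega_0)}^p,\qquad \|\nabla T\omega\|_{L^p(\Omega_2)}^p=2\|\nabla\omega\|_{L^p(\Omega_0)}^p,
\end{equation*}
hence $\|T\omega\|_{W^{1,p}(\Omega_2)}\le 2^{1/p}\|\omega\|_{W^{1,p}(\Omega_0)}$.

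The second step is to pass to general $W^{1,p}$ by density. Since $\Omega_0$ is a Lipschitz domain, $C^1(\overline{\Omega_0})$ is dense in $W^{1,p}(\Omega_0)$. The operator $T$ is linear and, by the previous step, bounded with norm at most $2^{1/p}$ on this dense subspace, hence it extends uniquely to a bounded linear operator on all of $W^{1,p}(\Omega_0)$ with the same operator norm. For any $\omega\in W^{1,p}(\Omega_0)$, taking approximants $\omega_n\to\omega$ in $W^{1,p}(\Omega_0)$, the pointwise formula for $T\omega_n$ converges in $L^p(\Omega_2)$ to the pointwise formula applied to $\omega$, so the abstract extension coincides a.e.\ with the stated pointwise formula.

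The only mild point of care is ensuring that no distributional derivative is concentrated on the interface $\{y=0\}$ in the weak derivative of $T\omega$ for rough $\omega$; this is handled automatically by the continuity of the trace operator $W^{1,p}(\Omega_0)\to W^{1-1/p,p}(\Gamma_0)$, which guarantees that the two traces of $T\omega$ from $y>0$ and $y<0$ agree. One then takes $C_{ext}(p,\Omega_0)=2^{1/p}$, which depends only on $p$ and, in particular, is independent of $\e$ since neither the domain $\Omega_0$ nor the operator $T$ depends on $\e$.
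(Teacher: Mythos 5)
Your proof is correct and follows exactly the approach the paper points to (the paper simply cites the flat-boundary reflection argument from Evans without reproducing it); you have filled in the details explicitly. In fact your computation gives the sharp constant $C_{ext}=2^{1/p}$, which depends only on $p$ and not on $\Omega_0$ at all, so your bound is slightly stronger than the paper's stated $C_{ext}(p,\Omega_0)$.
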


\begin{remark}
The proof is a simple exercise which consists of applying the method of extending a Sobolev function by reflection against a flat boundary, illustrated in \cite{evans2010second} for $W^{1,p}(\Omega_0)$ functions.
\end{remark}

\begin{definition}\label{defn:uniform_ext}
Let $E_{\e}:W^{1,p}(\Omega_{\e})\to W^{1,p}(\Omega_0)$ defined as
\begin{align*}
E_{\e}\omega:=\bigg(\big(T(\omega\circ \Phi_\e\big|_{\Omega_0})\big)\bigg|_{\Omega_1^{\e}}\circ \Phi_\e^{-1}\bigg),
\end{align*}
for any $\omega\in W^{1,p}(\Omega_\e)$. In this way, $E_{\e}\omega\equiv \omega$ in $\Omega_{\e}$, for any $\omega\in W^{1,p}(\Omega_{\e})$.
\end{definition}

\begin{proposition}\label{prop:uniform_ext}
There exists an $\e$-independent constant $C_{ext}$ such that:
\begin{align*}
\|E_{\e}\omega\|_{W^{1,p}(\Omega_0)}\leq C_{ext}\cdot\|\omega\|_{W^{1,p}(\Omega_{\e})},\;\forall\;\omega\in W^{1,p}(\Omega_{\e}).
\end{align*}
\end{proposition}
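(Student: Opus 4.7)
The plan is to unwind the definition of $E_\e$ as a composition of three operations and apply the corresponding norm estimates in sequence. Given $\omega\in W^{1,p}(\Omega_\e)$, set $\omega_1=\omega\circ\Phi_\e\big|_{\Omega_0}\in W^{1,p}(\Omega_0)$, then $\omega_2=T\omega_1\in W^{1,p}(\Omega_2)$, then $\omega_3=\omega_2\big|_{\Omega_1^\e}\in W^{1,p}(\Omega_1^\e)$, and finally $E_\e\omega=\omega_3\circ\Phi_\e^{-1}\in W^{1,p}(\Omega_0)$. The strategy is just to estimate $\|\cdot\|_{W^{1,p}}$ at each of these four stages using results already established in the paper.

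For the first stage, the upper bound in \eqref{ineq:C_Phi_eps_0} from \Cref{prop:properties_of_Phi_e} gives $\|\omega_1\|_{W^{1,p}(\Omega_0)}\leq C_\Phi\|\omega\|_{W^{1,p}(\Omega_\e)}$. For the second stage, \Cref{corollary:extension_0} yields $\|\omega_2\|_{W^{1,p}(\Omega_2)}\leq C_{ext}(p,\Omega_0)\|\omega_1\|_{W^{1,p}(\Omega_0)}$. For the third stage, the inclusion $\Omega_1^\e\subset\Omega_2$ (which is noted in the remark immediately after \Cref{defn:new_Phi_eps}) gives the trivial restriction estimate $\|\omega_3\|_{W^{1,p}(\Omega_1^\e)}\leq\|\omega_2\|_{W^{1,p}(\Omega_2)}$.

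The last stage requires the upper bound in \eqref{ineq:C_Phi_0_1} used in reverse: writing $u=\omega_3\circ\Phi_\e^{-1}\in W^{1,p}(\Omega_0)$ and noting that $\omega_3=u\circ\Phi_\e$ on $\Omega_1^\e$, we apply the lower bound to $u\in W^{1,p}(\Omega_0)$ to obtain $\|u\|_{W^{1,p}(\Omega_0)}\leq C_\Phi\|\omega_3\|_{W^{1,p}(\Omega_1^\e)}$. Concatenating these four inequalities yields
\begin{equation*}
\|E_\e\omega\|_{W^{1,p}(\Omega_0)}\leq C_\Phi^2\, C_{ext}(p,\Omega_0)\,\|\omega\|_{W^{1,p}(\Omega_\e)},
\end{equation*}
so $C_{ext}:=C_\Phi^2\,C_{ext}(p,\Omega_0)$ serves as the desired $\e$-independent constant, since both $C_\Phi$ (by \Cref{prop:properties_of_Phi_e}) and $C_{ext}(p,\Omega_0)$ (by \Cref{corollary:extension_0}) are independent of $\e$.

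There is no genuine obstacle here; the proof is a bookkeeping exercise. The one point worth double-checking is the inclusion $\Omega_1^\e\subset\Omega_2$, which is where \Cref{pp:2} enters crucially to ensure the lower graph of $\Omega_1^\e$ stays above $y=-R$ (i.e.\ that $R\e\varphi(x/\e)/(R-\e\varphi(x/\e))<R$, equivalent to $\e\|\varphi\|_\infty<R/2$). This is exactly the hypothesis built into the definition of $\Omega_1^\e$ and verified in the remark preceding \Cref{defn:uniform_ext}, so the step is justified.
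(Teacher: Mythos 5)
Your proof is correct and follows essentially the same route as the paper: bi-Lipschitz pullback to $\Omega_0$ via \eqref{ineq:C_Phi_eps_0}, reflection extension $T$ via \Cref{corollary:extension_0}, restriction to $\Omega_1^\e\subset\Omega_2$, then push back via \eqref{ineq:C_Phi_0_1}. Your tracking of the constant as $C_\Phi^2\,C_{ext}(p,\Omega_0)$ is in fact more careful than the paper's, which invokes $C_\Phi^{-1}$ where the direction of \eqref{ineq:C_Phi_0_1} requires $C_\Phi$ and then asserts the two change-of-variable factors cancel; this leads the paper to claim $C_{ext}=C_{ext}(p,\Omega_0)$, an apparent slip, though since both constants are $\e$-independent the proposition holds either way.
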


\begin{proof}
Let $\omega\in W^{1,p}(\Omega_{\e})$ and $\tilde{\omega}=\omega\circ\Phi_\e\big|_{\Omega_0}$. Since the transformation $\Phi_{\e}^{-1}$ is bi-Lipschitz with its constants bounded $\e$-independent, then, using \eqref{ineq:C_Phi_0_1}:
\begin{align*}
\|E_{\e}\omega\|_{W^{1,p}(\Omega_0)}=\bigg\|\bigg(\big(T\tilde{\omega}\big)\bigg|_{\Omega_1^{\e}}\circ \Phi_\e^{-1}\bigg)\bigg\|_{W^{1,p}(\Omega_0)}\leq C_\Phi^{-1}\bigg\|\big(T\tilde{\omega}\big)\bigg|_{\Omega_1^{\e}}\bigg\|_{W^{1,p}(\Omega_1^{\e})}\leq C_\Phi^{-1}\big\|T\tilde{\omega}\big\|_{W^{1,p}(\Omega_2)},
\end{align*}
where in the last inequality we use that $T\tilde{\omega}\bigg|_{\Omega_1^{\e}}$ is a restriction of $T\tilde{\omega}$ from $\Omega_2$. Then:
\begin{align*}
\|E_{\e}\omega\|_{W^{1,p}(\Omega_0)}\leq C_\Phi^{-1}\|T\tilde{\omega}\|_{W^{1,p}(\Omega_2)}\leq C_\Phi^{-1}\cdot C_{ext}(p,\Omega_0)\cdot \|\tilde{\omega}\|_{W^{1,p}(\Omega_0)},
\end{align*}
where we have used \Cref{corollary:extension_0}. Since $\tilde{\omega}=\omega\circ \Phi_\e\big|_{\Omega_0}$, then, using \eqref{ineq:C_Phi_eps_0}, we obtain:
\begin{align*}
\|E_{\e}\omega\|_{W^{1,p}(\Omega_0)}\leq C_{ext}(p,\Omega_0)\cdot \|\omega\|_{W^{1,p}(\Omega_{\e})}.
\end{align*}
Therefore, we can actually choose $C_{ext}=C_{ext}(p,\Omega_0)$ given by \Cref{corollary:extension_0}.
\end{proof}

\begin{proof}[Proof of \Cref{prop:ineq_Omega_eps_rhs}]
Let $v\in W^{1,p}(\Omega_{\e})$. Then $E_{\e}v\in W^{1,p}(\Omega_0)$ and we can apply \Cref{prop:ineq_for_W_1_p_Omega_0} to obtain:
\begin{align*}
|a_{\e}(u_{\e},E_{\e}v)|\leq C_I\cdot \e^{\frac{p-2}{p}}\cdot \|E_{\e}v\|_{W^{1,p}(\Omega_0)}.
\end{align*}
Using \Cref{defn:uniform_ext} and \Cref{prop:uniform_ext}, we obtain:
\begin{align*}
\big|a_{\e}(u_{\e},v)\big|\leq \big(C_I\cdot C_{ext}\big)\cdot \e^{\frac{p-1}{p}}\cdot \|v\|_{W^{1,p}(\Omega_{\e})}.
\end{align*}
\end{proof}

\begin{corollary}
There exists a unique solution $v_{\e}\in W^{2,p}(\Omega_{\e})$ that solves the following PDE:
\begin{align}\label{defn:PDE_v}
\begin{cases}
-\Delta v_{\e}+cv_{\e}=u_{\e},\;\text{in}\;\Omega_{\e}\\
\dfrac{\partial v_{\e}}{\partial \nu_{\e}}+\dfrac{w_0}{2}v_{\e}=0,\;\text{on}\;\Gamma_{\e}\\
\dfrac{\partial v_{\e}}{\partial\nu_R}+\dfrac{w_0}{2}v_{\e}=0,\;\text{on}\;\Gamma_R
\end{cases}
\end{align}
where $u_{\e}=Q_0-Q_{\e}$.
\end{corollary}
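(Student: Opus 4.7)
The plan is to follow exactly the same strategy as in the proofs of \Cref{cor:Q_eps} and \Cref{cor:Q_0}, namely to first obtain a weak solution via Lax-Milgram on the bilinear form $a_\e$ and then upgrade its regularity to $W^{2,p}$ by invoking Grisvard's \Cref{thm:reg}. The structural similarity is even stronger here, since the boundary data is $0$ and the only genuine datum is the right-hand side $u_\e$, which by \Cref{cor:Q_eps} and \Cref{cor:Q_0} lies in $W^{2,p}(\Omega_\e) \subset L^p(\Omega_\e) \cap L^2(\Omega_\e)$ (noting that $\Omega_\e \subset \Omega_0$, so $Q_0|_{\Omega_\e} \in W^{2,p}(\Omega_\e)$).

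\textbf{Step 1: Existence of a weak solution.} I would apply the Lax-Milgram theorem on $W^{1,2}(\Omega_\e)$ with the bilinear form $a_\e$ from \Cref{defn:bilinear_funct}. Coercivity holds because $c>0$ and $w_0>0$ make both the volume and boundary contributions nonnegative, and together they control the full $W^{1,2}$-norm. The linear functional $v \mapsto \int_{\Omega_\e} u_\e \cdot v\,\mathrm{d}(x,y)$ is continuous on $W^{1,2}(\Omega_\e)$ since $u_\e \in L^2(\Omega_\e)$. This delivers a unique $v_\e \in W^{1,2}(\Omega_\e)$ satisfying the weak form of \eqref{defn:PDE_v}.

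\textbf{Step 2: Upgrading regularity via Grisvard.} I would verify the hypotheses of \Cref{thm:reg} with the identifications $a_{ij} = -\delta_{ij}$, $a_i = 0$, $a_0 = c$ (giving $\sum a_{ij}\xi_i\xi_j = -|\xi|^2$, so $\alpha = 1$, and $\beta = c > 0$), and on the boundary $b_j = \nu_\e^j$ (resp.\ $\nu_R^j$), $b_0 = w_0/2 > 0$, which yields $b_0 b_\nu = w_0/2 > 0$ and $b_\nu = 1 \neq 0$. The domain $\Omega_\e$ has $C^2$ boundary because $\varphi \in C^2$ (\Cref{pp:1}). The right-hand side $u_\e = Q_0 - Q_\e$ lies in $L^p(\Omega_\e)$ by the two previous corollaries, and the boundary data $g \equiv 0$ trivially lies in $W^{1-1/p,p}(\partial\Omega_\e)$. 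Grisvard's theorem then gives a unique $W^{2,p}(\Omega_\e)$-solution of \eqref{defn:PDE_v}, which must coincide with the weak solution from Step 1 (since Grisvard's statement already includes uniqueness).

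\textbf{On the main obstacle.} There is essentially none beyond bookkeeping: the argument is completely analogous to that in \Cref{appendix:err_est} for \Cref{cor:Q_eps}. The only detail worth being explicit about is that, although $\Omega_\e$ depends on $\e$, the $W^{2,p}$-bound obtained from Grisvard need \emph{not} be $\e$-uniform for this corollary to be true; $\e$-uniform bounds for $v_\e$ would be a separate question, to be addressed later in the error estimate via the extension operator $E_\e$ of \Cref{defn:uniform_ext}.
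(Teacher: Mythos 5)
There is a genuine gap in Step 2. You cannot apply Grisvard's \Cref{thm:reg} directly on $\Omega_\e$. Regarded literally as an open subset of $\mathbb{R}^2$, the set $\Omega_\e=\{(x,y):0\leq x<2\pi,\;\varphi_\e(x)<y<R\}$ has the two lateral segments $\{x=0\}$ and $\{x=2\pi\}$ in its boundary as well as corners where these meet $\Gamma_\e$ and $\Gamma_R$, so $\partial\Omega_\e$ is not $C^{1,1}$. If instead one treats the periodicity in $x$ as an identification, the resulting quotient domain has the smooth boundary $\Gamma_\e\cup\Gamma_R$, but it is no longer a subset of $\mathbb{R}^2$ and so again falls outside the literal hypotheses of Grisvard's theorem as quoted. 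This is exactly why the paper introduces the map $\Phi=\Phi_{polar}\circ\Phi_{tl}$ of \Cref{defn:THE_transf}: it sends the periodic slab to a genuine bounded annular domain $\mathcal{U}_\e\subset\mathbb{R}^2$ with $C^2$ boundary, transforms \eqref{defn:PDE_v} into a uniformly elliptic problem with Lipschitz coefficients on $\mathcal{U}_\e$ (as done for $Q_\e$ in \Cref{appendix:err_est}, yielding the PDE \eqref{eq:PDE_annulus}), applies \Cref{thm:reg} there, and pulls the resulting $W^{2,p}$ solution back via the smooth bi-Lipschitz map $\Phi$. That change of variables is not bookkeeping; it is the step that makes Grisvard's theorem applicable.

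Your Step 1 (Lax--Milgram on $a_\e$) and your closing remark that $\e$-uniform bounds are not needed for this corollary are both fine, and your identification of the coefficients $a_{ij},a_i,a_0,b_j,b_0$ is correct \emph{for the PDE on $\Omega_\e$}; the issue is only that those coefficients are not the ones on which \Cref{thm:reg} is ultimately invoked. To repair the argument, either carry out the transformation to $\mathcal{U}_\e$ exactly as in the proof of \Cref{cor:Q_eps} (with right-hand side $\overline{u}_\e=u_\e\circ\Phi_\e|_{\Omega_0}\circ\Phi^{-1}\in L^p(\mathcal{U}_\e)$ in place of the boundary data), or explicitly invoke a version of the elliptic $W^{2,p}$ theory adapted to periodic slab domains, which the paper does not state.
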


\begin{proof}
The proof follows the same steps as in \Cref{cor:Q_eps}.
\end{proof}

\begin{proposition}\label{prop:properties_of_v_eps}
The function $v_{\e}$ satisfies the following inequality: 
\begin{align}
\|v_{\e}\|_{H^1(\Omega_{\e})}\leq \text{min}\{c^{-1/2},c^{-1}\}\cdot \|u_{\e}\|_{L^2(\Omega_{\e})},
\end{align}
where $c$ is the positive constant from the bulk energy defined in \eqref{eq:free_energy_functional}.
\end{proposition}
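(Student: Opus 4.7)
The strategy is the standard energy estimate for an elliptic Robin problem: test the equation against the solution itself. Concretely, I would take the pointwise Frobenius inner product of the PDE $-\Delta v_\e + c v_\e = u_\e$ with $v_\e$ and integrate over $\Omega_\e$, yielding
\begin{equation*}
\int_{\Omega_\e} (-\Delta v_\e) \cdot v_\e \,\text{d}(x,y) + c \int_{\Omega_\e} |v_\e|^2 \,\text{d}(x,y) = \int_{\Omega_\e} u_\e \cdot v_\e \,\text{d}(x,y).
\end{equation*}
Integration by parts on the Laplacian (which is justified by $v_\e \in W^{2,p}(\Omega_\e)$ from the regularity result proved above) produces the interior Dirichlet energy $\int_{\Omega_\e} |\nabla v_\e|^2$ together with the boundary flux $-\int_{\partial\Omega_\e}(\partial_\nu v_\e)\cdot v_\e\,\text{d}\sigma$. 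Substituting the Robin conditions $\partial_\nu v_\e = -\tfrac{w_0}{2} v_\e$, which hold on both $\Gamma_\e$ and $\Gamma_R$ by \eqref{defn:PDE_v}, the boundary term becomes $\tfrac{w_0}{2}\int_{\partial\Omega_\e}|v_\e|^2\,\text{d}\sigma$, which is nonnegative since $w_0>0$.

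Discarding this nonnegative boundary contribution and applying Cauchy--Schwarz on the right-hand side gives the key inequality
\begin{equation*}
\int_{\Omega_\e} |\nabla v_\e|^2 \,\text{d}(x,y) + c \int_{\Omega_\e} |v_\e|^2 \,\text{d}(x,y) \leq \|u_\e\|_{L^2(\Omega_\e)} \|v_\e\|_{L^2(\Omega_\e)}.
\end{equation*}
From this I would extract two bounds in sequence. First, dropping the gradient term leaves $c\|v_\e\|_{L^2(\Omega_\e)}^2 \leq \|u_\e\|_{L^2(\Omega_\e)}\|v_\e\|_{L^2(\Omega_\e)}$, hence $\|v_\e\|_{L^2(\Omega_\e)} \leq c^{-1}\|u_\e\|_{L^2(\Omega_\e)}$. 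Substituting this back into the same inequality controls the gradient: $\|\nabla v_\e\|_{L^2(\Omega_\e)}^2 \leq c^{-1}\|u_\e\|_{L^2(\Omega_\e)}^2$, i.e.\ $\|\nabla v_\e\|_{L^2(\Omega_\e)} \leq c^{-1/2}\|u_\e\|_{L^2(\Omega_\e)}$.

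Combining the $L^2$ bounds on $v_\e$ and $\nabla v_\e$ then delivers the claimed $H^1$ estimate. There is no genuine obstacle in the proof; the only structural point worth emphasising is that the sign convention $w_0>0$ is precisely what makes the Robin boundary contributions add constructively to the ellipticity of $-\Delta + c$, so no Poincar\'e-type inequality (whose constant could in principle depend on $\e$ through the geometry of $\Omega_\e$) is required. Because $c$ and $w_0$ are both $\e$-independent, the resulting constants are automatically uniform in $\e$, which is the property that will be invoked in the subsequent step of the error analysis.
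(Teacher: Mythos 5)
Your proof is correct and follows exactly the paper's argument: test the equation against $v_\e$ itself, discard the nonnegative Robin boundary terms (which are nonnegative precisely because $w_0>0$), apply Cauchy--Schwarz, extract $\|v_\e\|_{L^2(\Omega_\e)}\leq c^{-1}\|u_\e\|_{L^2(\Omega_\e)}$, then feed this back to get $\|\nabla v_\e\|_{L^2(\Omega_\e)}\leq c^{-1/2}\|u_\e\|_{L^2(\Omega_\e)}$. One small caveat that applies equally to the paper's proof: combining these two componentwise bounds yields $\|v_\e\|_{H^1(\Omega_\e)}\leq \sqrt{c^{-2}+c^{-1}}\,\|u_\e\|_{L^2(\Omega_\e)}$, which is controlled by a \emph{max} rather than the stated \emph{min} of $c^{-1/2}$ and $c^{-1}$; the ``min'' in the proposition appears to be a slip (it would require $\|v_\e\|_{H^1}\leq\|\nabla v_\e\|_{L^2}$ and $\|v_\e\|_{H^1}\leq\|v_\e\|_{L^2}$ simultaneously), and the harmless fix is to replace $\min$ by $\max$ in the statement, which is all that is used downstream.
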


\begin{proof}
Let $w\in W^{1,p}(\Omega_{\e})$. Since $-\Delta v_{\e}+cv_{\e}=u_{\e}$ in $\Omega_{\e}$, we have:
\begin{align*}
\int_{\Omega_{\e}}u_{\e}\cdot w\;\text{d}(x,y)&=\int_{\Omega_{\e}}\big(-\Delta v_{\e}+cv_{\e}\big)\cdot w\;\text{d}(x,y) \\
&=\int_{\Omega_{\e}}\big(\nabla v_{\e}\cdot \nabla w+cv_{\e}\cdot w\big)\;\text{d}(x,y)-\int_{\partial \Omega_{\e}}\dfrac{\partial v_{\e}}{\partial \nu_{\e}}\cdot w\;\text{d}\sigma_{\e}\\
&=\int_{\Omega_{\e}}\big(cv_{\e}\cdot w+\nabla v_{\e}\cdot\nabla w\big)\;\text{d}(x,y)+\dfrac{w_0}{2}\int_{\Gamma_{\e}}v_{\e}\cdot w\;\text{d}\sigma_{\e} +\dfrac{w_0}{2}\int_{\Gamma_R}v_{\e}\cdot w\;\text{d}\sigma_R
\end{align*}

Taking $w=u_{\e}$, we obtain:
\begin{align}\label{eq:a_eps_u_eps_v_eps}
a_{\e}(u_{\e},v_{\e})&=\int_{\Omega_{\e}}\big(cu_{\e}\cdot v_{\e}+\nabla u_{\e}\cdot\nabla v_{\e}\big)\;\text{d}(x,y) +\dfrac{w_0}{2}\int_{\Gamma_{\e}}u_{\e}\cdot v_{\e}\;\text{d}\sigma_{\e} +\dfrac{w_0}{2}\int_{\Gamma_R}u_{\e}\cdot v_{\e}\;\text{d}\sigma_R \notag\\
&=\|u_{\e}\|^2_{L^2(\Omega_{\e})}.
\end{align}

Taking $w=v_{\e}$, we obtain:
\begin{align*}
c\|v_{\e}\|^2_{L^2(\Omega_{\e})}+\|\nabla v_{\e}\|^2_{L^2(\Omega_{\e})}+\dfrac{w_0}{2}\|v_{\e}\|^2_{L^2(\Gamma_{\e})}+\dfrac{w_0}{2}\|v_{\e}\|^2_{L^2(\Gamma_R)}&=\int_{\Omega_{\e}}u_{\e}\cdot v_{\e}\;\text{d}x.
\end{align*}

Now we can see that
\begin{align*}
c\|v_{\e}\|^2_{L^2(\Omega_{\e})}\leq c\|v_{\e}\|^2_{L^2(\Omega_{\e})}+\|\nabla v_{\e}\|^2_{L^2(\Omega_{\e})}+\dfrac{w_0}{2}\|v_{\e}\|^2_{L^2(\partial\Omega_{\e})} \leq \|u_{\e}\|_{L^2(\Omega_{\e})}\|v_{\e}\|_{L^2(\Omega_{\e})}
\end{align*}
which implies that $\|v_{\e}\|_{L^2(\Omega_{\e})}\leq c^{-1}\cdot \|u_{\e}\|_{L^2(\Omega_{\e})}$. In the same way,
\begin{align*}
\|\nabla v_{\e}\|^2_{L^2(\Omega_{\e})}\leq c\|v_{\e}\|^2_{L^2(\Omega_{\e})}+\|\nabla v_{\e}\|^2_{L^2(\Omega_{\e})}+\dfrac{w_0}{2}\|v_{\e}\|^2_{L^2(\partial\Omega_{\e})} \leq \|u_{\e}\|_{L^2(\Omega_{\e})}\|v_{\e}\|_{L^2(\Omega_{\e})}\leq c^{-1}\cdot \|u_{\e}\|^2_{L^2(\Omega_{\e})}
\end{align*}
using the last inequality proved. This implies that $\|\nabla v_{\e}\|_{L^2(\Omega_{\e})}\leq c^{-1/2}\cdot \|u_{\e}\|_{L^2(\Omega_{\e})}$. In the end, we obtain that:
\begin{align*}
\|v_{\e}\|_{H^1(\Omega_{\e})}\leq \text{min}\{c^{-1/2},c^{-1}\}\cdot \|u_{\e}\|_{L^2(\Omega_{\e})}.
\end{align*}
\end{proof}

\begin{definition}\label{defn:v_eps_on_annulus}
Let $\overline{v}_{\e}:=v_{\e}\circ \Phi_\e\big|_{\Omega_0}\circ \Phi^{-1}$, where $\Phi_{\e}$ is introduced in \Cref{defn:new_Phi_eps} and $\Phi$ in \Cref{defn:THE_transf}. Since $\Phi_{\e}(\Omega_0)=\Omega_\e$, then $\Omega_{\e}=\big(\Phi_{\e}\big|_{\Omega_0}\circ \Phi_{^-1}\big)(\mathcal{U}_0)$.
\end{definition}

\begin{corollary}
We have that $\overline{v}_{\e}\in W^{2,p}(\mathcal{U}_0)$, for any $p>2$.
\end{corollary}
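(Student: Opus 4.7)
The plan is to apply the chain rule for Sobolev functions twice: first to pull $v_\e$ back from $\Omega_\e$ to $\Omega_0$ via $\Phi_\e|_{\Omega_0}$, and then to the annular domain $\mathcal{U}_0$ via $\Phi^{-1}$. The required inputs are $W^{2,p}$ regularity of $v_\e$ together with sufficient regularity and non-degeneracy of both coordinate changes; neither presents a genuine obstacle.

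First I would establish that $v_\e\in W^{2,p}(\Omega_\e)$ for any $p\in(1,+\infty)$. This follows by applying \Cref{thm:reg} to the PDE \eqref{defn:PDE_v} after pulling it onto $\mathcal{U}_\e$ via $\Phi$, exactly as in \Cref{cor:Q_eps}; the only differences are that the right-hand side is $u_\e\in L^p(\Omega_\e)$ (which is fine because $u_\e = Q_0 - Q_\e \in W^{2,p}$ by \Cref{cor:Q_0,cor:Q_eps}) and the Robin boundary data vanish, both permitted by the hypotheses of \Cref{thm:reg}. Next, I would verify that $\Phi_\e|_{\Omega_0}:\Omega_0\to\Omega_\e$ is a $C^2$ diffeomorphism with bounded second derivatives and Jacobian bounded away from zero: from the explicit formula in \Cref{defn:new_Phi_eps}, $\det D\Phi_\e=(R-\e\varphi(x/\e))/R\in(1/2,1]$ by \Cref{pp:2}, and all partial derivatives up to second order are controlled via the $C^2$ regularity of $\varphi$ (\Cref{pp:1}). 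The classical chain rule for $W^{2,p}$ functions under $C^2$ diffeomorphisms with bounded derivatives and non-degenerate Jacobian (see e.g.\ \cite{ziemer1989sobolev}) then gives $v_\e\circ\Phi_\e|_{\Omega_0}\in W^{2,p}(\Omega_0)$.

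For the second composition, $\Phi^{-1}:\mathcal{U}_0\to\Omega_0$ is $C^\infty$ with bounded derivatives of all orders, because $\mathcal{U}_0=A_{2R,3R}$ is bounded away from the origin and $\Phi$ is merely a translation followed by polar coordinates. The $2\pi$-periodicity of $\varphi$ in $x$ (\Cref{pp:1}) and the $x$-periodicity built into the variational formulation ensure that $v_\e\circ\Phi_\e|_{\Omega_0}$ is well-defined as a $W^{2,p}$ function on the cylinder $\mathbb{R}/2\pi\mathbb{Z}\times(0,R)$, so no seam issue arises at $x\equiv 0\equiv 2\pi$ when composing with $\Phi^{-1}$. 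A second application of the chain rule then yields $\overline{v}_\e \in W^{2,p}(\mathcal{U}_0)$. The only non-cosmetic ingredient is the $W^{2,p}$ bound on $v_\e$, which is inherited from exactly the argument already used for $Q_\e$; the rest is bookkeeping of smoothness and uniform non-degeneracy of the coordinate changes.
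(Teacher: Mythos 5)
Your proof is correct and follows essentially the same route as the paper's (one-line) argument: $v_\e\in W^{2,p}(\Omega_\e)$ from the preceding corollary, then push through $\Phi_\e|_{\Omega_0}$ and $\Phi^{-1}$ using the $C^2$/smooth bi-Lipschitz regularity of the two coordinate changes. Your added remarks — the explicit Jacobian bound for $\Phi_\e$ from \Cref{pp:2}, and the observation that periodicity in $x$ prevents any seam issue when passing to the annulus — fill in details the paper leaves implicit but do not change the strategy.
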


\begin{proof}
Since $v_{\e}\in W^{2,p}(\Omega_{\e})$, $\Phi_{\e}$ and $\Phi$ are bi-Lipschitz with $\Phi\in C^2(\Omega_0)$ and $\Phi^{-1}$ smooth, we obtain the conclusion.
\end{proof}

\begin{remark}
Using \Cref{defn:v_eps_on_annulus}, we can see that $\overline{v}_{\e}$ solves a PDE of the form:
\begin{align}\label{defn:PDE_tilde_v_eps}
\begin{cases}
\mathcal{L}\overline{v}_{\e}+c\overline{v}_{\e}=\overline{u}_{\e},\;\text{in}\;\mathcal{U}_0\\
\\
\nabla\overline{v}_{\e}\cdot \overline{l_1}+\dfrac{w_0}{2}\overline{v}_{\e}=0,\;\text{on}\;\Phi(\Gamma_0)\\
\\
\nabla\overline{v}_{\e}\cdot \overline{l_2}+\dfrac{w_0}{2}\overline{v}_{\e}=0,\;\text{on}\;\Phi(\Gamma_R)
\end{cases}
\end{align}
where $\mathcal{L}$ is an uniformly elliptic operator, $\overline{l_1}\in C^1(\Phi(\Gamma_0))$, $\overline{l_2}\in C^1(\Phi(\Gamma_R))$ and $\overline{u_{\e}}:=u_{\e}\circ \Phi_\e\big|_{\Omega_0}\circ \Phi^{-1}\in W^{2,p}(\mathcal{U}_0)$.
\end{remark}

\begin{proposition}\label{prop:regularity}
There exists an $\e$-independent constant $C_{reg}(\mathcal{U}_0)$ such that $\overline{v}_{\e}$ satisfies the following inequality:
\begin{align*}
\|\overline{v}_{\e}\|_{H^2(\mathcal{U}_0)}\leq C_{reg}(\mathcal{U}_0)\cdot\big(\|\mathcal{L}\overline{v}_{\e}\|_{L^2(\mathcal{U}_0)}+\|\overline{v}_{\e}\|_{H^{1/2}(\partial\mathcal{U}_0)}+\|\overline{v}_{\e}\|_{H^1(\mathcal{U}_0)}\big).
\end{align*}
\end{proposition}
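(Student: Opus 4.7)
The plan is to identify the estimate as a special case of a standard $H^2$ elliptic regularity bound for a Robin problem on the fixed smooth annulus $\mathcal{U}_0$, and then verify that the constant in that bound can be chosen uniformly in $\varepsilon$.

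For the first step, I would invoke a classical second-order regularity theorem for uniformly elliptic equations with oblique-derivative boundary conditions on a $C^{1,1}$ domain---for instance Grisvard's Theorem 2.4.2.6 at $p=2$ (already used earlier in this paper), or equivalently the Agmon--Douglis--Nirenberg estimates. Such a result asserts that if $\bar{v}\in H^2(\mathcal{U}_0)$ solves $\mathcal{L}\bar{v}=f$ in $\mathcal{U}_0$ with $\nabla\bar{v}\cdot \bar{l}_i=g$ on $\partial\mathcal{U}_0$, then
\begin{align*}
\|\bar{v}\|_{H^2(\mathcal{U}_0)}\leq C\bigl(\|f\|_{L^2(\mathcal{U}_0)}+\|g\|_{H^{1/2}(\partial\mathcal{U}_0)}+\|\bar{v}\|_{H^1(\mathcal{U}_0)}\bigr).
\end{align*}
Taking $f=\mathcal{L}\bar{v}_\varepsilon$ and identifying the Robin datum from \eqref{defn:PDE_tilde_v_eps} as $g=-\tfrac{w_0}{2}\bar{v}_\varepsilon$, whose trace norm satisfies $\|g\|_{H^{1/2}(\partial\mathcal{U}_0)}\leq\tfrac{|w_0|}{2}\|\bar{v}_\varepsilon\|_{H^{1/2}(\partial\mathcal{U}_0)}$, delivers the stated inequality once the factor $\tfrac{|w_0|}{2}$ is absorbed into $C_{reg}(\mathcal{U}_0)$.

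For the $\varepsilon$-uniformity, I would verify that all constants entering the regularity theorem can be chosen independently of $\varepsilon$. The domain $\mathcal{U}_0 = A_{2R,3R}$ is a fixed smooth annulus, so only $\varepsilon$-independent geometric constants appear (such as the trace constant of $H^1(\mathcal{U}_0)\hookrightarrow H^{1/2}(\partial\mathcal{U}_0)$). The map $\Psi_\varepsilon := \Phi_\varepsilon\circ\Phi^{-1}:\mathcal{U}_0\to\Omega_\varepsilon$ is uniformly bi-Lipschitz by \Cref{prop:properties_of_Phi_e} and the smoothness of $\Phi$, so the principal symbol $a^\varepsilon_{kl}=\sum_i (D\Psi_\varepsilon^{-1})_{ki}(D\Psi_\varepsilon^{-1})_{li}\circ\Psi_\varepsilon$ admits $\varepsilon$-uniform $L^\infty$ bounds and $\varepsilon$-uniform ellipticity. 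The boundary fields $\bar{l}_1,\bar{l}_2$, being the pullbacks of the outer unit normals $\nu_\varepsilon,\nu_R$, are likewise uniformly transversal to $\partial\mathcal{U}_0$ with $\varepsilon$-uniform $L^\infty$ bounds.

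The main obstacle lies with the first-order coefficients of $\mathcal{L}$: since $\varphi_\varepsilon''(x)=\varepsilon^{-1}\varphi''(x/\varepsilon)$, the second derivatives of $\Phi_\varepsilon$ blow up as $\varepsilon\to 0$, so a naive pullback of $-\Delta$ produces first-order terms in $\mathcal{L}$ (and tangential derivatives of $\bar{l}_i$) that are not $\varepsilon$-uniformly bounded in Lipschitz norm. I would circumvent this by writing $-\Delta$ in divergence form before pulling back, so that
\begin{align*}
\mathcal{L}\bar{v}=-|\det D\Psi_\varepsilon|^{-1}\operatorname{div}\!\bigl(|\det D\Psi_\varepsilon|\,A^\varepsilon\nabla\bar{v}\bigr),
\end{align*}
where $A^\varepsilon$ and the weight $|\det D\Psi_\varepsilon|$ depend only on the first derivatives of $\Psi_\varepsilon$, i.e.\ on $\varphi$ and $\varphi'$ evaluated at $x/\varepsilon$, which are $\varepsilon$-uniformly bounded; the Robin condition is similarly recast in conormal form. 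The divergence-form $H^2$ regularity on the $C^{1,1}$ annulus $\mathcal{U}_0$ then requires only $\varepsilon$-uniform $L^\infty$ control of $A^\varepsilon$ and uniform ellipticity and transversality (all already established), yielding the $\varepsilon$-independent constant $C_{reg}(\mathcal{U}_0)$ and completing the proof.
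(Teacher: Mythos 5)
You correctly identify the result as an application of $H^2$ elliptic regularity for the oblique-derivative/Robin problem on the fixed smooth annulus $\mathcal{U}_0$, which is precisely the paper's route (its proof is a one-line citation of Grisvard's Theorem 2.3.3.2). You are also right to worry about the $\varepsilon$-uniformity of the constant: pulling $-\Delta$ back from $\Omega_\varepsilon$ through $\Psi_\varepsilon=\Phi_\varepsilon|_{\Omega_0}\circ\Phi^{-1}$ produces first-order coefficients of $\mathcal{L}$ involving $D^2\Psi_\varepsilon$, and $\partial_x^2\Phi_\varepsilon\sim\varepsilon^{-1}\varphi''(\cdot/\varepsilon)$, so the usual hypotheses of such regularity theorems (uniformly Lipschitz $a_{ij}$, bounded $a_i$, $C^1$ boundary fields) are not visibly satisfied with $\varepsilon$-independent constants. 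The paper is silent on this point; your instinct to address it is sound.

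However, the divergence-form rewriting you propose does not close that gap. It is not true that $H^2$ regularity for $-\operatorname{div}(a\nabla u)=f$ on a $C^{1,1}$ domain follows from uniform ellipticity plus $L^\infty$ control of $a$ alone: with merely bounded measurable coefficients one obtains De Giorgi--Nash--Moser ($C^\alpha$) and Meyers ($W^{1,2+\delta}$), not $H^2$, and simple discontinuous-coefficient examples show $H^2$ genuinely fails. Every usable $H^2$ estimate requires control on the \emph{oscillation} of the leading coefficients -- Grisvard's Theorem 2.4.2.6, which you invoke, explicitly assumes $a_{ij}$ uniformly Lipschitz; Calder\'{o}n--Zygmund/VMO versions need a uniform modulus of continuity; the 2D Cordes--Talenti theory is formulated in non-divergence form, at which point the blown-up first-order coefficients reappear and must be absorbed. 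That oscillation control is precisely what degenerates here: $A^\varepsilon$ is a smooth function of $\varphi'(x/\varepsilon)$, so $\partial_x A^\varepsilon\sim\varepsilon^{-1}\varphi''(x/\varepsilon)$, and the oscillation of $A^\varepsilon$ on scales $\lesssim\varepsilon$ does not shrink as $\varepsilon\to 0$. Passing to divergence form merely relocates the offending $D^2\Psi_\varepsilon$ from the zeroth-order coefficient bounds to the first-order (Lipschitz/modulus-of-continuity) bounds the $H^2$ machinery actually needs. The $\varepsilon$-independence of $C_{reg}$, which is the entire content of the proposition, therefore remains unproven by your argument and would require a genuinely different justification.
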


\begin{proof}
We apply \cite[Theorem 2.3.3.2]{grisvard1985non}, since all the required conditions are satisfied.
\end{proof}

We can now prove the main result of this section.

\begin{proof}[Proof of \Cref{thm:main}]

We apply first \Cref{prop:ineq_Omega_eps_rhs} with $v_{\e}\in W^{2,p}(\Omega_{\e})$:
\begin{align*}
\|u_{\e}\|^2_{L^2(\Omega_{\e})}=a_{\e}(u_{\e},v_{\e})\leq C_0\cdot\e^{\frac{p-1}{p}}\cdot\|v_{\e}\|_{W^{1,p}(\Omega_{\e})},
\end{align*}
where we also use \eqref{eq:a_eps_u_eps_v_eps}. We apply now \Cref{prop:properties_of_Phi_e}:
\begin{align*}
\|u_{\e}\|^2_{L^2(\Omega_{\e})}&\leq C_0\cdot\e^{\frac{p-1}{p}}\cdot\|v_{\e}\|_{W^{1,p}(\Omega_{\e})}\leq \big(C_0\cdot C_\Phi\big)\cdot\e^{\frac{p-1}{p}}\cdot\big\|v_{\e}\circ \Phi_\e\big|_{\Omega_0}\big\|_{W^{1,p}(\Omega_0)}.
\end{align*}

We now use the compact embedding $W^{2,2}(\Omega_0)\hookrightarrow\hookrightarrow W^{1,p}(\Omega_0)$ to obtain:
\begin{align*}
\|u_{\e}\|^2_{L^2(\Omega_{\e})}&\leq \big(C_0\cdot C_\Phi\cdot C_{emb}(\Omega_0)\big)\cdot\e^{\frac{p-1}{p}}\cdot\big\|v_{\e}\circ \Phi_\e\big|_{\Omega_0}\big\|_{W^{2,2}(\Omega_0)},
\end{align*}
where $C_{emb}(\Omega_0)$ is the constant given by the compact embbeding used.

Recalling \Cref{defn:THE_transf}, it is easy to see that there exists $C_{polar}>0$, which is $\e$-independent, such that:
\begin{align}\label{eq:ineq_C_polar}
C_{polar}^{-1}\|\omega\|_{W^{2,2}(\Omega_0)}\leq \big\|\omega\circ \Phi^{-1}\big\|_{W^{2,2}(\mathcal{U}_0)}\leq C_{polar}\|\omega\|_{W^{2,2}(\Omega_0)},\;\forall\;\omega\in W^{2,2}(\Omega_0).
\end{align}

In this way, we have:
\begin{align*}
\|u_{\e}\|^2_{L^2(\Omega_{\e})}&\leq C'_0\cdot\e^{\frac{p-1}{p}}\cdot\|\overline{v}_{\e}\|_{W^{2,2}(\mathcal{U}_0)},
\end{align*}
where $C'_0=C_0\cdot C_\Phi\cdot C_{emb}(\Omega_0)\cdot C_{polar}$.

We apply now \Cref{prop:regularity}:
\begin{align*}
\|u_{\e}\|^2_{L^2(\Omega_{\e})}&\leq \big(C'_0\cdot C_{reg}(\mathcal{U}_0)\big)\cdot\e^{\frac{p-1}{p}}\cdot\big(\|\mathcal{L}\overline{v}_{\e}\|_{L^2(\mathcal{U}_0)}+\|\overline{v}_{\e}\|_{H^{1/2}(\partial\mathcal{U}_0)}+\|\overline{v}_{\e}\|_{H^1(\mathcal{U}_0)}\big).
\end{align*}

Using \eqref{defn:PDE_tilde_v_eps} and the trace inequality for the trace operator $\text{Tr}:H^1(\mathcal{U}_0)\to H^{1/2}(\partial\mathcal{U}_0)$, we get:
\begin{align*}
\|u_{\e}\|^2_{L^2(\Omega_{\e})}&\leq \big(C'_0\cdot C_{reg}(\mathcal{U}_0)\big)\cdot\e^{\frac{p-1}{p}}\cdot\big(\big\|\overline{u}_{\e}-c\overline{v}_{\e}\|_{L^2(\mathcal{U}_0)}+(1+C_{tr}(\mathcal{U}_0))\|\overline{v}_{\e}\|_{H^1(\mathcal{U}_0)}\big)\\
&\leq \big(C'_0\cdot C_{reg}(\mathcal{U}_0)\big)\cdot\e^{\frac{p-1}{p}}\cdot\big(\|\overline{u}_{\e}\|_{L^2(\mathcal{U}_0)}+c\cdot\|\overline{v}_{\e}\|_{L^2(\mathcal{U}_0)}+(1+C_{tr}(\mathcal{U}_0))\cdot \|\overline{v}_{\e}\|_{H^1(\mathcal{U}_0)}\big)
\end{align*}
and, using \eqref{eq:ineq_C_polar}, we obtain:
\begin{align*}
\|u_{\e}\|^2_{L^2(\Omega_{\e})}&\leq \big(C'_0\cdot C_{reg}(\mathcal{U}_0)\cdot C_{polar}^{-1}\big)\cdot\e^{\frac{p-1}{p}}\cdot\\
&\cdot \big(\big\|u_{\e}\circ \Phi_\e\big|_{\Omega_0}\big\|_{L^2(\Omega_0)}+c\cdot\big\|v_{\e}\circ \Phi_\e\big|_{\Omega_0}\big\|_{L^2(\Omega_0)}+(1+C_{tr}(\mathcal{U}_0))\cdot \big\|v_{\e}\circ \Phi_\e\big|_{\Omega_0}\big\|_{H^1(\Omega_0)}\big).
\end{align*}

Using \Cref{prop:properties_of_Phi_e}, we get:
\begin{align*}
\|u_{\e}\|^2_{L^2(\Omega_{\e})}&\leq \big(C'_0\cdot C_{reg}(\mathcal{U}_0)\cdot C_{polar}^{-1}\cdot C_\Phi^{-1}\big)\cdot\e^{\frac{p-1}{p}}\cdot\\
&\cdot \big(\|u_{\e}\|_{L^2(\Omega_{\e})}+c\cdot\|v_{\e}\|_{L^2(\Omega_{\e})}+(1+C_{tr}(\mathcal{U}_0))\cdot \|v_{\e}\|_{H^1(\Omega_{\e})}\big).
\end{align*}
and then, using \Cref{prop:properties_of_v_eps}, we obtain:
\begin{align*}
\|u_{\e}\|_{L^2(\Omega_{\e})}&\leq \big(C_0\cdot C_{emb}(\Omega_0)\cdot C_{reg}(\mathcal{U}_0)\big)\cdot\bigg(2+(1+C_{tr}(\mathcal{U}_0))\cdot \text{min}\{c^{-1/2},c^{-1}\}\bigg)\cdot\e^{\frac{p-1}{p}},
\end{align*}
for any $p>2$.
\end{proof}

\section{Concluding remarks}

Within this work we have considered the effect of rugosity in liquid crystalline systems in the limit where the wavelength and amplitude of the surface oscillations go to zero at a comparable rate. 

 We have considered two different aspects of the problem:
 
 \begin{itemize}
 \item firstly a highly general formulation of the problem that is able to simultaneously treat many different common models of liquid crystals, including Oseen-Frank, and Landau-de Gennes with either polynomial or singular bulk potential. The fact that we are able to consider a broad set of models in one framework is due to the fact that their specific peculiarities are more apparent in the ``bulk" of the domain, and have little effect on the surface energy.
 
 \item Secondly we have considered a more simple linear model in a periodic two-dimensional geometry, which we are able to interpret as a toy model for paranematic systems. Within this simpler situation, we are able to make far more precise estimates on the convergence towards the homogenised limit, and near-explicit calculation of the relevant coefficients in the homogenised surface energy.
 \end{itemize}
 
  These two threads both show the same key feature, that the influence of the geometry of the oscillatory surfaces we consider may replaced by effective surface energies on a simpler limiting domain, which may be represented using particular types of weak limits; that of families of Young measures in the general case, and weak-$L^2$ limits in the second.
  
   The gap between the two cases leaves an obvious question, as to whether the more precise estimates for the linear two-dimensional system may be extended to more realistic non-linear models, both in higher dimensions and more complex geometries than that of a periodic cell.
   
    Furthermore, the inexplicit nature of the effective surface energies, described only as weak limits defined on potentially complex geometries, raises the question as to whether a more detailed analysis of such surface oscillations could give rise to more constructive methods for evaluating the effective surface energies, or more ambitiously, towards an inverse problem of designing them. 

\section{Acknowledgments}

This research is supported by the Basque Government through the BERC 2018-2021 program and by the Spanish State Research Agency through BCAM Severo Ochoa excellence accreditation SEV-2017-0718 and SEV-2013-0323-17-1 (BES-2017-080630) and through project PID2020-114189RB-I00 funded by Agencia Estatal de Investigaci\'on (PID2020-114189RB-I00 / AEI / 10.13039/501100011033)

\appendix

\section{Arithmetic on surface terms}
\label{appSurface}
To analyse the surface terms, we will need a quantity that is morally the wedge product of $N-1$ vectors in $\mathbb{R}^N$. We define precisely what we mean by this as follows.
 
\begin{definition}
Let $(v^i)_{i=1}^{N-1}$ be a collection of $N-1$ vectors in $\mathbb{R}^N$. We define, in a canonical orthonormal basis $(e_i)_{i=1}^N$ with $v^i_j=v^i\cdot e_j$, the $N\times N-1$ matrix $M$ to be the matrix with row $i$ equal to $(v_j^i)_{j=1}^N$. That is, 
\begin{equation}
M= \begin{pmatrix}
v^1_1 & v^1_2 & \hdots & v^1_N\\
v^2_1 & v^2_2 & \hdots & v^2_N\\
\vdots & \vdots & \ddots & \vdots\\
v^{N-1}_1 & v^{N-1}_2 & \hdots & v^{N-1}_N.
\end{pmatrix}
\end{equation}
Then for $1<i<N$, we define $M_i$ to be the $N-1\times N-1$ matrix which corresponds to $M$ with the $i$-th column removed. We define the product 
\begin{equation}
\bigwedge\limits_{i=1}^{N-1}v^i = \sum\limits_{i=1}^N \det(M_i)(-1)^i e_i. 
\end{equation}
\end{definition}

\begin{remark}
We present this definition in a self-contained manner, however it is simply a specific case of the exterior product \cite[Chapter 9]{lee2013smooth}. Using the language of differential geometry, we identifying vectors $v_i\in\mathbb{R}^N$ with {\it 1-vectors}, denoted $v_i\in \Lambda^1(\mathbb{R}^N)$. The exterior product of $(N-1)$ 1-vectors gives an $(N-1)$-vector, $\bigwedge\limits_{i=1}^{N-1}v_i =v_1\wedge v_2\wedge....\wedge v_{N-1}\in \Lambda^{N-1}(\mathbb{R}^N)$. The space $\Lambda^{N-1}(\mathbb{R}^N)$ may be canonically identified with the space of 1-vectors $\Lambda^1(\mathbb{R}^N)$ via the Hodge star operator, so that $\mathbb{R}^N\sim\Lambda^1(\mathbb{R}^N)\sim\Lambda^{N-1}(\mathbb{R}^N)$. In this sense, we may interpret $\bigwedge\limits_{i=1}^{N-1}v_i$ for $v_i\in\mathbb{R}^N$ as a vector in $\mathbb{R}^N$ in a canonical way.
\end{remark}

\begin{remark}
We abuse notation for brevity, and interpret the ``determinant" of the $N\times N$ matrix with first column corresponding to the canonical basis of $\mathbb{R}^N$ as 
\begin{equation}\label{eqFunkyDet}
\begin{split}
&\det\begin{pmatrix}
e_1 & e_2 & \hdots & e_N\\
v^1_1 & v^1_2 & \hdots & v^1_N\\
v^2_1 & v^2_2 & \hdots & v^2_N\\
\vdots & \vdots & \ddots & \vdots\\
v^{N-1}_1 & v^{N-1}_2 & \hdots & v^{N-1}_N
\end{pmatrix}\\
=&\det\begin{pmatrix}
v^1_2 & v^1_3 & \hdots & v^1_N\\
v^2_2 & v^2_3 & \hdots & v^2_N\\
\vdots & \vdots & \ddots & \vdots\\
v^{N-1}_2 & v^{N-1}_3 & \hdots & v^{N-1}_N
\end{pmatrix}e_1 - \det\begin{pmatrix}
v^1_1 & v^1_3 & \hdots & v^1_N\\
v^2_1 & v^2_3 & \hdots & v^2_N\\
\vdots & \vdots & \ddots & \vdots\\
v^{N-1}_1 & v^{N-1}_3 & \hdots & v^{N-1}_N 
\end{pmatrix}e_2+\hdots\\
=&  \sum\limits_{i=1}^N \det(M_i)(-1)^i e_i=\bigwedge\limits_{i=1}^{N-1}v^i .
\end{split}
\end{equation}
\end{remark}

\begin{remark}
It is immediate from the expression \eqref{eqFunkyDet} that if $1\leq j \leq N-1$,
\begin{equation}
v^j\cdot \bigwedge\limits_{i=1}^{N-1}v^i = \det\begin{pmatrix}
v^j_1 & v^j_2 & \hdots & v^j_N\\
v^1_1 & v^1_2 & \hdots & v^1_N\\
v^2_1 & v^2_2 & \hdots & v^2_N\\
\vdots & \vdots & \ddots & \vdots\\
v^{N-1}_1 & v^{N-1}_2 & \hdots & v^{N-1}_N
\end{pmatrix}=0,
\end{equation}
thus we are able to write the outer unit normal to $\Gamma_\e$, with respect to a correctly oriented orthonormal basis $(\tau_i)_{i=1}^{N-1}$ of $T_x\Gamma$ as 
\begin{equation}
\begin{split}
\Lambda_\e (x)= & \bigwedge\limits_{i=1}^{N-1}\langle D(x+\varphi_\e(x)\nu(x)),\tau_i\rangle,\\
\tilde{\nu}_\e = & \frac{1}{|\Lambda_\e|}\Lambda_\e.
\end{split}
\end{equation}
Furthermore, the surface element $\gamma_\e$ can be written in terms of $\Lambda_\e$ as 
\begin{equation}
\gamma_\e=|\Lambda_\e|.
\end{equation}
These representations involving the operator $\bigwedge$ thus permit us to simplify the surface energy terms sufficiently to produce more explicit results. 

We also note the further identity, that for any scalars $(c_i)_{i=1}^{N-1}$, vectors $v, (v^i)_{i=1}^{N-1}$, we have that 
\begin{equation}\label{eqRank1Stuff}
\begin{split}
v\cdot\bigwedge\limits_{i=1}^{N-1} (v^i +c_i v)=&\det\begin{pmatrix}
v_1 & v_2 & \hdots & v_N\\
v^1_1+c_1v_1 & v^1_2 +c_1v_2& \hdots & v^1_N+c_1v_N\\
v^2_1 +c_2v_1& v^2_2 +c_2v_2& \hdots & v^2_N+c_2v_N\\
\vdots & \vdots & \ddots & \vdots\\
v^{N-1}_1 +c_{N-1}v_1& v^{N-1}_2 +c_{N-1}v_2& \hdots & v^{N-1}_N+c_{N-1}v_N
\end{pmatrix}\\
=&\det\begin{pmatrix}
v_1 & v_2 & \hdots & v_N\\
v^1_1 & v^1_2 & \hdots & v^1_N\\
v^2_1 & v^2_2 & \hdots & v^2_N\\
\vdots & \vdots & \ddots & \vdots\\
v^{N-1}_1 & v^{N-1}_2 & \hdots & v^{N-1}_N
\end{pmatrix}=v\cdot\bigwedge\limits_{i=1}^{N-1}v^i.
\end{split}
\end{equation}
\end{remark}

\begin{proposition}\label{eqPropLeadingOrderSurface}
Let $\varphi_\e$ and domains $\Omega_\e,\Omega$ satisfy Assumptions \ref{assumpOmega}, \ref{assumpOmegaEps} and  $\hat{D}\varphi_\e$ as given in \Cref{assumpSurface2}. Then, for $\e$ sufficiently small, we may estimate $\gamma_\e(x)$, $\nu_\e(x)$ in terms of $\hat{D}\varphi_\e$ up to a quantity of order $\e$ as
\begin{equation}
\begin{split}
\gamma_\e=&|\hat{D}\varphi_\e|+O(\e),\\
\tilde{\nu}_\e=&=\frac{1}{|\hat{D}\varphi_\e|}\hat{D}\varphi_\e+O(\e)
\end{split}
\end{equation}
where $h(x)=O(\e)$ is taken to mean that $|h(x)|\leq C\e$ for some $C$ which is independent of $x,\e$.
\end{proposition}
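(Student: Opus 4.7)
The plan is to compute $\gamma_\e$ and $\tilde\nu_\e$ directly from their wedge-product representations, then linearise in $\e$ using the bound $\|\varphi_\e\|_\infty = O(\e)$. Write $\Psi_\e(x) = x + \varphi_\e(x)\nu(x)$, so that $\Gamma_\e = \Psi_\e(\Gamma)$. For a (positively oriented) orthonormal basis $(\tau_i)_{i=1}^{N-1}$ of $T_x\Gamma$, a direct chain-rule computation gives
\begin{equation*}
\langle D\Psi_\e(x),\tau_i\rangle = \tau_i + \langle D\varphi_\e(x),\tau_i\rangle \nu(x) + \varphi_\e(x)\,\langle D\nu(x),\tau_i\rangle.
\end{equation*}
Since $\Gamma$ is a $C^2$ compact manifold, the shape operator $D\nu$ is bounded in $L^\infty$, so the last summand is pointwise $O(\e)$. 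Hence, writing $c_i := \langle D\varphi_\e(x),\tau_i\rangle$, we have $\langle D\Psi_\e(x),\tau_i\rangle = \tau_i + c_i \nu(x) + O(\e)$ uniformly in $x$, where the $c_i$ are uniformly bounded by $\|D\varphi_\e\|_\infty = O(1)$.

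Next I use multilinearity of the wedge product, combined with the uniform boundedness of $\tau_i + c_i\nu$, to expand
\begin{equation*}
\Lambda_\e(x) = \bigwedge_{i=1}^{N-1}\langle D\Psi_\e(x),\tau_i\rangle = \bigwedge_{i=1}^{N-1}\bigl(\tau_i + c_i\nu\bigr) + O(\e).
\end{equation*}
The key algebraic step is to identify this leading-order wedge. Working in the orthonormal basis $(\nu, \tau_1,\ldots,\tau_{N-1})$, the matrix whose rows are $\tau_i + c_i\nu$ has the block form $(c \mid I_{N-1})$. Computing the $(N-1)\times(N-1)$ minors $M_j$ by cofactor expansion yields $\det M_0 = 1$ and $\det M_j = \pm c_j$ for $j \ge 1$, with signs arranged so that
\begin{equation*}
\bigwedge_{i=1}^{N-1}(\tau_i + c_i\nu) = \nu(x) - \sum_{i=1}^{N-1} c_i\,\tau_i = \hat D\varphi_\e(x),
\end{equation*}
where the orientation of $(\tau_i)$ is chosen so that this produces the \emph{outward} normal. (Checking this sign convention, so that the outward unit normal on $\Gamma_\e$ really equals $\Lambda_\e/|\Lambda_\e|$, is where one has to be most careful; one can verify it by testing against the trivial case $\varphi_\e\equiv 0$, where $\Lambda_\e = \nu$.)

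Combining the above, $\Lambda_\e = \hat D\varphi_\e + O(\e)$ uniformly on $\Gamma$. Since $|\hat D\varphi_\e(x)| \ge 1$ (the $\nu$ component contributes $1$ to the squared norm in \eqref{eqPseudoDeriv}), the scalar norm is bounded away from zero uniformly in $\e$, so the functions $v\mapsto |v|$ and $v\mapsto v/|v|$ are uniformly Lipschitz on the range of $\hat D\varphi_\e$. Thus
\begin{equation*}
\gamma_\e = |\Lambda_\e| = |\hat D\varphi_\e| + O(\e), \qquad \tilde\nu_\e = \frac{\Lambda_\e}{|\Lambda_\e|} = \frac{\hat D\varphi_\e}{|\hat D\varphi_\e|} + O(\e),
\end{equation*}
as required. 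The main obstacle is purely bookkeeping: tracking signs and orientations in the wedge-product identification $\bigwedge(\tau_i + c_i\nu) = \hat D\varphi_\e$, and making sure the $O(\e)$ remainders absorb curvature terms uniformly on the compact $C^2$ manifold $\Gamma$.
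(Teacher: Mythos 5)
Your proof is correct and follows the same skeleton as the paper: chain rule on $\Psi_\e(x)=x+\varphi_\e(x)\nu(x)$, absorb the curvature term $\varphi_\e\langle D\nu,\tau_i\rangle$ into $O(\e)$ using compactness and the $C^2$ regularity of $\Gamma$, expand the wedge by multilinearity, identify the leading-order wedge with $\hat D\varphi_\e$, and then use the lower bound $|\hat D\varphi_\e|\ge 1$ to normalise. The one place you diverge is the identification $\bigwedge_{i=1}^{N-1}(\tau_i+c_i\nu)=\hat D\varphi_\e$: you compute the minors $M_j$ directly in the frame $(\nu,\tau_1,\dots,\tau_{N-1})$, where the matrix is $(c\mid I_{N-1})$, whereas the paper instead reads off the components of $\Lambda_\e$ by pairing against the orthonormal frame, using the identity $v\cdot\bigwedge_i(v^i+c_iv)=v\cdot\bigwedge_i v^i$ to get $\Lambda_\e\cdot\nu=1+O(\e)$ and the orthogonality $\Lambda_\e\perp(\tau_i+c_i\nu)$ to get $\Lambda_\e\cdot\tau_i=-c_i+O(\e)$. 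The two computations are equivalent; yours is more explicitly a determinant calculation (hence needs a sanity check on orientation, which you correctly do via the $\varphi_\e\equiv 0$ test case), while the paper's avoids explicit cofactor sign-tracking by relying on orthogonality and the rank-one invariance of the mixed determinant. Either is fine, and your justification of the final normalisation step (uniform Lipschitz continuity of $v\mapsto|v|$ and $v\mapsto v/|v|$ on the range, which is bounded away from $0$ and bounded above) is exactly what is needed.
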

\begin{proof}
First, we note that the map $\Gamma\ni x\mapsto x+\varphi_\e(x)\nu(x)\in\Gamma_\e$ is differentiable, and its derivative in the direction $\tau_i\in T_x\Gamma$ satisfies
\begin{equation}
\left\langle D\left(x+\varphi_\e(x)\nu(x)\right),\tau_i\right\rangle=\tau_i+\left\langle D\varphi_\e(x),\tau_i\right\rangle\nu(x)+\varphi_\e(x)\left\langle D\nu(x),\tau_i\right\rangle.
\end{equation}
Now, as $\Gamma$ is a $C^2$ compact manifold, this implies that $\nu$ is $C^1$ with bounded $C^1$ norm, and since $|\tau_i|=1$, we have that $|\varphi_\e(x)\langle D\nu(x),\tau_i\rangle|\leq ||\varphi_\e||_\infty ||D\nu||_\infty=O(\e)$. 
Now we recall that the quantity $\Lambda_\e$ defined by
\begin{equation}
\Lambda_\e(x)=\bigwedge\limits_{i=1}^{N-1}\left\langle D\left(x+\varphi_\e(x)\nu(x)\right),\tau_i\right\rangle
\end{equation}
provides expressions for $\tilde{\nu}_\e,\gamma_\e$, and turn our attention to $\Lambda_\e$. We may then write, as all quantities are uniformly bounded in $x,\e$, and the wedge product is locally Lipschitz continuous, that 
\begin{equation}
\Lambda_\e(x)=O(\e)+\bigwedge\limits_{i=1}^{N-1}\Big(\tau_i+\left\langle D\varphi_\e(x),\tau_i\right\rangle\nu(x)\Big).
\end{equation}
We note that the $O(\e)$ term contains all contributions related to the curvature of $\Gamma$, i.e., $D\nu(x)[\tau_i]$.
We may evaluate $\Lambda_\e$ componentwise, as $(\tau_i)_{i=1}^{N-1}$ with $\nu$ form an orthonormal basis for $\mathbb{R}^N$, and we see that 
\begin{equation}\begin{split}
\Lambda_\e(x)\cdot\nu=&\nu\cdot\bigwedge\limits_{i=1}^{N-1}\left(\tau_i+\left\langle D\varphi_\e(x),\tau_i\right\rangle \nu(x)\right)+O(\e)\\
=& \nu\cdot\bigwedge\limits_{i=1}^{N-1}\tau_i+O(\e)=1+O(\e),\end{split}
\end{equation}
via the identity \eqref{eqRank1Stuff}, and that $\bigwedge\limits_{i=1}^{N-1}\tau_i=\nu$. Then we may evaluate
\begin{equation}
\begin{split}
\Lambda_\e(x)\cdot\tau_i=&\Lambda_\e(x)\cdot\left(\tau_i+\left\langle D\varphi_\e(x),\tau_i\right\rangle\nu(x)\right)-\Lambda_\e(x)\cdot \left(\langle D\varphi_\e(x),\tau_i\rangle \nu(x)\right)\\
=& - \langle D\varphi_\e(x),\tau_i\rangle\Lambda_\e(x)\cdot\nu(x)=- \langle D\varphi_\e(x),\tau_i\rangle +O(\e).
\end{split}
\end{equation}
In particular, as $\nu$ and $(\tau_i)_{i=1}^{N-1}$ form an orthonormal basis, we must have that 
\begin{equation}
\begin{split}
\Lambda_\e(x)=&\left(\Lambda_\e(x)\cdot\nu(x)\right)\nu(x)+\sum\limits_{i=1}^{N-1}\left(\Lambda_\e(x)\cdot\tau_i(x)\right)\tau_i(x)\\
=&\nu(x)-\sum\limits_{i=1}^{N-1}\langle D\varphi_\e(x),\tau_i\rangle +O(\e)\\
=& \hat{D}\varphi_\e(x)+O(\e).
\end{split}
\end{equation}

Then, using that $|\hat{D}\varphi_\e|\geq 1$ and is thus certainly bounded away from zero, it is immediate that 
\begin{equation}
\begin{split}
\gamma_\e =& |\Lambda_\e | = |\hat{D}\varphi_\e|+O(\e)\\
\nu_\e =& \frac{1}{|\Lambda_\e|}\Lambda_\e = \frac{1}{|\hat{D}\varphi_\e|}\hat{D}\varphi_\e+O(\e)
\end{split}
\end{equation}
provided $\e$ is sufficiently small.
\end{proof}

With this, it is straightforward that \Cref{corollarySimplerSurface} holds.
\begin{proof}[Proof of \Cref{corollarySimplerSurface}]
As $w(x,\nu,u)$ is Lipschitz with Lipschitz constant bounded by $C(1+|u|^q)$ and $|D\varphi_\e|$ is uniformly bounded, the result is immediate as $u\in L^q(\Gamma)$.
\end{proof}

\begin{remark}
The consequence of this corollary is that the effective surface energy $w_h$ can be described as a limit of only first order quantities, that is, derivatives of $\varphi_\e$ and the unit normal vector $\nu$. In particular, curvature of the domain does not have an effect, and in the proof of \Cref{eqPropLeadingOrderSurface}, we see that this is because the influence of curvature via $\langle D\nu(x),\tau_i\rangle$ in $\gamma_\e,\nu_\e$ is multiplied by $\varphi_\e$, an order $\e$ quantity. In particular this suggests that ``higher order" behaviour as $\e\to 0$ could be affected by curvature of the limiting domain. 
\end{remark}

\section{Proofs for error estimate}\label{appendix:err_est}

In this subsection, we prove \Cref{cor:Q_eps}, that $a_{\e}(Q_{\e}-Q_0,v)$ is well-defined for any $v\in W^{1,p}(\Omega_0)$ and then we prove \Cref{prop:sum_of_integrals}.

\begin{proof}[Proof of \Cref{cor:Q_eps}]
We consider $\tilde{Q}_{\e}:\mathcal{U}_{\e}\to\text{Sym}_0(2)$ such that:
\begin{align*}
Q_{\e}(x,y)=\tilde{Q}_{\e}\big(\Phi(x,y)\big),\;\forall\;(x,y)\in\Omega_{\e},
\end{align*}
and we denote $(\tilde{x},\tilde{y})=\Phi(x,y)$ and $(x,y)=\big(\Phi^{-1}_1(\tilde{x},\tilde{y}),\Phi^{-1}_2(\tilde{x},\tilde{y})\big)$.

Then $\tilde{Q}_{\e}$ solves a PDE of the following form:
\begin{align}\label{eq:PDE_annulus}
\begin{cases}
\displaystyle{\sum_{i,j=1}^2D_i\big(a_{ij}D_j\tilde{Q}_{\e}\big)+\sum_{i=1}^2a_iD_i\tilde{Q}_{\e}+ c\tilde{Q}_{\e}=0,\;\text{in}\;\mathcal{U}_{\e};}\\
\\
\displaystyle{\sum_{i=1}^2b_{1i}D_i\tilde{Q}_{\e}+\dfrac{w_0}{2}\tilde{Q}_{\e}=\dfrac{w_0}{2}\tilde{Q}_{\e}^0,\;\text{on}\;\Phi(\Gamma_{\e});}\\
\\
\displaystyle{\sum_{i=1}^2b_{2i}D_i\tilde{Q}_{\e}+\dfrac{w_0}{2}\tilde{Q}_{\e}=\dfrac{w_0}{2}\tilde{Q}_R,\;\text{on}\;\Phi(\Gamma_R).}
\end{cases}
\end{align}
where $D_1=\dfrac{\partial}{\partial\tilde{x}}$ and $D_2=\dfrac{\partial}{\partial\tilde{y}}$. We have that $a_{ij}\in C^{\infty}(\mathcal{U}_{\e})$ with: 
\begin{align*}
\begin{pmatrix}
a_{11} & a_{12}\\
a_{21} & a_{22}
\end{pmatrix}=\begin{pmatrix}
-\tilde{y}^2-\dfrac{\tilde{x}^2}{\tilde{x}^2+\tilde{y}^2} & \tilde{x}\tilde{y}-\dfrac{\tilde{x}\tilde{y}}{\tilde{x}^2+\tilde{y}^2}\\
\tilde{x}\tilde{y}-\dfrac{\tilde{x}\tilde{y}}{\tilde{x}^2+\tilde{y}^2} & -\tilde{x}^2-\dfrac{\tilde{y}^2}{\tilde{x}^2+\tilde{y}^2}
\end{pmatrix}.
\end{align*}
The coefficients $a_i$ are also from $C^{\infty}(\mathcal{U}_{\e})$ with: 
\begin{align*}
\begin{pmatrix}
a_1\\
a_2
\end{pmatrix}=\begin{pmatrix}
\tilde{x}+D_1a_{11}+D_2a_{21}\\
\tilde{y}+D_1a_{12}+D_2a_{22}
\end{pmatrix}
\end{align*}
The coefficients $b_{1i}$ are from $C^1(\Phi(\Gamma_{\e}))$ and can be obtained explicitly from $\nabla Q_{\e}\cdot \nu_\e=\sum_{i=1}^2b_{1i}D_i\tilde{Q}_\e$. In the same way, $b_{2i}$ are from $C^1(\Phi(\Gamma_R))$ and can be obtained explicitly from $\nabla Q_\e\cdot(0,1)=\sum_{i=1}^2b_{2i}D_i\tilde{Q}_\e$. Moreover, $\tilde{Q}_{\e}^0=Q_{\e}^0\circ \Phi^{-1}$ and $\tilde{Q}_R=Q_R\circ \Phi^{-1}$. Since $Q_{\e}^0\in C^1(\Gamma_{\e})$ and $\Phi$ is a smooth bi-Lipschitz transformation, then $\tilde{Q}_{\e}^0\in C^1(\Phi(\Gamma_{\e}))$ which implies that $\tilde{Q}_{\e}^0\in W^{1,1-1/p}(\Phi(\Gamma_{\e}))$. In the same way, $\tilde{Q}_R\in W^{1,1-1/p}(\Phi(\Gamma_R))$.

Therefore, we can apply \Cref{thm:reg} to obtain that there exists a unique solution\linebreak $\tilde{Q}_{\e}^0\in W^{2,p}(\mathcal{U}_\e)$ of the problem \eqref{eq:PDE_annulus}. Using now that $\Phi$ is smooth, we obtain that there exists a unique solution $Q_{\e}\in W^{2,p}(\Omega_{\e})$ of the problem \eqref{defn:PDE_Q_eps}.
\end{proof}

Let $p\in(1,+\infty)$ and $v\in W^{1,p}(\Omega_0)$. We recall that $\Omega_{\e}\subset\Omega_0$, for all $\e>0$, hence we also have that $v\big|_{\Omega_{\e}}\in W^{1,p}(\Omega_{\e})$. Applying \Cref{cor:Q_eps} and \Cref{cor:Q_0} with exponent $p'=\dfrac{p-1}{p}\in (1,+\infty)$, we obtain that $Q_{\e}\in W^{1,p'}(\Omega_{\e})$ and that $Q_0\in W^{1,p'}(\Omega_0)$. Using \Cref{defn:bilinear_funct}, it is easy to see that $a_{\e}(Q_{\e}-Q_0,v)$ is well-defined.

\begin{proof}[Proof of \Cref{prop:sum_of_integrals}]

In the following paragraphs, we fix $p\in (1,+\infty)$ and $v\in W^{1,p}(\Omega_0)$. We recall now that $Q_{\e}$ solves weakly \eqref{defn:PDE_Q_eps}, which is the following PDE:
\begin{align*}
\begin{cases}
-\Delta Q_{\e}+cQ_{\e}=0\;\;\;\text{in}\;\Omega_{\e};\\
\\
\dfrac{\partial Q_{\e}}{\partial\nu_{\e}}+\dfrac{w_0}{2}Q_{\e}=\dfrac{w_0}{2}Q^0_{\e}\;\;\;\text{on}\;\Gamma_{\e};\\
\\
\dfrac{\partial Q_{\e}}{\partial\nu_R}+\dfrac{w_0}{2}Q_{\e}=\dfrac{w_0}{2}Q_R\;\;\;\text{on}\;\Gamma_R.
\end{cases}
\end{align*}

Then, using the integration by parts formula, we get:
\begin{align*}
0&=\int_{\Omega_{\e}} \big(-\Delta Q_{\e}+cQ_{\e}\big)\cdot v\;\text{d}(x,y)\\
&=\int_{\Omega_{\e}}\big(\nabla Q_{\e}\cdot\nabla v +cQ_{\e}\cdot v\big)\;\text{d}(x,y)-\int_{\Gamma_{\e}}\dfrac{\partial Q_{\e}}{\partial \nu_{\e}}\;\text{d}\sigma_{\e}-\int_{\Gamma_R}\dfrac{\partial Q_{\e}}{\partial \nu_R}\;\text{d}\sigma_R\\
&=\int_{\Omega_{\e}}\big(\nabla Q_{\e}\cdot\nabla v+cQ_{\e}\cdot v\big)\;\text{d}(x,y)+\dfrac{w_0}{2}\int_{\Gamma_{\e}}\big(Q_{\e}-Q_{\e}^0\big)\cdot v\;\text{d}\sigma_{\e}+\dfrac{w_0}{2}\int_{\Gamma_R}\big(Q_{\e}-Q_R\big)\cdot v\;\text{d}\sigma_R\\
&=a_{\e}(Q_{\e},v)-\dfrac{w_0}{2}\int_{\Gamma_{\e}}Q_{\e}^0\cdot v\;\text{d}\sigma_{\e}-\dfrac{w_0}{2}\int_{\Gamma_R}Q_R\cdot v\;\text{d}\sigma_R,
\end{align*}
according to \Cref{defn:bilinear_funct}. Hence
\begin{align}\label{eq:app_a_e_Q_e}
a_{\e}(Q_{\e},v)=\dfrac{w_0}{2}\int_{\Gamma_{\e}}Q_{\e}^0\cdot v\;\text{d}\sigma_{\e}+\dfrac{w_0}{2}\int_{\Gamma_R}Q_R\cdot v\;\text{d}\sigma_R.
\end{align}

For $Q_0$, we remind the reader that it solves weakly \ref{defn:PDE_Q_0}, which is the following PDE:
\begin{align*}
\begin{cases}
-\Delta Q_0+cQ_0=0\;\;\;\text{in}\;\Omega_0;\\
\\
\dfrac{\partial Q_0}{\partial\nu_0}+\dfrac{w_{ef}}{2}Q_0=\dfrac{w_{ef}}{2}Q_{ef}\;\;\;\text{on}\;\Gamma_0;\\
\\
\dfrac{\partial Q_0}{\partial\nu_R}+\dfrac{w_0}{2}Q_0=\dfrac{w_0}{2}Q_R\;\;\;\text{on}\;\Gamma_R.
\end{cases}
\end{align*}

Then, using the previous PDE and the integration by parts formula, we get:
\begin{align*}
0&=\int_{\Omega_0}\big(-\Delta Q_0+cQ_0)\cdot v\;\text{d}(x,y)\\
&=\int_{\Omega_0}\big(\nabla Q_0\cdot \nabla v+cQ_0\cdot v\big)\;\text{d}(x,y)-\int_{\Gamma_0}\dfrac{\partial Q_0}{\partial \nu_0}\cdot v\;\text{d}\sigma_0-\int_{\Gamma_R}\dfrac{\partial Q_0}{\partial \nu_R}\cdot v\;\text{d}\sigma_R\\
&=\int_{\Omega_0}\big(\nabla Q_0\cdot \nabla v+cQ_0\cdot v\big)\;\text{d}(x,y)+\dfrac{w_{ef}}{2}\int_{\Gamma_0}\big(Q_0-Q_{ef}\big)\cdot v\;\text{d}\sigma_0+\\
&\hspace{5mm}+\dfrac{w_0}{2}\int_{\Gamma_R}\big(Q_0-Q_R\big)\cdot v\;\text{d}\sigma_R\\
&=a_{\e}(Q_0,v)+\int_{\Omega_0\setminus\Omega_{\e}}\big(\nabla Q_0\cdot \nabla v+cQ_0\cdot v\big)\;\text{d}(x,y)+\dfrac{w_{ef}}{2}\int_{\Gamma_0}\big(Q_0-Q_{ef}\big)\cdot v\;\text{d}\sigma_0-\\
&\hspace{5mm}-\dfrac{w_0}{2}\int_{\Gamma_R}Q_R\cdot v\;\text{d}\sigma_R-\dfrac{w_0}{2}\int_{\Gamma_{\e}}Q_0\cdot v\;\text{d}\sigma_{\e},
\end{align*}
where we have used \Cref{defn:bilinear_funct}. Using \eqref{eq:app_a_e_Q_e}, we obtain that:
\begin{align*}
a_{\e}(Q_0,v)-a_{\e}(Q_{\e},v)&=-\int_{\Omega_0\setminus\Omega_{\e}}\big(\nabla Q_0\cdot \nabla v+cQ_0\cdot v\big)\;\text{d}(x,y)-\dfrac{w_0}{2}\int_{\Gamma_{\e}}Q_{\e}^0\cdot v\;\text{d}\sigma_{\e}-\\
&\hspace{3mm}-\dfrac{w_{ef}}{2}\int_{\Gamma_0}\big(Q_0-Q_{ef}\big)\cdot tv\;\text{d}\sigma_0+\dfrac{w_0}{2}\int_{\Gamma_{\e}}Q_0\cdot v\;\text{d}\sigma_{\e}
\end{align*}

Using \Cref{defn:I_integrals}, we already notice that:
\begin{align}\label{eq:desired_equality_only_I_1}
a_{\e}(Q_0-Q_{\e},v)=I_1-\dfrac{w_0}{2}\int_{\Gamma_{\e}}Q_{\e}^0\cdot v\;\text{d}\sigma_{\e}-\dfrac{w_{ef}}{2}\int_{\Gamma_0}\big(Q_0-Q_{ef}\big)\cdot v\;\text{d}\sigma_0+\dfrac{w_0}{2}\int_{\Gamma_{\e}}Q_0\cdot v\;\text{d}\sigma_{\e}.
\end{align}

Since $\Gamma_{\e}=\{\big(x,\e\varphi(x/\e)\big)\;|\;x\in[0,2\pi)\}$, $\Gamma_0=\{(x,0)\;|\;x\in[0,2\pi)\}$ and $\Gamma_R=\{(x,R)\;|\;x\in[0,2\pi)\}$, we obtain:
\begin{align*}
&\dfrac{w_0}{2}\int_{\Gamma_{\e}}Q_{\e}^0\cdot v\;\text{d}\sigma_{\e}=\dfrac{w_0}{2}\int_0^{2\pi}Q_{\e}^0(x/\e)\cdot v(x,\e\varphi(x/\e))\cdot \gamma_{\e}(x/\e)\;\text{d}x,\\
&\dfrac{w_{ef}}{2}\int_{\Gamma_0}\big(Q_0-Q_{ef}\big)\cdot v\;\text{d}\sigma_0=\dfrac{w_0}{2}\int_0^{2\pi}\big(Q_0(x,0)\cdot\gamma-Q_{ef}\cdot \gamma\big)\cdot v(x,0)\;\text{d}x\\
\text{and}\\
&\dfrac{w_0}{2}\int_{\Gamma_{\e}}Q_0\cdot v\;\text{d}\sigma_{\e}=\dfrac{w_0}{2}\int_0^{2\pi}Q_0(x,\e\varphi(x/\e))\cdot v(x,\e\varphi(x/\e))\cdot \gamma_{\e}(x/\e)\;\text{d}x,
\end{align*}
where we have used \eqref{eq:gamma_e}, \eqref{eq:nu_eps_depends_only_on_x_e} and \Cref{remark:Q_e_0_dependency}.

Using the previous equalities in \eqref{eq:desired_equality_only_I_1}, we obtain that:
\begin{align*}
a_{\e}(Q_0-Q_{\e},v)&=I_1-\dfrac{w_0}{2}\int_0^{2\pi}\big(v(x,\e\varphi(x/\e))-v(x,0)\big)\cdot\gamma_{\e}(x/\e)\cdot Q_{\e}^0(x/\e)\;\text{d}x-\\
&\hspace{-10mm}-\dfrac{w_0}{2}\int_0^{2\pi}v(x,0)\cdot\gamma_{\e}(x/\e)\cdot Q_{\e}^0(x/\e)\;\text{d}x+\dfrac{w_0}{2}\int_0^{2\pi}v(x,0)\cdot \gamma\cdot Q_{ef}\;\text{d}x-\\
&\hspace{-10mm}-\dfrac{w_0}{2}\int_0^{2\pi}Q_0(x,0)\cdot v(x,0)\cdot \gamma\;\text{d}x+\dfrac{w_0}{2}\int_0^{2\pi}Q_0(x,\e\varphi(x/\e))\cdot v(x,\e\varphi(x/\e))\cdot\gamma_{\e}(x/\e)\;\text{d}x.
\end{align*}

We see now that the second integral from the right-hand side from the last equality is $I_{21}$, according to \Cref{defn:I_integrals}, and that the next two terms generate $I_{22}$, according to the same definition as before. Hence:
\begin{align*}
a_{\e}(Q_0-Q_{\e},v)&=I_1+I_{21}+I_{22}+\\
&\hspace{-10mm}+\dfrac{w_0}{2}\int_0^{2\pi}\big(Q_0(x,\e\varphi(x/\e))\cdot v(x,\e\varphi(x/\e))-Q_0(x,0)\cdot v(x,0)\big)\cdot\gamma_{\e}(x/\e)\;\text{d}x+\\
&\hspace{-10mm}+\dfrac{w_0}{2}\int_0^{2\pi}Q_0(x,0)\cdot v(x,0)\cdot \gamma_{\e}(x/\e)\;\text{d}x-\dfrac{w_0}{2}\int_0^{2\pi}Q_0(x,0)\cdot v(x,0)\cdot \gamma\;\text{d}x.
\end{align*}

The last equality proves \eqref{eq:sum_of_integrals}.

\end{proof}

\medskip

\bibliography{bibl}

\end{document}